\newtheorem{thm}{Theorem}[section]
\newtheorem{cor}[thm]{Corollary}
\newtheorem{lem}[thm]{Lemma}
\newtheorem{prop}[thm]{Proposition}
\newtheorem{clm}[thm]{Claim}
\newtheorem{quest}[thm]{Question}
\newtheorem*{theorem}{Theorem}
\theoremstyle{definition}
\newtheorem{defin}[thm]{Definition}
\newtheorem{exa}[thm]{Example}
\newtheorem{obs}[thm]{Observation}
\newtheorem{fac}[thm]{Fact}
\newtheorem*{unthm}{Theorem}
\tikzset{
  double arrow/.style args={#1 colored by #2 and #3}{
    -stealth,line width=#1,#2, % first arrow
    postaction={draw,-stealth,#3,line width=(#1)/3,
                shorten <=(#1)/3,shorten >=2*(#1)/3}, % second arrow
  }
}
\tikzset{
	MyPersp/.style={scale=1.1,x={(1.1cm,0cm)},y={(0.8cm,0.47cm)},
    z={(0cm,1cm)}},
%  MyPersp/.style={scale=1.5,x={(0cm,0cm)},y={(1cm,0cm)},
%    z={(0cm,1cm)}}, % uncomment the two lines to get a lateral view
	MyPoints/.style={fill=white,draw=black,thick}
		}
\title[The first omega alephs]{The first omega alephs: \\ from simplices to trees of trees to higher walks}
\author{Jeffrey Bergfalk}
\address{Centro de Ciencas Matem\'{a}ticas\\
UNAM\\
Xangari, Morelia, Michoac\'{a}n\\
58089, M\'{e}xico}
\email{jeffrey@matmor.unam.mx}
\begin{document}

\begin{abstract} The point of departure for the present work is Barry Mitchell's 1972 theorem that the cohomological dimension of $\aleph_n$ is $n+1$. We record a new proof and mild strengthening of this theorem; our more fundamental aim, though, is some clarification of the higher-dimensional infinitary combinatorics lying at its core. In the course of this work, we describe simplicial characterizations of the ordinals $\omega_n$, higher-dimensional generalizations of coherent Aronszajn trees, bases for critical inverse systems over large index sets, nontrivial $n$-coherent families of functions, and higher-dimensional generalizations of portions of Todorcevic's walks technique. These constructions and arguments are undertaken entirely within a $\mathsf{ZFC}$ framework; at their heart is a simple, finitely iterable technique of compounding $C$-sequences.
\end{abstract}

\subjclass[2020]{03E05, 18G20}

\keywords{cohomological dimension, simplicial complex, characterizing cardinals, projective dimension, coherent Aronszajn tree, walks on ordinals, higher walks}

\thanks{This research was partly supported by the NSF grant DMS-1600635.}

%\date{\today}
\maketitle

\tableofcontents

\section{Introduction}
In 1972, building on the work of Barbara Osofsky and others, Barry Mitchell showed that for each nonnegative integer $n$ the cohomological dimension of the cardinal $\aleph_n$ is $n+1$ \cite{rings}.
%[Goblot, Pierce, N\"{o}beling, Roos], Osofsky, 
This is data that the set-theoretic community --- i.e., the mathematical community most foundationally interested in the transfinite --- has yet to fully exploit or absorb. This fact, in turn, is all the more striking when we consider that Mitchell's results, properly understood, fall squarely within the long-active area of set-theoretic research into \emph{incompactness phenomena}, that is, into behaviors of size-$\aleph_\alpha$ structures which sharply differ from those of their smaller substructures. Mitchell's results attest the existence of a family of $(n+1)$-dimensional incompactness principles, each of which first holds at the cardinal $\aleph_n$. 

There have been several reasons for this neglect. First among these is the abstract and nonconstructive nature of Mitchell's original proof: in essence, Mitchell adapted to the setting of functor categories an inductive sequence of arguments by contradiction which had been pioneered in ring-theoretic contexts, where the homological significance of $\aleph_n$ had first been perceived (see \cite{osofsubscript}). Close reading of this proof, in other words, has probably tended to reinforce a perception of Mitchell's result as fundamentally algebraic in import, difficult and possibly pointless to disentangle from its original framework. (For the reader's convenience we sketch both Mitchell's original argument and its broader historical contexts in an appendix.)

Secondly, it would be another decade before the critical template for what Mitchell's result \emph{could in fact be about} would emerge within the field of set theory. This would be Todorcevic's \emph{method of minimal walks} \cite{todpairs}. As we will argue below, these comprise the essential content of the $n=1$ case of Mitchell's theorem. This recognition, in turn, situates the now-classical walks apparatus as only the first in a family of higher-dimensional analogues. Extracting these higher-dimensional analogues from Mitchell's theorem is one main object of our work below, and we conclude by describing their basic form. These \emph{higher walks} are almost certain to be of independent interest, and indeed, their fuller analysis, quite apart from this their original setting, is a focus of ongoing work.

Our argument in fact amounts to a new proof and slight strengthening of Mitchell's theorem: it upgrades the characterization of the key inverse system in its original proof from \emph{projective} to \emph{free}, and derives from each (suitably conditioned) \emph{$C$-sequence on $\omega_n$} a canonical witness to the $n^{\text{th}}$ instance of the theorem. As hinted above, when $n=1$ this witness is little other than the associated system of walks on countable ordinals, algebraically recast; more generally, all these witnesses are, in a suitable sense, recursive on the input of a $C$-sequence, in strong contrast to the nonconstructive arguments alluded to above. The core of our strengthened version of the  theorem also admits the following pithy reformulation:
\begin{theorem}
Let $n$ be a positive integer. Then $\omega_n$ is the least ordinal supporting no $n$-dimensional tail-acyclic simplicial complex.
\end{theorem}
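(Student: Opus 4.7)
The plan is to prove this characterization in two directions: that every $\alpha < \omega_n$ supports an $n$-dimensional tail-acyclic simplicial complex, and that $\omega_n$ itself does not. I would establish the first direction by induction on $n$, with the base case $n = 0$ reducing to a direct construction on finite ordinals. For the inductive step, fix $\alpha < \omega_n$ and a $C$-sequence $\langle C_\beta : \beta < \alpha\rangle$ each of whose cofinal sets $C_\beta$ has order type below $\omega_{n-1}$; such a $C$-sequence exists because $|\alpha| \leq \aleph_{n-1}$. The inductive hypothesis furnishes $(n-1)$-dimensional tail-acyclic complexes $K_\beta$ on (the order types of) the $C_\beta$, and I would compound them into an $n$-dimensional complex $K$ on $\alpha$ by coning each $K_\beta$ off the vertex $\beta$. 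Tail-acyclicity of $K$ should then follow from the tail-acyclicity of the $K_\beta$ together with the coherence of the $C$-sequence, via the \emph{finitely iterable technique of compounding $C$-sequences} forecast in the abstract.

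For the converse direction, the nonexistence of an $n$-dimensional tail-acyclic complex on $\omega_n$ is the cohomological heart of the theorem. A putative $K$ on $\omega_n$ would, via its simplicial chain complex organized along the inverse system of its tails, yield a resolution of length $n$ of the canonical functor whose cohomology controls the dimension of $\omega_n$; upgraded to free form (the strengthening advertised in the introduction), this would bound the cohomological dimension of $\omega_n$ above by $n$, contradicting the lower bound $n+1$ supplied by Mitchell's theorem. In effect, the existence of such a complex is the simplicial shadow of a short enough resolution, and its nonexistence at $\omega_n$ is therefore a reformulation of the hard direction of Mitchell's theorem.

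The main obstacle is the inductive step in the constructive direction: one must arrange the compounding so that the resulting $n$-dimensional complex truly inherits full tail-acyclicity, rather than only partial triviality on its cofinal subcomplexes. This is precisely where the paper's higher-dimensional analogues of Todorcevic's walks should enter, since the cancellations required to make cofinal inclusions homologically trivial are the higher-dimensional counterparts of the coherence cancellations that make classical walks on $\omega_1$ work. Getting this step right is what turns Mitchell's theorem into the recursive, $C$-sequence driven construction that the paper promises, rather than a contradiction-driven argument of the original kind.
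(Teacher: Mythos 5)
Your division of the theorem into a constructive half (every $\gamma<\omega_n$ supports such a complex) and a nonexistence half ($\omega_n$ does not) matches the paper's organization, and your nonexistence direction is essentially right: by Theorem \ref{free}, a tail-acyclic $n$-complex on $\omega_n$ would make $\mathbf{d}_n\mathbf{P}_n(\omega_n)$ free, hence projective, which truncates the standard resolution of $\mathbf{\Delta}_{\omega_n}(\mathbb{Z})$ to length $n$ and contradicts Mitchell's computation $\text{cd}(\omega_n)=n+1$. (You do not actually need to ``upgrade to free form'' here; projectivity alone gives the contradiction. The projective-to-free strengthening is the substance of the \emph{forward} direction, not the nonexistence half.)

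The forward direction, however, has a genuine gap. You propose an induction on the dimension $n$, coning an $(n-1)$-dimensional tail-acyclic complex $K_\beta$ on each $C_\beta$ off the vertex $\beta$. Two problems. First, the induction is ill-grounded: a $0$-dimensional tail-acyclic complex cannot exist on any ordinal with two or more elements (already two vertices give $\tilde{\mathrm{H}}_0^\Delta\neq 0$), so the hypothesis the $n=1$ step would invoke is vacuous. Second, and decisively, your inductive step fails at exactly the hard case. If $\alpha<\omega_n$ has $\mathrm{cf}(\alpha)=\aleph_{n-1}$, then every club $C_\alpha\subseteq\alpha$ has order type at least $\omega_{n-1}$ --- not strictly below it, as you assert --- and by the very theorem you are proving (applied at $n-1$), $\omega_{n-1}$ supports \emph{no} $(n-1)$-dimensional tail-acyclic complex. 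So the inductive hypothesis supplies nothing to cone, precisely where it is needed. The paper avoids this by never reducing to a single club: the $n$-faces $\langle\vec{\alpha},\vec{\beta}\rangle$ of its complex $\mathcal{B}_n(\varepsilon)$ carry internal tails $\vec{\beta}$ that thread through an iterated, nested tower of \emph{compounded} $C$-sequences $C_{\beta_{j+1}\dots\beta_{n-i-1}\varepsilon}$ with strictly increasing cofinalities (Definition \ref{cmpdlddrs}); the induction proceeds on the ordinal $\varepsilon$ (Theorem \ref{basis}), not on the dimension $n$, and the step lemma relating $\mathcal{B}_n(\varepsilon)$ to $\mathcal{B}_{n-1}(\delta)$ (Lemma \ref{deleps}) works only with the specific compounded parameter $C_{\delta\varepsilon}$, not with an arbitrary $(n-1)$-complex on $C_\delta$. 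Your instinct that compounded $C$-sequences drive the tail-acyclicity is right; what is missing is the recognition that ``cone off $\beta$'' is not the operation --- the complex must encode a multi-layered internal-tail structure, and no single-layer coning reaches the ordinals of cofinality $\aleph_{n-1}$, which is where the entire combinatorial difficulty of the theorem is concentrated.
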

\noindent A simplicial complex $B$ whose vertices are the elements of an ordinal $\gamma$ is \emph{tail-acyclic} if its restrictions to tails $[\beta,\gamma)$ of $\gamma$ are each acyclic, i.e., if the reduced simplicial homology groups $\tilde{\mathrm{H}}^{\Delta}_n$ of those restrictions all vanish, as we will explain in greater detail below. So concise a characterization of the ordinals $\omega_n$ in terms of dimensional constraints on objects of a geometric flavor is suggestive in its own right; indeed, below, this theorem will initiate a series of related recognitions in which \emph{dimension}, in cohomological and even strikingly spatial senses, emerges as a fundamental motif and structuring principle of the $\mathsf{ZFC}$ combinatorics of $\omega_n$ $(n\in\omega)$. The spatial flavor of these recognitions is reflected in the number of  accompanying figures, later sections even assuming something of the character of a ``picture book.'' These pictures are meant to bolster intuitions in the face of the accruing coordinates and abstractions that higher-dimensional combinatorics tend to entail; they foreground frameworks within which not only walks but also related classical objects like coherent Aronszajn trees appear as only the first, initiating instance in a family of multidimensional generalizations.

We now describe this paper's organization. As the above account might suggest, set theorists do form this work's primary intended audience, but only by a gentle margin. We naturally hope for a more diverse readership and accordingly adopt fairly minimal assumptions about our readers' backgrounds in either set theory or homological algebra; this is feasible because our arguments are so generally and deliberately elementary in nature. Hence in Section \ref{background} we review what basics we will need about walks and $C$-sequences, simplicial complexes, and free and projective inverse systems, relating the latter with Aronszajn trees; we also introduce a notion of \emph{internal walks} and conclude the section with a statement of Mitchell's theorem. In Section \ref{goodsection} we define \emph{tail-acyclic simplicial complexes} and state our strengthened variant of Mitchell's theorem, as well as its translation to a statement about the existence of bases of inverse systems of abelian groups. Section \ref{bases} introduces the idea at the heart of all our constructions, namely a simple, finitely iterable technique of \emph{compounding $C$-sequences}, and applies this technique to describe bases of inverse systems with $n$-dimensional generating sets. Section \ref{afterbases} derives a family of $n$-coherent functions $\mathtt{f}_n:[\omega_n]^{n+2}\to\mathbb{Z}$ from these bases and reduces the proof of Mitchell's theorem (and its strengthening) to showing these functions to be nontrivial; this nontriviality is then the argument of Section \ref{deromega1}. We endeavor in the remaining sections to reconnect these functions with more familiar mathematical objects. We begin Section \ref{treessection}, for example, by deducing from the functions $\mathtt{f}_n$ that $\omega_n$ is the least ordinal with a nonzero constant-sheaf $n^{\mathrm{th}}$ \v{C}ech cohomology group. We describe as well how these functions may be viewed as $n$-dimensional generalizations of coherent Aronszajn trees, the so-called ``trees of trees'' of the present work's title. We turn in Section \ref{highertraces} to the $n$-dimensional walks lying at the heart of the functions $\mathtt{f}_n$, treating the representative $n=1$ and $n=2$ cases in some detail. In Section \ref{conclusion}, we conclude with several open questions alongside a brief survey of other $n$-dimensional phenomena arising among the cardinals $\aleph_n$. In part for the further light it sheds on these phenomena, a sketch of Mitchell's original argument of his theorem is included in an appendix, along with some discussion of its antecedents. The proof of one theorem from the main text, being laborious, is deferred to the appendix as well, along with supplementary details of the proof of another.

One final word before beginning: Sections \ref{deromega1} and \ref{treessection} (and to a lesser degree Sections \ref{afterbases} and \ref{bases}) demand markedly more of the reader than the others; there simply is no ready language for the structures $\mathtt{f}_n$ lying at their core. First-time readers, accordingly, should feel free to skim these sections, reading them even mainly through their figures and captions, noting results like Theorem \ref{cechtheorem} and Corollary \ref{maybemylastlabel} along the way, and more conventionally re-engaging the text in Section \ref{highertraces}. Most essential more generally for any number of non-linear approaches to the text will be Definition \ref{cmpdlddrs} and the notational conventions listed at the end of Section \ref{3210}.

\section{Background material and conventions}\label{background}

\subsection{Walks and $C$-sequences}\label{walkssection}
Among the most consequential developments in the study of infinitary combinatorics over the past forty years have been the arrival and elaboration of Todorcevic's method of minimal walks (\cite{todpairs, todwalks, todcoh}). As indicated above, this method ---  or, more precisely, the question of its ultimate scope and meaning --- forms the shaping inspiration of the present work. In this section we describe its basic features. 

We first establish some conventions. Below, the early Greek letters will always denote ordinals, with only $\varepsilon$ in a few clearly marked exceptions denoting a more general partial order. In these contexts, topological references are always to the order topology on a given $\alpha$, i.e., to that generated by the initial and terminal segments of $\alpha$.
\begin{defin} A subset $Q$ of a partial order $P$ is \emph{cofinal} in $P$ if for all $p\in P$ there exists a $q\in Q$ with $p\leq q$. The \emph{cofinality} $\mathrm{cf}(P)$ of a partial order $P$ is the least cardinality of a cofinal $Q\subseteq P$. It will occasionally be convenient to adopt the convention that the cofinality of a partial order possessing a maximum element is $\aleph_{-1}$.

We write $S^n_k$ for the set $\{\alpha<\omega_n\mid\mathrm{cf}(\alpha)=\aleph_k\}$. We write $\mathrm{Lim}$ and $\mathrm{Succ}$ for the classes of limit and successor ordinals, respectively.

We write $[\gamma]^n$ for the collection of size-$n$ subsets of an ordinal $\gamma$, frequently identifying this collection with that of the strictly increasing $n$-tuples of elements of $\gamma$ without further comment.

A \emph{$C$-sequence on $\gamma$} is a family $\mathcal{C}=\{ C_\beta\mid\beta<\gamma\}$ in which each $C_\beta$ is a closed cofinal subset of $\beta$. For concision, we will often term closed cofinal subsets of ordinals \emph{clubs}.
\end{defin}

We now describe the fundamentals of minimal walks; unless otherwise indicated, this material is standard and drawn from the references cited just above. With respect to some fixed C-sequence $\mathcal{C}=\{C_\beta\mid\beta\in\gamma\}$, for any $\alpha\leq\beta<\gamma$ the \emph{upper trace of the walk from $\beta$ down to $\alpha$} is recursively defined as follows:
$$ \mathrm{Tr}(\alpha,\beta)=\{\beta\}\,\cup\,\mathrm{Tr}(\alpha,\min (C_\beta\backslash\alpha)),$$
with the \emph{boundary condition} that $\mathrm{Tr}(\alpha,\alpha)=\{\alpha\}$ for all $\alpha<\gamma$. The \emph{walk} from $\beta$ to $\alpha$ is loosely identified with its upper trace, or with the collection of \emph{steps} between successive elements thereof, which is typically pictured as a series of arcs cascading in a downwards left direction, as on the left-hand side of Figure \ref{thepifigure} below. The \emph{number of steps function} $\rho_2$ sends any $\alpha$ and $\beta$ as above to $|\mathrm{Tr}(\alpha,\beta)|-1$. When $\gamma=\omega_1$ and each $C_\beta$ in $\mathcal{C}$ is of minimal possible order-type, the $\rho_2$ \emph{fiber maps} $\varphi_\beta(\,\cdot\,):=\rho_2(\,\cdot\,,\beta):\beta\to\mathbb{Z}$ form a \emph{nontrivial coherent family of functions}. More precisely, under these assumptions \begin{align}\label{c}\varphi_\beta\big|_\alpha - \varphi_\alpha=0\hspace{.8 cm}\textnormal{modulo locally constant functions}\end{align}
for all $\alpha\leq\beta<\gamma$, but there exists no $\varphi:\gamma\to\mathbb{Z}$ such that
\begin{align}\label{nt}\varphi\big|_\alpha - \varphi_\alpha=0\hspace{.8 cm}\textnormal{modulo locally constant functions}\end{align}
for all $\alpha<\gamma$ (see \cite[Corollary 2.7]{CoOI}). \emph{Coherence} broadly refers to relations like the first above; \emph{nontriviality} refers to relations like the second. The $\rho_2$ fiber maps, for example, also exhibit the following nontrivial coherence relations, closely related but not identical to (\ref{c}) and (\ref{nt}):
\begin{align}\label{cc}\varphi_\beta\big|_\alpha - \varphi_\alpha=0\hspace{.8 cm}\textnormal{modulo bounded functions}\end{align}
for all $\alpha\leq\beta<\gamma$, but there exists no $\varphi:\gamma\to\mathbb{Z}$ such that
\begin{align}\label{ohtexmaker}\varphi\big|_\alpha - \varphi_\alpha=0\hspace{.8 cm}\textnormal{modulo bounded functions}\end{align}
for all $\alpha<\gamma$.

Complementary to the upper trace is the \emph{lower trace of the walk from $\beta$ to $\alpha$}, which is defined as follows. Enumerate $\mathrm{Tr}(\alpha,\beta)$ in descending order as $(\beta_0,\beta_1,\dots,\beta_{\rho_2(\alpha,\beta)}=\alpha)$ and let $\max\,\varnothing=0$. Under the above assumptions, the sequence
$$\mathrm{L}(\alpha,\beta)=\langle\, \max(\bigcup_{i\leq j} \alpha\cap C_{\beta_i})\mid j<\rho_2(\alpha,\beta)\,\rangle$$
is well-defined, ascending towards $\alpha$ from below in tandem with $\mathrm{Tr}(\alpha,\beta)$'s descent to $\alpha$ from above. Related considerations determine the \emph{maximal weight function} $\rho_1$, defined again under the above assumptions as
$$\rho_1(\alpha,\beta)=\max_{i<\rho_2(\alpha,\beta)} |\,\alpha\cap C_{\beta_i}|.$$
Just as for $\rho_2$, the fiber maps $\varphi_\beta(\,\cdot\,):=\rho_1(\,\cdot\,,\beta)$ form a family of functions which is nontrivially coherent --- i.e., which follows the pattern of relations in (\ref{c}) and (\ref{nt}) above --- but in this case it is with respect to the modulus of \emph{finitely supported functions}.

As the above might suggest, walks and nontrivial coherence relations exhibit a particular affinity for the ordinal $\omega_1$, one, indeed, which it would be difficult to overstate.\footnote{``Despite its simplicity, [the method of minimal walks] can be used to derive virtually all known other structures that have been defined so far on $\omega_1$'' \cite[19]{todwalks}; see also the remarks at page 7 therein.} Observe most immediately, for example, that by virtue of their nontrivial coherence the families of fiber maps associated to either $\rho_1$ or $\rho_2$ above determine that most characteristic of combinatorial structures on $\omega_1$: in either case, $(\{\varphi_{\beta}\big|_\alpha\mid\alpha<\beta<\omega_1\}, \subseteq)$ forms an $\omega_1$-Aronszajn tree.
\begin{defin} A \emph{tree} $\mathcal{T}=(T,\triangleleft)$ consists of a set $T$ of \emph{nodes} and a partial order $\triangleleft$ thereon with the property that $\triangleleft$ well-orders the set $\{s\in T\mid s\,\triangleleft\, t\}$ for each $t\in T$. The \emph{height} $\mathrm{ht}(t)$ of a node $t$ is the order-type of $\{s\in T\mid s\,\triangleleft\, t\}$; the height $\mathrm{ht}(\mathcal{T})$ of $\mathcal{T}$ is $\min\{\alpha\mid\{t\in T\mid\mathrm{ht}(t)=\alpha\}=\varnothing\}$. A \emph{branch} is a maximal $\triangleleft$-linearly ordered subset of $T$; a branch $b$ is \emph{cofinal} if $\min\{\alpha\mid\{t\in b\mid\mathrm{ht}(t)=\alpha\}=\varnothing\}=\mathrm{ht}(\mathcal{T})$. A \emph{$\kappa$-tree} $\mathcal{T}$ is a tree of height $\kappa$ satisfying $|\{t\in T\mid\mathrm{ht}(t)=\alpha\}|<\kappa$ for all $\alpha<\kappa$. A $\kappa$-tree is \emph{Aronszajn} if it possesses no cofinal branch, and \emph{coherent} if it admits representation as the set of initial segments of a coherent family (modulo finite differences, typically) of functions, ordered by inclusion.
\end{defin}
We describe higher-order variants of these structures in Section \ref{treessubsection} below. We should perhaps emphasize, though, that we do not pursue higher-dimensional combinatorics quite for their own sake. The heart of the matter, rather, is this: walks techniques' extreme successes in capturing or consolidating the $\mathsf{ZFC}$ combinatorics of $\omega_1$ are unmatched at any higher $\omega_n$, and it is natural to wonder why. Broadly speaking, there have evolved two main strategies for extending these techniques' reach to higher cardinals $\kappa$. In the first, assumptions along the lines of the combinatorial principle $\square(\kappa)$ on the underlying $C$-sequence \emph{do} translate much of what's so  productive in the $\omega_1$ setting to higher $\kappa$ (see \cite{jensenfine, todpairs, todwalks}); this has the simple disadvantage of involving assumptions supplementary to the $\mathsf{ZFC}$ axioms. The second, on the other hand, involves \emph{relaxations} of requirements, on the modulus of coherence, for example, from \emph{mod finite} to \emph{mod countable} or \emph{mod $\kappa$}. Although initially avoiding assumptions beyond the $\mathsf{ZFC}$ axioms, this approach tends subsequently to need them, in the form of cardinal arithmetic conditions, for results of real force.

Against this background, Mitchell's theorem is provocative, particularly once one recognizes its $n=1$ case as fundamentally that of minimal walks on the ordinals of $\omega_1$ (see Sections \ref{thecasen1} and especially \ref{thecasen=1again} below). While remaining well within the framework of the $\mathsf{ZFC}$ axioms, it records substantial generalizations of that case to each of the higher ordinals $\omega_n$ $(n\in\omega)$. It thereby suggests a third way of extending walks techniques, one which bypasses expansions of \emph{assumptions} via expansions of dimension. 
\subsection{Internal walks}\label{internalsection}
As the above account may have suggested, walks tend to engender \emph{multiple forms} of nontrivial coherence. This observation formalizes as follows:
\begin{enumerate}
\item Nontrivial coherent families with respect to various moduli witness the nonvanishing of the cohomology groups $\check{\mathrm{H}}^1(\gamma;\mathcal{P})$ with respect to various corresponding presheaves $\mathcal{P}$.\footnote{Readers unfamiliar with these frameworks are both referred to \cite{CoOI} and reassured that they will play no essential role in our main argument; they function mainly as a convenient shorthand below.}
\item For any limit ordinal $\gamma$ and $n\geq 1$, the groups $\check{\mathrm{H}}^n(\gamma;\mathcal{P})$ with respect to the presheaves $\mathcal{P}$ corresponding to the most combinatorially prominent moduli --- namely, those of the \emph{finitely supported functions} and the \emph{locally constant functions} to an abelian group $A$, as above --- are isomorphic. In other words, nontrivial coherent families of the first sort exist on $\gamma$ if and only if nontrivial coherent families of the second sort exist on $\gamma$ as well. Both of the aforementioned moduli will feature in more general settings below.
\end{enumerate}
For example, by way of its fiber maps the function $\rho_2$ above may be viewed as representing a nontrivial element of $\check{\mathrm{H}}^1(\omega_1;\underline{\mathbb{Z}})$, where $\underline{\mathbb{Z}}$ denotes the sheaf of locally constant functions to the integers (see \cite{CoOI}; see Section \ref{71cohomology} below for further discussion of higher cohomology groups); by way of the above isomorphism, we may identify $\rho_1$ with such an element as well.

We now describe a simple mechanism for extending the observation that $\check{\mathrm{H}}^1(\omega_1;\underline{\mathbb{Z}})\neq 0$ to the result that $\check{\mathrm{H}}^1(\delta;\underline{\mathbb{Z}})\neq 0$ all ordinals $\delta$ of cofinality $\aleph_1$. This mechanism is both a critical component of higher walks and a useful heuristic for the relativizations appearing more generally below.

Here and throughout, terms like $C_\delta$ will always denote closed cofinal subsets of $\delta$ of minimal possible order-type. We will refer to their elements via increasing enumerations $\langle\eta^\delta_i\mid i\in\mathrm{cf}(\delta)\rangle$ (or $\langle\eta_i\mid i\in\mathrm{cf}(\delta)\rangle$ when $\delta$ is clear) and require moreover that $\textnormal{cf}(\eta^\delta_i)=\mathrm{cf}(i)$ for any limit ordinal $\delta$ and $i\in\mathrm{cf}(\delta)$. We will generally assume some fixed $C$-sequence to be defined over whatever ordinals we are working with.

Fix now an ordinal $\delta$ of cofinality $\aleph_1$. For $\alpha<\beta$ in $\delta$ define the \emph{upper trace $\mathit{Tr}^\delta$ of the $C_\delta$-internal walk from $\beta$ to $\alpha$} as follows: let $\eta_i=\min C_\delta\backslash\alpha$ and $\eta_k=\min C_\delta\backslash\beta$ and let \begin{align*} \mathrm{Tr}^\delta(\alpha,\beta)=\{\beta\}\,\cup\,\{\eta_j\mid j\in\mathrm{Tr}(i,k)\}.\end{align*}
In particular, $\mathrm{Tr}^\delta(\eta_i,\eta_k)$ is the image of the walk $\mathrm{Tr}(i,k)$ under the order-isomorphism $\pi:\omega_1\to C_\delta$ mapping each $i$ to $\eta_i$. Let $\rho_2[\delta](\alpha,\beta)=|\mathrm{Tr}^\delta(\alpha,\beta)|-1$. Observe then that the fiber maps $$\{\rho_2[\delta](\,\cdot\,,\beta):\beta\to\mathbb{Z}\mid\beta\in\delta\}$$
define a nontrivial coherent family of functions (modulo locally constant functions), i.e., they witness that $\check{\mathrm{H}}^1(\delta;\mathbb{Z})\neq 0$. Observe also that if $\delta=\omega_1=C_{\omega_1}$ then $\mathrm{Tr}^\delta=\mathrm{Tr}$ and $\rho_2[\delta]=\rho_2$.

\begin{figure}
\centering
\begin{tikzpicture}[MyPersp,font=\large]
	\coordinate (A) at (-.3,0,1.5);
	\coordinate (B) at (3.2,0,1.5);
	\coordinate (C) at (3.1,0,5.5);
	\coordinate (D) at (3.1,0,0);
	
	\coordinate (E) at (5.6,0,2);
	\coordinate (F) at (10.6,0,2);
	\coordinate (G) at (10.1,0,7.33);
	\coordinate (H) at (10.1,0,0.2);	
	
	\draw (A)--(B);
	\draw[thick] (C)--(D);
	\draw (E)--(F);
	\draw[line cap=round, very thick, gray, opacity=.9] (G)--(H);	
	\draw[very thick, gray, opacity=.9] (10,0,0.2)--(10.2,0,0.2);
	\draw[thick] (10.6,0,7.33) -- (10.6,0,0);
	\draw[thick] (3,0,4.8)--(3.2,0,4.8);
	\draw[thick] (3,0,4.2)--(3.2,0,4.2);
	\draw[thick] (3,0,5.5)--(3.2,0,5.5);
	\draw[thick] (3,0,1.5)--(3.2,0,1.5);
	\draw (10.1,0,.2) node[below] {$C_\delta$};
	\draw (2.7,0,4.8)[very thick] to[out=-180,in=85] (2.1,0,4.2);	
	\draw[very thick] (2.1,0,4.2) to[out=-180,in=85] (1.2,0,2.8);
		\draw[very thick] (1.2,0,2.8) to[out=-180,in=85] (.6,0,2.2);
		\draw[very thick] (.6,0,2.2) to[out=-180,in=85] (.3,0,1.9);
		\draw[very thick] (.3,0,1.9) to[out=-180,in=85] (0,0,1.5);
		\draw[very thick, gray, opacity=.5] (2.7,0,4.8)--(2.7,0,.5);
		\draw (2.7,0,.5) node[below] {$C_k$};
		\draw[very thick, gray, opacity=.5] (2.1,0,4.2)--(2.1,0,.9);
		\draw (2.1,0,.9) node[below] {$C_j$};
		\draw[very thick, gray, opacity=.5] (1.2,0,2.8)--(1.2,0,1.2);
		\draw[very thick, gray, opacity=.5] (.6,0,2.2)--(.6,0,1.9);
		\draw[very thick, gray, opacity=.5] (.3,0,1.9)--(.3,0,1);
		\draw (3.2,0,5.5) node[right] {$\omega_1$};
		\draw (3.2,0,4.8) node[right] {$k$};
		\draw (3.2,0,4.2) node[right] {$j$};
		\draw (3.2,0,1.5) node[right] {$i$};
		\draw (2.7,0,4.8) node[circle,draw,fill=black, scale=.25]{};
		\draw (2.1,0,4.2) node[circle,draw,fill=black, scale=.25]{};
		\draw (1.2,0,2.8) node[circle,draw,fill=black, scale=.25]{};
		\draw (.6,0,2.2) node[circle,draw,fill=black, scale=.25]{};
		\draw (.3,0,1.9) node[circle,draw,fill=black, scale=.25]{};
		\draw (0,0,1.5) node[circle,draw,fill=black, scale=.25]{};
		\draw[very thick, gray, opacity=.5] (2.6,0,4.2)--(2.8,0,4.2);
		\draw[very thick, gray, opacity=.5] (2.6,0,1.3)--(2.8,0,1.3);
		\draw[very thick, gray, opacity=.5] (2.6,0,1.1)--(2.8,0,1.1);
		\draw[very thick, gray, opacity=.5] (2.6,0,.5)--(2.8,0,.5);
		\draw[very thick, gray, opacity=.5] (2.6,0,4.45)--(2.8,0,4.45);
		\draw[very thick, gray, opacity=.5] (2,0,2.8)--(2.2,0,2.8);
		\draw[very thick, gray, opacity=.5] (2,0,3.8)--(2.2,0,3.8);
		\draw[very thick, gray, opacity=.5] (2,0,3.3)--(2.2,0,3.3);
		\draw[very thick, gray, opacity=.5] (2,0,3.1)--(2.2,0,3.1);
		\draw[very thick, gray, opacity=.5] (2,0,1.1)--(2.2,0,1.1);
		\draw[very thick, gray, opacity=.5] (2,0,.9)--(2.2,0,.9);
		\draw[very thick, gray, opacity=.5] (1.1,0,2.2)--(1.3,0,2.2);
		\draw[very thick, gray, opacity=.5] (1.1,0,2.5)--(1.3,0,2.5);
		\draw[very thick, gray, opacity=.5] (1.1,0,1.2)--(1.3,0,1.2);
		\draw[very thick, gray, opacity=.5] (.5,0,1.9)--(.7,0,1.9);
		\draw[very thick, gray, opacity=1] (.2,0,1.5)--(.4,0,1.5);
		\draw[very thick, gray, opacity=.5] (.2,0,1)--(.4,0,1);
		\draw[very thick, gray, opacity=.5] (.2,0,1.2)--(.4,0,1.2);
	\draw[very thick, gray, opacity=.8] (10,0,6.8)--(10.2,0,6.8);
	\draw[very thick, gray, opacity=.8] (10,0,7)--(10.2,0,7);
	\draw[very thick, gray, opacity=.5] (10,0,6.4)--(10.2,0,6.4);
	\draw[very thick, gray, opacity=.5] (10,0,5.6)--(10.2,0,5.6);
	\draw[very thick, gray, opacity=.5] (10,0,2)--(10.2,0,2);
	\draw[thick] (10.5,0,7.33)--(10.7,0,7.33);
	\draw[thick] (10.5,0,6.4)--(10.7,0,6.4);
	\draw[thick] (10.5,0,5.6)--(10.7,0,5.6);
	\draw[thick] (10.5,0,2)--(10.7,0,2);
	\draw[thick] (10.5,0,6.08)--(10.7,0,6.08);
	\draw (10.7,0,7.33) node[right] {$\delta$};
		\draw (10.7,0,6.4) node[right] {$\eta_k$};
		\draw (10.7,0,5.6) node[right] {$\eta_j$};
		\draw (10.7,0,2) node[right] {$\eta_i$};
		\draw (10.7,0,6.07) node[right] {$\beta$};
	\draw (9.95,0,6.09)[densely dotted, thick] to[out=100,in=-2] (9.6,0,6.4);
	\draw (9.6,0,6.4)[very thick] to[out=-180,in=85] (8.8,0,5.6);	
	\draw[very thick] (8.8,0,5.6) to[out=-180,in=85] (7.6,0,3.73);
		\draw[very thick] (7.6,0,3.73) to[out=-180,in=85] (6.8,0,2.93);
		\draw[very thick] (6.8,0,2.93) to[out=-180,in=85] (6.4,0,2.57);
		\draw[very thick] (6.4,0,2.57) to[out=-180,in=85] (6,0,2);
				\draw (9.6,0,6.4) node[circle,draw,fill=black, scale=.25]{};
		\draw (8.8,0,5.6) node[circle,draw,fill=black, scale=.25]{};
		\draw (7.6,0,3.73) node[circle,draw,fill=black, scale=.25]{};
		\draw (6.8,0,2.93) node[circle,draw,fill=black, scale=.25]{};
		\draw (6.4,0,2.57) node[circle,draw,fill=black, scale=.25]{};
		\draw (6,0,2) node[circle,draw,fill=black, scale=.25]{};
		\draw (9.95,0,6.09) node[circle,draw,fill=black, scale=.25]{};
		\draw[very thick, gray, opacity=.5] (9.5,0,5.6)--(9.7,0,5.6);
		\draw[very thick, gray, opacity=.5] (9.5,0,5.94)--(9.7,0,5.94);
		\draw[very thick, gray, opacity=.8] (10,0,5.94)--(10.2,0,5.94);
		\draw[very thick, gray, opacity=.5] (9.6,0,6.4)--(9.6,0,.66);
		\draw (9.6,0,.66) node[below] {$C_{\eta_k\delta}$};
		\draw[very thick, gray, opacity=.5] (9.5,0,.66)--(9.7,0,.66);
		\draw[very thick, gray, opacity=.5] (9.5,0,1.5)--(9.7,0,1.5);
		\draw[very thick, gray, opacity=.5] (9.5,0,1.75)--(9.7,0,1.75);
		\draw[very thick, gray, opacity=.5] (8.8,0,5.6)--(8.8,0,1.2);
		\draw (8.8,0,1.2) node[below] {$C_{\eta_j\delta}$};
		\draw (.2,0,4.6) node[right] {$\mathrm{Tr}(i,k)$};
		\draw (4.1,0,4.6) node[right] {$\pi``\mathrm{Tr}(i,k)=\mathrm{Tr}^{\delta}(\eta_i,\eta_k)$};
		\draw[very thick, gray, opacity=.5] (8.7,0,1.2)--(8.9,0,1.2);
		\draw[very thick, gray, opacity=.5] (8.7,0,1.5)--(8.9,0,1.5);
		\draw[very thick, gray, opacity=.5] (8.7,0,3.73)--(8.9,0,3.73);
		\draw[very thick, gray, opacity=.5] (8.7,0,4.1)--(8.9,0,4.1);
		\draw[very thick, gray, opacity=.5] (8.7,0,4.35)--(8.9,0,4.35);
		\draw[very thick, gray, opacity=.5] (8.7,0,5.1)--(8.9,0,5.1);
		\draw[very thick, gray, opacity=.5] (7.6,0,3.73)--(7.6,0,1.6);
		\draw[very thick, gray, opacity=.5] (7.5,0,1.6)--(7.7,0,1.6);
		\draw[very thick, gray, opacity=.5] (7.5,0,2.93)--(7.7,0,2.93);
		\draw[very thick, gray, opacity=.5] (7.5,0,3.33)--(7.7,0,3.33);
		\draw[very thick, gray, opacity=.5] (6.8,0,2.93)--(6.8,0,2.57);
		\draw[very thick, gray, opacity=.5] (6.7,0,2.57)--(6.9,0,2.57);
		\draw[very thick, gray, opacity=.5] (6.4,0,2.57)--(6.4,0,1.33);
		\draw[very thick, gray, opacity=.5] (6.3,0,1.33)--(6.5,0,1.33);
		\draw[very thick, gray, opacity=.5] (6.3,0,2)--(6.5,0,2);
		\draw[very thick, gray, opacity=.5] (6.3,0,1.6)--(6.5,0,1.6);
				\draw[very thick, gray, opacity=.8] (10,0,.66)--(10.2,0,.66);
		\draw[very thick, gray, opacity=.8] (10,0,1.5)--(10.2,0,1.5);
		\draw[very thick, gray, opacity=.8] (10,0,1.75)--(10.2,0,1.75);
		\draw[very thick, gray, opacity=.8] (10,0,1.2)--(10.2,0,1.2);
		\draw[very thick, gray, opacity=.8] (10,0,3.73)--(10.2,0,3.73);
		\draw[very thick, gray, opacity=.8] (10,0,4.1)--(10.2,0,4.1);
		\draw[very thick, gray, opacity=.8] (10,0,4.35)--(10.2,0,4.35);
		\draw[very thick, gray, opacity=.8] (10,0,5.1)--(10.2,0,5.1);
		\draw[very thick, gray, opacity=.8] (10,0,1.6)--(10.2,0,1.6);
		\draw[very thick, gray, opacity=.8] (10,0,2.93)--(10.2,0,2.93);
		\draw[very thick, gray, opacity=.8] (10,0,3.33)--(10.2,0,3.33);
		\draw[very thick, gray, opacity=.8] (10.2,0,2.57)--(10,0,2.57);
		\draw[very thick, gray, opacity=.8] (10.2,0,1.33)--(10,0,1.33);
		\draw[very thick, gray, opacity=.8] (10,0,2)--(10.2,0,2);
		%\draw[very thick, gray, opacity=.9] (6.3,0,1.6)--(6.5,0,1.6);
	\draw[double arrow=2pt colored by black and white]
(3.7,0,5.6) -- node[midway,anchor=center,fill=white,draw=black,line width=.2mm,inner sep=3pt, rounded corners=.5mm]{$\pi:\omega_1\to C_\delta$} (10.07,0,7.33);
\draw[->,dashdotted] (2.93,0,.24)--(9.23,0,.41);
\draw[->,dashdotted] (2.34,0,.65)--(8.42,0,.93);
\end{tikzpicture}
\caption{The order-isomorphism $\pi:\omega_1\to C_\delta:k\mapsto\eta_k$ translating a $C$-sequence on $\omega_1$, and thereby a walk, to one on $C_\delta$. On the left is the standard picture of a walk determined by a $C$-sequence (drawn in gray; notches depict representative elements of the associated $C_\gamma$) on $\omega_1$; we term the walk on the right-hand side \emph{$C_\delta$-internal}. For initial inputs $\beta\notin C_\delta$ such walks require a first step ``up into'' $C_\delta$; this we have depicted as well.}
\label{thepifigure}
\end{figure}
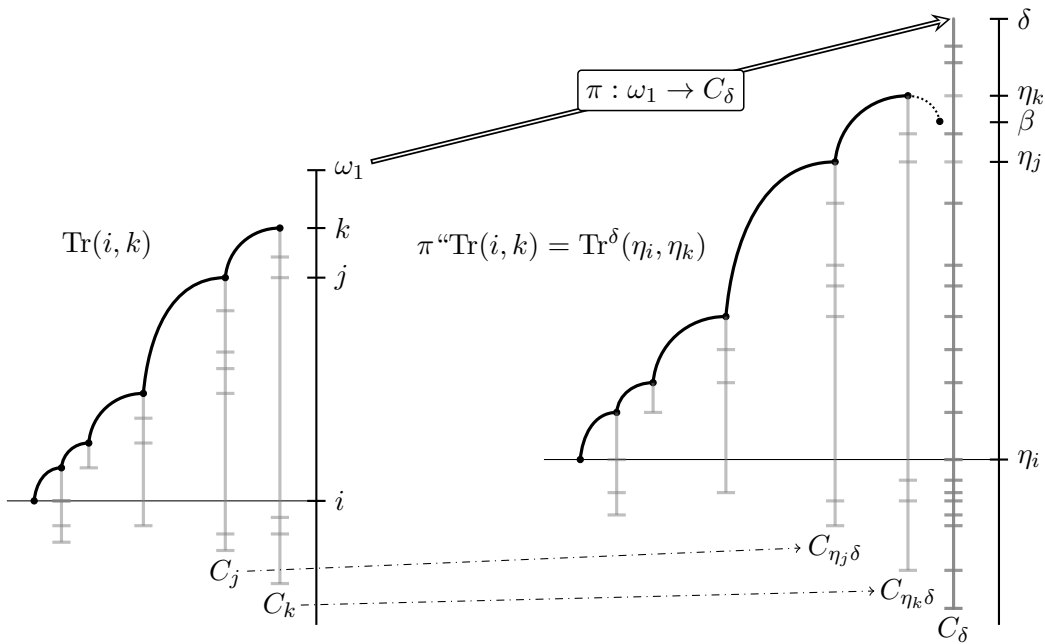

If for $\beta\in C_\delta$ we let $C_{\beta\delta}=\pi`` C_{\pi^{-1}(\beta)}$ then we may define the \emph{$C_\delta$-internal walk} more directly: for $\beta\in C_\delta$ we have
$$\mathrm{Tr}^\delta(\alpha,\beta)=\{\beta\}\,\cup\,\mathrm{Tr}^\delta(\alpha,\min(C_{\beta\delta}\backslash\alpha)).$$
To ground the recursion, let $\mathrm{Tr}^\delta(\alpha,\min(C_\delta\backslash\alpha))=\{\min(C_\delta\backslash\alpha)\}$ for all $\alpha<\delta$. (Similarly for $\beta\not\in C_\delta$, though the notation grows cluttered.\footnote{As the referee has noted, under present definitions, $\mathrm{Tr}^\delta(\beta,\beta)=\{\beta,\mathrm{min}\,C_\delta\backslash\beta\}$ when $\beta\not\in C_\delta$, with the consequence that $\rho_2[\delta](\beta,\beta)=1$, in contrast with the classical fact that $\rho_2(\beta,\beta)$ is always zero. Superficial modifications to these definitions would eliminate this effect without affecting this section's broader argument, and indeed, the term \emph{internal walk} may be taken to refer to any of a family of minor variations on the single basic idea we are describing. The variant we have recorded here has the virtue of corresponding to the values arising at the second or inner coordinate along the internal branches of the three-coordinate higher walks; see Section \ref{thecasehighernagain}.}) Rather than mapping a walk on the countable ordinals to one on those of $C_\delta$, this second framing maps the underlying $C$-sequence on the countable ordinals to the ordinals of $C_\delta$, and walks thereon. See Figure \ref{thepifigure}.

In one view, these \emph{internal walks} are the material of walks of the next higher order; these we describe in Sections \ref{thecasehighernagain} and \ref{lastsection} below. The basic idea of these translations, though, suffuses this work very generally.

\subsection{Simplicial complexes and the systems $\mathbf{P}_n(\varepsilon)$}\label{3210}

By \textit{simplicial complex $B$ on $\beta$} we mean a simplicial complex whose vertices are the elements of $\beta$. We may more generally identify any $n$-dimensional face of $B$ with the size-$(n+1)$ set of its vertices. Best suited for our purposes, in other words, are \textit{abstract simplicial complexes on $\beta$}: $\subseteq$-downward-closed collections of finite subsets of $\beta$. Writing $B^n$ for the set of $n$-dimensional faces of $B$, we then have
 \begin{enumerate}
 \item $B^n\subseteq[\beta]^{n+1}$, and
 \item $\bigcup_{k\leq n} B^k$ is the $n$-skeleton of $B$.
 \end{enumerate}
The \emph{dimension} of a simplicial complex $B$ is $\text{sup}\{ n\mid B^n\neq\varnothing\}$. For any such $B$ let \begin{align}\label{doldkan} C_n(B)=\bigoplus_{B^n}\mathbb{Z}\end{align} and for all $\vec{\alpha}\in B^n$ write $\langle\vec{\alpha}\rangle$ for the associated generator of $C_n(B)$. Writing $\vec{\alpha}^i$ for the $(n-1)$-face of $B$ obtained by omitting the $i^{\mathrm{th}}$ element of $\vec{\alpha}$, the maps \begin{align*}& \langle\vec{\alpha}\rangle\mapsto\displaystyle\sum_{i\leq n}(-1)^i \langle\vec{\alpha}^i\rangle
\end{align*} then induce \emph{boundary homomorphisms} $\partial_n:C_n(B)\rightarrow C_{n-1}(B)$ and \emph{simplicial homology groups} $$\mathrm{H}_n^{\Delta}(B)=\frac{\text{ker}(\partial_n)}{\text{im}(\partial_{n+1})}$$ for each $n\geq 0$. Here $C_{-1}(B)$, and hence $\partial_0$, equals zero. The \emph{reduced simplicial homology groups} $\tilde{\mathrm{H}}_n^{\Delta}(B)$ are similarly defined, but with $C_{-1}(B)=\mathbb{Z}$ and $\partial_0:\langle\alpha\rangle\mapsto 1$ for all $\alpha\in\beta$. Observe that the complex $B$ (or, more precisely, its geometric realization) is connected if and only if $\mathrm{H}_0^{\Delta}(B)=\mathbb{Z}$, or equivalently if and only if $\tilde{\mathrm{H}}_0^{\Delta}(B)=0$.

When $\beta$ is of cofinality $\aleph_k$, its order-structure manifests as a $k$-dimensional combinatorial-topological condition on the family of simplicial complexes $B$ on $\beta$. This is the content of Theorem \ref{omegan} below. The mechanism of this surprising rapport is a \textit{grading} of simplicial complexes, for which inverse systems are a convenient framework.
\begin{defin} Let $\mathcal{C}$ be a category. An \emph{inverse system in $\mathcal{C}$ over $I$} consists of a partially ordered \emph{index-set} $I$, \emph{terms} $X_i$ $(i\in I)$, and \emph{bonding maps} $x_{ij}:X_j\to X_i$ satisfying $x_{ik}=x_{ij}\,x_{jk}$ for all $i\leq j\leq k$ in $I$, where the terms and bonding maps are objects and morphisms in the category $\mathcal{C}$, respectively. We will typically represent such systems as triples $(X_i,x_{ij},I)$ and more abstractly denote inverse systems by boldfaced variables like $\mathbf{X}$; we will also take $\mathcal{C}$ to be the category $\mathsf{Ab}$ of abelian groups except where otherwise indicated below. A morphism between two inverse systems $\mathbf{X}=(X_i,x_{ij},I)$ and $\mathbf{Y}=(Y_i,y_{ij},I)$ is a family $\mathbf{f}=\{f_i:X_i\to Y_i\mid i\in I\}$ of morphisms satisfying $y_{ij}\,f_j=f_i\,x_{ij}$ for all $i\leq j$ in $I$.
\end{defin}
The terms of our central examples are of the following forms: for $n\geq 0$ and $A$ a collection of ordinals, let $$P_n(A)=\bigoplus_{[A]^{n+1}}\mathbb{Z}\hspace{.3 cm}\textnormal{      and      }\hspace{.3 cm}R_n(A)=\displaystyle\prod_{[A]^{n+1}}\mathbb{Z}\,.$$
\noindent In the framework of (\ref{doldkan}) above, $P_n(A)$ is $C_n(B)$, where $B$ is the complete $n$-dimensional simplicial complex on $A$. For both $P_n(A)$ and $R_n(A)$, again write $\langle\vec{\alpha}\rangle$ for the generator associated to $\vec{\alpha}\in[A]^{n+1}$. Again boundary maps on these $\langle\vec{\alpha}\rangle$ determine maps $$d_n: P_n(A)\rightarrow P_{n-1}(A)$$
for $n\geq 1$. For any ordinal $\varepsilon$ and $n\geq 0$ define then the inverse system $$\mathbf{P}_n(\varepsilon)=(P_n([\alpha,\varepsilon)),p_{\alpha\beta},\varepsilon)$$
with $p_{\alpha\beta}:P_n([\beta,\varepsilon))\rightarrow P_n([\alpha,\varepsilon))$ the natural inclusion map, for $\alpha\leq\beta<\varepsilon$, and define $\mathbf{R}_n$ analogously. (Here and below we denote intervals of ordinals just as we would intervals of reals; $[\alpha,\varepsilon)=\{\xi\in\mathrm{Ord}\mid\alpha\leq\xi<\varepsilon\}$, for example.) Observe that $p_{\alpha\beta}$ and $d_n$ commute; hence the maps $d_n$ determine in turn a mapping of inverse systems $$\mathbf{d}_n:\mathbf{P}_n(\varepsilon)\rightarrow\mathbf{P}_{n-1}(\varepsilon)\,.$$
This map may be regarded as a natural transformation between contravariant functors ($\alpha\mapsto P_n([\alpha,\varepsilon))$ and $\alpha\mapsto P_{n-1}([\alpha,\varepsilon))$, respectively) from the partial order $\varepsilon$, viewed as a category, to the category of abelian groups. We write $\mathsf{Ab}^{\varepsilon^{\mathrm{op}}}$ for the category with such functors as objects and natural transformations as morphisms. Observe that $\mathsf{Ab}^{\varepsilon^{\mathrm{op}}}$ is an abelian category; in particular, sums and kernels and quotients and, hence, exact sequences exist therein, and are evaluated pointwise (e.g., the terms of a quotient are the quotients of the corresponding terms). %As an example, the inclusion and quotient maps, respectively, $f_\alpha:P_k([\alpha,\varepsilon))\to R_k([\alpha,\varepsilon))$ and $g_\alpha:R_k([\alpha,\varepsilon))\to R_k([\alpha,\varepsilon))/P_k([\alpha,\varepsilon))$ $(\alpha\in\varepsilon)$ together determine a short exact sequence in $\mathsf{Ab}^{\varepsilon^{\mathrm{op}}}$:\\
%\begin{equation}\label{A}\mathbf{0}\longrightarrow\mathbf{P}_k(\varepsilon)\stackrel{\mathbf{f}}{\longrightarrow}\mathbf{R}_{k}(\varepsilon)\stackrel{\mathbf{g}}{\longrightarrow}\mathbf{R}_{k}(\varepsilon)/\mathbf{P}_k(\varepsilon)\longrightarrow\mathbf{0} \end{equation}
We write $\mathbf{\Delta}_\varepsilon(\,\cdot\,)$ for the \textit{diagonal functor} $A\mapsto (A,\text{id},\varepsilon)$ embedding $\mathsf{Ab}$ into $\mathsf{Ab}^{\varepsilon^{\mathrm{op}}}$; in particular, $\mathbf{\Delta}_\varepsilon(\mathbb{Z})$ is the inverse system $(\mathbb{Z},\text{id},\varepsilon)$. The aforementioned objects then assemble in the following exact sequence:
\begin{equation}\label{P}\tag{$\mathsf{P}(\varepsilon)$}\dots\stackrel{\mathbf{d}_{n+1}}{\longrightarrow}\mathbf{P}_n(\varepsilon)\stackrel{\mathbf{d}_{n}}{\longrightarrow}\mathbf{P}_{n-1}(\varepsilon)\stackrel{\mathbf{d}_{n-1}}{\longrightarrow}\dots \stackrel{\mathbf{d}_1}{\longrightarrow} \mathbf{P}_{0}(\varepsilon)\stackrel{\mathbf{d}_0}{\longrightarrow}\mathbf{\Delta}_\varepsilon(\mathbb{Z})\longrightarrow\mathbf{0}\, , \end{equation}
with $\mathbf{d}_0=\{d_{0,\alpha}\,|\,\alpha\in\varepsilon\}$ defined by $d_{0,\alpha}:\langle\beta\rangle\mapsto 1$ for all $\alpha\leq\beta<\varepsilon$. We term this sequence the \emph{standard projective resolution} of $\mathbf{\Delta}_\varepsilon(\mathbb{Z})$.

Simple as it might appear, the sequence \ref{P} will be a main object of study below. A main part of our argument, in other words, will frequently be the manipulation of algebraic relations between $n$-tuples of ordinals. For this work, a clear but flexible notation is critical; we therefore pause to collect and augment its more scattered description above:

\begin{enumerate}
\item For $A$ a collection of ordinals, we write $\vec{\beta}\in[A]^n$ to mean that $\vec{\beta}$ is an increasing $n$-tuple $(\beta_0,\dots,\beta_{n-1})$ of ordinals in $A$. We will typically write a $1$-tuple $(\beta)$ as $\beta$. For $\vec{\beta}\in[A]^n$ and $0\leq i<n$, we write $\vec{\beta}^i$ for $\vec{\beta}$ with the $i^{th}$ coordinate removed. If $\vec{\beta}$ is a $1$-tuple, then $\vec{\beta}^0=\varnothing$. As we did when defining simplicial complexes above, we will sometimes simply view $\vec{\beta}$ as an $n$-element subset of $A$; we write $\vec{\alpha}<\vec{\beta}$ to mean that every element of $\vec{\alpha}$ is less than every element of $\vec{\beta}$. We apply the restriction-notation $B\big|_X$ both to functions and to simplicial complexes; in the latter case, it denotes the simplicial complex comprised of those $x\in B$ satisfying $x\subseteq X$.
\item As for $\mathbf{d}_n$ above, we will define maps among inverse systems largely by way of their actions on generators $\langle\vec{\alpha}\rangle$; at times, we will conflate maps between terms (like $d_n$) and maps between inverse systems (like $\mathbf{d}_n$) as well. Relatedly, we will tend not to formally distinguish between a generator $\langle\vec{\gamma}\rangle\in P_n([\beta,\varepsilon))$ and its images $p_{\alpha\beta}(\langle\vec{\gamma}\rangle)$. When we do, it will be to regard $\langle\vec{\gamma}\rangle$ as an element of the ``highest possible'' term of $\mathbf{P}_n(\varepsilon)$ --- namely, $P_n([\gamma_0,\varepsilon))$.
\item We write $\langle\vec{\alpha},\vec{\beta}\rangle$ for $\langle\vec{\alpha}^\frown\vec{\beta}\rangle$; we will also at times write sums of generators inside the angled brackets, preferring expressions like $\langle d_k \vec{\alpha},\vec{\beta}\rangle$ to $\sum_{i=0}^k(-1)^i\langle\vec{\alpha}^i,\vec{\beta}\rangle$. As they do here, commas can render such expressions more readable. In subscripts, however, such commas typically have more of an effect of clutter.
 In these cases we omit them, denoting concatenations of coordinates, as in $(\beta_0,\dots,\beta_{m-1},\gamma_0,\dots,\gamma_{n-1})$, as concatenations of tuples, as in $\vec{\beta}\vec{\gamma}$. Putting all this together: the tuple $(\beta_0,\beta_2,\delta)$ would typically appear in a subscript as $C_{\vec{\beta}^1\delta}$, for example; it would appear in a generator probably as $\langle\vec{\beta}^1,\delta\rangle$. Lastly, an expression like $d_k\mathcal{B}$ means $\{ d_k\langle\vec{\alpha}\rangle\,|\,\langle\vec{\alpha}\rangle\in\mathcal{B}\}$.
\end{enumerate}

\subsection{Free and projective inverse systems}\label{freeandproj}

\begin{defin} For any object $\mathbf{P}$ in $\mathsf{Ab}^{\varepsilon^{\mathrm{op}}}$, let $\mathbf{id}$ denote the identity morphism. $\mathbf{P}$  is \textit{projective} if for any epimorphism $\mathbf{e}:\mathbf{R}\rightarrow\mathbf{P}$ there exists a morphism $\mathbf{s}:\mathbf{P}\rightarrow\mathbf{R}$ such that $\mathbf{e}\,\mathbf{s}=\mathbf{id}$. We will sometimes term such a right-inverse to an epimorphism a \textit{section}. Dually, we will sometimes term a left-inverse $\mathbf{r}$ to a monomorphism $\mathbf{m}$ a \textit{retract}.

An object $\mathbf{X}$ in $\mathsf{Ab}^{\varepsilon^{\mathrm{op}}}$ is \textit{free} if there exists some $\mathcal{B}\subseteq\bigcup_{\alpha<\varepsilon}X_\alpha$ such that any $x$ in any $X_\alpha$ has a unique $\mathcal{B}$-decomposition
$$x=\displaystyle\sum_{i<k}a_i q_{\alpha\beta_i}(b_i)$$
with $b_i\in X_{\beta_i}\cap\mathcal{B}$ for all $i<k$.
\end{defin}

\begin{exa} The system $\mathbf{\Delta}_\varepsilon(\mathbb{Z})$ is free if and only if $\varepsilon$ is a successor, i.e., if $\text{cf}(\varepsilon)=1$. The system $\mathbf{P}_{n}(\varepsilon)$ is free, on the other hand, for any ordinal $\varepsilon$ and $n\in\omega$. By an argument exactly as in more standard settings, it follows that every $\mathbf{P}_n(\varepsilon)$ is projective as well.\footnote{The interested reader is encouraged to verify these assertions directly; for the first and last of them, though, see also \cite[Example 11.17]{SSH} and Lemma \ref{summandlemma} below, respectively.}
\end{exa}

The reverse question of whether a projective system is free (or, conversely, of whether a nonfree system is nonprojective) is in general much subtler. Even the simplest instance is less than obvious:
\begin{align}\label{projquest} \text{Let $\varepsilon$ be a limit ordinal. Is }\mathbf{\Delta}_\varepsilon(\mathbb{Z})\text{ projective?}
\end{align}
The question involves a different order of quantification from that of freeness: it quantifies over the collection of morphisms in $\mathsf{Ab}^{\varepsilon^{\mathrm{op}}}$. Arguably the obscurity --- or, in another view, the power --- of the notion of projective consists, simply, in this quantification.
To see that a question like (\ref{projquest}) is as much about the ambient category as it is about the object itself, consider the following:

\begin{defin}
For any infinite cardinal $\kappa$ let $\kappa$-$\mathsf{Ab}$ denote the category of abelian groups of size less than $\kappa$, and let $\kappa.g.$-$\mathsf{Ab}$ denote the category of abelian groups with generating sets of size less than $\kappa$. When $\kappa=\omega$, of course, these are the categories of \emph{finite abelian groups} and of \emph{finitely generated abelian groups}, respectively; note also that $\omega$ is the only infinite cardinal $\kappa$ for which the categories $\kappa$-$\mathsf{Ab}$ and $\kappa.g.$-$\mathsf{Ab}$ are distinct.
\end{defin}

It is straightforward to verify that each of the categories defined above is abelian. Recall that a cardinal $\kappa$ has the \emph{tree property} if there exist no $\kappa$-Aronsjazn trees, i.e., if every tree of height $\kappa$ and level-widths all less than $\kappa$ possesses a cofinal branch.

\begin{thm}\label{treepropproj} For any infinite cardinal $\kappa$,
\begin{enumerate}
\item if $\mathbf{\Delta}_\kappa(\mathbb{Z})$ is projective in $(\kappa.g.$-$\mathsf{Ab})^{\kappa^{\mathrm{op}}}$ then $\kappa$ has the tree property, and
\item if $\kappa$ has the tree property then $\mathbf{\Delta}_\kappa(\mathbb{Z})$ is projective in $(\kappa$-$\mathsf{Ab})^{\kappa^{\mathrm{op}}}$.
\end{enumerate}
In particular, an uncountable cardinal $\kappa$ has the tree property if and only if $\mathbf{\Delta}_\kappa(\mathbb{Z})$ is projective in $(\kappa$-$\mathsf{Ab})^{\kappa^{\mathrm{op}}}$.
\end{thm}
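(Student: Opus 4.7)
My plan treats the two implications separately, each by connecting sections of an epimorphism onto $\mathbf{\Delta}_\kappa(\mathbb{Z})$ to cofinal branches of an auxiliary $\kappa$-tree.

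For part (2), I assume $\kappa$ has the tree property and take any epimorphism $\mathbf{e}\colon\mathbf{R}\to\mathbf{\Delta}_\kappa(\mathbb{Z})$ in $(\kappa\text{-}\mathsf{Ab})^{\kappa^{\mathrm{op}}}$. I would form the tree $\mathcal{T}$ whose level-$\alpha$ nodes are the elements of $E_\alpha := e_\alpha^{-1}(1)\subseteq R_\alpha$, declaring $t\triangleleft x$ precisely when $t\in E_\alpha$, $x\in E_\beta$, $\alpha<\beta$, and $r_{\alpha\beta}(x)=t$. Each $E_\alpha$ is nonempty (as $e_\alpha$ is surjective) and has $|E_\alpha|\le|R_\alpha|<\kappa$, so $\mathcal{T}$ is a $\kappa$-tree in the sense of the paper. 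The tree property then furnishes a cofinal branch $(x_\alpha)_{\alpha<\kappa}$, and the homomorphisms $s_\alpha\colon\mathbb{Z}\to R_\alpha$, $n\mapsto n\,x_\alpha$, assemble into a section $\mathbf{s}$ of $\mathbf{e}$.

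For part (1) I argue the contrapositive. Given a $\kappa$-Aronszajn tree $T$ with levels $T_\alpha$ of cardinality below $\kappa$, I would put $R_\alpha:=\mathbb{Z}[T_\alpha]$, define $r_{\alpha\beta}\colon R_\beta\to R_\alpha$ as the $\mathbb{Z}$-linear extension of $t\mapsto t\!\restriction\!\alpha$, and let $e_\alpha\colon R_\alpha\to\mathbb{Z}$ be the augmentation $t\mapsto 1$; these assemble into an epimorphism in $(\kappa.g.\text{-}\mathsf{Ab})^{\kappa^{\mathrm{op}}}$, and the crux is to show it admits no section. Supposing $\mathbf{s}$ were one, I would write $s_\alpha(1)=\sum_t n_{\alpha,t}\,t$ and set $U_\alpha:=\{t:n_{\alpha,t}\ne 0\}$. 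The augmentation and coherence conditions then make each $U_\alpha$ finite and nonempty, and force $U_\alpha\subseteq\{u\!\restriction\!\alpha:u\in U_\beta\}$ for every $\alpha\le\beta$, so every $t\in U_\alpha$ has an extension in $U_\beta$ at each $\beta\ge\alpha$. From $(U_\alpha)_{\alpha<\kappa}$ I would then extract a cofinal branch of $T$ by transfinite recursion, carrying the invariant that the chain $(b_\beta)_{\beta\le\alpha}$ built at stage $\alpha$ is \emph{extendable}: for every $\gamma>\alpha$ some single element of $U_\gamma$ projects onto the entire chain. Successor stages are immediate, since $b_\alpha\in U_\alpha$ has extensions in $U_{\alpha+1}$. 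At a limit $\lambda$, extendability at each $\alpha<\lambda$ supplies some $u^\alpha\in U_\lambda$ projecting correctly at levels $\le\alpha$, and because $U_\lambda$ is finite and every finite partition of the limit ordinal $\lambda$ admits a cofinal block, some $b_\lambda\in U_\lambda$ coincides with $u^\alpha$ for cofinally many $\alpha<\lambda$, forcing $b_\lambda\!\restriction\!\beta=b_\beta$ for all $\beta<\lambda$; extendability at the new stage then transfers from $b_\lambda$'s own extension property. The resulting cofinal branch of $T$ contradicts its Aronszajn-ness.

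The main obstacle is the limit-stage pigeonhole within part (1). Its combinatorial content is short, but closing the argument depends on framing the recursion around the ``extendability'' invariant rather than around arbitrary coherent chains in $\bigcup_\alpha U_\alpha$; otherwise a tentative chain may fail to admit a limit inside $U_\lambda$ at a limit level, even when its ancestors admit limits in $T$ itself.
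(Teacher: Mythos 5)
Your proposal is correct and takes essentially the same route as the paper. Part (2) is identical: form a $\kappa$-tree from the fibers $e_\alpha^{-1}(1)$, apply the tree property, and read off a section from the cofinal branch. For part (1) you use the same auxiliary system $\mathbf{Q}$ (free abelian groups on the levels of $T$, with bonding maps induced by tree restriction and augmentations as the $e_\alpha$); the paper then disposes of the claim that no section exists in one sentence, asserting that $\{\mathrm{supp}(s_\xi(1))\mid\xi<\kappa\}$ ``defines a finitely branching subtree, and hence a cofinal branch.'' You correctly supply the recursion that this sentence compresses: the supports $U_\alpha$ are finite, nonempty, and satisfy $U_\alpha\subseteq\{u\restriction\alpha:u\in U_\gamma\}$ for $\alpha\le\gamma$, and a branch through them is extracted level by level with a pigeonhole argument on the finite set $U_\lambda$ at each limit $\lambda$.

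One clean-up worth making: the ``extendability'' invariant you carry --- that for every $\gamma>\alpha$ some $u\in U_\gamma$ restricts to $b_\alpha$ --- does not by itself yield the assertion ``$b_\alpha\in U_\alpha$'' that you invoke at successor stages. What saves you is that your construction in fact maintains the stronger (and simpler) invariant $b_\alpha\in U_\alpha$: the base case plants $b_0\in U_0$, the limit pigeonhole selects $b_\lambda\in U_\lambda$, and the successor step can be taken directly inside $U_{\alpha+1}$ because $U_\alpha\subseteq\{u\restriction\alpha:u\in U_{\alpha+1}\}$. Under the coherence of the supports, $b_\alpha\in U_\alpha$ already implies your extendability condition for every $\gamma>\alpha$, so the recursion is more economically framed around membership in $U_\alpha$ alone; the worry in your final paragraph about tentative chains failing to admit limits in $U_\lambda$ is then resolved by exactly the pigeonhole step you describe, which works for any chain satisfying $b_\alpha\in U_\alpha$ regardless of how it was built.
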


By Theorem \ref{treepropproj} and K\"{o}nig's Infinity Lemma, $\mathbf{\Delta}_{\omega}(\mathbb{Z})$ is projective in the category of \textit{height-$\omega$ inverse systems of finite abelian groups}. $\mathbf{\Delta}_{\omega}(\mathbb{Z})$ is \textit{not} projective, however, in the wider category of \textit{height-$\omega$ inverse systems of abelian groups}. We will argue this latter fact as the base case in the inductive proof of Proposition \ref{notproject}; it will follow as well from our results in Section \ref{deromega1}. A subtler point complicating the statement of Theorem \ref{treepropproj} is the fact that $\mathbf{\Delta}_{\omega}(\mathbb{Z})$ is not projective in the category of \textit{height-$\omega$ inverse systems of finitely \textbf{generated} abelian groups}. This subtlety is sufficiently removed from our main concerns, though, that we record just the contours of a counterexample in a footnote.\footnote{Perhaps the simplest counterexample is the inverse sequence of finitely generated abelian groups $\mathbf{Q}=(Q_i,q_{ij},\omega)$ in which each $Q_i=\mathbb{Z}\oplus\mathbb{Z}$ and each $q_{i,i+1}:Q_{i+1}\to Q_i$ is given by the maps $(1,0)\mapsto (2,0)$ and $(0,1)\mapsto (1,1)$. Define an epimorphism $\mathbf{e}:\mathbf{Q}\to\mathbf{\Delta}_\omega(\mathbb{Z})$ by letting each $e_i$ be the map determined by $(1,0)\mapsto 0$ and $(0,1)\mapsto 1$. If $\mathbf{s}$ were right-inverse to $\mathbf{e}$ then $s_0(1)$ would need to fall in $\bigcap\{(1+2^n,1)+2^{n+1}\mathbb{Z}\oplus\{0\}\mid n\in\omega\}$, as the reader may verify. But this intersection is empty.}

\begin{proof}[Proof of Theorem \ref{treepropproj}] First we show item (2). Suppose that $\kappa$ has the tree property, and consider an epimorphism $\mathbf{e}=\{e_\xi:Q_\xi\rightarrow\mathbb{Z}\,|\,\xi<\kappa\}$ from some $\mathbf{Q}=(Q_\xi,q_{\eta\xi},\kappa)$ in $(\kappa$-$\mathsf{Ab})^{\kappa^{\mathrm{op}}}$ to $\mathbf{\Delta}_\kappa(\mathbb{Z})$. We will show that $\mathbf{e}$ has a right-inverse $\mathbf{s}$. As $\mathbf{e}$ is arbitrary, this will show that $\mathbf{\Delta}_\kappa(\mathbb{Z})$ is projective in $(\kappa$-$\mathsf{Ab})^{\kappa^{\mathrm{op}}}$. Observe that $$\big(\bigcup_{\xi<\kappa} e_\xi^{-1}(1),\trianglelefteq\big)$$
defines a $\kappa$-tree $T$, where $x\trianglelefteq y$ iff $q_{\eta\xi}(y)=x$ for some $\eta\leq\xi<\kappa$. By the tree property, $T$ contains a cofinal branch $\{x_\xi\,|\,\xi<\kappa\}$. Setting $s_\xi(1)=x_\xi$ for $\xi<\kappa$ then defines an $\mathbf{s}:\mathbf{\Delta}_\kappa(\mathbb{Z})\rightarrow\mathbf{Q}$ right-inverse to $\mathbf{e}$.

To see item (1), fix a $\kappa$-Aronszajn tree $T$, let $$Q_\xi=\bigoplus_{\text{lev}_\xi(T)}\mathbb{Z}$$ be the free group generated by the $\xi^{\text{th}}$ level of $T$ and define $q_{\eta\xi}:Q_\xi\rightarrow Q_\eta$ by $q_{\eta\xi}(\langle x\rangle)=\,$\textit{the $Q_\eta$-generator corresponding to the $\eta^{\text{th}}$-level predecessor of $x$}. This determines an object $\mathbf{Q}$ of $(\kappa.g.$-$\mathsf{Ab})^{\kappa^{\mathrm{op}}}$. Mappings $\langle x\rangle\mapsto 1$ for $x\in\text{lev}_\xi(T)$ then induce $e_\xi:Q_\xi\rightarrow\mathbb{Z}$, and, hence, an $\mathbf{e}:\mathbf{Q}\rightarrow\mathbf{\Delta}_\kappa(\mathbb{Z})$ with no right-inverse $\mathbf{s}=\{s_\xi:\mathbb{Z}\rightarrow Q_\xi\,|\,\xi<\kappa\}$, since for any such inverse, $\{\text{supp}(s_\xi(1))\,|\,\xi<\kappa\}$ would define a finitely branching subtree, and hence a cofinal branch, in $T$.
\end{proof}

The above remarks and theorem were something of a digression, meant to help frame the recognition below that a number of projective inverse systems are free.\footnote{The systems we consider are indeed ``big,'' so we are recording a fact somewhat described by Hyman Bass's 1963 title \textit{Big projective modules are free} \cite{bass}. Note that by Theorem \ref{treepropproj}, though, that title is far from describing the situation for \emph{inverse systems} in any unqualified generality: assume the tree property of some infinite cardinal $\kappa$ (readers wary of large cardinals may take $\kappa$ to be $\omega$). Then $\mathbf{\Delta}_\kappa(\mathbb{Z})$ is a projective system in $(\kappa$-$\mathsf{Ab})^{\kappa^{\mathrm{op}}}$ which, by Claim \ref{clma3} below, is not projective in $\mathsf{Ab}^{\kappa^{\text{op}}}$, and therefore cannot be free. Mitchell cites the question of when projective objects in categories $\mathsf{Ab}^\mathcal{C}$ are free as motivating \cite{eilmoore} (see \cite[5]{rings}).

Note also that the proof of Theorem \ref{treepropproj} shows that $\lim\,\mathbf{Q}=0$ for the system $\mathbf{Q}$ associated therein to an Aronszajn tree $T$. The question of the values of higher limits of $\mathbf{Q}$-like constructions is a subtler matter and could conceivably shape a productive approach to the study of various set-theoretic trees.} To apply this recognition, we will want the following standard lemma:
\begin{lem}\label{summandlemma} An inverse system $\mathbf{X}$ is projective if and only if $\mathbf{X}$ is a direct summand of a free inverse system $\mathbf{Y}$.
\end{lem}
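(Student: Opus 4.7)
The plan is to follow the standard template for the analogous result in module categories, taking care that freeness here, as defined above, carries both an ordinary basis structure in each term $X_\alpha$ and a compatibility with the bonding maps $x_{\alpha\beta}$. The two directions decompose into three statements: (i) every free system is projective, (ii) direct summands of projective systems are projective, and (iii) every system admits an epimorphism from a free system.

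For (i), suppose $\mathbf{Y}=(Y_\alpha,y_{\alpha\beta},\varepsilon)$ is free with basis $\mathcal{B}$ and let $\mathbf{e}:\mathbf{R}\to\mathbf{Y}$ be an epimorphism. For each $b\in\mathcal{B}\cap Y_\beta$ choose an $r_b\in R_\beta$ with $e_\beta(r_b)=b$, and define $s_\beta(b)=r_b$. For $\alpha\le\beta$ this forces $s_\alpha(y_{\alpha\beta}(b))=r_{\alpha\beta}(r_b)$, and the uniqueness clause in the definition of freeness lets us extend $s_\alpha$ unambiguously and $\mathbb{Z}$-linearly to all of $Y_\alpha$. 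The morphism $\mathbf{s}=\{s_\alpha\}$ commutes with bonding maps by construction on generators and hence on all of $Y_\alpha$, and $\mathbf{e}\mathbf{s}=\mathbf{id}$ on each $b\in\mathcal{B}$ and hence everywhere. For (ii), given $\mathbf{Y}=\mathbf{X}\oplus\mathbf{X}'$ with $\mathbf{Y}$ projective and an epimorphism $\mathbf{e}:\mathbf{R}\to\mathbf{X}$, apply projectivity of $\mathbf{Y}$ to the epimorphism $\mathbf{e}\oplus\mathbf{id}:\mathbf{R}\oplus\mathbf{X}'\to\mathbf{Y}$ to split it, and compose with the summand-inclusion $\mathbf{X}\hookrightarrow\mathbf{Y}$ and the summand-projection $\mathbf{R}\oplus\mathbf{X}'\to\mathbf{R}$ to produce a section of $\mathbf{e}$.

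For (iii), which supplies the ``free cover'' needed for the forward direction, for each $\beta<\varepsilon$ choose a generating set $B_\beta\subseteq X_\beta$ (e.g.\ take $B_\beta=X_\beta$), set
\[
Y_\alpha \,=\, \bigoplus_{\beta\ge\alpha}\,\bigoplus_{B_\beta}\mathbb{Z},
\]
and let $y_{\alpha\alpha'}:Y_{\alpha'}\to Y_\alpha$ for $\alpha\le\alpha'$ be the natural inclusion induced by $\{\beta\ge\alpha'\}\subseteq\{\beta\ge\alpha\}$. Then $\mathbf{Y}$ is free on the disjoint union $\mathcal{B}=\bigsqcup_\beta B_\beta$, with each $b\in B_\beta$ regarded as living in $Y_\beta$. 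The map $\mathbf{e}:\mathbf{Y}\to\mathbf{X}$ defined on generators by $e_\alpha(b)=x_{\alpha\beta}(b)$ (for $b\in B_\beta$ and $\alpha\le\beta$) is a well-defined morphism of inverse systems, and it is termwise surjective because each $B_\beta$ generates $X_\beta$.

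Putting the pieces together: the reverse direction of the lemma is immediate from (i) and (ii). For the forward direction, given projective $\mathbf{X}$, use (iii) to obtain an epimorphism $\mathbf{e}:\mathbf{Y}\to\mathbf{X}$ with $\mathbf{Y}$ free, apply projectivity of $\mathbf{X}$ to split $\mathbf{e}$ via some $\mathbf{s}:\mathbf{X}\to\mathbf{Y}$, and observe that $\mathbf{Y}\cong\mathrm{im}(\mathbf{s})\oplus\ker(\mathbf{e})\cong\mathbf{X}\oplus\ker(\mathbf{e})$ in $\mathsf{Ab}^{\varepsilon^{\mathrm{op}}}$, using that the splitting and the resulting decomposition are computed pointwise in each abelian term. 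The only nonroutine point is verifying in (i) that a set-theoretically defined lift on basis elements genuinely commutes with the bonding maps; this is exactly what the unique-decomposition clause in the definition of freeness was set up to guarantee, so no real obstacle arises.
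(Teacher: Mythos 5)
Your proof is correct and follows essentially the same outline as the paper's: the forward direction splits a free cover via projectivity of $\mathbf{X}$, and the reverse direction extends an epimorphism over the complementary summand and restricts the resulting section. You make explicit two steps the paper silently assumes — that every system admits an epimorphism from a free system (your (iii)) and that free systems are projective (your (i), using the unique-decomposition clause to check the lift commutes with bonding maps) — but the underlying argument is the same.
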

\begin{proof} For the \textit{only if} direction, fix an epimorphism $\mathbf{e}$ from a free system $\mathbf{Y}$ to $\mathbf{X}$. As $\mathbf{X}$ is projective, $\mathbf{e}$ admits a right-inverse $\mathbf{s}$, so that $\mathbf{Y}\cong\mathbf{s}(\mathbf{X})\oplus\text{ker}(\mathbf{e})\cong\mathbf{X}\oplus\text{ker}(\mathbf{e})$. For the \textit{if} direction, observe that if $\mathbf{Y}=\mathbf{X}\oplus\mathbf{Z}$ then any epimorphism $\mathbf{e}:\mathbf{R}\rightarrow\mathbf{X}$ naturally extends to an epimorphism $\mathbf{e}':\mathbf{R}\oplus\mathbf{Z}\rightarrow\mathbf{X}\oplus\mathbf{Z}$. As $\mathbf{Y}$ is free, $\mathbf{e}'$ has a right-inverse $\mathbf{s}'$, which restricts to an $\mathbf{s}:\mathbf{X}\rightarrow\mathbf{R}$ right-inverse to $\mathbf{e}$.\end{proof}

As the lemma suggests, it is not in general true that subsystems of free inverse systems of abelian groups are free,
%(more precisely, this is only in general true when the indexing set is $1$)
 or even projective. A central concern below, in fact, is the question of whether the subsystem $\mathbf{d}_{n}\mathbf{P}_{n}(\varepsilon)$ of the free system $\mathbf{P}_{n-1}(\varepsilon)$ is projective. This question, as we will see, is fundamentally a question about the cofinality of $\varepsilon$. Observe in this connection that we may truncate the exact sequence \ref{P} at any $\mathbf{d}_{n}\mathbf{P}_{n}(\varepsilon)$ to form a shorter exact sequence as follows:
\begin{equation}\label{Q}\dots\rightarrow\mathbf{0}\longrightarrow\mathbf{d}_n\mathbf{P}_n(\varepsilon)\stackrel{\mathbf{i}}{\longrightarrow}\mathbf{P}_{n-1}(\varepsilon)\stackrel{\mathbf{d}_{n-1}}{\longrightarrow}\dots \stackrel{\mathbf{d}_1}{\longrightarrow} \mathbf{P}_{0}(\varepsilon)\stackrel{\mathbf{d}_0}{\longrightarrow}\mathbf{\Delta}_\varepsilon(\mathbb{Z})\longrightarrow\mathbf{0} \end{equation}
If $\mathbf{d}_{n}\mathbf{P}_{n}(\varepsilon)$ is projective then (\ref{Q}) shares with \ref{P} the property that all terms except possibly the ``target'' $\mathbf{\Delta}_\varepsilon(\mathbb{Z})$ are projective.
\begin{defin}\label{projdefinition} A \textit{projective resolution} of an inverse system $\mathbf{X}$ is an exact sequence ending in $\mathbf{X}$ as in \ref{P} or (\ref{Q}), above, in which all nonzero terms except possibly the rightmost are projective. Such resolutions are sometimes written $\mathsf{P}\rightarrow\mathbf{X}\rightarrow\mathbf{0}$. The \textit{length} of $\mathsf{P}$ is the supremum of the indices of its nonzero terms --- where $\mathsf{P}$'s terms are indexed, as above, from right to left, beginning with zero. Possibly all of $\mathsf{P}$'s terms are nonzero; its length in this case is $\infty$. The \textit{projective dimension} of $\mathbf{X}$, written $\text{pd}(\mathbf{X})$, is the minimal length of a projective resolution of $\mathbf{X}$. An equivalent definition is the following: given any projective resolution $\mathsf{P}=\langle\mathbf{P}_n,\mathbf{d}_n\mid n\in\omega\rangle$ of $\mathbf{X}$, the projective dimension of $\mathbf{X}$ is the least $n$ such that $\mathbf{d}_n \mathbf{P}_n$ is projective.
\end{defin}

\begin{exa} $\mathbf{X}$ is projective if and only if $\dots\rightarrow\mathbf{0}\rightarrow\mathbf{X}\stackrel{\mathbf{id}}{\longrightarrow}\mathbf{X}\rightarrow\mathbf{0}$ is a projective resolution, if and only if pd$(\mathbf{X})=0$. More generally, pd($\mathbf{X}$) may be read as quantifying how ``far'' a system $\mathbf{X}$ is from being projective.
\end{exa}

We conclude this section with several summary remarks and with a statement of Mitchell's theorem. Our interest is in projective resolutions of $\mathbf{\Delta}_\varepsilon(\mathbb{Z})$, for two related reasons:
\begin{enumerate}
\item[(i)] They translate order-theoretic information into algebraic information.
\item[(ii)] They are of computational value.
\end{enumerate}
In item (ii), we have in mind the following: the diagonal functor $\mathbf{\Delta}_\varepsilon(\,\cdot\,):\mathsf{Ab}\rightarrow\mathsf{Ab}^{\varepsilon^{\mathrm{op}}}$ is left-adjoint to the inverse limit functor $\lim\,(\,\cdot\,):\mathsf{Ab}^{\varepsilon^{\mathrm{op}}}\rightarrow\mathsf{Ab}$. This has as consequence the formula
\begin{align}\label{adjointb}\text{H}^n(\text{Hom}(\mathsf{P},\mathbf{X}))\cong\text{Ext}^n(\mathbf{\Delta}_\varepsilon(\mathbb{Z}),\mathbf{X})\cong\text{lim}^n\,\mathbf{X}\, .\end{align}
(See \cite[Section 12.2]{SSH}). Here $\mathbf{X}=(X_\alpha,x_{\alpha\beta},\varepsilon)$ is any system in $\mathsf{Ab}^{\varepsilon^{\mathrm{op}}}$ and $\mathsf{P}$ is any projective resolution of $\mathbf{\Delta}_\varepsilon(\mathbb{Z})$, such as \ref{P} or (\ref{Q}) above. $\text{Ext}^n$ and $\text{lim}^n$ are the \emph{higher derived functors} of the functors Hom and lim, respectively (see just below or \cite{weibel} or \cite{SSH} or \cite{jensen} for further discussion). Via equation \ref{adjointb}, the standard projective resolution $\mathsf{P}(\varepsilon)$ of $\mathbf{\Delta}_\varepsilon(\mathbb{Z})$ uniformizes the computation of higher derived limits, providing, in particular, explicit formulae for $\lim^n(\,\cdot\,)$, as we will now describe.

Above, $\text{Hom}(\mathsf{P},\mathbf{X})$ denotes the cochain complex with $n^{\mathrm{th}}$ term $\text{Hom}(P_n,\mathbf{X})$, where $P_n$ is the $n^{\mathrm{th}}$ term of $\mathsf{P}$; the associated coboundary maps are those induced by the boundary maps of $\mathsf{P}$. If $\mathsf{P}=\mathsf{P}(\varepsilon)$ then since $\mathbf{P}_n(\varepsilon)$ is free, elements of $\text{Hom}(\mathbf{P}_n(\varepsilon),\mathbf{X})$ amount simply to a choice of map for each basis-element $\langle\vec{\alpha}\rangle$ of $\mathbf{P}_n(\varepsilon)$. It follows that $\text{lim}^n\mathbf{X}$ may be computed as the $n^{\mathrm{th}}$ cohomology group of the cochain complex we denote $\mathcal{K}(\mathbf{X})$, with cochain groups

\begin{align}\label{readerverify}K^j(\mathbf{X})=\prod_{\vec{\alpha}\in [\varepsilon]^{j+1}}X_{\alpha_0}\end{align}
and coboundary maps $d^j:K^j(\mathbf{X})\rightarrow K^{j+1}(\mathbf{X})$ defined by $$d^j(c)(\vec{\alpha})=x_{\alpha_0\alpha_1}(c(\vec{\alpha}^0))+\sum_{i=1}^j (-1)^i c(\vec{\alpha}^i)\, .$$
We will apply this description in Section \ref{afterbases} below.

Again note, on the other hand, that other resolutions $\mathsf{P}$ of $\mathbf{\Delta}_\varepsilon(\mathbb{Z})$ might be taken in place of $\mathsf{P}(\varepsilon)$ in equation \ref{adjointb}. In particular, the eventual zeros of a resolution like (\ref{Q}) will translate in equation \ref{adjointb} to vanishing cohomology groups, and hence to vanishing higher derived limits for \emph{any} inverse system indexed by $\varepsilon$, for all $n$ above some finite $m$. More precisely:

\begin{lem}\label{projlemma}
The projective dimension of $\mathbf{\Delta}_\varepsilon(\mathbb{Z})$ is $n\in\mathbb{N}$ if and only if $n$ is the largest integer for which $\text{lim}^n(\,\cdot\,)$ is nonvanishing, i.e., for which there exists some $\mathbf{X}$ in $\mathsf{Ab}^{\varepsilon^{\mathrm{op}}}\!$ with $\text{lim}^n(\mathbf{X})\neq 0$.\footnote{A version of this lemma appears as Theorem 13.3 in \cite{SSH}, wherein it is traced to \cite[Theorem 7.20]{bucur}.}
\end{lem}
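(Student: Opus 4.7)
The plan is to invoke the isomorphism $\mathrm{lim}^n(\mathbf{X}) \cong \mathrm{Ext}^n(\mathbf{\Delta}_\varepsilon(\mathbb{Z}),\mathbf{X})$ recorded in equation \ref{adjointb}, which reduces the lemma to the standard homological fact that, in an abelian category with enough projectives, the projective dimension of an object $A$ equals the largest $n$ for which the functor $\mathrm{Ext}^n(A,-)$ is nonvanishing. Since $\mathsf{Ab}^{\varepsilon^{\mathrm{op}}}$ evidently has enough projectives (the free systems $\mathbf{P}_n(\varepsilon)$ supply them, as the standard resolution $\mathsf{P}(\varepsilon)$ already shows), it suffices to verify this generic Ext-characterization of projective dimension for $\mathbf{\Delta}_\varepsilon(\mathbb{Z})$ in our setting.

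The engine of the argument is \emph{dimension shifting}. Fix any projective resolution $\mathsf{P}=\langle \mathbf{P}_i,\mathbf{d}_i\mid i\in\omega\rangle$ of $\mathbf{\Delta}_\varepsilon(\mathbb{Z})$ and let $\mathbf{K}_i=\mathrm{ker}(\mathbf{d}_{i-1})$ denote its $i^{\mathrm{th}}$ syzygy (with $\mathbf{K}_0=\mathbf{\Delta}_\varepsilon(\mathbb{Z})$). The short exact sequences
\[ \mathbf{0}\longrightarrow\mathbf{K}_{i+1}\longrightarrow\mathbf{P}_i\longrightarrow\mathbf{K}_i\longrightarrow\mathbf{0}\]
induce long exact sequences in $\mathrm{Ext}^*(-,\mathbf{X})$; projectivity of $\mathbf{P}_i$ kills its middle Ext-terms, collapsing these long exact sequences to isomorphisms $\mathrm{Ext}^{j+1}(\mathbf{K}_i,\mathbf{X})\cong\mathrm{Ext}^j(\mathbf{K}_{i+1},\mathbf{X})$ for all $j\geq 1$. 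Iterating $n$ times produces the dimension-shifting isomorphism
\[ \mathrm{Ext}^{n+1}(\mathbf{\Delta}_\varepsilon(\mathbb{Z}),\mathbf{X})\;\cong\;\mathrm{Ext}^1(\mathbf{K}_n,\mathbf{X}),\]
natural in $\mathbf{X}$.

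With this in hand, the two directions of the lemma are straightforward. For the forward direction, suppose $\mathrm{pd}(\mathbf{\Delta}_\varepsilon(\mathbb{Z}))=n$. Then some projective resolution $\mathsf{P}$ has length exactly $n$, i.e.\ $\mathbf{K}_n$ is projective and $\mathbf{K}_{n-1}$ is not. Projectivity of $\mathbf{K}_n$ gives $\mathrm{Ext}^1(\mathbf{K}_n,-)=0$, so by dimension shifting $\mathrm{Ext}^{m}(\mathbf{\Delta}_\varepsilon(\mathbb{Z}),-)=0$ for all $m>n$, and hence $\mathrm{lim}^m(-)=0$ for all such $m$. Conversely, $\mathrm{lim}^n(-)$ cannot vanish identically, for then $\mathrm{Ext}^1(\mathbf{K}_{n-1},-)\cong\mathrm{Ext}^n(\mathbf{\Delta}_\varepsilon(\mathbb{Z}),-)$ would vanish identically, forcing $\mathbf{K}_{n-1}$ to be projective (lift $\mathrm{id}_{\mathbf{K}_{n-1}}$ through any epimorphism onto it, using the long exact sequence associated to its kernel) and thereby contradicting the minimality of $n$. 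For the reverse direction, if $n$ is the largest index with $\mathrm{lim}^n(-)$ nonvanishing, then $\mathrm{Ext}^{n+1}(\mathbf{\Delta}_\varepsilon(\mathbb{Z}),-)=0$, so $\mathrm{Ext}^1(\mathbf{K}_n,-)=0$, so $\mathbf{K}_n$ is projective; truncating $\mathsf{P}$ at position $n$ yields a projective resolution of length $\leq n$, while nonvanishing of $\mathrm{lim}^n$ rules out any shorter resolution.

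I expect no significant obstacle: the proof is essentially a template application of dimension shifting. The only point that warrants explicit mention is the implication ``$\mathrm{Ext}^1(M,-)\equiv 0\Rightarrow M$ projective,'' which in $\mathsf{Ab}^{\varepsilon^{\mathrm{op}}}$ follows by the standard lift-the-identity argument applied to any epimorphism from a free system onto $M$.
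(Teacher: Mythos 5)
Your proof is correct, and it is the standard dimension-shifting argument one would find in the cited references; the paper itself supplies no proof, only the footnote to Mardešić--Segal (Theorem 13.3) and Bucur--Deleanu (Theorem 7.20), both of which establish the result along exactly the lines you describe (reduce via the adjunction isomorphism to an $\mathrm{Ext}$-characterization of projective dimension, then shift dimension along the syzygies of a fixed projective resolution). The one step worth having made explicit, as you did, is that $\mathrm{Ext}^1(\mathbf{K}_n,-)\equiv 0$ forces $\mathbf{K}_n$ projective, which closes the loop with the paper's equivalent formulation of projective dimension as the least $n$ with $\mathbf{d}_n\mathbf{P}_n$ projective.
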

Sensitivities of $\textnormal{pd}(\mathbf{\Delta}_\varepsilon(\mathbb{Z}))$ to the cofinality of $\varepsilon$ transmit in this manner to functors of broad application and importance, namely, the higher derived limits \textnormal{lim}$^n$. These are of sufficient significance, for example, to warrant the following definition:
\begin{defin}[\cite{latch}] \label{cohomologicaldimension}
The \emph{cohomological dimension} $\text{cd}(\varepsilon)$ of a partial order $\varepsilon$ (or more generally of any small category $\varepsilon$) is the supremum of $\{n\mid \lim^n:\mathsf{Ab}^{\varepsilon^{\text{op}}}\to\mathsf{Ab}\textnormal{ does not equal }0\}$. The supremum of $\mathbb{N}$ is denoted by $\infty$.\footnote{A certain instability of terminology for this invariant of $\varepsilon$ afflicts the literature. As indicated, we follow \cite{latch}, which seems to us the most logical; Definition \ref{cohomologicaldimension} is also equivalent to that appearing in \cite{mitchcohdim}. Note, though, that this same invariant is termed the \emph{homological dimension of $\varepsilon$} in \cite{SSH}, and the \emph{cohomological dimension of $\varepsilon^{\text{op}}$} in \cite{husainov}; as is perhaps apparent, these variations reflect only superficially different approaches to the constitutive contravariance of $\mathsf{Ab}^{\varepsilon^{\text{op}}}$.}
\end{defin}
Before continuing, we pause to recall the most essential feature of higher derived limits: their interrelations in long exact sequences deriving from short exact sequences in $\mathsf{Ab}^{\varepsilon^\text{op}}$. Higher limits are an artifact of the lim functor's ``failure to be exact''; for example, the lim-image of the short exact sequence
\begin{equation}\label{A}\mathbf{0}\longrightarrow\mathbf{P}_k(\varepsilon)\stackrel{\mathbf{i}}{\longrightarrow}\mathbf{R}_{k}(\varepsilon)\stackrel{\mathbf{q}}{\longrightarrow}\mathbf{R}_{k}(\varepsilon)/\mathbf{P}_k(\varepsilon)\longrightarrow\mathbf{0} \end{equation}
may itself be only \emph{half} or \emph{left exact}, meaning that while $\lim\mathbf{i}$ will inherit the injectivity of $\mathbf{i}$, $\lim\mathbf{q}$ may fail to be surjective. On the other hand, a \emph{long} exact sequence extending the lim-image of (\ref{A}) and comprised of higher limits will conserve the exactness of (\ref{A}); its form is the following:
\begin{align}\label{les} &0\to \text{lim}\,\mathbf{P}_{k}(\varepsilon)\to\text{lim}\,\mathbf{R}_{k}(\varepsilon)\to\text{lim}\,\mathbf{R}_{k}(\varepsilon)/\mathbf{P}_{k}(\varepsilon)\to\text{lim}^1\,\mathbf{P}_{k}(\varepsilon)\to\text{lim}^1\,\mathbf{R}_{k}(\varepsilon)\to\dots \\ &\dots\to \text{lim}^n\,\mathbf{R}_{k}(\varepsilon)\to\text{lim}^n\,\mathbf{R}_{k}(\varepsilon)/\mathbf{P}_{k}(\varepsilon)\to\text{lim}^{n+1}\,\mathbf{P}_{k}(\varepsilon)\to \text{lim}^{n+1}\,\mathbf{R}_{k}(\varepsilon)\to\dots\nonumber\end{align}
(We forego discussion of the connecting morphisms, but these also, like each $\lim^n$, are in the proper sense functorial.) The basic heuristic for this phenomenon is that higher limits array, in group form, the information of inverse systems, in a graded and coordinated fashion (with the caveat that some such information may, in the process, be lost).

We turn now, at long last, to Mitchell's theorem.
\begin{thm}[\cite{rings}] \label{mitchellstheorem} Let $\varepsilon$ be a linear order of cofinality $\aleph_\xi$. If $\xi$ is finite then the cohomological dimension of $\varepsilon$ is $\xi+1$. If $\xi$ is infinite then the cohomological dimension of $\varepsilon$ is $\infty$.
\end{thm}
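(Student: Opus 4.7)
The plan is to reduce the statement to a projective-dimension computation via Lemma \ref{projlemma}, so that $\mathrm{cd}(\varepsilon)=\mathrm{pd}(\mathbf{\Delta}_\varepsilon(\mathbb{Z}))$, and then establish matching upper and lower bounds on this projective dimension by induction on finite $\xi$. The infinite-$\xi$ case I would handle at the end as a corollary: since $\aleph_\xi$ is then a limit cardinal, $\varepsilon$ possesses cofinal segments or quotients of cofinality $\aleph_\eta$ for arbitrarily large $\eta<\xi$, and suitably extending witnesses to $\mathrm{cd}\ge\eta+1$ by zero to $\varepsilon$ should yield $\mathrm{cd}(\varepsilon)=\infty$.

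For the upper bound $\mathrm{pd}(\mathbf{\Delta}_\varepsilon(\mathbb{Z}))\le\xi+1$, I would truncate the standard resolution $\mathsf{P}(\varepsilon)$ as in \eqref{Q} and show that $\mathbf{d}_{\xi+1}\mathbf{P}_{\xi+1}(\varepsilon)$ is projective; in fact, following the strengthened form the introduction advertises, I would show that this subsystem is \emph{free}. The base case $\xi=0$ is handled directly from the successor-like structure of a cofinal $\omega$-sequence. The inductive step should rest on the abstract's ``finitely iterable technique of compounding $C$-sequences'' (Section \ref{bases}): fix a $C$-sequence on $\varepsilon$, refine it by nested $C$-sequences on each $C_\beta$ of higher cofinality, and read off a canonical basis whose elements are $n$-simplices of ordinals traced out by these compounded $C$-sequences. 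The appearance of free bases here is the algebraic shadow of Theorem~1's tail-acyclic simplicial complex characterization of $\omega_n$.

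For the lower bound $\mathrm{pd}(\mathbf{\Delta}_\varepsilon(\mathbb{Z}))\ge\xi+1$, again by Lemma \ref{projlemma} it suffices to exhibit a single system $\mathbf{X}$ in $\mathsf{Ab}^{\varepsilon^{\mathrm{op}}}$ with $\lim^{\xi+1}\mathbf{X}\ne 0$. Using the concrete cochain formula \eqref{readerverify}, I would realize $\lim^{\xi+1}$ as the $(\xi+1)$-cohomology of a complex whose cochains are families indexed by $[\varepsilon]^{\xi+2}$, and produce my witness as the $(\xi+1)$-cocycle $\mathtt{f}_{\xi+1}:[\omega_{\xi+1}]^{\xi+3}\to\mathbb{Z}$ extracted from the free basis of the previous paragraph (Section \ref{afterbases}). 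That $\mathtt{f}_{\xi+1}$ is a cocycle (``$(\xi+1)$-coherent'') is a formal consequence of $\mathbf{d}_{\xi+1}\circ\mathbf{d}_{\xi+2}=0$; the substantive claim is its \emph{nontriviality}, i.e.\ that no $(\xi+1)$-cochain trivializes it.

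The main obstacle, by a significant margin, is this nontriviality. For $\xi=0$ (walks on $\omega_1$) it reduces to the classical Todorcevic-style pressing-down argument on a stationary subset of $\omega_1$ extracting unbounded oscillation of any candidate trivialization. For general finite $\xi$, I would iterate this strategy through the compounded $C$-sequence structure, running a Fodor-type argument at each cofinality level $S^{\xi+1}_k$ in turn; each iteration reduces a hypothetical trivialization of $\mathtt{f}_{\xi+1}$ on a stationary set to a trivialization of $\mathtt{f}_\xi$ on a stationary subset of $\omega_\xi$, contradicting the inductive hypothesis. This is the combinatorial heart of the theorem --- the point where ``higher walks'' must do the work that ordinary walks do at $\omega_1$ --- and I would expect its execution (corresponding to Section \ref{deromega1}) to occupy the bulk of the proof.
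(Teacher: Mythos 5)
Your plan for the finite-$\xi$ case matches the paper's strategy in its essentials --- free bases from compounded $C$-sequences for the upper bound, nontrivial $n$-coherence of the functions $\mathtt{f}_n$ with a pressing-down argument for the lower bound --- though the indexing has slipped by one. The paper's witness for $\varepsilon$ of cofinality $\aleph_\xi$ is $\mathtt{f}_\xi$, a $\xi$-cocycle in $K^\xi(\mathbf{R}_{\xi+1}(\omega_\xi))$, which witnesses $\mathrm{lim}^\xi(\mathbf{R}_{\xi+1}/\mathbf{P}_{\xi+1})\neq 0$; the conclusion $\mathrm{lim}^{\xi+1}\mathbf{P}_{\xi+1}(\omega_\xi)\neq 0$ then comes from the long exact sequence, the actual $(\xi+1)$-cocycle being $\mathbf{s}\,\mathbf{d}_{\xi+1}$, not $\mathtt{f}_{\xi+1}$ (which would live on $\omega_{\xi+1}$, the wrong ordinal). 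One must also bridge from $\omega_\xi$ to a general linear order $\varepsilon$ by relativizing to a \emph{cofinal} ordertype-$\omega_\xi$ subset and then extending; the cofinality hypothesis is essential.

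The infinite-$\xi$ case as you sketch it contains a genuine gap. A linear order of cofinality $\aleph_\xi$ has no \emph{cofinal} segment of smaller cofinality --- every cofinal segment again has cofinality $\aleph_\xi$ --- so the subsets $X$ of cofinality $\aleph_\eta<\aleph_\xi$ you need are necessarily non-cofinal, and extending a witness on such an $X$ to $\varepsilon$ ``by zero'' destroys it: the extended system has zero terms for all sufficiently high indices, and a short computation with the cochain complex $\mathcal{K}$ shows that any $n$-cocycle of such a system is eventually constant on late arguments and hence a coboundary, so all higher derived limits vanish. The paper instead argues by contradiction in the opposite direction: if $\mathrm{cd}(\varepsilon)=n<\infty$ then $\mathbf{d}_n\mathbf{P}_n(\varepsilon)$ is projective with some section $\mathbf{s}$; choosing $X\subseteq\varepsilon$ of cofinality $\aleph_n$ closed under $\mathbf{s}\,\mathbf{d}_n$, the map $\mathbf{d}_n\,\mathbf{p}\,\mathbf{s}$ retracts the inclusion $\mathbf{d}_n\mathbf{P}_n(X)\hookrightarrow\mathbf{d}_n\mathbf{P}_n(\varepsilon)$, so by Lemma \ref{summandlemma} $\mathbf{d}_n\mathbf{P}_n(X)$ is a projective direct summand, i.e.\ $\mathrm{cd}(X)\leq n$, contradicting the finite-cofinality case. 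You should replace your ``push witnesses up from $X$'' with this ``pull projectivity down to $X$'' argument.
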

The theorem holds even with the convention that the cofinality of a partial order which has a maximum element is $\aleph_{-1}$. Mitchell extended his theorem in the following year to the generality of \emph{directed partial orders} $\varepsilon$ \cite{mitchcohdim}; not unrelatedly, the functors $\lim^n$ extend to the category pro-$\mathsf{Ab}$, as described in \cite[Section 15]{SSH}. This family of results organizes, in other words, into a core --- the combinatorics of the cardinals $\aleph_\xi$, particularly when $\xi$ is finite --- and various techniques of extension. Our interest is in that core; as will grow clearer, we regard it as at heart expressing the cofinality interrelations among the ordinals, interrelations which $C$-sequences instantiate. For this reason we will focus on the case of Theorem \ref{mitchellstheorem} when $\varepsilon$ is an ordinal; its extension to linear orders will require little more than a comment in Section \ref{afterbases}.

To sum up: the main content of Mitchell's theorem is that objects like $\mathbf{\Delta}_\varepsilon(\mathbb{Z})$ and $\mathsf{P}(\varepsilon)$ exhibit significant sensitivities to order-theoretic considerations. Our aim is to better understand in what these sensitivities consist. Our guiding interest in all that follows, in other words, is point (i) above, in question form: \textit{What is it in the ordinals --- the ordinals $\omega_n$, in particular --- that these algebraic structures are capturing?}

\section{Tail-acyclic simplicial complexes}\label{goodsection}

It is natural to consider, for a given $\gamma\in\omega_1$, the family of all walks $\text{Tr}(\alpha,\gamma)$ from $\gamma$ down to some $\alpha<\gamma$. Such a family is most concisely conceived, perhaps, as the graph \begin{align}\label{tr2}
\displaystyle\bigcup_{\alpha<\gamma}\text{Tr}^2(\alpha,\gamma)\end{align}
on $\gamma+1$, where $\text{Tr}^2(\alpha,\gamma)$ records the steps of $\text{Tr}(\alpha,\gamma)$ as edges $\{\{\gamma_{i+1},\gamma_i\}\,|\,\allowbreak i<\rho_2(\alpha,\gamma)-1\}$. It is an effect of the fact that $\mathrm{Tr}(\beta,\gamma)$ is an initial segment of any $\mathrm{Tr}(\alpha,\gamma)$ passing through $\beta$, together with the fact that walks' steps are always ``from above'', that any such graph is well-behaved or \emph{good} in the following sense:

\begin{defin}\label{goodgraph}
A graph $G$ on an ordinal $\gamma$ is \textit{good} if 
\begin{enumerate}
\item $G$ is cycle-free, and
\item $G\big|_{[\alpha,\gamma)}$ is connected, for all $\alpha<\gamma$.
\end{enumerate}
\end{defin}
%Here if $G=(V,E)$ denotes a graph with vertex-set $V$ and edge-set $E$ then $G\big|_X=(X,E\cap [X]^2)$ for any $X\subseteq V$.
\begin{exa} Consider the following graphs on the ordinals $4$ and $3$, respectively:\\

\begin{tikzpicture}[auto, node distance=1.24 cm, every loop/.style={},
                    thick,main node/.style={}]
  \node[main node] (0) {};
  \node[main node] (1) [right of=0] {0};
  \node[main node] (2) [right of=1] {1};
  \node[main node] (3) [right of=2] {2};
  \node[main node] (4) [right of=3] {3};
  \node[main node] (5) [right of=4] {};
\node[main node] (6) [below right of=2] {$G_0$};
  \node[main node] (7) [right of=5] {};
  \node[main node] (8) [right of=7] {0};
  \node[main node] (9) [right of=8] {1};
  \node[main node] (10) [right of=9] {2};
\node[main node] (11) [below right of=8] {$G_1$};
   
  \path[every node/.style={font=\sffamily\small}]
    (1) edge [bend left] node[left] {} (4)
    (2)  edge [bend left] node[left] {} (4)
    (3) edge [bend left] node {} (4)
    (8) edge [bend left] node {} (9)
          edge [bend left] node {} (10);
\end{tikzpicture}

\noindent $G_0$ is a good graph. On the other hand, $G_1\big|_{[1,3)}$ is disconnected, so $G_1$ is not good.
\end{exa}

In fact, $G_1$ is the \textit{forbidden configuration}: \textit{a connected graph is good if and only if it contains no copy of $G_1$ $($i.e., contains no $\{\{\alpha,\beta\},\{\alpha,\gamma\}\}$, for $\alpha<\beta<\gamma)$}. A consequence is the following theorem, one measure of the difficulty of extending the technique of minimal walks beyond the countable ordinals.

\begin{thm} \label{omega1} $\omega_1$ is the least ordinal admitting no good graph. \end{thm}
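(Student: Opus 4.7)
The plan is to establish the two halves of the theorem separately: every countable ordinal admits a good graph, and $\omega_1$ does not. Both halves exploit the forbidden-configuration characterization just recorded, by which a connected graph $G$ on an ordinal is good precisely when no vertex has two neighbors strictly above it; equivalently, $G$ determines a partial function $\mathrm{succ}$ sending each vertex to its unique upward neighbor, whenever one exists.

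To construct a good graph on $\gamma < \omega_1$, I would split into cases. If $\gamma = \delta + 1$ is a successor, the star $G = \{\{\beta, \delta\} : \beta < \delta\}$ works, since each restriction $G|_{[\alpha,\gamma)}$ is itself a star centered at $\delta$ and no vertex has two upward neighbors. If $\gamma$ is a countable limit, fix a strictly increasing $\omega$-sequence $(\sigma_n)_{n<\omega}$ cofinal in $\gamma$ with $\sigma_0 = 0$ and let $G$ comprise the edges $\{\beta, \sigma_{n+1}\}$ for all $n < \omega$ and all $\sigma_n \leq \beta < \sigma_{n+1}$. Every vertex then has exactly one upward neighbor, so $G$ contains no copy of $G_1$; moreover every orbit $(\mathrm{succ}^k(\beta))_{k<\omega}$ threads through $(\sigma_n)$ from some point on, so any two such orbits ultimately coincide. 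Together with the tree argument of the next paragraph, this yields connectedness of $G$ and of each tail $G|_{[\alpha,\gamma)}$.

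For the nonexistence claim on $\omega_1$, assume toward contradiction that $G$ is good there. First, $\mathrm{succ}$ must be defined everywhere: if $\mathrm{succ}(\alpha)$ were undefined then every neighbor of $\alpha$ would lie below $\alpha$, and $\alpha$ would be isolated in the tail $G|_{[\alpha,\omega_1)}$, contradicting tail-connectedness. Hence for each $\alpha$ the orbit $(\mathrm{succ}^k(\alpha))_{k<\omega}$ is a strictly increasing $\omega$-sequence in $\omega_1$, necessarily bounded by the regularity of $\omega_1$; write $s(\alpha) := \sup_k \mathrm{succ}^k(\alpha) < \omega_1$. The main step, and what I expect to be the subtlest point, is the observation that connectedness of $G$ forces the $\mathrm{succ}$-orbits of any two vertices to share an element. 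Indeed, $G$ is acyclic (the minimum vertex of any cycle would have two upward neighbors along the cycle, violating no-$G_1$) and hence, being connected, a tree; and in a tree where each vertex has at most one upward neighbor the unique simple path from $u$ to $v$ can have no ``valley'' vertex, since such a vertex would again have two upward neighbors. The path must therefore ascend along $u$'s $\mathrm{succ}$-orbit to a ``peak'' that also lies on $v$'s orbit, then descend. Applied to $0$ and $s(0)$ this fails: the orbit from $0$ lies in $[0,s(0))$ while that from $s(0)$ lies in $[s(0),\omega_1)$, so the two orbits are disjoint. This contradiction completes the proof.
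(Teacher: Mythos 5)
Your proof is correct, but the negative half takes a genuinely different route from the paper's. The paper applies the Pressing Down Lemma: for each limit $\gamma$, it selects the least $\gamma_1\geq\gamma$ with a neighbor $\gamma_0<\gamma$, observes that $\gamma\mapsto\gamma_0$ is regressive, stabilizes it at some $\alpha$ on a stationary set, and then extracts a copy of $G_1$ at $\alpha$ directly. You instead develop structural consequences of goodness: the upward-neighbor function $\mathrm{succ}$ is total (else a tail is disconnected), $G$ is a tree whose $u$-to-$v$ paths are unimodal, so any two $\mathrm{succ}$-orbits meet; regularity of $\omega_1$ bounds each orbit, and comparing the orbits of $0$ and of $s(0)=\sup_k\mathrm{succ}^k(0)$ gives disjoint orbits, contradicting merging. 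What this buys: your argument needs only the regularity of $\omega_1$ rather than the full strength of PDL, and it isolates a clean structural picture (every good graph on an ordinal is a forest of ``ascending chains'' that must merge) that anticipates the walks perspective the paper develops afterward. The tradeoff is length: the paper's PDL argument is essentially three lines, while yours requires the acyclicity/unimodality lemma. The positive half (star on successors, cofinal $\omega$-ladder on countable limits) coincides in substance with the paper's $\{\alpha,\min(C_\gamma\backslash(\alpha+1))\}$ construction.
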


\begin{proof} Suppose for contradiction that $\omega_1$ admitted a good graph $G$. As $G$ is connected, there exists for each $\gamma\in\text{Lim}\cap\,\omega_1$ some least $\gamma_1\geq\gamma$ such that $\{\xi,\gamma_1\}\in G$ for some $\xi<\gamma$. Let $\gamma_0$ denote the least such $\xi$. The function $\gamma\mapsto\gamma_0$ is then a regressive function and hence, by the Pressing Down Lemma, constantly $\alpha$ on some stationary $S\subseteq\text{Lim}\cap\,\omega_1$. For any $\beta$ and $\gamma$ in $S$ above $\alpha$ with $\beta_1\neq\gamma_1$, then, $\{\{\alpha,\beta_1\},\{\alpha,\gamma_1\}\}$ is a copy of $G_1$ in $G$ --- a contradiction.

On the other hand, (\ref{tr2}) defines a good graph on any countable successor ordinal, and a natural variant of its definition handles the countable limit case as well. In fact, the more elementary $\{\{\alpha,\min(C_\gamma\backslash (\alpha+1))\}\,|\,\alpha<\gamma\}$ defines a good graph on any $\gamma$ of countable cofinality (with $G_0$, above, as a simple instance).
\end{proof}

These phenomena generalize.

\begin{defin}\label{goodndsimpcomp}
An $n$-dimensional simplicial complex $G$ on an ordinal $\gamma$ is \textit{tail-acyclic} if $G^{n-1}=[\gamma]^n$ and for all $\alpha<\gamma$ and $k\geq 0$,
\[
  \tilde{\mathrm{H}}_k^\Delta\big(G\big|_{[\alpha,\gamma)}\big) = 0.
\]
\end{defin}
Tail-acyclic $n$-dimensional $G$ on $\gamma$, in other words, have a complete $(n-1)$-skeleton and are connected and acyclic on any tail of $\gamma$; good graphs are simply the $n=1$ case of this definition.\footnote{The requirement of a complete $(n-1)$-skeleton in Definition \ref{goodgraph} streamlines the argument of Theorem \ref{free}; if every $n$-dimensional $G$ satisfying $\tilde{\mathrm{H}}_k^\Delta\big(G\big|_{[\alpha,\gamma)}\big) = 0$ for all $\alpha<\gamma$ and $k\geq 0$ \emph{extends to} a tail-acyclic $G'\supseteq G$ (i.e., extends to one with a complete $(n-1)$-skeleton) then this requirement is unnecessary. This may in fact be the case, as it clearly is when $n=2$, but for more general $n$ it is at the very least quite tedious to rigorously argue. Hence for now, for simplicity's sake, we adopt this requirement and record the extension problem as one of our concluding questions.\label{footnote}}

\begin{thm} \label{omegan}
Let $n$ be a positive integer. Then $\omega_n$ is the least ordinal supporting no $n$-dimensional tail-acyclic simplicial complex.
\end{thm}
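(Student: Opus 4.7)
The plan is to prove both directions: every $\gamma<\omega_n$ admits an $n$-dimensional tail-acyclic simplicial complex, and $\omega_n$ itself does not. Both hinge on translating tail-acyclic complexes into free resolutions of $\mathbf{\Delta}_\varepsilon(\mathbb{Z})$ inside $\mathsf{Ab}^{\varepsilon^{\mathrm{op}}}$.

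Given an $n$-dimensional tail-acyclic complex $B$ on $\varepsilon$, I would form the subsystem $\hat{\mathbf{B}}_n\subseteq\mathbf{P}_n(\varepsilon)$ with terms $\hat{B}_n([\alpha,\varepsilon))=\bigoplus_{B^n\cap[\alpha,\varepsilon)^{n+1}}\mathbb{Z}$, generated by $\{\langle\vec{\alpha}\rangle:\vec{\alpha}\in B^n\}$. The complete $(n-1)$-skeleton condition identifies $C_k(B|_{[\alpha,\varepsilon)})$ with $P_k([\alpha,\varepsilon))$ for $k<n$, and tail-acyclicity translates exactly into exactness, pointwise and hence in $\mathsf{Ab}^{\varepsilon^{\mathrm{op}}}$, of
\[
\mathbf{0}\to\hat{\mathbf{B}}_n\to\mathbf{P}_{n-1}(\varepsilon)\stackrel{\mathbf{d}_{n-1}}{\longrightarrow}\cdots\stackrel{\mathbf{d}_0}{\longrightarrow}\mathbf{\Delta}_\varepsilon(\mathbb{Z})\to\mathbf{0}.
\]
Since the elements of $B^n$ furnish $\hat{\mathbf{B}}_n$ with a free basis, this is a free resolution of $\mathbf{\Delta}_\varepsilon(\mathbb{Z})$ of length $n$, so in particular $\mathrm{pd}(\mathbf{\Delta}_\varepsilon(\mathbb{Z}))\leq n$.

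For the upper bound, given $\gamma<\omega_n$ of cofinality $\aleph_k$ with $k<n$, I would recursively build $B_\gamma$ on $\gamma$ by compounding $C$-sequences (the technique anticipated in Section \ref{bases}): fix $C_\gamma$ of minimal order-type $\omega_k$, enumerated $\langle\eta_i\mid i<\omega_k\rangle$ with $\mathrm{cf}(\eta_i)=\mathrm{cf}(i)$, and assemble $B_\gamma$ out of the inductively constructed tail-acyclic complexes $B_{\eta_i}$ on the half-open intervals $[\eta_i,\eta_{i+1})$ together with selected ``connecting'' $n$-simplices spanning consecutive intervals in a pattern dictated by $C_\gamma$. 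The tail-acyclicity of the result can be verified by a Mayer--Vietoris-style argument along the $C$-sequence, in which the connecting simplices collapse the cycles that would otherwise arise from gluing.

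For the lower bound, such a free resolution on $\omega_n$ would force $\mathrm{pd}(\mathbf{\Delta}_{\omega_n}(\mathbb{Z}))\leq n$, hence by Lemma \ref{projlemma} that $\lim^{n+1}\mathbf{X}=0$ for every inverse system $\mathbf{X}$ over $\omega_n$. I would contradict this by extracting, from the basis constructions for initial segments $\gamma<\omega_n$, an $n$-coherent function $\mathtt{f}_n:[\omega_n]^{n+2}\to\mathbb{Z}$ which represents, via the cochain complex description \eqref{readerverify} of $\lim^n$, a class in $\lim^{n+1}\mathbf{X}$ for a suitably chosen $\mathbf{X}$; it suffices to exhibit $\mathtt{f}_n$ as a cocycle that is not a coboundary. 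The main obstacle is the nontriviality of $\mathtt{f}_n$: the $n=1$ case essentially recovers the classical non-triviality of $\rho_2$-fiber families on $\omega_1$ (modulo locally constant functions), but higher $n$ demand the higher-dimensional walks arguments developed later in Sections \ref{deromega1} and \ref{highertraces}. Everything else in the proof --- most notably the translation between free resolutions and tail-acyclic complexes, and the construction of the resolutions via compounded $C$-sequences --- is combinatorial bookkeeping around that single hard step.
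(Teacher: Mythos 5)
Your overall strategy coincides with the paper's: translate tail-acyclic complexes into freeness of $\mathbf{d}_n\mathbf{P}_n(\varepsilon)$ (the paper's Theorem \ref{free}, phrased in your proposal as a length-$n$ free resolution of $\mathbf{\Delta}_\varepsilon(\mathbb{Z})$ --- equivalent, since $\hat{\mathbf{B}}_n\cong\mathbf{d}_n\mathbf{P}_n(\varepsilon)=\ker\mathbf{d}_{n-1}$), build the positive examples by compounding $C$-sequences, and rule out $\omega_n$ by the nontriviality of $\mathtt{f}_n$, deferred, as the paper defers it, to Sections \ref{deromega1} and \ref{highertraces}.

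Two caveats, though. First, your picture of the positive-direction construction undersells the actual object. You describe ``connecting $n$-simplices spanning consecutive intervals $[\eta_i,\eta_{i+1})$ in a pattern dictated by $C_\gamma$,'' which is accurate for $n=1$ (Theorem \ref{omega1}) but already fails for $n=2$: the $n$-faces of the paper's basis $\mathcal{B}_n(\varepsilon)$ carry \emph{internal tails} $\vec{\beta}$ drawn from iterated compounded $C$-sequences $C_{\vec{\gamma}}$ at multiple cofinality levels (Definitions \ref{cmpdlddrs} and \ref{thedef}, Observation \ref{bulletedobservation}), not merely faces bridging adjacent intervals of $C_\gamma$. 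A Mayer--Vietoris-style verification would therefore have to manage much longer-range interactions than the ``consecutive interval'' gluing your sketch envisions; the paper handles this with the explicit two-claim induction of Theorem \ref{basis}, and the key to that induction, Lemma \ref{deleps}, is precisely the statement that these multi-scale bases restrict coherently along the $C$-sequence. As written, your Mayer--Vietoris step is a label rather than an argument and would break down if implemented literally. Second, and minor: $\mathtt{f}_n$ lives in $K^n(\mathbf{R}_{n+1}(\omega_n))$ and thus yields a class in $\lim^n(\mathbf{R}_{n+1}(\omega_n)/\mathbf{P}_{n+1}(\omega_n))$, not directly in $\lim^{n+1}$; the passage to nonvanishing of $\lim^{n+1}\mathbf{P}_{n+1}(\omega_n)$ requires the connecting map in the long exact sequence (\ref{les}). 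You compressed that shift away and also wrote the domain of $\mathtt{f}_n$ as $[\omega_n]^{n+2}$ where it should be $[\omega_n]^{n+1}$.
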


In particular, there is some least number of dimensions --- namely, $n+1$
 --- in which $\omega_n$ \textit{can} support a tail-acyclic simplicial complex.

We will argue Theorem \ref{omegan} by way of an algebraic translation, which we motivate as follows:

\begin{exa}\label{iplus}
Recall that $P_1(\omega)=\bigoplus_{[\omega]^2}\mathbb{Z}$, and let $I=\{\langle i,i+1\rangle\,|\,i\in\omega\}$. For every $j<k$ in $\omega$, the difference $\langle k\rangle - \langle j\rangle$ has a unique $d_1 I$-decomposition $$\sum_{j\leq i<k}(\langle i+1\rangle-\langle i\rangle).$$ In other words, $d_1 I$ is a basis for $d_1 P_1(\omega)$. Pictorially, edges $\{i,i+1\}$ connect the points $j$ and $k$ as below:
\vspace{.35 cm}

\begin{center}
\begin{tikzpicture}
\draw (0,0) to [out=40,in=140] (1.2,0);
\draw [very thick] (1.2,0) to [out=40,in=140] (2.4,0);
\node [left] at (0,0) {\dots};
\node [below] at (1.2,0) {j};
\node [right] at (2.4,0) {\dots};
\draw [very thick] (3.2,0) to [out=40,in=140] (4.4,0) to [out=40,in=140] (5.6,0) to [out=40,in=140] (6.8,0);
\node [right] at (6.8,0) {\dots};
\draw [very thick] (7.6,0) to [out=40,in=140] (8.8,0);
\draw (8.8,0) to [out=40,in=140] (10,0);
\node [right] at (10,0) {\dots};
\node [below] at (4.4,0) {i};
\node [below] at (5.6,0) {i+1};
\node [below] at (8.8,0) {k};
\end{tikzpicture}
\end{center}

\noindent These edges evidently define a unique path between any two points in $\omega$. Put differently, $d_1I$ defines a good, or tail-acyclic, graph $G_I$ on $\omega$. More precisely, the spanning and linear independence properties of $d_1 I$ manifest as the connectedness of, and lack of cycles in, $G_I$, respectively. ``Goodness'' captures the fact that these properties persist on any restriction of $d_1 I$ and $d_1 P_1(\omega)$ to a tail $[n,\omega)$ of $\omega$ --- the fact, in other words, that $d_1 I$ defines a basis for the \textit{inverse system} $\mathbf{d}_1\mathbf{P}_1(\omega)$.

These seemingly rudimentary considerations are surprisingly sensitive to the cofinality of the index-set of $\mathbf{d}_n\mathbf{P}_n(\gamma)$. For example: by Theorems \ref{omega1} and \ref{free} below, the least ordinal $\gamma$ for which $\mathbf{d}_1\mathbf{P}_1(\gamma)$ is not free is $\omega_1$; in fact, $\mathbf{d}_1\mathbf{P}_1(\omega_1)$ is not even projective.
\end{exa}

\begin{thm}\label{free} For $n\geq 1$, the system $\mathbf{d}_n\mathbf{P}_n(\gamma)$ is free if and only if $\gamma$ admits a tail-acyclic $n$-dimensional simplicial complex.
\end{thm}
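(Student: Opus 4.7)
The plan is to establish a direct correspondence between tail-acyclic $n$-dimensional simplicial complexes $G$ on $\gamma$ and bases of $\mathbf{d}_n\mathbf{P}_n(\gamma)$ of the ``geometric'' form $\{d_n\langle\vec{\alpha}\rangle : \vec{\alpha}\in G^n\}$, proving both directions through this correspondence.

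\emph{Forward direction.} Given tail-acyclic $G$, set $\mathcal{B}=\{d_n\langle\vec{\alpha}\rangle : \vec{\alpha}\in G^n\}$. For each $\alpha<\gamma$, identify $C_\bullet(G|_{[\alpha,\gamma)})$ as a subcomplex of $P_\bullet([\alpha,\gamma))$; the completeness assumption $G^{n-1}=[\gamma]^n$ gives $C_k(G|_{[\alpha,\gamma)})=P_k([\alpha,\gamma))$ for $k\leq n-1$. Tail-acyclicity then reduces (via the $n$-dimensionality of $G$ and the completeness of its lower skeleta) to two key conditions: $\tilde H_n^\Delta(G|_{[\alpha,\gamma)})=0$, which forces injectivity of $d_n$ restricted to $C_n(G|_{[\alpha,\gamma)})$ (the linear independence of $\mathcal{B}\cap d_nP_n([\alpha,\gamma))$); and $\tilde H_{n-1}^\Delta(G|_{[\alpha,\gamma)})=0$, which combined with exactness of $P_\bullet([\alpha,\gamma))$ --- giving $\ker(d_{n-1}|_{P_{n-1}([\alpha,\gamma))})=d_nP_n([\alpha,\gamma))$ --- forces $d_n\,C_n(G|_{[\alpha,\gamma)})=d_nP_n([\alpha,\gamma))$ (spanning). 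The lower-degree reduced homologies vanish automatically from the exactness of $P_\bullet([\alpha,\gamma))$ and the complete lower skeleta. Hence $\mathcal{B}$ is a basis of the inverse system.

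\emph{Reverse direction.} Given a basis $\mathcal{B}$ of $\mathbf{d}_n\mathbf{P}_n(\gamma)$, set $\alpha_b=\min\{\alpha : b\in d_nP_n([\alpha,\gamma))\}$ for each $b\in\mathcal{B}$. The cone decomposition of $P_n([\alpha,\gamma))$ along the vertex $\alpha$, together with exactness of $P_\bullet((\alpha,\gamma))$, identifies the quotient $Q_\alpha:=d_nP_n([\alpha,\gamma))/d_nP_n([\alpha+1,\gamma))$ with $d_{n-1}P_{n-1}((\alpha,\gamma))$ via $d_n\langle\alpha,\vec{\delta}\rangle\mapsto d_{n-1}\langle\vec{\delta}\rangle$. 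The set $\mathcal{B}^\alpha=\{b : \alpha_b=\alpha\}$ projects to a basis of $Q_\alpha$, thereby encoding candidate $\vec{\delta}_b\in[(\alpha,\gamma)]^n$ for each $b\in\mathcal{B}^\alpha$. The aim is to extract an assignment $b\mapsto\vec{\delta}_b$ from $\mathcal{B}$, set $G^n=\{(\alpha_b,\vec{\delta}_b) : b\in\mathcal{B}\}$, adjoin a complete $(n-1)$-skeleton (and lower), and verify tail-acyclicity of $G$ by running the forward argument in reverse.

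\emph{Main obstacle.} The reverse direction requires coordinating the $\vec{\delta}_b$'s across all levels so that $\{d_n\langle\alpha_b,\vec{\delta}_b\rangle : b\in\mathcal{B}\}$ remains a basis of the \emph{entire} inverse system --- not merely of each $Q_\alpha$ in isolation. Naive level-by-level choices (e.g., fan bases using $\delta^*_\alpha=\alpha+1$) can fail at limit ordinals: on $\omega_1$ with $n=1$ such a choice yields the graph $\{\{\alpha,\alpha+1\} : \alpha<\omega_1\}$, whose tails through limit $\alpha$ are disconnected, and indeed by Theorem \ref{omega1} no good graph exists on $\omega_1$ at all. The freeness of $\mathbf{d}_n\mathbf{P}_n(\gamma)$, properly unpacked, must guide the coordination: one would expect a transfinite construction that at each limit level $\alpha$ uses the basis $\mathcal{B}$ --- specifically, how basis elements from levels below $\alpha$ ``cross'' through $\alpha$ --- to pin down the $\vec{\delta}_b$'s for $b\in\mathcal{B}^\alpha$, ensuring that the straightened family has the finite-support decompositions required of a basis.
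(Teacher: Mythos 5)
Your ``forward direction'' (tail-acyclic $\Rightarrow$ basis) is essentially the paper's own argument for the corresponding implication and is fine: identifying $C_\bullet(G|_{[\alpha,\gamma)})$ inside $P_\bullet([\alpha,\gamma))$, using the complete $(n-1)$-skeleton to align the chain groups in degrees $<n$, and reading off linear independence and spanning from $\tilde H_n^\Delta=0$ and $\tilde H_{n-1}^\Delta=0$ together with exactness of the standard resolution. That matches the paper.

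The gap is in your other direction, and you correctly sense it without resolving it. You attempt to take an \emph{arbitrary} basis $\mathcal{B}$ of the free system $\mathbf{d}_n\mathbf{P}_n(\gamma)$ and ``straighten'' it into a geometric basis $\{d_n\langle\alpha_b,\vec{\delta}_b\rangle\}$, level by level via the quotients $Q_\alpha$. You observe that the quotient picture determines $\vec{\delta}_b$ only modulo higher-level contributions and that naive choices fail at limit ordinals, and you then appeal, in effect, to a hoped-for transfinite coordination ``one would expect.'' That is not a proof; as stated, nothing you write shows that such a coordination exists, and it is genuinely unclear that an arbitrary basis can be straightened by a local-to-global recursion of the kind you sketch.

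The paper sidesteps this entirely. Given that $\mathbf{d}_n\mathbf{P}_n(\gamma)$ is free, it does not work with the given basis at all. Instead it notes (Fact \ref{fackt}, drawing on Theorem \ref{concludestheproof}/Lemma \ref{projlemma} or Proposition \ref{notproject}) that freeness implies projectivity and hence $\mathrm{cf}(\gamma)<\aleph_n$, and then invokes Theorem \ref{basis}, which constructs \emph{from scratch}, via a compounded $C$-sequence on $\gamma$, an explicit basis already of the geometric form $d_n\mathcal{B}=\{d_n\langle\vec{\alpha}\rangle : \vec{\alpha}\in B\}$ with $B\subseteq[\gamma]^{n+1}$. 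Its downward closure $\underline{B}$ is then the desired tail-acyclic complex by exactly the kind of homological bookkeeping you carried out in your forward direction. In short: the correct move is to discard the arbitrary basis and \emph{replace} it with a canonically constructed geometric one, not to massage the arbitrary basis into geometric form. Your instinct that the freeness must ``guide the coordination'' is right in spirit, but in practice that guidance is extracted indirectly, through the cofinality bound it imposes, which then feeds a completely independent construction.
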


\begin{proof}
For the forward direction of the proof, suppose that $\mathbf{d}_n\mathbf{P}_n(\gamma)$ is free. We will want the following fact:
\begin{fac}\label{fackt} If $\mathbf{d}_n\mathbf{P}_n(\gamma)$ is projective and $\text{cf}(\gamma)=\aleph_\xi$, then $\xi<n$.
\end{fac}
\noindent This fact is immediate from Theorem \ref{concludestheproof} together with Lemma \ref{projlemma}, or Proposition \ref{notproject} below. In the following section, we construct for any $\mathbf{d}_n\mathbf{P}_n(\gamma)$ as in Fact \ref{fackt} a basis of the form $d_n\mathcal{B}=\{d_n\langle\vec{\alpha}\rangle\,|\,\vec{\alpha}\in B\}$, with $B\subseteq[\gamma]^{n+1}$. Let $\mathbf{d}_n\mathbf{P}_n(\gamma)$ be free and fix such a basis, and write $\underline{B}$ for the $\subseteq$-downward closure of $B$. In other words, $\underline{B}$ is the natural interpretation of $B$ as a simplicial complex. We show that $\underline{B}$ is tail-acyclic.
As $d_n\mathcal{B}$ is linearly independent,
$$\text{ker}(\partial_n:C_n(\underline{B})\rightarrow C_{n-1}(\underline{B}))=0$$
hence $\tilde{\mathrm{H}}^{\Delta}_n(\underline{B})=0$. 
\begin{clm} $\underline{B}^{n-1}=[\gamma]^n$.
\end{clm}
\begin{proof} Towards contradiction, suppose instead that $\vec{\beta}^i\in[\gamma]^n\backslash\underline{B}^{n-1}$ for some $\vec{\beta}\in [\gamma]^{n+1}$. Then no linear combination of elements of $d_n\mathcal{B}$ can supply the summand $\langle\vec{\beta}^i\rangle$ of $d_n\langle\vec{\beta}\rangle$, hence $d_n\mathcal{B}$ does not span $\mathbf{d}_n\mathbf{P}_n(\gamma)$.
\end{proof}
\noindent By the claim, $\underline{B}^k=[\gamma]^{k+1}$ for all $k<n$. %(This is because if $\vec{\alpha}\in [\gamma]^n\backslash  \underline{B}^{n-1}$ and $\xi$ is not among the coordinates of $\vec{\alpha}$, then $d_n\mathcal{B}$ cannot possibly decompose $d_n\langle\vec{\alpha},\xi\rangle$ --- a contradiction.)
Therefore
\begin{enumerate}
\item[(i)] $\underline{B}$ is connected: $\tilde{\mathrm{H}}^\Delta_0(\underline{B})=0$, and
\item[(ii)] $\tilde{\mathrm{H}}^\Delta_k(\underline{B})$ for $0<k<n$ is nothing other than the homology of the chain complex
$$P_n(\gamma)\stackrel{d_{n}}{\longrightarrow}P_{n-1}(\gamma)\stackrel{d_{n-1}}{\longrightarrow}\dots\stackrel{d_2}{\longrightarrow} P_1(\gamma)\stackrel{d_1}{\longrightarrow} P_{0}(\gamma)$$
As noted in Section \ref{3210}, this sequence is exact, so $\tilde{\mathrm{H}}^\Delta_k(\underline{B})=0$. By definition these arguments hold on any tail of $\gamma$; in consequence, $\underline{B}$ is tail-acyclic.
\end{enumerate}
For the reverse direction of the proof, simply observe that the above argument is reversible. In other words, any tail-acyclic $n$-dimensional simplicial complex is determined by its collection, $B$, of $n$-faces, which in turn define a basis $d_n\mathcal{B}=\{d_n\langle\vec{\alpha}\rangle\,|\,\vec{\alpha}\in B\}$ for $\mathbf{d}_n\mathbf{P}_n(\gamma)$, just as above.

To see that $d_n\mathcal{B}$ spans $\mathbf{d}_n\mathbf{P}_n(\gamma)$, observe that any $$\sum_{i<k} b_i d_n\langle\vec{\beta}_i\rangle\in d_n P_n([0,\gamma))$$ may be identified with an element $e$ of $C_{n-1}(\underline{B})$. Since $\partial_{n-1} e=0$ and $\tilde{\mathrm{H}}^\Delta_{n-1}(\underline{B})=0$, we must have $\partial_n f=e$ for some $f\in C_n(\underline{B})$; this $\partial_n f$ naturally corresponds to a linear combination of elements of $d_n\mathcal{B}$, showing that the latter indeed does span the group $d_n P_n([0,\gamma))$. Since $\underline{B}$ is tail-acyclic, this argument applies to the restriction of $\underline{B}$ to any tail of $\gamma$, hence $d_n\mathcal{B}$ spans the \emph{inverse system} $\mathbf{d}_n\mathbf{P}_n(\gamma)$, as desired.

To see that $d_n\mathcal{B}$ is linearly independent, observe that $$\sum_{i<k} a_i d_n\langle\vec{\alpha}_i\rangle=0\hspace{.4 cm}\text{ implies }\hspace{.4 cm}\sum _{i<k} a_i \langle\vec{\alpha}_i\rangle\in\mathrm{ker}(\partial_n),$$
via an identification just as above; in consequence, if the $\langle\vec{\alpha}_i\rangle$ are all distinct, then the coefficients $a_i$ must all equal zero, since $\tilde{\mathrm{H}}^\Delta_n(\underline{B})=\mathrm{ker}(\partial_n)=0$.
\end{proof}

Theorem \ref{omegan} then takes the following form:

\begin{thm}\label{eight} For $n\geq 1$, $\omega_n$ is the least ordinal $\varepsilon$ such that $\mathbf{d}_n\mathbf{P}_n(\varepsilon)$ is not free.
\end{thm}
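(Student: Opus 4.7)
The plan is to read Theorem \ref{eight} directly off of Theorem \ref{free}, once the two ingredients promised in that proof are in hand: Fact \ref{fackt} (which will follow from the upcoming Theorem \ref{concludestheproof} or Proposition \ref{notproject}), and the basis construction announced for Section \ref{bases}. Modulo those, the statement becomes a routine bookkeeping of cofinalities, and no further homological or combinatorial work is needed at this stage.

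The non-freeness at $\varepsilon=\omega_n$ is the easier half of the deduction. Freeness implies projectivity, and $\mathrm{cf}(\omega_n)=\aleph_n$, so Fact \ref{fackt} (applied with $\xi=n$) forbids $\mathbf{d}_n\mathbf{P}_n(\omega_n)$ from being projective, hence a fortiori from being free. For the minimality, I would hand every $\varepsilon<\omega_n$ to the construction of Section \ref{bases}: every such $\varepsilon$ has cofinality $\aleph_\xi$ with $\xi<n$, so that construction produces a $B\subseteq[\varepsilon]^{n+1}$ for which $d_n\mathcal{B}=\{d_n\langle\vec\alpha\rangle\mid\vec\alpha\in B\}$ is a basis of $\mathbf{d}_n\mathbf{P}_n(\varepsilon)$, exhibiting the system as free. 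Combining the two halves yields that $\omega_n$ is the least ordinal at which freeness fails.

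The genuine difficulty lies not in this deduction but in the two supporting ingredients. One must construct bases uniformly across all ordinals of cofinality below $\aleph_n$ --- the main constructive content of the paper, resting on the compounded $C$-sequence technique of Section \ref{bases} --- and one must prove non-projectivity at cofinality $\aleph_n$, which amounts to the nontriviality of the associated family of $n$-coherent functions $\mathtt{f}_n$ (reproducing, in the case $n=1$, the classical nontriviality of the walks-derived coherent family on $\omega_1$). Everything substantive in Theorem \ref{eight} is thus deferred to those two external inputs; the present proof itself is a two-line application of Theorem \ref{free}.
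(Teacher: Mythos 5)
Your proposal is correct and matches the paper's own deduction: the paper derives Theorem \ref{eight} by combining Theorem \ref{basis} (freeness for all $\varepsilon$ of cofinality below $\aleph_n$) with the fact that $\mathrm{cd}(\omega_n)\geq n+1$ (Theorem \ref{concludestheproof}, which via Lemma \ref{projlemma} and Definition \ref{projdefinition} is exactly the content of Fact \ref{fackt}), so that $\mathbf{d}_n\mathbf{P}_n(\omega_n)$ is not projective and hence not free. The only cosmetic difference is that the paper routes the negative half through the cohomological-dimension language rather than citing Fact \ref{fackt} directly, but these are the same argument.
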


Theorem \ref{eight} is in fact true even for $n=0$. This theorem, like \ref{omega1} and \ref{omegan} above, conjoins both positive and negative statements, namely that \begin{enumerate}
\item $\mathbf{d}_n\mathbf{P}_n(\varepsilon)$ \textit{does} admit a basis, for $\varepsilon<\omega_n$, while 
\item $\mathbf{d}_n\mathbf{P}_n(\omega_n)$ does not.
\end{enumerate}
We argue (1) and (2), respectively, in Sections \ref{bases} then \ref{afterbases} and \ref{deromega1} below. To show (1), we define an explicit basis for each eligible $\mathbf{d}_n\mathbf{P}_n(\varepsilon)$; this we do by an expanded, or compound, use of $C$-sequences. We then describe ``pullbacks'' $\mathtt{f}_n$ of these $n$-dimensional basis-systems which exhibit nontrivial $n$-dimensional coherence relations, and thereby witness the (hitherto abstract) fact that $\text{pd}(\mathbf{\Delta}_{\omega_n}(\mathbb{Z}))>n$; from this point (2) follows. In the process, we will have shown the core of Mitchell's theorem and a bit more: for each $\varepsilon<\omega_n$, that theorem only implies that $\mathbf{d}_n\mathbf{P}_n(\varepsilon)$ is \emph{projective}, whereas just below, we show in a very concrete fashion that it is free.

\section{Bases from $C$-sequences}\label{bases}

In this section, we will define for each $\varepsilon$ of cofinality $\aleph_k$ and positive $n>k$ a $\mathcal{B}\subseteq\mathbf{P}_n(\varepsilon)$ such that $d_n\mathcal{B}$ is a basis for $\mathbf{d}_n\mathbf{P}_n(\varepsilon)$. Clubs $C_\beta$ on $\beta\in\varepsilon\cap\{\gamma\,|\,\text{cf}(\gamma)<\text{cf}(\varepsilon)\}$ will structure the construction of $\mathcal{B}$. Hence, as above, we begin by fixing for each relevant $\beta$ a closed cofinal $C_\beta\subseteq\beta$ such that $\text{otp}(C_\beta)=\text{cf}(\beta)$, and such that for any limit $\beta$ and $i<\text{cf}(\beta)$ the cofinality of the $i^{th}$ element of $C_\beta$ is $\text{cf}(i)$. In particular, $C_0=\varnothing$; we will also assume that $C_\kappa=\kappa$ for any infinite regular cardinal $\kappa$.
\begin{defin}\label{cmpdlddrs} For any $\beta\in C_\gamma$ define $C_{\beta\gamma}$ to be $\pi^{-1}(C_\alpha)$, where $\alpha=\text{otp}\,(C_\gamma\cap\beta)$ and $\pi$ is the order-isomorphism $C_\gamma\cap\beta \rightarrow \alpha$.

One may continue in this fashion, defining $C_{\vec{\gamma}}$ by induction on the length of $\vec{\gamma}$ much as above: Suppose $C_{\vec{\gamma}}$ is defined and $\beta\in C_{\vec{\gamma}}$. Let $\alpha=\text{otp}(C_{\vec{\gamma}}\cap\beta)$ and let $\pi:C_{\vec{\gamma}}\cap\beta \rightarrow \alpha$ be the order-isomorphism. Define $C_{\beta\vec{\gamma}}$ to be $\pi^{-1}(C_\alpha)$.

Let $C^\gamma(\alpha)=\text{min}(C_\gamma\backslash(\alpha+1))$; more generally, let $C^{\vec{\gamma}}(\alpha)=\text{min}(C_{\vec{\gamma}}\backslash(\alpha+1))$.

Finally, define $\vec{\alpha}=(\alpha_0,\dots,\alpha_n)$ to be \emph{internal to} $C_{\beta}$ iff

\begin{enumerate}
\item $\alpha_n\in C_\beta$,
\item $\alpha_i\in C_{\alpha_{i+1}\dots\alpha_n\beta}$ for all $i<n$, and
\item $\text{cf}(\alpha_0)\leq\text{cf}(\alpha_i)<\text{cf}(\alpha_j)<\text{cf}(\beta)$ for $0<i<j\leq n$.
\end{enumerate}
Observe that if $\beta=\omega_k$ then its role in points (1)-(3) is superficial; $C_{\alpha_{i+1}\dots\alpha_n\beta}=C_{\alpha_{i+1}\dots\alpha_n}$, for example. We will sometimes in this case omit mention of $C_\beta$, terming an $\vec{\alpha}$ as above \emph{internal}, simply.
\end{defin}

\begin{obs}\label{bulletedobservation} The following observations are straightforward:\begin{itemize}
\item If $\varepsilon$ is a successor ordinal, then $C_{\alpha\varepsilon}=\varnothing$ in the one case in which it is defined, namely when $\varepsilon=\alpha+1$.
\item If $\beta_0$ is a limit ordinal above $\alpha$, then $C^{\vec{\beta}}(\alpha)$ is defined and is a successor ordinal.
%\item If $\varepsilon=\omega_k=C_\varepsilon$ for some $k\geq 0$ then $C_{\vec{\beta}\varepsilon}=C_{\vec{\beta}}$ for any $C_\varepsilon$-internal $\vec{\beta}\in[\varepsilon]^{<\omega}$.
\item If $\alpha\in \mathrm{Lim}\cap C_{\vec{\beta}}$ then $C_{\alpha\vec{\beta}}$ is a closed unbounded subset of $\alpha$, of ordertype $\mathrm{cf}(\alpha)$.
\item More generally, if $\vec{\beta}$ is a tail of $\vec{\gamma}$ then if $C_{\vec{\gamma}}$ is defined then $C_{\vec{\beta}}$ is as well and $C_{\vec{\gamma}}\subseteq C_{\vec{\beta}}$.
%hence in this case, $C^{\vec{\gamma}}(\alpha)\geq C^{\vec{\beta}}(\alpha)$ whenever $C^{\vec{\gamma}}(\alpha)$ is defined.
\item If $\varepsilon$, the largest cardinal involved, is of cofinality $\aleph_k$, then the recursions of Definition \ref{cmpdlddrs} are only meaningful for $k+2$ many steps. \textit{$\vec{\beta}$ is internal to $C_\varepsilon$}, in particular, implies that $|\vec{\beta}|\leq k+2$.
\end{itemize}
\end{obs}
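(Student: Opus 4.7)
All five bullets are direct unpackings of Definition \ref{cmpdlddrs}; I plan to verify them roughly in the order 1, 3, 2, 4, 5, since bullets (2) and (4) implicitly use the club structure established by (3).

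The first is immediate: by the order-type condition on $C$-sequences, $C_{\alpha+1} = \{\alpha\}$; then $\mathrm{otp}(C_{\alpha+1}\cap\alpha) = 0$ together with the convention $C_0 = \varnothing$ gives $C_{\alpha,\alpha+1} = \pi^{-1}(\varnothing) = \varnothing$. For bullet (3), fix $\alpha\in\mathrm{Lim}\cap C_{\vec\beta}$; closedness of $C_{\vec\beta}$ forces $C_{\vec\beta}\cap\alpha$ to be cofinal in $\alpha$, so $\zeta := \mathrm{otp}(C_{\vec\beta}\cap\alpha)$ is a limit with $\mathrm{cf}(\zeta) = \mathrm{cf}(\alpha)$, and hence $C_\zeta$ is a club in $\zeta$ of order type $\mathrm{cf}(\alpha)$. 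A short diagram chase through the order-isomorphism $\pi:C_{\vec\beta}\cap\alpha\to\zeta$ then shows that $C_{\alpha\vec\beta} = \pi^{-1}(C_\zeta)$ inherits both cofinality in $\alpha$ and closedness (the latter from closedness of $C_\zeta$ in $\zeta$). An immediate inductive corollary is that each defined $C_{\vec\beta}$ is a club in its first coordinate.

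For bullet (2), given a limit $\beta_0 > \alpha$, the clubness of $C_{\vec\beta}$ in $\beta_0$ guarantees that $C^{\vec\beta}(\alpha)$ is defined. Enumerating $C_{\vec\beta}$ as $\langle\eta_i\rangle$ as in Definition \ref{cmpdlddrs}: if the minimizing index $i$ were a limit, then $\mathrm{cf}(\eta_i) = \mathrm{cf}(i) \geq \omega$ would force $\eta_i$ to be the ordinal-sup of its $C_{\vec\beta}$-predecessors, placing one of those in $(\alpha,\eta_i)$ and contradicting minimality; so $i$ is a successor (or $i=0$, handled by the cofinality convention placing $\eta_0$ at $0$ or a successor ordinal), and thus $\eta_i$ has cofinality $1$ and is a successor ordinal. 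Bullet (4) is a routine induction on the length of the prefix $\vec\delta$ with $\vec\gamma = \vec\delta\frown\vec\beta$: the definedness condition for $\vec\gamma$ has the definedness condition for $\vec\beta$ as a suffix, and the containment follows step-by-step from $C_{\xi\vec\beta'} = \pi^{-1}(C_\zeta) \subseteq C_{\vec\beta'}\cap\xi \subseteq C_{\vec\beta'}$.

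The fifth bullet is a counting argument. Internality of $\vec\beta$ pins down a strict increasing chain $\mathrm{cf}(\beta_1) < \mathrm{cf}(\beta_2) < \cdots < \mathrm{cf}(\beta_n) < \aleph_k$ among the regular cardinals below $\aleph_k$. Since there are only $k+1$ such cardinals (namely $1, \aleph_0, \aleph_1, \ldots, \aleph_{k-1}$), we get $n \leq k+1$ and so $|\vec\beta| = n+1 \leq k+2$. The same bookkeeping shows that once the nesting depth of the recursion would outrun this chain (at $k+2$ steps), any further iteration would force $C_{\vec\beta} = \varnothing$. The only delicate points anywhere are the successor and $i=0$ edge cases, which are entirely dispatched by the conventions $C_0 = \varnothing$ and $C_{\xi+1} = \{\xi\}$; there is no real obstacle.
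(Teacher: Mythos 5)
Your organization is sound and bullets (1), (4), and (5) are handled correctly. But there is a real gap in the justification of bullet (3), which also quietly affects bullet (2).

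You write: ``closedness of $C_{\vec\beta}$ forces $C_{\vec\beta}\cap\alpha$ to be cofinal in $\alpha$.'' This is false as a general fact about closed cofinal sets. A club $C$ in a limit $\gamma$ can contain a limit ordinal $\alpha$ as an isolated point of $C$, in which case $C\cap\alpha$ is bounded below $\alpha$: for instance $\{\omega\}\cup[\omega\cdot 2,\omega_1)$ is a club in $\omega_1$ containing the limit $\omega$ with $C\cap\omega=\varnothing$. What actually rescues the step is the requirement, imposed by the paper on the ambient $C$-sequence, that $\mathrm{cf}(\eta_i)=\mathrm{cf}(i)$ along the increasing enumeration. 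If that property holds of $C_{\vec\beta}$, then $\alpha\in\mathrm{Lim}\cap C_{\vec\beta}$ forces $\alpha=\eta_i$ with $i$ a limit index, and \emph{then} closedness (equivalently, continuity of the enumeration at limit indices) yields $\alpha=\sup_{j<i}\eta_j$, hence cofinality of $C_{\vec\beta}\cap\alpha$ in $\alpha$. You do invoke the consequence $\mathrm{cf}(\zeta)=\mathrm{cf}(\alpha)$ in the very next clause, so you are aware of the property; but you misattribute the cofinality step to closedness alone, and — more importantly — you never argue that this cofinality-matching property propagates from $C_\delta$ to the iterated $C_{\vec\beta}$. That propagation is the actual inductive content of bullet (3), and it is straightforward but not free: one checks that if $\alpha=\mathrm{otp}(C_{\vec\beta}\cap\beta)$ for $\beta\in C_{\vec\beta}$, then the $\ell^{\mathrm{th}}$ element of $C_{\beta\vec\beta}=\pi^{-1}(C_\alpha)$ is $\eta_{\zeta_\ell}$ (where $\zeta_\ell$ enumerates $C_\alpha$), and $\mathrm{cf}(\eta_{\zeta_\ell})=\mathrm{cf}(\zeta_\ell)=\mathrm{cf}(\ell)$ by applying the hypothesis twice. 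Your ``inductive corollary'' should therefore be not merely that each $C_{\vec\beta}$ is a club in its first coordinate, but that it is a club satisfying the cofinality-matching condition; with that strengthened inductive hypothesis both the cofinality step and the closedness step go through.

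The same imprecision surfaces in bullet (2): you say the $i=0$ case is ``handled by the cofinality convention placing $\eta_0$ at $0$ or a successor ordinal.'' In fact the convention gives $\mathrm{cf}(\eta_0)=\mathrm{cf}(0)=0$, which forces $\eta_0=0$ exactly, not ``$0$ or a successor.'' The $i=0$ case is then not handled by $\eta_0$ being a successor, but by it being impossible: $\eta_0=0>\alpha$ cannot occur. The conclusion you draw is correct, but the stated reason is off. Both issues have the same root — the cofinality-matching property is doing more work than you credit it with — and both are easily repaired once you make that property an explicit part of the induction.

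Your arguments for bullets (1), (4), and (5) are correct as written: bullet (1) is a computation from $C_{\alpha+1}=\{\alpha\}$ and $C_0=\varnothing$; bullet (4) is a clean induction using $C_{\xi\vec\beta'}\subseteq C_{\vec\beta'}\cap\xi$; and bullet (5) correctly reads off the strictly increasing chain of cofinalities in item (3) of Definition \ref{cmpdlddrs} and counts the $k+1$ available values $1,\aleph_0,\dots,\aleph_{k-1}$, giving $n\leq k+1$ and hence $|\vec\beta|\leq k+2$.
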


The next definition is crucial.

\begin{defin}\label{thedef} For $\varepsilon$ of cofinality $\aleph_k$ and positive $n>k$, let $\mathcal{B}_n(\varepsilon)$ denote the collection of $\langle\vec{\alpha},\vec{\beta}\rangle$ satisfying the following:
\begin{enumerate}
\item $\vec{\alpha}\in [\varepsilon]^{i+1}$ for some $i<n$.
\item $\vec{\beta}\in[\varepsilon]^{n-i}\text{ is internal to }C_\varepsilon$.
\item $C^{\vec{\beta}^0\varepsilon}(\alpha_i)=\beta_0$.
\end{enumerate}
Note as in Observation \ref{bulletedobservation} that our $\mathcal{C}$-sequence conventions entail that any such $\beta_0$ is a successor ordinal. Where we wish to emphasize the choice of the parameter $C_\varepsilon$ in the above definition, we write $\mathcal{B}_n(\varepsilon)[C_\varepsilon]$.\end{defin}
\begin{exa}  \label{succB} For any $n>0$ and $\varepsilon$ a successor ordinal, e.g., $\varepsilon=\delta+1$,
$$\mathcal{B}_n(\varepsilon)=\{\langle\vec{\alpha},\delta\rangle\,|\,\vec{\alpha}\in[\delta]^n\}.$$
Here there is only one possibility for $C_\varepsilon$, and the only $\vec{\beta}$ which is internal to $C_\varepsilon$ is the $1$-tuple $\delta$. Hence the $C^{\vec{\beta}^0\varepsilon}(\alpha_i)$ of Definition \ref{thedef} is constantly equal to $C^\varepsilon(\alpha_i)=\delta$ for all $\vec{\alpha}<\vec{\beta}$.
\end{exa}

\begin{lem} If $\varepsilon$ is a successor ordinal and $n>0$ then $d_n\mathcal{B}_n(\varepsilon)$ is a basis for $\mathbf{d}_n\mathbf{P}_n(\varepsilon)$.
\end{lem}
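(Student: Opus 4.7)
The plan is to invoke the reverse direction of Theorem \ref{free}: it suffices to produce a tail-acyclic $n$-dimensional simplicial complex on $\varepsilon=\delta+1$ whose collection of $n$-faces is exactly $\mathcal{B}_n(\varepsilon)$. By Example \ref{succB}, $\mathcal{B}_n(\varepsilon)=\{\langle\vec{\alpha},\delta\rangle\mid\vec{\alpha}\in[\delta]^n\}$, which under the standing conflation of generators with their underlying tuples is just the set of all $(n+1)$-element subsets of $\delta+1$ containing the top vertex $\delta$. Its $\subseteq$-downward closure $\underline{\mathcal{B}_n(\varepsilon)}$ is therefore the cone with apex $\delta$ over the full $(n-1)$-skeleton of the simplex on vertex set $\delta$.

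Two consequences of this cone description deliver the required tail-acyclicity. First, $\underline{\mathcal{B}_n(\varepsilon)}$ has complete $(n-1)$-skeleton: any $n$-subset $\vec{\beta}$ of $\delta+1$ either misses $\delta$, in which case $\vec{\beta}\in[\delta]^n$ is a proper face of $(\vec{\beta},\delta)\in\mathcal{B}_n(\varepsilon)$; or it contains $\delta$, in which case $\vec{\beta}=\vec{\beta}'\cup\{\delta\}$ for some $\vec{\beta}'\in[\delta]^{n-1}$, and then $\vec{\beta}$ is a proper face of any $(\vec{\beta}'\cup\{\gamma\},\delta)\in\mathcal{B}_n(\varepsilon)$ with $\gamma\in\delta\setminus\vec{\beta}'$. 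Second, for every $\alpha<\delta+1$ the restriction $\underline{\mathcal{B}_n(\varepsilon)}\big|_{[\alpha,\delta+1)}$ is itself a cone with apex $\delta$ (namely, over the $(n-1)$-skeleton of the simplex on $[\alpha,\delta)$), hence contractible; in particular, $\tilde{\mathrm{H}}_k^\Delta$ of every such restriction vanishes.

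Together these facts say $\underline{\mathcal{B}_n(\varepsilon)}$ is tail-acyclic in the sense of Definition \ref{goodndsimpcomp}, and the reverse direction of Theorem \ref{free} then yields that $d_n\mathcal{B}_n(\varepsilon)$ is a basis for $\mathbf{d}_n\mathbf{P}_n(\varepsilon)$. There is no serious obstacle here: the lemma functions as a warm-up base case, and the substantial work of this section lies in the parallel construction for $\varepsilon$ of infinite cofinality $\aleph_k$ with $k<n$, where the compounded $C$-sequence data of Definition \ref{cmpdlddrs} must replace the trivial choice of $C_\varepsilon$ available in the successor case. Should one prefer a direct algebraic verification, the identity $d_n\langle\vec{\alpha},\delta\rangle=(-1)^n\langle\vec{\alpha}\rangle+\sum_{i<n}(-1)^i\langle\vec{\alpha}^i,\delta\rangle$ lets one peel off the $\delta$-omitting summand from any cycle in $d_nP_n([\alpha,\delta+1))$ and iterate on the remaining $\delta$-containing part to produce the unique integer expansion in $d_n\mathcal{B}_n(\varepsilon)$.
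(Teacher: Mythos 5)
Your proof is correct and proceeds by a genuinely different route from the paper's. The paper argues this lemma directly and algebraically: it checks linear independence of $d_n\mathcal{B}_n(\varepsilon)$ by observing that $\sum_j z_j d_n\langle\vec{\alpha}_j,\delta\rangle=0$ forces the $\delta$-omitting summands $\sum_j z_j\langle\vec{\alpha}_j\rangle$ to vanish, and it checks spanning by writing $d_n\langle\vec{\beta}\rangle=\sum_{i=0}^{n-1}(-1)^{n+i}d_n\langle\vec{\beta}^i,\delta\rangle$ for any $\vec{\beta}\in[\delta]^{n+1}$ via the identity $d_nd_{n+1}\langle\vec{\beta},\delta\rangle=0$ (every other $\vec{\beta}\in[\varepsilon]^{n+1}$ already lies in $\mathcal{B}_n(\varepsilon)$). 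You instead invoke only the reverse implication of Theorem \ref{free} and identify $\underline{\mathcal{B}_n(\varepsilon)}$ as the cone with apex $\delta$ over the full $(n-1)$-skeleton on $\delta$, noting that cones and their restrictions to tails $[\alpha,\delta+1)$ are contractible, so tail-acyclicity is automatic; this is logically sound because the reverse direction of Theorem \ref{free} was established independently of the constructions in Section \ref{bases}. What each approach buys: yours is slicker and more conceptual, neatly isolating \emph{why} the successor case is easy (the top vertex $\delta$ literally cones everything off), but it uses Theorem \ref{free} as a black box and would sit slightly at odds with the paper's economy of exposition, since that theorem's forward direction itself waits on the very construction this lemma initiates. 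The paper's direct verification is dryer but has the virtue of being the template, in miniature, for the two much harder Claims in the proof of Theorem \ref{basis} that immediately follow; the reader who has absorbed the successor-ordinal computation is better primed for the inductive step. Your closing remark sketching the direct algebraic alternative is essentially the paper's argument, though phrased via $d_n\langle\vec{\alpha},\delta\rangle=(-1)^n\langle\vec{\alpha}\rangle+\sum_{i<n}(-1)^i\langle\vec{\alpha}^i,\delta\rangle$ for $\vec{\alpha}\in[\delta]^n$ rather than via $d_nd_{n+1}\langle\vec{\beta},\delta\rangle=0$ for $\vec{\beta}\in[\delta]^{n+1}$; both are routine boundary-map identities and either serves.
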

\begin{proof}
Here and below it will suffice to check that $d_n\mathcal{B}_n(\varepsilon)$ is linearly independent and decomposes each generator $d_n\langle\vec{\beta}\rangle$ of $\mathbf{d}_n\mathbf{P}_n(\varepsilon)$. Again let $\varepsilon=\delta+1$. Linear independence follows from the fact that for any collection of pairwise distinct $\vec{\alpha}_j$, \begin{align}\label{firstreasoning}\sum_{j\in J} z_j d_n\langle\vec{\alpha}_j,\delta\rangle=0\hspace{.4 cm}\text{ implies }\hspace{.4 cm}\sum_{j\in J} z_j \langle\vec{\alpha}_j\rangle=0\hspace{.4 cm}\text{ implies }\hspace{.4 cm}z_j=0\text{ for all }j\in J.\end{align}
Hence the only $d_n\mathcal{B}_n(\varepsilon)$ decomposition of $0$ is the trivial one. Therefore, since $d_n d_{n+1}\langle \vec{\beta},\delta\rangle=0$ for any $\vec{\beta}\in [\delta]^{n+1}$,
\begin{align*}d_n\langle \vec{\beta}\rangle=\sum_{i=0}^{n-1}(-1)^{n+i} d_n\langle \vec{\beta}^i,\delta\rangle
\end{align*}
is the unique $d_n\mathcal{B}_n(\varepsilon)$ decomposition of $d_n\langle \vec{\beta}\rangle$. As any other $\vec{\beta}\in [\varepsilon]^{n+1}$ contains $\delta$ and consequently falls in $\mathcal{B}_n(\varepsilon)$, this completes the argument.
\end{proof} 

\begin{thm}\label{basis} Fix a positive integer $n$. Then for every ordinal $\varepsilon$ for which $\mathrm{cf}(\varepsilon)<\aleph_n$, the collection $d_n(\mathcal{B}_n(\varepsilon)[C_\varepsilon])$ is a basis for $\mathbf{d}_n \mathbf{P}_n(\varepsilon)$. In particular, every such $\mathbf{d}_n \mathbf{P}_n(\varepsilon)$ is free.
\end{thm}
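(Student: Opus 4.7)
The approach is induction on $k$, where $\mathrm{cf}(\varepsilon)=\aleph_k$ and $k<n$. The base case of successor $\varepsilon$ (formally $k=-1$) is the preceding lemma, so suppose $0\le k<n$ and that the result holds for every ordinal of cofinality $\aleph_j$ with $j<k$. Enumerate $C_\varepsilon=\{\eta_\xi : \xi<\aleph_k\}$; by the conventions of this section, each $\eta_\xi$ has cofinality strictly less than $\aleph_k$. I verify spanning and linear independence of $d_n\mathcal{B}_n(\varepsilon)$ separately, both by leveraging the cocycle identity $d_n d_{n+1}=0$ together with the nested-club structure of Definition \ref{cmpdlddrs}; the concluding ``in particular'' clause then follows immediately, since a system spanned by a linearly independent set is free on that set.

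For spanning, fix $\vec{\gamma}=(\gamma_0,\dots,\gamma_n)\in[\varepsilon]^{n+1}$, set $\eta:=C^\varepsilon(\gamma_n)$, and apply $d_n$ to $d_{n+1}\langle\vec{\gamma},\eta\rangle$ to obtain
\[
d_n\langle\vec{\gamma}\rangle \;=\; (-1)^n\sum_{i=0}^{n}(-1)^i\, d_n\langle\vec{\gamma}^i,\eta\rangle.
\]
For $i<n$, the tuple $\langle\vec{\gamma}^i,\eta\rangle$ lies in $\mathcal{B}_n(\varepsilon)$ under the split $\vec{\alpha}=\vec{\gamma}^i$, $\vec{\beta}=\eta$, since by construction $C^\varepsilon(\gamma_n)=\eta$. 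Only the term $d_n\langle\gamma_0,\dots,\gamma_{n-1},\eta\rangle$ is residual; to handle it, I repeat the maneuver one level in, using $C_{\eta\varepsilon}$ in place of $C_\varepsilon$: set $\eta':=C^{\eta\varepsilon}(\gamma_{n-1})$ and expand $0=d_n d_{n+1}\langle\gamma_0,\dots,\gamma_{n-1},\eta',\eta\rangle$. The summands produced fall into two classes: those lying in $\mathcal{B}_n(\varepsilon)$ (now with internal suffix $(\eta',\eta)$, valid since $\eta\in C_\varepsilon$, $\eta'\in C_{\eta\varepsilon}$, and the required cofinality ordering holds by Definition \ref{cmpdlddrs}(3)), and a smaller residual that now lives entirely below $\eta$. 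The latter is absorbed by the inductive hypothesis applied to the smaller ordinal containing it, and by Observation \ref{bulletedobservation} the descent terminates within at most $k+1$ iterations.

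For linear independence, suppose $\sum_j c_j\, d_n\langle\vec{\alpha}_j,\vec{\beta}_j\rangle=0$ with the $(\vec{\alpha}_j,\vec{\beta}_j)\in\mathcal{B}_n(\varepsilon)$ pairwise distinct. Well-order the indices by comparing first the internal suffix $\vec{\beta}_j$ (anti-lex, say) and then $\vec{\alpha}_j$, and let $j_0$ be maximal. Examine the coefficient of the generator $\langle\vec{\alpha}_{j_0},\vec{\beta}_{j_0}^0\rangle$, obtained from $d_n\langle\vec{\alpha}_{j_0},\vec{\beta}_{j_0}\rangle$ by dropping the $\beta_0$-coordinate. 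Any rival $(\vec{\alpha}_k,\vec{\beta}_k)$ whose $d_n$-expansion produces this same generator must be obtainable by inserting exactly one coordinate into the length-$n$ tuple $(\vec{\alpha}_{j_0},\vec{\beta}_{j_0}^0)$; the pinning condition $C^{\vec{\beta}_k^0\varepsilon}(\alpha_{k,i_k})=\beta_{k,0}$ of Definition \ref{thedef}(3) together with the strict cofinality chain of Definition \ref{cmpdlddrs}(3) then constrains the allowable insertions so tightly that maximality of $j_0$ forces $c_{j_0}=0$; iteration on the support finishes the argument.

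The main obstacle is the recursive spanning step: at each level of descent one must verify that the suffix constructed so far really satisfies all three clauses of Definition \ref{cmpdlddrs} — most delicately the strict ascent $\mathrm{cf}(\beta_0)\le\mathrm{cf}(\beta_i)<\mathrm{cf}(\beta_j)<\mathrm{cf}(\varepsilon)$ — while carefully tracking signs through the iterated cocycle expansions and confirming that every residual term either falls into the current $\mathcal{B}_n(\varepsilon)$ or is passed to the inductive hypothesis on smaller cofinality. The extremal argument for linear independence likewise requires some care, as multiple insertion positions can in principle produce the same generator; the strict cofinality hierarchy is what ultimately rules out all but the intended insertion.
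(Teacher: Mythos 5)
Your spanning argument has a genuine gap, and the overall inductive scheme is missing a key ingredient that the paper supplies with Lemma \ref{deleps}.

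First, the spanning step. After your first expansion you get the residual term $d_n\langle\gamma_0,\dots,\gamma_{n-1},\eta\rangle$. You then expand $0=d_nd_{n+1}\langle\gamma_0,\dots,\gamma_{n-1},\eta',\eta\rangle$ with $\eta'=C^{\eta\varepsilon}(\gamma_{n-1})$ and assert that everything not in $\mathcal{B}_n(\varepsilon)$ ``now lives entirely below $\eta$.'' That is false. Among the faces of $\langle\gamma_0,\dots,\gamma_{n-1},\eta',\eta\rangle$ is $\langle\gamma_0,\dots,\gamma_{n-2},\eta',\eta\rangle$ (removing $\gamma_{n-1}$), which still has $\eta$ as its top coordinate and is \emph{not} guaranteed to lie in $\mathcal{B}_n(\varepsilon)$: the pinning condition with internal suffix $(\eta',\eta)$ would require $C^{\eta\varepsilon}(\gamma_{n-2})=\eta'$, which need not hold, while the alternative decomposition with internal suffix $(\eta)$ would require $C^\varepsilon(\eta')=\eta$, i.e., that $\eta'$ and $\eta$ be consecutive in $C_\varepsilon$, which also need not hold. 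The appeal to Observation \ref{bulletedobservation} for termination ``within at most $k+1$ iterations'' conflates two different things: that observation bounds the depth of nested internal tails, not the number of iterated boundary expansions needed to reach $\mathcal{B}_n(\varepsilon)$-decompositions. Indeed decompositions of a single $d_n\langle\vec{\gamma}\rangle$ may involve arbitrarily many basis elements, and the paper correspondingly proves spanning by a \emph{transfinite} induction on $\delta\in C_\varepsilon$ (Claim 2 of the paper's proof), not by a bounded-step descent.

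Second, the structural omission. The paper's argument pivots on Lemma \ref{deleps}, the compatibility $\mathcal{B}_{n-1}(\delta)[C_{\delta\varepsilon}]=\{\langle\vec{\alpha}\rangle\mid\langle\vec{\alpha},\delta\rangle\in\mathcal{B}_n(\varepsilon)[C_\varepsilon]\}$ for $\delta\in\mathrm{Lim}\cap C_\varepsilon$, which lets one pass to the $(n-1)$-dimensional basis on $\delta$ in both the spanning and linear-independence arguments at limit stages. This requires the inductive hypothesis to carry across \emph{dimension} as well as ordinal; the paper's $\mathrm{IH}(\varepsilon)$ is quantified over all relevant $n$ precisely for this reason. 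Your induction is on $k=\mathrm{cf}(\varepsilon)$ only, with $n$ fixed, and your argument never invokes anything like Lemma \ref{deleps}; as a result it has no mechanism for handling limit points of $C_\varepsilon$, which is exactly where the real difficulty lies. Finally, your linear-independence argument is too loosely stated to assess: the anti-lex ordering on internal suffixes is not fully specified (in particular, how tuples of different lengths compare), and it is not shown that the ``intended'' insertion position is the only one compatible with the pinning condition; in fact a basis element of the form $\langle\vec{\alpha}_{j_0},x,\vec{\beta}_{j_0}^0\rangle$ can genuinely produce the same face, and whether it is ruled out by maximality depends on details you do not pin down. The paper instead extracts the maximal top coordinate $\delta$ and does casework on $\delta$ being a limit (reducing, via Lemma \ref{deleps}, to the $(n-1)$-dimensional basis on $\delta$) or a successor (a direct coordinate argument); you need some comparable mechanism.
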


\begin{proof} The proof is by induction on $\varepsilon$. Denote the following inductive hypothesis $\mathrm{IH}(\varepsilon)$:
\bigskip

{\centering
  \emph{If $\delta<\varepsilon$ and $\mathrm{cf}(\delta)=\aleph_k$ and $n>\text{max}\,(0,k)$, then for any club $C_\delta$ on $\delta$,}
  
  \emph{$d_n(\mathcal{B}_n(\delta)[C_\delta])$ is a basis for $\mathbf{d}_n \mathbf{P}_n(\delta)$.}\par
}
\bigskip

\noindent Notice that if $\varepsilon$ is a limit ordinal and $\mathrm{IH}(\xi)$ holds for all $\xi<\varepsilon$, then $\mathrm{IH}(\varepsilon)$ holds. Hence we only need to show that $\mathrm{IH}(\varepsilon)$ implies $\mathrm{IH}(\varepsilon+1)$. If $\varepsilon$ is a successor ordinal, then $\mathrm{IH}(\varepsilon+1)$ follows from $\mathrm{IH}(\varepsilon)$ by the preceding example and lemma. If $\varepsilon$ is a limit ordinal of cofinality greater than $\aleph_\omega$, then $\mathrm{IH}(\varepsilon+1)$ and $\mathrm{IH}(\varepsilon)$ are equivalent assertions. This leaves just one case of interest: limit $\varepsilon$ of cofinality less than $\aleph_\omega$.

First, a lemma:

\begin{lem} \label{deleps} For $\delta\in\mathrm{Lim}\cap C_\varepsilon$,
$$\mathcal{B}_{n-1}(\delta)[C_{\delta\varepsilon}]=\{\langle\vec{\alpha}\rangle\,|\,\langle\vec{\alpha},\delta\rangle\in\mathcal{B}_n(\varepsilon)[C_\varepsilon]\}$$
\end{lem}
\begin{proof}[Proof of Lemma] Term the longest proper tail-segment of $\vec{\alpha}$ which is internal to $C_\varepsilon$ the \textit{$C_\varepsilon$-tail of $\vec{\alpha}$}. Write $(\vec{\beta},\delta)$ for the $C_\varepsilon$-tail of $\langle\vec{\alpha},\delta\rangle\in\mathcal{B}_n(\varepsilon)[C_\varepsilon]$. As $\delta$ is a limit, $\vec{\beta}\neq\varnothing$. Moreover, $\vec{\beta}$ is the $C_{\delta\varepsilon}$-tail of $\langle\vec{\alpha}\rangle$ if and only if $(\vec{\beta},\delta)$ is the $C_\varepsilon$-tail of $\langle\vec{\alpha},\delta\rangle$. The lemma follows.\end{proof}

We return to the proof of the theorem. Assume $\mathrm{IH}(\varepsilon)$, with $\varepsilon$ a limit ordinal of cofinality less than $\aleph_\omega$. Enumerate the elements of $C_\varepsilon$ as $\{\eta_i^\varepsilon\,|\,i\in\text{cf}(\varepsilon)\}$.
\medskip

\noindent\textbf{Claim 1.} $d_n\mathcal{B}_n(\varepsilon)$ is linearly independent.

\begin{proof}[Proof of Claim 1]
Towards contradiction, suppose instead that
\begin{align} \label{claim1} \sum_{j<\ell}z_j d_n\langle\vec{\alpha}_j,\beta_j\rangle=0
\end{align}
for nonzero coefficients $z_j$ $(j<\ell)$ and $\{\langle\vec{\alpha}_j,\beta_j\rangle\,|\,j<\ell\}\subset\mathcal{B}_n(\varepsilon)$. Let $\delta=\mathrm{max}\{\beta_j\,|\,j<\ell\}$, and let $J=\{j\,|\,\beta_j=\delta\}$. Note that all $\beta_j$ are elements of $C_\varepsilon$. In particular, $\delta$ is an element of $C_\varepsilon$. Note also that (without loss of generality) the $\vec{\alpha}_j$ indexed by $J$ are all distinct. By (\ref{claim1}),
\begin{align} \label{claim2}
\sum_{j\in\ell\backslash J}z_j d_n\langle\vec{\alpha}_j,\beta_j\rangle+\sum_{j\in J} z_j\langle d_{n-1}\vec{\alpha}_j,\delta\rangle+(-1)^n\sum_{j\in J} z_j\langle\vec{\alpha}_j\rangle=0
\end{align}
\underline{Case 1}: $\delta$ is a limit ordinal. Then together with the induction hypothesis, $\sum_J z_j\langle d_{n-1}\vec{\alpha}_j,\delta\rangle=0$, which follows from (\ref{claim2}), contradicts Lemma \ref{deleps}.

\medskip
\noindent\underline{Case 2}: $\delta$ is a successor ordinal: $\delta=\eta^{\varepsilon}_{i+1}$, for some $i<\mathrm{cf}(\varepsilon)$. Hence $(\vec{\alpha}_j)_{n-1}\geq\eta_i^\varepsilon$ for $j\in J$. By equation (\ref{claim2}),
\begin{align}\label{friday}
\sum_{j\in\ell\backslash J}z_j d_n\langle\vec{\alpha}_j,\beta_j\rangle+(-1)^n\sum_{j\in J} z_j\langle\vec{\alpha}_j\rangle=0
\end{align}
By definition, $\beta_j\leq\eta_i^\varepsilon$ for $j\in \ell\backslash J$. This implies that $(\vec{\alpha}_j)_{n-1}\leq\eta_i^\varepsilon$ for $j\in J$. Hence $(\vec{\alpha}_j)_{n-1}=\eta_i^\varepsilon$ for $j\in J$. Therefore the $\vec{\alpha}_j^{n-1}$ indexed by $J$ are all distinct. By (\ref{claim2}), though,
\begin{align*}
0 & = \sum_{j\in J} z_j d_{n-1}\langle \vec{\alpha}_j\rangle
\end{align*}
and we may conclude as in equation (\ref{firstreasoning}) that $\sum_{j\in J} z_j \langle \vec{\alpha}_j^{n-1}\rangle=0$, a contradiction.
\end{proof}
\noindent\textbf{Claim 2.} $d_n\mathcal{B}_n(\varepsilon)$ generates $\mathbf{d}_n \mathbf{P}_n(\varepsilon)$.
\begin{proof}[Proof of Claim 2]

We argue by induction on $\delta\in C_\varepsilon$. Let
\begin{align*}
\mathcal{B}_n(\varepsilon)\big|_{\delta}\:=\{\langle\vec{\alpha}\rangle\in\mathcal{B}_n(\varepsilon)\,|\,\alpha_n<\delta\}
\end{align*}
We show that if $d_n\mathcal{B}_n(\varepsilon)\big|_{\gamma+1}$ generates $\mathbf{d}_n \mathbf{P}_n(\gamma+1)$ for all $\gamma\in\delta\cap C_\varepsilon$, then $d_n\mathcal{B}_n(\varepsilon)\big|_{\delta+1}$ generates $\mathbf{d}_n \mathbf{P}_n(\delta+1)$.

The base case, $\delta=\eta_0^\varepsilon$, is exactly as in Example \ref{succB}.

\medskip
\noindent\underline{Case 1}: $\delta$ is a limit ordinal. Consider then $d_n\langle\vec{\alpha},\delta\rangle\in\mathbf{d}_n \mathbf{P}_n(\delta+1)$. Let $$\sum_{j<\ell} z_j d_{n-1}\langle\vec{\beta}_j\rangle$$ be the $\mathcal{B}_{n-1}(\delta)[C_{\delta\varepsilon}]$ decomposition of $d_{n-1}\langle\vec{\alpha}\rangle$. Then
\begin{align*}d_{n}\langle\vec{\alpha},\delta\rangle 
& = \langle d_{n-1}\langle\vec{\alpha}\rangle,\delta\rangle+(-1)^n\langle\vec{\alpha}\rangle \\
 & = \sum_{j<\ell} z_j d_n\langle\vec{\beta_j},\delta\rangle+(-1)^n(\sum_{j<\ell} z_j\langle\vec{\beta}_j\rangle+\langle\vec{\alpha}\rangle)\end{align*}
By Lemma \ref{deleps}, the left-hand summands of the last line are all from $d_n\mathcal{B}_n(\varepsilon)\big|_{\delta+1}$, while the rightmost sum is in $\mathbf{d}_n \mathbf{P}_n(\eta+1)$ for some $\eta\in\delta\cap C_\varepsilon$. By our induction hypothesis, this concludes Case 1.

\medskip
\noindent\underline{Case 2}: $\delta$ is a successor ordinal. Let $\delta=\eta_{i+1}^\varepsilon$. Consider $\vec{\beta}\in[\delta]^{n}$. If \mbox{$\beta_{n-1}\geq\eta_i^\varepsilon$} then $\langle\vec{\beta},\delta\rangle\in\mathcal{B}_n(\varepsilon)$. If $\beta_{n-1}<\eta_i^\varepsilon$, then since $d_n d_{n+1}\langle\vec{\beta},\eta_i^\varepsilon,\delta\rangle=0$,
\begin{align} \label{form1} d_n\langle\vec{\beta},\delta\rangle=(-1)^{n-1}d_n\langle d_{n-1}\vec{\beta},\eta_i^\varepsilon,\delta\rangle+d_n\langle\vec{\beta},\eta_i^\varepsilon\rangle\text{.}\end{align}
Again the rightmost summand decomposes by hypothesis, while each summand of $\langle d_{n-1}\vec{\beta},\eta_i^\varepsilon,\delta\rangle$ is from $\mathcal{B}_n(\varepsilon)\big|_{\delta+1}$.

We will be done if we show that, for any $\vec{\alpha}\in[\delta]^{n+1}$ with $\alpha_{n}>\eta_i^\varepsilon$, $d_n\langle\vec{\alpha}\rangle$ has a $\mathcal{B}_n(\varepsilon)\big|_{\delta+1}$ decomposition. Again, though, since $d_nd_{n+1}\langle\vec{\alpha},\delta\rangle=0$,
$$d_n\langle\vec{\alpha}\rangle=\sum_{j=0}^{n}(-1)^{n+j+1}d_n\langle\vec{\alpha}^j,\delta\rangle$$
and all summands on the right are as discussed above: either of type (\ref{form1}), or from $\mathcal{B}_n(\varepsilon)\big|_{\delta+1}$ directly. This concludes the proof of Claim 2. 
\end{proof} Together with the induction hypothesis $\mathrm{IH}(\varepsilon)$, Claims 1 and 2 establish $\mathrm{IH}(\varepsilon+1)$. This concludes the proof of Theorem \ref{basis}.
\end{proof}

Lemma \ref{deleps} in the above argument bears comparison with square principles (see again  \cite{jensenfine, todpairs, cummings}): structuring both is a certain uniformity at the limit points $C_\varepsilon':=\{\delta\in\varepsilon\mid \sup(C_\varepsilon\cap\delta)=\delta\}$ of a club $C_\varepsilon\subseteq \varepsilon$. In $\square(\kappa)$, this condition takes the form $$C_\varepsilon\cap\delta=C_\delta\text{ for all }\delta\in C'_\varepsilon$$
In our basis construction, it comes at the cost of an additional coordinate:
$$\mathcal{B}_{n-1}(\delta)[C_{\delta\varepsilon}]=\{\langle\vec{\alpha}\rangle\,|\,\langle\vec{\alpha},\delta\rangle\in\mathcal{B}_n(\varepsilon)[C_\varepsilon]\}\text{ for all }\delta\in C'_\varepsilon$$
Moreover, whenever $\delta$ is in $C_\varepsilon$ and $\gamma\in\text{Lim}\cap C_{\delta\varepsilon}$ then
$$\mathcal{B}_{n-2}(\gamma)[C_{\gamma\delta\varepsilon}]=\{\langle\vec{\alpha}\rangle\,|\,\langle\vec{\alpha},\gamma\rangle\in\mathcal{B}_{n-1}(\delta)[C_{\delta\varepsilon}]\}=\{\langle\vec{\alpha}\rangle\,|\,\langle\vec{\alpha},\gamma,\delta\rangle\in\mathcal{B}_n(\varepsilon)[C_\varepsilon]\}$$
Hence these additional coordinates accrue. In other words, more room is needed to carry out the construction on higher cofinality $\varepsilon$; this is one heuristic for the associated rise in cohomological dimension. Internal tails record these accruing coordinates and are the key to our further constructions. More particularly, $C_\varepsilon$-internal tails organize the $d_n\mathcal{B}_n(\varepsilon)$-decomposition of $\mathbf{d}_n\mathbf{P}_n(\varepsilon)$ to such a degree that the associated map $\mathbf{d}_n\mathbf{P}_n(\varepsilon)\rightarrow \mathbf{P}_n(\varepsilon)$ (see equation \ref{sdefinition} below) extends to all of $\mathbf{P}_{n-1}(\varepsilon)$, as we describe in the following section. The basic principle is the following: a fact used in the proof of Lemma \ref{deleps} is that any $\langle\vec{\gamma}\rangle$ has a maximal proper internal tail $\vec{\beta}$. Hence $\langle\vec{\gamma}\rangle=\langle\vec{\alpha},\vec{\beta}\rangle$ for some $i$ and $\vec{\alpha}\in [\varepsilon]^{i+1}$, and $\langle\vec{\gamma}\rangle$ then has some ``nearest'' basis element $\mathtt{b}(\vec{\gamma})$, if $C^{\vec{\beta}\varepsilon}(\alpha_i)$ is defined:
\begin{defin}\label{thefunctionb} Given an $\varepsilon$ and $C_\varepsilon$ as in the above construction, the \textit{maximal proper internal tail} of $\langle\vec{\gamma}\rangle\in\mathbf{P}_n(\varepsilon)$ is the longest tail $\vec{\beta}$ of $\vec{\gamma}$ which is internal to $C_\varepsilon$ and not all of $\vec{\gamma}$. In this case,  if $C^{\vec{\beta}\varepsilon}(\alpha_i)$ is defined, let $$\mathtt{b}(\vec{\gamma})=\langle\vec{\alpha},C^{\vec{\beta}\varepsilon}(\alpha_i),\vec{\beta}\rangle$$If $C^{\vec{\beta}\varepsilon}(\alpha_i)$ is not defined, let $\mathtt{b}(\vec{\gamma})=0$.
\end{defin}
The function $\mathtt{b}$ will feature centrally in the following sections. Note that every element of $\mathcal{B}_n(\varepsilon)$ is of the form $\mathtt{b}(\vec{\gamma})$ for some $\vec{\gamma}\in [\varepsilon]^n$. Observe finally that Theorem \ref{basis} implies the ``upper bound half'' of Mitchell's theorem (due, in fact, originally to Goblot \cite{Goblot}):
\begin{cor}\label{goblotsthm} If the cofinality of an ordinal $\varepsilon$ is $\aleph_k$ then the cohomological dimension of $\varepsilon$ is at most $k+1$.
\end{cor}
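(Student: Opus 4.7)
The plan is to read off the corollary essentially immediately from Theorem \ref{basis} together with Lemma \ref{projlemma} and Definition \ref{projdefinition}. First, suppose $\varepsilon$ has cofinality $\aleph_k$ with $k\geq 0$. Set $n=k+1$; this $n$ is positive and satisfies $\mathrm{cf}(\varepsilon)=\aleph_k<\aleph_n$, so Theorem \ref{basis} applies and yields that $\mathbf{d}_{k+1}\mathbf{P}_{k+1}(\varepsilon)$ is free, hence projective by (the easy direction of) Lemma \ref{summandlemma}.

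Next I would substitute this fact into the truncated exact sequence (\ref{Q}) with $n=k+1$, namely
\[
\mathbf{0}\to\mathbf{d}_{k+1}\mathbf{P}_{k+1}(\varepsilon)\xrightarrow{\mathbf{i}}\mathbf{P}_{k}(\varepsilon)\xrightarrow{\mathbf{d}_{k}}\cdots\xrightarrow{\mathbf{d}_{1}}\mathbf{P}_{0}(\varepsilon)\xrightarrow{\mathbf{d}_{0}}\mathbf{\Delta}_\varepsilon(\mathbb{Z})\to\mathbf{0}.
\]
Every nonzero term to the left of $\mathbf{\Delta}_\varepsilon(\mathbb{Z})$ is projective (the terms $\mathbf{P}_j(\varepsilon)$ by the example following the definition of freeness, and the leftmost term by the previous paragraph), so this is a projective resolution of $\mathbf{\Delta}_\varepsilon(\mathbb{Z})$ of length $k+1$. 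By Definition \ref{projdefinition}, $\text{pd}(\mathbf{\Delta}_\varepsilon(\mathbb{Z}))\leq k+1$, and then Lemma \ref{projlemma} gives that $\lim^n$ vanishes on $\mathsf{Ab}^{\varepsilon^{\mathrm{op}}}$ for every $n>k+1$, i.e., $\mathrm{cd}(\varepsilon)\leq k+1$, as desired.

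Finally I would dispatch the boundary case $k=-1$, i.e., $\varepsilon$ with a maximum element. In this case $\mathbf{\Delta}_\varepsilon(\mathbb{Z})$ is itself free (by the example noted earlier in Section \ref{freeandproj}), hence projective, so $\text{pd}(\mathbf{\Delta}_\varepsilon(\mathbb{Z}))=0=k+1$ and the conclusion holds a fortiori. There is no real obstacle to this argument: the substantive content lives in Theorem \ref{basis}, and the corollary is simply the observation that a free $\mathbf{d}_{k+1}\mathbf{P}_{k+1}(\varepsilon)$ caps off the standard resolution \ref{P} of $\mathbf{\Delta}_\varepsilon(\mathbb{Z})$ at length $k+1$, which via Lemma \ref{projlemma} controls $\mathrm{cd}(\varepsilon)$ from above.
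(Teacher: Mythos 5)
Your proof is correct and is exactly the argument the paper leaves implicit: the paper presents the corollary as a direct observation from Theorem \ref{basis}, and you have spelled out the intended chain (freeness of $\mathbf{d}_{k+1}\mathbf{P}_{k+1}(\varepsilon)$ by Theorem \ref{basis}, hence projectivity by Lemma \ref{summandlemma}, hence the truncation (\ref{Q}) at $n=k+1$ is a projective resolution of length $k+1$, hence $\mathrm{pd}(\mathbf{\Delta}_\varepsilon(\mathbb{Z}))\leq k+1$ by Definition \ref{projdefinition}, hence $\mathrm{cd}(\varepsilon)\leq k+1$ by Lemma \ref{projlemma}). The boundary case $k=-1$ is a reasonable bit of extra care and is consistent with the paper's convention.
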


\section{Translation to \textit{mod finite} settings}\label{afterbases}

In this section we describe how the basis constructions of Section \ref{bases} induce functions whose nontriviality implies the ``lower bound half,'' and hence the entirety, of  Theorems \ref{mitchellstheorem} (Mitchell's),  \ref{omegan}, and  \ref{eight}. We then define these functions explicitly; we argue their nontriviality in Section \ref{deromega1} below.

\subsection{The argument of the remainder of Mitchell's theorem}\label{theargument}

We begin by returning our attention to the projective resolution
\begin{equation*}\dots\longrightarrow\mathbf{P}_{n+1}(\omega_n)\stackrel{\mathbf{d}_{n+1}}{\longrightarrow}\mathbf{P}_{n}(\omega_n)\stackrel{\mathbf{d}_{n}}{\longrightarrow}\dots \stackrel{\mathbf{d}_1}{\longrightarrow} \mathbf{P}_{0}(\omega_n)\stackrel{\mathbf{d}_0}{\longrightarrow}\mathbf{\Delta}_{\omega_n}(\mathbb{Z})\longrightarrow\mathbf{0}.\end{equation*}
We ``telescope'' the interval $\mathbf{P}_{n+1}(\omega_n)\to\mathbf{P}_{n}(\omega_n)$ to record the existence of a right-inverse, or section, $\mathbf{s}:\mathbf{d}_{n+1}\mathbf{P}_{n+1}(\omega_n)\rightarrow \mathbf{P}_{n+1}(\omega_n)$ of the map $\mathbf{d}_{n+1}$:
\smallskip
\begin{equation*}
\xymatrix@!{\mathbf{P}_{n+1}(\omega_n) \ar [r] | <<<<<<<{\;\mathbf{d}_{n+1}\;} & \mathbf{ d }_{n+1}\mathbf{P}_{n+1}(\omega_n) \ar [r] | >>>>>>>>>{\;\mathbf{i}\;} \ar@/_1pc/ [l] | {\;\mathbf{s}\;} & \mathbf{P}_{n}(\omega_n) }
\end{equation*}
Here $\mathbf{i}$ is the inclusion map. The existence of a section $\mathbf{s}$ as above follows from the fact that $\mathbf{d}_{n+1}\mathbf{P}_{n+1}(\omega_n)$ is projective, which follows in turn from Lemma \ref{summandlemma} and Theorem \ref{basis}. In fact that theorem affords us more; namely, it affords us an explicit description of such an $\mathbf{s}$: simply let
\begin{align}\label{sdefinition} s_\alpha\; :\; x\mapsto\displaystyle\sum_{j=0}^k z_j \langle\vec{\alpha}_j\rangle\end{align}
for any $x\in d_{n+1} P_{n+1}([\alpha,\omega_n))$, where
\begin{align*} x=\displaystyle\sum_{j=0}^k z_j d_{n+1}\langle\vec{\alpha}_j\rangle\end{align*} 
is the $d_{n+1}\mathcal{B}_{n+1}(\omega_n)$-decomposition of $x$.

One of our chief interests below will be functions $\mathbf{f}$ out of $\mathbf{P}_n(\omega_n)$ which extend $\mathbf{s}$, i.e., which satisfy $\mathbf{f}\big|_{\mathbf{i}[\mathbf{d}_{n+1}\mathbf{P}_{n+1}(\omega_n)]}=\mathbf{s}$. It will follow from our analysis in Section \ref{deromega1} that no such $\mathbf{f}$ can map into $\mathbf{P}_{n+1}(\omega_n)$: if $\mathbf{f}$ did, then $\mathbf{d}_{n+1}\mathbf{f}$ would define a retract $\mathbf{r}$ of the map $\mathbf{i}$, but if such an $\mathbf{r}$ exists then $\mathbf{P}_n(\omega_n)\cong\mathbf{d}_{n+1}\mathbf{P}_{n+1}(\omega_n)\bigoplus\mathbf{d}_{n}\mathbf{P}_{n}(\omega_n)$. As $\mathbf{P}_n(\omega_n)$ is free, this would imply that $\mathbf{d}_{n}\mathbf{P}_{n}(\omega_n)$ is projective (by Lemma \ref{summandlemma}), which results below will contradict. We therefore have the following diagram.
\begin{displaymath}
\xymatrix@!{\mathbf{P}_{n+1}(\omega_n) \ar [r] | <<<<<<<{\;\mathbf{d}_{n+1}\;} & \mathbf{ d }_{n+1}\mathbf{P}_{n+1}(\omega_n) \ar [r] | >>>>>>>>>{\;\mathbf{i}\;} \ar@/_1pc/ [l] | {\;\mathbf{s}\;} & \mathbf{P}_{n}(\omega_n) \ar @/_1pc/ @{.>} [l] | {\;\cancel{\mathbf{r}}\;} \ar@/_2pc/@{.>}[ll] |{\;\cancel{\mathbf{f}}\;} }
\end{displaymath}
The issue, as we will see momentarily, is not that there exist no natural extensions $\mathbf{f}$ of $\mathbf{s}:\mathbf{d}_{n+1}\mathbf{P}_{n+1}(\omega_n)\rightarrow \mathbf{P}_{n+1}(\omega_n)$, but that these extensions all output values taking infinite support. In other words, these extensions require a concomitant expansion of the target system from $\mathbf{P}_{n+1}(\omega_n)$ to $\mathbf{R}_{n+1}(\omega_n)$ (here and below, see again Sections \ref{3210} and \ref{freeandproj} for definitions).
\begin{displaymath}
\xymatrixcolsep{3.5pc}\xymatrix@R=.8pc{\mathbf{R}_{n+1}(\omega_n) \\
\mathbf{j} \ar[u] \\
\mathbf{P}_{n+1}(\omega_n) \ar@{-}[u] \ar [r] | <<<<<<<<<{\;\mathbf{d}_{n+1}\;} & \mathbf{d}_{n+1}\mathbf{P}_{n+1}(\omega_n) \ar [r] | >>>>>>>>>{\;\mathbf{i}\;} \ar@/_1pc/ [l] | {\;\mathbf{s}\;} & \mathbf{P}_{n}(\omega_n) \ar@/_1pc/@{.>}[lluu] |{\;\mathbf{f}_n\;} }
\end{displaymath}
Here $\mathbf{j}$ is the inclusion map; below we will define functions $\mathbf{f}_n\in\text{Hom}(\mathbf{P}_n(\omega_n),\mathbf{R}_{n+1}(\omega_n))$ making the above quadrilateral commute, i.e., satisfying $\mathbf{j}\,\mathbf{s}=\mathbf{f}_n\,\mathbf{i}$. As noted in Section \ref{freeandproj}, these functions may be identified with elements $\mathtt{f}_n$ of the cochain group $K^n(\mathbf{R}_{n+1}(\omega_n))$ via the simple equation $\mathtt{f}_n(\vec{\alpha})=\mathbf{f}_n(\langle\vec{\alpha}\rangle)$. These in turn determine elements $[\mathtt{f}_n]$ of the cochain group $K^n(\mathbf{R}_{n+1}(\omega_n)/\mathbf{P}_{n+1}(\omega_n))$ (this quotient should be read as encoding \emph{mod finite} relations, thereby reconnecting with the material of Section \ref{walkssection}, as we will see). Moreover, since for all $\vec{\alpha}\in [\omega_n]^{n+2}$,
\begin{align}\label{fcoherence} d^n[\mathtt{f}_n](\vec{\alpha}) & = \sum_{i=0}^{n+1}(-1)^i[\mathtt{f}_n](\vec{\alpha}^i) \\ \nonumber & = \Big[\sum_{i=0}^{n+1}(-1)^i \mathtt{f}_n(\vec{\alpha}^i)\Big] \\ \nonumber & = [\mathbf{f}_n\mathbf{d}_{n+1}(\langle\vec{\alpha}\rangle)] \\ \nonumber & = [\mathbf{s}\,\mathbf{d}_{n+1}(\langle\vec{\alpha}\rangle)] \\ \nonumber & = [0],
\end{align}
these $[\mathtt{f}_n]\in K^n(\mathbf{R}_{n+1}(\omega_n)/\mathbf{P}_{n+1}(\omega_n))$ are cocycles. If it also happens that \begin{align}\label{showingthis}[\mathtt{f}_n]\neq d^{n-1}[\mathtt{e}_{n-1}]\text{ 
for any }\mathtt{e}_{n-1}\in K^{n-1}(\mathbf{R}_{n+1}(\omega_n))\end{align} then $[\mathtt{f}_n]$ will represent a nonzero element of $$H^n(\mathcal{K}(\mathbf{R}_{n+1}(\omega_n)/\mathbf{P}_{n+1}(\omega_n))=\text{lim}^n(\mathbf{R}_{n+1}(\omega_n)/\mathbf{P}_{n+1}(\omega_n)).$$
Showing (\ref{showingthis}) is our main object in Section \ref{deromega1}. Therein, in analogy with Section \ref{walkssection}, we will often term the phenomena of equations (\ref{showingthis}) and (\ref{fcoherence}) \emph{nontriviality} and \emph{coherence} relations, respectively.

We describe now how showing $\text{lim}^n(\mathbf{R}_{n+1}(\omega_n)/\mathbf{P}_{n+1}(\omega_n))\neq 0$ for all $n$ in fact implies the remainder of Theorems \ref{mitchellstheorem}, \ref{omegan}, and \ref{eight}. We focus first on the cases of $\varepsilon=\omega_n$. As Theorem \ref{basis} implies that $\text{cd}(\omega_n)\leq n+1$ for all $n\in\omega$, we need only to show that $\text{cd}(\omega_n)\geq n+1$ for all $n\in\omega$; for this it will suffice to show for each such $n$ that $\text{lim}^{n+1}\,\mathbf{X}\neq 0$ for some $\mathbf{X}$ indexed by $\omega_n$, as argued in Section \ref{freeandproj}. For the $n=0$ case, simply observe, for example, that $$\text{lim}^1(\mathbb{Z}\stackrel{\times 2}{\longleftarrow}\mathbb{Z}\stackrel{\times 2}{\longleftarrow}\dots)=\mathbb{Z}_2/\mathbb{Z}\neq 0.$$
(See \cite[Example 3.5.5]{weibel}.) For positive $n$, $\text{cd}(\omega_n)\geq n+1$ follows immediately from $$\text{lim}^n(\mathbf{R}_{n+1}(\omega_n)/\mathbf{P}_{n+1}(\omega_n))\neq 0,$$together with the existence of the short exact sequence
\begin{align*}0\to\mathbf{P}_{n+1}(\omega_n)\to\mathbf{R}_{n+1}(\omega_n)\to\mathbf{R}_{n+1}(\omega_n)/\mathbf{P}_{n+1}(\omega_n)\to0.\end{align*}
For, letting $k=n+1$ and $\varepsilon=\omega_n$ in the long exact sequence (\ref{les}), we see that
\begin{align*}\dots\to \text{lim}^n\,\mathbf{R}_{n+1}(\omega_n)\to\text{lim}^n\,\mathbf{R}_{n+1}(\omega_n)/\mathbf{P}_{n+1}(\omega_n)\to\text{lim}^{n+1}\,\mathbf{P}_{n+1}(\omega_n)\to \text{lim}^{n+1}\,\mathbf{R}_{n+1}(\omega_n))\to\dots\end{align*}
is exact. It is not difficult to see that $\text{lim}^j\,\mathbf{R}_{n+1}(\omega_n)=0$ for all $j>0$ (direct computational verification via $\mathcal{K}(\mathbf{R}_{n+1}(\omega_n))$ is straightforward). Exactness then entails that $\text{lim}^n\,\mathbf{R}_{n+1}(\omega_n)/\mathbf{P}_{n+1}(\omega_n)\cong\text{lim}^{n+1}\,\mathbf{P}_{n+1}(\omega_n)$, hence $\mathbf{P}_{n+1}(\omega_n)$ is just such an $\mathbf{X}$ as we had desired. In fact, the nontrivial functions $\mathtt{f}_n$ we define below will correspond under this isomorphism to $\mathbf{s}\,\mathbf{d}_{n+1}$, underscoring the canonical relationship between these two functions (and their canonical relationship, in turn, with the underlying choice of $C$-sequence) and certifying the latter as very concrete witnesses to cd($\omega_n)\geq n+1$. Observe also that by way of Definitions \ref{projdefinition} and \ref{cohomologicaldimension} and Lemma \ref{projlemma}, the fact that cd($\omega_n)\geq n+1$ for all $n\in\omega$ together with Theorem \ref{basis} immediately implies Theorem \ref{eight} and, hence, Theorem \ref{omegan} as well.

Extending our results to the full statement of Theorem \ref{mitchellstheorem} is then straightforward; there are two broad cases to check:
\begin{enumerate}
\item \emph{Linear orders $\varepsilon$ of cofinality $\aleph_n$ for some finite $n$}. As the reader may verify, the argument of Theorem \ref{basis} is easily adapted to apply to linear orders $\varepsilon$, implying that cd($\varepsilon)\leq n+1$. Witnesses to $\text{lim}^{n+1}\mathbf{P}_{n+1}(\omega_n)\neq 0$ readily relativize to any ordertype-$\omega_n$ subset $\delta$ of $\varepsilon$; if $\delta$ is cofinal in $\varepsilon$ then they extend in turn to systems indexed by $\varepsilon$, implying that cd($\varepsilon)\geq n+1$. Alternately, having shown that cd$(\omega_n)=n+1$ for all $n\in\omega$, appeal to a theorem like \cite[Theorem 15.5]{SSH} will immediately extend the result to linear orders $\varepsilon$ of cofinality $\aleph_n$.
\item \emph{Linear orders $\varepsilon$ of cofinality $\kappa\geq\aleph_\omega$}. Suppose that cd$(\varepsilon)=n<\infty$ and hence that $\mathbf{d}_n\mathbf{P}_n(\varepsilon)$ is projective. We may then construct a direct summand $\mathbf{d}_n\mathbf{P}_n(X)$ of $\mathbf{d}_n\mathbf{P}_n(\varepsilon)$ with cf$(X)=\aleph_n$; by Lemma \ref{summandlemma}, $\mathbf{d}_n\mathbf{P}_n(X)$ is then projective, i.e., cd$(X)\leq n$, contradicting point (1) above. (This is a mild abuse: under our conventions, $\mathbf{d}_n\mathbf{P}_n(X)$ isn't an object of $\mathsf{Ab}^{\varepsilon^{\text{op}}}$, but it naturally identifies with one.) It suffices in fact to take any $X\subseteq \varepsilon$ of cofinality $\aleph_n$ such that $\mathbf{P}_n(X)$ is closed with respect to the function $\mathbf{s}\,\mathbf{d}_n$, where $\mathbf{s}$ is the section $\mathbf{d}_n\mathbf{P}_n(\varepsilon)\to\mathbf{P}_n(\varepsilon)$ furnished by our assumption. For then, letting $\mathbf{p}:\mathbf{P}_n(\varepsilon)\to\mathbf{P}_n(X)$ denote the natural projection, $\mathbf{d}_n\,\mathbf{p}\,\mathbf{s}$ defines a retract of the natural inclusion $\mathbf{j}:\mathbf{d}_n\mathbf{P}_n(X)\to\mathbf{d}_n\mathbf{P}_n(\varepsilon)$, implying that $\mathbf{d}_n\mathbf{P}_n(X)$ is a direct summand of $\mathbf{d}_n\mathbf{P}_n(\varepsilon)$, as desired.

This argument resembles nearly enough the first part of the induction step of Proposition \ref{notproject} below that the reader is referred there for further details and a diagram.
\end{enumerate}

\subsection{The functions $\mathtt{f}_n$}\label{section52}
We now describe the functions $\mathtt{f}_n$ that will form the focus of the remainder of the paper, beginning with the case of $n=0$.
\begin{exa}{\textbf{The case of $\omega\,$:}} \label{thecaseofomega} Here the $C$-sequence $\{C_{i+1}=\{i\}\,|\,i<\omega\}$ determines the basis $\mathcal{B}_1(\omega)=\{\langle i,i+1\rangle\,|\, i<\omega\}$; hence 
$$\mathbf{s}\,\mathbf{d}_1(\langle j,k \rangle)=\displaystyle\sum_{i=j}^{k-1}\,\langle i,i+1\rangle$$
for any $j<k<\omega$. 
 An $\mathbf{f}:\mathbf{P}_0(\omega)\rightarrow\mathbf{R}_1(\omega)$ for which $\mathbf{f}\big|_{\mathbf{d}_1\mathbf{P}_1(\omega)}\,=\mathbf{s}$ would satisfy
$$\mathbf{f}(\langle j+1\rangle-\langle j \rangle)=\mathbf{s}(\langle j+1\rangle-\langle j \rangle)=\langle j, j+1\rangle$$
for any $j<\omega$. This, though, amounts to a definition: the implicit formula
$$\mathbf{f}(\langle j\rangle)=-\langle j, j+1\rangle+\mathbf{f}(\langle j+1\rangle)$$
in fact fully determines $\mathbf{f}$. This is because $\mathbf{f}=\{f_j:P_0([j,\omega))\rightarrow R_1([j,\omega))\,|\,j<\omega\}$, and $\mathbf{f}(\langle j\rangle)$ by definition is $f_j(\langle j\rangle)$. This value, falling in $R_1([j,\omega))=\prod_{[[j,\omega)]^2}\mathbb{Z}$, can involve no coordinates less than $j$. Hence the formula
\begin{align}\label{stupid}\mathbf{f}(\langle 0\rangle)=-\langle 0,1\rangle+\mathbf{f}(\langle 1\rangle)\end{align} entirely determines the ``$0$-column'' of $\mathbf{f}(\langle 0\rangle)$. Similarly, the formula
$$\mathbf{f}(\langle 1\rangle)=-\langle 1,2\rangle+\mathbf{f}(\langle 2\rangle)$$ entirely determines the ``$1$-column'' of $\mathbf{f}(\langle 1\rangle)$ and hence, by (\ref{stupid}), that of $\mathbf{f}(\langle 0\rangle)$ as well --- and so on. This defines $\mathbf{f}$ on $\{\langle j\rangle\,|\,j\in\omega\}$ and therefore on all of $\mathbf{P}_0(\omega)$; as described in the previous subsection, it determines a function $\mathtt{f}_0\in  K^0(\mathbf{R}_1(\omega))$ as well. It is not difficult to see, in fact, that $\mathtt{f}_0$ represents a nonzero element of $\text{lim} (\mathbf{R}_1(\omega)/\mathbf{P}_1(\omega))$.\footnote{It is gratuitous but tempting and possibly clarifying to write the relationship of these functions  as follows: $\mathtt{f}_0(k)-\mathtt{f}_0(j)=\int_j^k\mathbf{s}\,\mathbf{d}_1$ for all $j<k<\omega$.}
\end{exa}

This technique very generally applies. Recall from Definition \ref{thefunctionb} the function $\mathtt{b}$, which via the addition of a single coordinate into $\vec{\alpha}$, if possible,  converts $\vec{\alpha}$ to some ``nearest'' $\mathtt{b}(\vec{\alpha})\in\mathcal{B}$.\footnote{Since $\mathtt{b}(\vec{\alpha})$ is a generator, not an $n$-tuple, the notation $\mathtt{b}(\vec{\alpha})^i$ hereabouts is a minor abuse; still, its meaning should be clear.} Recall also that $\mathbf{s}$ was defined so that $\mathbf{s}\,\mathbf{d}_{n+1}\big|_{\mathcal{B}_{n+1}}=\mathbf{id}\big|_{\mathcal{B}_{n+1}}$ (see equation \ref{sdefinition}). Therefore if $\mathbf{f}_n\,:\,\mathbf{P}_n(\omega_n)\rightarrow\mathbf{R}_{n+1}(\omega_n)$ extends $\mathbf{s}$, then
\begin{align}\label{eff}\displaystyle\sum_{i=0}^{n+1}\,(\text{-}1)^i \mathbf{f}_n (\mathtt{b}(\vec{\alpha})^i)=\mathbf{f}_n(\mathbf{d}_{n+1}(\mathtt{b}(\vec{\alpha})))=\mathbf{s}(\mathbf{d}_{n+1}(\mathtt{b}(\vec{\alpha})))=\mathtt{b}(\vec{\alpha})\end{align}
Recall that $d_{n+1}\mathcal{B}_{n+1}$ defines a basis for $\mathbf{d}_{n+1}\mathbf{P}_{n+1}(\omega_n)$, and that every element of $\mathcal{B}_{n+1}$ is of the form $\mathtt{b}(\vec{\alpha})$ for some $\vec{\alpha}$. It follows that equation \ref{eff} is both a sufficient and a necessary condition for $\mathbf{f}_n$ to extend $\mathbf{s}$, and may even, as in Example \ref{thecaseofomega}, be read as \textit{defining} such an $\mathbf{f}_n$: let $\vec{\alpha}=(\vec{\beta},\vec{\gamma})\in[\omega_n]^{n+1}$, with $|\vec{\beta}|=j+1$ and $\vec{\gamma}$ the maximal proper internal tail of $\vec{\alpha}$, so that $\mathtt{b}(\vec{\alpha})$ is either $0$ or $\langle\vec{\beta},C^{\vec{\gamma}}(\beta_j),\vec{\gamma}\rangle$. %(Here $\omega_n=C_{\omega_n}$ is playing the role of $\varepsilon$ in Definition \ref{thefunctionb}; here and below in such cases, we will tend to omit its mention.)
In the former case, let
\begin{align}\label{efff}
\mathbf{f}_n(\langle\vec{\alpha}\rangle)=0\end{align}
In the latter case, $\langle\vec{\alpha}\rangle=\mathtt{b}(\vec{\alpha})^{j+1}$, hence equation \ref{eff} entails that
\begin{align}\label{effff}
\mathbf{f}_n(\langle\vec{\alpha}\rangle)=(\text{-}1)^{j+1}\Big[\mathtt{b}(\vec{\alpha})-\displaystyle\sum_{i=0}^{j}\,(\text{-}1)^i \mathbf{f}_n(\mathtt{b}(\vec{\alpha})^i)-\displaystyle\sum_{i=j+2}^{n+1}\,(\text{-}1)^i \mathbf{f}_n(\mathtt{b}(\vec{\alpha})^i)\Big]
\end{align}
Unlike in Example \ref{thecaseofomega}, equations \ref{efff} and \ref{effff} alone do not fully determine $\mathbf{f}_n$. 
%\footnote{In particular, in crucial distinction to Example \ref{thecaseofomega}, they fail to fully determine ``columns.''}
  However, these equations \textit{do} share with that of Example \ref{thecaseofomega} a canonical solution, namely the function associating to $\langle\vec{\alpha}\rangle$ just those generators $\mathtt{b}(\,\cdot\,)\in\mathcal{B}_{n+1}(\omega_n)$ appearing in the full formal expansion of equation \ref{effff}. More precisely, we identify the function $\mathtt{f}_n(\vec{\alpha})$ with the pointwise limit of the generator-sums appearing in the possibly infinitely many steps of the recursive expansion of equation \ref{effff}. Below, we duly argue that this operation is meaningful, but first-time readers might proceed directly to Section \ref{deromega1}; therein, its nature should rapidly grow intuitively clear.

Technically speaking, $\mathtt{f}_n$ is an element of $K^n(\mathbf{R}_{n+1}(\omega_n))$, hence $\mathtt{f}_n(\vec{\alpha})\in R_{n+1}([\alpha_0,\omega_n))$ for each $\vec{\alpha}\in [\omega_n]^{n+1}$. It is often simpler, though, to regard $\mathtt{f}_n$ as a function $[\omega_n]^{n+1}\to R_{n+1}([0,\omega_n))=\prod_{[\omega_n]^{n+2}}\mathbb{Z}$ via tacit applications of the inclusion maps $p_{0,\alpha_0}:R_{n+1}([\alpha_0,\omega_n))\to R_{n+1}([0,\omega_n))$, and we will tend to do so below. Observe that statements like ``$\mathtt{f}_n(\vec{\alpha})\in R_{n+1}([\alpha_0,\omega_n))$'' convert under this convention to statements about the support of $\mathtt{f}_n(\vec{\alpha})$. The legitimacy of the previous paragraph's definition of $\mathtt{f}_n$ derives from the following lemmas, which collect some useful general information about these functions along the way. By \emph{a branch of the formal expansion of equation \ref{effff}} we mean a sequence of the form
\begin{align}\label{bees}\vec{\alpha}\rightarrow \mathtt{b}(\vec{\alpha})\rightarrow \mathtt{b}(\vec{\alpha})^{k_1}\rightarrow \mathtt{b}(\mathtt{b}(\vec{\alpha})^{k_1})\rightarrow \mathtt{b}(\mathtt{b}(\vec{\alpha})^{k_1})^{k_2}\rightarrow\dots \end{align}
where each $k_i\leq n+1$ is other than the index of the coordinate added by the $i^{th}$ application of $\mathtt{b}$.
\begin{lem} Any generator $\mathtt{b}(\vec{\alpha})\in\prod_{[\omega_n]^{n+2}}\mathbb{Z}$ appears at most once in any branch of the (possibly infinite) formal expansion of equation \ref{effff}.
\end{lem}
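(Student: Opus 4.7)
The plan is to show that along any branch $\vec{\gamma}_0, \vec{\gamma}_1, \ldots$ of the expansion (with $\vec{\gamma}_0 = \vec{\alpha}$ and $\vec{\gamma}_{m+1} = \mathtt{b}(\vec{\gamma}_m)^{k_{m+1}}$), the $(n+2)$-tuple generators $\mathtt{b}(\vec{\gamma}_m)$ are pairwise distinct. First I would record the set-level picture: since $\mathtt{b}(\vec{\gamma}_m) = \vec{\gamma}_m \cup \{\gamma_m^*\}$ as an $(n+2)$-element set, and $\vec{\gamma}_{m+1} = \mathtt{b}(\vec{\gamma}_m) \setminus \{\xi_{m+1}\}$ for some $\xi_{m+1} \neq \gamma_m^*$ (the branch condition), consecutive generators differ by a single element swap, and the question reduces to ruling out cycles in the sequence of sets $\vec{\gamma}_m \cup \{\gamma_m^*\}$.

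I would then split on whether the index $k_{m+1}$ removes a coordinate from the head $\vec{\alpha}_m$ or from the tail portion of $\mathtt{b}(\vec{\gamma}_m) = \langle\vec{\alpha}_m, \gamma_m^*, \vec{\beta}_m\rangle$. A head-removal step preserves $(\gamma_m^*, \vec{\beta}_m)$ as an internal tail of $\vec{\gamma}_{m+1}$, and since the cofinality condition (3) of Definition~\ref{cmpdlddrs} is trivially met by the successor ordinal $\gamma_m^*$, the maximal proper internal tail length $L(\vec{\gamma}_{m+1})$ is at least $L(\vec{\gamma}_m)+1$. A tail-removal step deletes some $\beta_{m,q}$, and I would verify by direct position-by-position comparison that, reading $\vec{\gamma}_m$ and $\vec{\gamma}_{m+1}$ from right to left, the two tuples first differ at position $i_m+1+q$ (or $i_m+1$ when $q=0$), with $\vec{\gamma}_{m+1}$ strictly smaller there (either because $\gamma_m^* < \beta_{m,0}$, or because $\beta_{m,q-1} < \beta_{m,q}$). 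Hence the reverse-lexicographic weight of $\vec{\gamma}_m$ strictly decreases at every tail-removal step.

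Since the reverse-lexicographic ordering is a well-ordering on $[\omega_n]^{n+1}$, the tail-removal steps alone cannot cycle. If one could show in addition that only finitely many head-removals occur on any branch, the lemma would follow. This is where the \emph{main obstacle} lies: because the chain condition of Definition~\ref{cmpdlddrs}(2) can fracture at a deleted interior element $\beta_{m,q}$, a tail-removal may sharply drop $L(\vec{\gamma}_{m+1})$ below $L(\vec{\gamma}_m)$, and a subsequent head-removal may then re-lengthen the internal tail, so head- and tail-removals can in principle alternate indefinitely.

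To handle this I would refine the invariant to the well-ordered pair $\bigl(\max \vec{\gamma}_m,\, w(\vec{\gamma}_m)\bigr)$ under reverse-lex comparison of tuples, noting that $\max \vec{\gamma}_m$ is non-increasing along any branch and strictly decreases whenever $k_{m+1}=n+1$; between such drops, the reverse-lex weight carries the descent. The residual difficulty is showing that transient head-removals cannot reset the reverse-lex weight upward in a way that subsequently cycles. Here the key leverage is that $\gamma_m^*$ is always a successor ordinal (by the enumeration convention at the opening of Section~\ref{bases}), so by Observation~\ref{bulletedobservation} the club $C_{\gamma_m^*\vec{\beta}_m\varepsilon}$ is a singleton or empty; this rigidity pins down how the internal tail of $\mathtt{b}(\vec{\gamma}_m)$ can possibly extend beyond $(\gamma_m^*,\vec{\beta}_m)$, and in particular lets one recover $\gamma_m^*$ and hence $\vec{\gamma}_m$ from the set $\mathtt{b}(\vec{\gamma}_m)$. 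A formal induction on the branch length, using this rigidity at each head-removal, should then deliver the desired distinctness.
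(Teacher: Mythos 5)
Your reverse-lexicographic invariant does decrease on tail-removal steps, and your instinct that head-removals are the obstruction is correct, but the proposal stops exactly where the real work begins: you explicitly leave open whether ``transient head-removals'' can reset the weight and cycle, and close with ``a formal induction \ldots should then deliver the desired distinctness'' without supplying one. As stated, the pair $(\max\vec{\gamma}_m, w(\vec{\gamma}_m))$ is \emph{not} monotone under the reverse-lex pair order: when $\alpha_{m,\ell}$ is removed from the head, the new coordinate $C^{\vec{\beta}_m\varepsilon}(\alpha_{m,\ell})$ exceeds $\alpha_{m,\ell}$, so the weight strictly \emph{increases} while $\max$ is unchanged. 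Your proposed invariant therefore does not carry the descent between drops of $\max$, and the gap is genuine, not merely cosmetic.

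What you are missing is the structural fact that does most of the work in the paper's Case~2: almost every head-removal step kills the branch outright. If the removed index $j$ satisfies $j < \ell$, or $j \le \ell$ when $\gamma_0$ is a successor, then $\mathtt{b}(\mathtt{b}(\vec{\alpha})^{k_1}) = 0$, because either $\gamma_0$ or $C^{\vec{\gamma}}(\beta_\ell)$ is a successor sitting at the wrong place to admit a new step. The only surviving head-removal is $j = \ell$ with $\gamma_0$ a limit, and in that single case the next application of $\mathtt{b}$ introduces an $\eta < \beta_\ell$; tracking the deleted coordinate over the resulting two-step window shows (as recorded at the end of the paper's proof of Lemma~\ref{lastin5}) that every coordinate of $\mathtt{b}(\mathtt{b}(\vec{\alpha})^{k})^{k'}$ is $\le$ the corresponding coordinate of $\vec{\alpha}$, with at least one strict inequality. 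This is the two-step coordinatewise descent that your single-step well-ordering cannot see, and it is what closes the case you leave open. The paper's Case~1 then handles tail-removals by the separate ``lost-coordinate'' argument: once $\gamma_m$ (the maximal tail entry) is gone it is irrecoverable, and a lower lost $\gamma_j$ can only be restored at the expense of some higher $\gamma_{j'}$, which bottoms out at $\gamma_m$. Your tail-removal reverse-lex claim is compatible with this but is not by itself sufficient, precisely because it does not survive interleaving with head steps. To repair your proposal you would need to (i) isolate the surviving head-removal case by the $\mathtt{b}=0$ observation, and (ii) replace the one-step invariant by a two-step (or coordinatewise) one; at that point you would essentially be rediscovering the paper's argument.
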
\label{nocircularity}
\begin{proof} We will show slightly more, namely that (\ref{bees}) is nonrepeating. Begin much as before, by letting $\vec{\alpha}=(\vec{\beta}, \vec{\gamma})$ with $\vec{\beta}$ and $\vec{\gamma}$ of lengths $\ell+1$ and $m+1$ respectively and $\vec{\gamma}$ the maximum proper internal tail of $\vec{\alpha}$, so that $\mathtt{b}(\vec{\alpha})=\langle\vec{\beta},C^{\vec{\gamma}}(\beta_\ell),\vec{\gamma}\rangle$. (If $\mathtt{b}(\vec{\alpha})=0$, of course, we are done.)

\noindent \underline{Case 1}: $\mathtt{b}(\vec{\alpha})^{k_1}=(\vec{\beta},C^{\vec{\gamma}}(\beta_\ell),\vec{\gamma}^j)$ for some $j$.

Observe first that if $j=m$ then the coordinate $\gamma_m$ will never reappear in the sequence (\ref{bees}); in consequence, neither will $\vec{\alpha}$ nor $\mathtt{b}(\vec{\alpha})$. Now suppose $j<m$ and let $\gamma_{-1}=C^{\vec{\gamma}}(\beta_\ell)$ and observe that as $(\gamma_{j-1},\gamma_{j+1},\dots,\gamma_m)$ is internal, so long as this tuple remains a tail of the entries in the sequence (\ref{bees}), no application of $\mathtt{b}$ can recover the coordinate $\gamma_j$. Observe also that if only $\gamma_{j-1}$ is ever removed from this tuple then subsequent applications of $\mathtt{b}$ will only ever introduce coordinates $\xi\leq\gamma_{j-1}<\gamma_j$ to the sequence (\ref{bees}). Hence only if some $k_i$ removes a coordinate $\gamma_{j'}>\gamma_j$ may the coordinate $\gamma_j$, and hence $\vec{\alpha}$ or $\mathtt{b}(\vec{\alpha})$, possibly reappear in the sequence (\ref{bees}) --- but this argument then applies to $\gamma_{j'}$, and so on, and can only end with the coordinate $\gamma_m$. As we noted at the outset of this case, though, $\gamma_m$, once lost, is irrecoverable.

\noindent \underline{Case 2}: $\mathtt{b}(\vec{\alpha})^{k_1}=(\vec{\beta}^j,C^{\vec{\gamma}}(\beta_\ell),\vec{\gamma})$ for some $j$.

Observe that either $\gamma_0$ or $C^{\vec{\gamma}}(\beta_\ell)$ is a successor. If $\gamma_0$ is a successor and $j\leq \ell$ then $\mathtt{b}(\mathtt{b}(\vec{\alpha})^{k_1})=0$. If $C^{\vec{\gamma}}(\beta_\ell)$ is a successor and $j<\ell$ then it is again clear that $\mathtt{b}(\mathtt{b}(\vec{\alpha})^{k_1})=0$. This leaves only the case in which $\gamma_0$ is a limit and $j=\ell$. In this case if $\mathtt{b}(\mathtt{b}(\vec{\alpha})^{k_1})\neq 0$ then it equals $\langle\vec{\beta}^\ell,\eta,C^{\vec{\gamma}}(\beta_\ell),\vec{\gamma}\rangle$ with $\eta<\beta_\ell$. Now $(\eta,C^{\vec{\gamma}}(\beta_\ell),\vec{\gamma})$ is internal and we may argue just as in Case 1 that the coordinate $\beta_\ell$ can only reappear at the expense of later coordinates, so that again, neither $\vec{\alpha}$ nor $\mathtt{b}(\vec{\alpha})$ can recur in the sequence (\ref{bees}).

This concludes the proof.
\end{proof}

The above reasoning carries yet stronger implications, namely:
\begin{lem}\label{terminatesorstabilizeslemma}
A branch in the formal expansion of equation \ref{effff} either
\begin{enumerate}
\item terminates, in the sense that $\mathtt{b}(\vec{\beta})=0$ for some $\vec{\beta}$ in its sequence, or
\item stabilizes, in the sense that there exists an $m\in\omega$ such that $k_j=0$ for all $j\geq m$.
\end{enumerate}
\end{lem}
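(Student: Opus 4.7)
My plan is to argue by contradiction: assume some branch is infinite (does not terminate) and does not eventually satisfy $k_j=0$, and derive a contradiction. The driving idea is a ``layered'' well-foundedness argument, peeling off one coordinate at a time from the top of the tuple and reducing to the case analysis already carried out in the proof of Lemma \ref{nocircularity}.

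First I would reuse the key monotonicity observation from that proof: the largest coordinate of $\vec{\alpha}^{(j)}$ is non-increasing. Indeed, $\mathtt{b}$ inserts its new coordinate $C^{\vec{\tau}\varepsilon}(\sigma_\ell)$ strictly below $\tau_m$, hence strictly below the current maximum; and a subsequent $^{k_{j+1}}$ can only decrease that maximum by removing the final position. By well-foundedness of the ordinals, the maximum stabilizes to some $M$ for all $j\geq j_0$. From this point on, $\alpha_n^{(j)}=M\in C_\varepsilon$ and the maximal proper internal tail $\vec{\tau}^{(j)}$ always ends with $M$. I would then apply exactly the same reasoning to the next-topmost tail coordinate, which must lie in $C_{M\varepsilon}$ and satisfies the analogous non-increasing property relative to that club. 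Since internal tails have length at most $n+1$ (their cofinalities strictly increase and are bounded by $\mathrm{cf}(\varepsilon)$, per Observation \ref{bulletedobservation}), iterating this argument through at most $n+1$ levels yields a time $j_1$ after which the entire maximal proper internal tail $\vec{\tau}^{(j)}=\vec{\tau}$ is constant.

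Once $\vec{\tau}$ is fixed, I would directly invoke Case~2 of the proof of Lemma \ref{nocircularity}. Write $\vec{\alpha}^{(j)}=(\vec{\sigma}^{(j)},\vec{\tau})$ with prefix of length $\ell+1$, and let $C_j=C^{\vec{\tau}\varepsilon}(\sigma_\ell^{(j)})$, a successor ordinal. For each $j\geq j_1$, $k_{j+1}$ cannot remove any position inside $\vec{\tau}$ or following $C_j$ (else either the tail destabilizes or the maximum decreases). If $k_{j+1}$ strikes a position strictly interior to the prefix ($k_{j+1}<\ell$), then since $C_j$ is a successor, Case~2 of Lemma \ref{nocircularity} yields $\mathtt{b}(\vec{\alpha}^{(j+2)})=0$, contradicting non-termination. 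If $k_{j+1}=\ell$ with $\ell\geq 1$, then either $\tau_0$ is a successor, in which case the same subcase of Case~2 again forces $\mathtt{b}(\vec{\alpha}^{(j+2)})=0$; or $\tau_0$ is a limit, in which case $(C_j,\vec{\tau})$ is internal, so the tail would strictly extend, contradicting the stability achieved in the previous paragraph. The only remaining option is $k_{j+1}=0$ with $\ell=0$, which is precisely stabilization.

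The main obstacle I anticipate is the inductive step of the tail-stabilization argument: at each descent, the natural analogue of ``$\mathtt{b}$ followed by $^{k}$'' on the reduced structure must be shown to correspond meaningfully to the restricted dynamics on $C_{M\varepsilon}$, respecting the recursive definition of $C_{\vec{\beta}}$ in Definition \ref{cmpdlddrs}. Once that correspondence is set up carefully, the well-foundedness and cofinality bounds drive the argument through.
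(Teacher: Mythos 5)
Your proposal follows essentially the same layered well\nobreakdash-foundedness route as the paper's own (quite terse) sketch: peel coordinates from the right, observing first that the last coordinate is non-increasing and hence eventually constant, then descending through the internal tail until the stable form forces $k_j=0$; both proofs leave the inductive tail\nobreakdash-stabilization step at a comparable level of detail, the paper deferring to the arguments of Lemmas~\ref{nocircularity} and~\ref{lastin5}. Your closing case analysis via Case~2 of Lemma~\ref{nocircularity} is a welcome elaboration of the final step, which the paper handles by simply pointing to the eventual form (\ref{morebees}).
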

To see this, observe that if item (1) of the lemma fails, then there must exist an $m(n+1)$ such that $j>m(n+1)$ implies $k_j<n+1$: if there were not, then the last coordinates of the terms of (\ref{bees}) would contain an infinite descending sequence of ordinals. One may then similarly deduce that $k_j<n$ for all $j$ above some $m(n)\geq m(n+1)$, and so on, down to $n=1$; let $m=m(1)$ (we are light on the details here because they so resemble those of the arguments of Lemmas \ref{nocircularity} and \ref{lastin5}; see particularly the close of the proof of Lemma \ref{lastin5}). Note that as the argument of $m$ descends or ``moves to the left,'' so too does a stable internal tail within the elements of (\ref{bees}), so that we can be quite concrete about the eventual form of Lemma \ref{terminatesorstabilizeslemma}'s case (2); ultimately, it takes the shape of
\begin{align}\label{morebees}\cdots\rightarrow(\eta_i,\vec{\beta})\rightarrow \mathtt{b}((\eta_i,\vec{\beta}))=\langle\eta_i,\eta_{i+1},\vec{\beta}\rangle\rightarrow (\eta_{i+1},\vec{\beta})\rightarrow \mathtt{b}((\eta_{i+1},\vec{\beta}))=\langle\eta_{i+1},\eta_{i+2},\vec{\beta}\rangle\rightarrow\cdots \end{align}
wherein $\vec{\beta}$ is the maximal internal tail of each $(\eta_j,\vec{\beta})$. Since $\mathrm{cf}(\beta_{n-1})<\aleph_n$, the cofinality of $\beta_0$ is $\aleph_0$ (if it were less, (\ref{morebees}) would terminate), and it is the supremum of the sequence $(\eta_j)_{j\in\omega}\subseteq C_{\vec{\beta}}$.

We will apply the following lemma to conclude that the function $\mathtt{f}_n$ is well-defined.

\begin{lem}\label{lastin5}
If $\mathtt{b}(\vec{\xi})=\langle\vec{\gamma}\rangle$ descends from $\vec{\alpha}$ via a path of type (\ref{bees}) then $\gamma_0<\alpha_1$.
\end{lem}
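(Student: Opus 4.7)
We track the first coordinate of the tuples appearing along the path. The preliminary observation is that each application of $\mathtt{b}$ inserts its new coordinate at position $\ell+1\geq 1$ (between the nonempty head $(\beta_0,\dots,\beta_\ell)$ and the internal tail), so $\mathtt{b}$ preserves the first coordinate of its argument. Hence $\gamma_0$ coincides with the first coordinate of $\vec{\xi}$, and it suffices to show by induction on path length that every even-indexed node $\vec{\mu}$ for which $\mathtt{b}(\vec{\mu})\neq 0$ satisfies $\mu_0<\alpha_1$.

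The base case $\vec{\mu}=\vec{\alpha}$ is immediate. For the inductive step, pass from $\vec{\mu}=(\vec{\beta},\vec{\gamma})$ with head $(\beta_0,\dots,\beta_\ell)$ and maximal proper internal tail $\vec{\gamma}$ to $\vec{\mu}'$, obtained from $\mathtt{b}(\vec{\mu})=\langle\vec{\beta},\zeta,\vec{\gamma}\rangle$ (where $\zeta=C^{\vec{\gamma}\varepsilon}(\beta_\ell)$ sits at position $\ell+1$) by removing the coordinate at some index $k_i\neq\ell+1$. If $k_i\neq 0$ the first coordinate is unchanged and the inductive hypothesis transfers. If $k_i=0$, the new first coordinate is $\beta_1$ when $\ell\geq 1$ and $\zeta$ when $\ell=0$, and we must argue that either this value is still strictly less than $\alpha_1$ or $\mathtt{b}(\vec{\mu}')=0$.

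The central mechanism is the collapse of the club $C_{\zeta\vec{\gamma}\varepsilon}$ to a singleton $\{\eta_{i-1}\}$---the predecessor of $\zeta$ in $C_{\vec{\gamma}\varepsilon}$---whenever $\zeta$ is a successor ordinal, which it always is by the convention $\text{cf}(\eta^\delta_i)=\text{cf}(i)$ together with Observation \ref{bulletedobservation}; this predecessor further satisfies $\eta_{i-1}\leq\beta_\ell$ since $\zeta=\min C_{\vec{\gamma}\varepsilon}\setminus(\beta_\ell+1)$. In the subcase $\ell\geq 1, k_i=0$, this singleton structure, together with the maximality of $\vec{\gamma}$ as an internal tail of $\vec{\mu}$ (which, through failure of either the relevant membership condition $\beta_\ell\in C_{\vec{\gamma}\varepsilon}$ or the cofinality inequality $\text{cf}(\beta_\ell)\leq\text{cf}(\gamma_0)$, precludes extending $(\zeta,\vec{\gamma})$ through $\beta_\ell$), immediately forces the prospective next inserted coordinate $\min C_{\zeta\vec{\gamma}\varepsilon}\setminus(\beta_\ell+1)$ to be empty, so $\mathtt{b}(\vec{\mu}')=0$. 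This mirrors Case 2 of Lemma \ref{nocircularity} and removes this branch from further consideration, regardless of the value of $\beta_1$.

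The subcase $\ell=0, k_i=0$ is handled by a parallel induction using the same singleton mechanism. Whenever the path has entered a configuration in which the first element $\gamma_0$ of the current internal tail exceeds $\alpha_1$, that element must itself be an inserted successor ordinal $\zeta^{\ast}$ from an earlier step, so that $C_{\zeta^{\ast}\vec{\gamma}^{\ast}\varepsilon}$ is a singleton $\{\eta^{\ast}_{i-1}\}$ with $\eta^{\ast}_{i-1}\leq\alpha_1$; this singleton both bounds the next inserted coordinate by $\alpha_1$ and forces the $\mathtt{b}$-application following any subsequent $k=0$ removal to vanish, by the same collapse argument. The main obstacle is the bookkeeping required to carry this auxiliary singleton-predecessor bound through the full variety of removal patterns permitted by (\ref{bees}), but the underlying combinatorial mechanism is in each case a close relative of the one already deployed in the proofs of Lemmas \ref{nocircularity} and \ref{terminatesorstabilizeslemma}.
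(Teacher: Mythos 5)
Your overall strategy — track the first coordinate along (\ref{bees}), note that $\mathtt{b}$ inserts at position $\geq 1$ and hence preserves first coordinates, and use the collapse of $C_{\zeta\vec{\gamma}\varepsilon}$ to a singleton to kill any branch in which a ``too-early'' coordinate is removed — is the same strategy the paper uses, and the treatment of the $\ell\geq 1$, $k_i=0$ subcase reaches the right conclusion (though the claim that the maximal internal tail of $\vec{\mu}'$ is exactly $(\zeta,\vec{\gamma})$ needs a little more care: even if $(\beta_\ell,\zeta,\vec{\gamma})$ were internal, one would have $\beta_\ell$ equal to the unique element of the singleton $C_{\zeta\vec{\gamma}\varepsilon}$, forcing $C_{\beta_\ell\zeta\vec{\gamma}\varepsilon}=\varnothing$ and so $\mathtt{b}(\vec{\mu}')=0$ anyway).

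The genuine gap is in the $\ell=0$, $k_i=0$ subcase, which you explicitly leave unfinished. You state an auxiliary invariant — that whenever $\gamma_0>\alpha_1$ in a later $\ell=0$ configuration, $\gamma_0$ must itself be an inserted successor $\zeta^{\ast}$ whose associated singleton predecessor $\eta^{\ast}_{i-1}$ satisfies $\eta^{\ast}_{i-1}\leq\alpha_1$ — and then defer its verification as ``bookkeeping.'' But this invariant is neither obvious nor proved: it is not clear why $\gamma_0>\alpha_1$ couldn't simply be a surviving original coordinate $\alpha_j$ ($j\geq 2$), which is not inserted, and the bound $\eta^{\ast}_{i-1}\leq\alpha_1$ is asserted without argument. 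Sustaining either claim requires exactly the sort of induction along the path that the lemma itself demands, so the deferral is circular rather than routine. The paper avoids this by a different device: it analyzes only the first two or three steps of (\ref{bees}) at a time, verifies the conclusion on those steps directly (in the $\ell=0$, $k=\ell$ case, the new first coordinate $\zeta$ lies in $C_{\vec{\varepsilon}\varepsilon}\subseteq\varepsilon_0=\alpha_1$, hence $<\alpha_1$), records that the terminal tuple $\vec{\alpha}'$ of each such segment has $\alpha'_1\leq\alpha_1$ — via coordinatewise descent in the $\ell>0$ cases, or because $\alpha'_1=\varepsilon_0=\alpha_1$ when $\ell=0$ — and then reapplies the two-to-three-step analysis from $\vec{\alpha}'$. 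That ``second coordinate nonincreasing across segments'' invariant is both what makes the reapplication go through and what your proposal is missing; without it, or a proved replacement, the $\ell=0$ subcase is open and the lemma is not established.
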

We apply the lemma as follows: suppose for contradiction that the formal expansion of some $\mathtt{f}_n(\vec{\alpha})$ outputs the generator $\langle\vec{\gamma}\rangle$ infinitely often. Then this expansion must contain an infinite branch of type (\ref{nocircularity}) whose elements all count $\langle\vec{\gamma}\rangle$ among their descendents. By Lemma \ref{terminatesorstabilizeslemma}, this branch eventually assumes the form (\ref{morebees}); by Lemma \ref{lastin5} then, $\gamma_0<\beta_0$. This, though, is a contradiction, since once $\eta_j>\gamma_0$ there is no longer any path of type (\ref{bees}) from $(\eta_j,\vec{\beta})$ to $\langle\vec{\gamma}\rangle$.
\begin{proof}[Proof of Lemma \ref{lastin5}] Begin with $\vec{\alpha}=(\vec{\upsilon},\vec{\varepsilon})$, where $\vec{\varepsilon}$ is the maximal internal tail of $\vec{\alpha}$ and the length of $\vec{\upsilon}$ is $\ell+1$, so that $\mathtt{b}(\vec{\alpha})$, if nonzero, is $\langle\vec{\upsilon}, C^{\vec{\varepsilon}}(\upsilon_\ell),\vec{\varepsilon}\rangle$. A key point is that by the definition of $C^{\vec{\varepsilon}}(\upsilon_\ell)$, the branch (\ref{bees}) can continue --- meaning $\mathtt{b}(\mathtt{b}(\vec{\alpha})^k)\neq 0$ --- only if $k$ removes either an element of $\vec{\varepsilon}$, or $\upsilon_\ell$. In the former case, the initial segment $\vec{\upsilon}$ of $\vec{\alpha}$ is unaffected; in particular, all elements of the sequence $\vec{\alpha}\to\mathtt{b}(\vec{\alpha})\to\mathtt{b}(\vec{\alpha})^k$ satisfy the conclusion of the lemma. Hence it's only via steps of the latter sort that that conclusion may conceivably fail; suppose therefore that $k$ removes $\upsilon_\ell$. There are then two possibilities, depending on the length of $\vec{\upsilon}$. If $\ell>0$ then, letting $\vec{\eta}=(C^{\vec{\varepsilon}}(\upsilon_\ell),\vec{\varepsilon})$, we have that $\mathtt{b}(\mathtt{b}(\vec{\alpha})^k)$, if nonzero, equals $\langle\vec{\upsilon}^\ell,C^{\vec{\eta}}(\upsilon_{\ell-1}),C^{\vec{\varepsilon}}(\upsilon_\ell),\vec{\varepsilon}\rangle$; moreover, as both $C$-terms are successors, it is only if $k'$ removes either $C^{\vec{\varepsilon}}(\upsilon_\ell)$ or an element of $\vec{\varepsilon}$ that $\mathtt{b}(\mathtt{b}(\mathtt{b}(\vec{\alpha})^k)^{k'})$ may fail to equal zero. Here again all elements of the sequence $\vec{\alpha}\to\mathtt{b}(\vec{\alpha})\to\mathtt{b}(\vec{\alpha})^k\to\mathtt{b}(\mathtt{b}(\vec{\alpha})^k)\to \mathtt{b}(\mathtt{b}(\vec{\alpha})^k)^{k'}$ satisfy the conclusion of the lemma; note furthermore that each coordinate of $\mathtt{b}(\mathtt{b}(\vec{\alpha})^k)^{k'}$ is less than or equal to the corresponding coordinate of $\vec{\alpha}$ (with at least one coordinate strictly less). This leaves only the case of $\ell=0$. Here again though, clearly, the conclusion of the lemma holds throughout the sequence $\vec{\alpha}\to\mathtt{b}(\vec{\alpha})\to\mathtt{b}(\vec{\alpha})^k$. As the analysis we've just described will reapply to the last term of each of the  sequences we've considered, this concludes the argument.

In its course, we showed that in the ``$\ell>0$ case'' corresponding coordinates descend, some strictly, in the passage from $\vec{\alpha}$ to $\mathtt{b}(\mathtt{b}(\vec{\alpha})^k)^{k'}$. This also clearly holds of the passage from $\vec{\alpha}$ to $\mathtt{b}(\vec{\alpha})^k$ in what was termed ``the former case'' above, and these two recognitions together suffice for the argument of Lemma \ref{terminatesorstabilizeslemma}.
\end{proof} 

Note in conclusion that we might read Lemma \ref{terminatesorstabilizeslemma} as pointing to something ``essentially finitary'' about the function $\mathtt{f}(\vec{\alpha})$, in the sense that its expansion has no truly \emph{interesting} infinite branches, and that we might in turn read the most fundamental implication of the above arguments --- namely, that the coefficient of any generator $\langle\vec{\gamma}\rangle$ in $\mathtt{f}_n(\vec{\alpha})$ may be computed in finitely many steps --- as among its effects. This is a perspective that the higher walks of this paper's Section \ref{highertraces} may be regarded as formalizing.\\

To recapitulate: the primary task of the following section is to show that the functions $\mathtt{f}_n$ are \emph{nontrivial} in the sense of formula \ref{showingthis} above. This fact together with Theorem \ref{basis} will then immediately imply Theorems \ref{mitchellstheorem},  \ref{omegan}, and  \ref{eight}, in just the fashion described in Subsection \ref{theargument}. In the process, the walks material of Sections \ref{walkssection} and \ref{internalsection} will begin to reappear, along with its higher-order analogues.
\section{The coherence and nontriviality of the functions $\mathtt{f}_n$}\label{deromega1}

In what follows, the letters $x$, $y$, and $z$ will correspond to the first, second, and third coordinate-places in ordered triples; more generally, $z$ will denote the last coordinate-position in any ordered $n$-tuple below, with prior coordinate-positions then labeled in descending alphabetic order. We write $=^{*}$ to denote equality modulo finite differences. As discussed above, although $\mathtt{f}_n$ is an element of $K^n(\mathbf{R}_{n+1}(\omega_n))$, we nevertheless regard each $\mathtt{f}_n(\vec{\alpha})$ as a function $[\omega_n]^{n+2}\to \mathbb{Z}$; similarly for each $\mathtt{e}_{n-1}(\vec{\alpha})$. This approach entails minor abuses, but appears to be the simplest. If any function in the equations below is restricted, then the comparison $=^*$ should be read as taking place on that restriction, but in this section it will be equally valid, and sometimes more telling, to read an expression like $\mathtt{f}\big|_A$ as the function $[\omega_n]^{n+2}\to \mathbb{Z}$ coinciding with $\mathtt{f}$ on $A$ and outputting zero elsewhere, and to read $=^*$, in conjunction, as applying over all of $[\omega_n]^{n+2}$.

\subsection{The case of $n=1$}\label{thecasen1}

Here the function $\mathtt{f}_n$ of the previous section specializes to a function $\mathtt{f}_1\in K^1(\mathbf{R}_2(\omega_1))$ with the property that
\begin{align}\label{41}\mathtt{f}_1(\beta,\gamma)-\mathtt{f}_1(\alpha,\gamma)+\mathtt{f}_1(\alpha,\beta)=^*0\textnormal{ for all }\alpha<\beta<\gamma<\omega_1
\end{align}
In fact this difference from zero is precisely $\mathbf{s}\,\mathbf{d}_2(\langle\alpha,\beta,\gamma\rangle)$. The \textit{coherence}, in other words, of the system $\mathtt{f}_1$ amounts simply to the fact that $\mathbf{s}\,\mathbf{d}_2$-images have finite supports. What remains to be shown is its \textit{nontriviality}, namely, the fact that no $\mathtt{e}_0\in K^0(\mathbf{R}_2(\omega_1))$ satisfies the following property:
\begin{align}\label{42}
\mathtt{e}_0(\beta)-\mathtt{e}_0(\alpha)=^*\mathtt{f}_1(\alpha,\beta)\text{ for all }\alpha<\beta<\omega_1
\end{align}
As noted, this will establish the case $n=1$ of Mitchell's theorem, and its argument will furnish the template for the cases of higher $n$. As noted as well, statements like ``$\mathtt{f}_1\in K^1(\mathbf{R}_2(\omega_1))$'' describe the supports of $\mathtt{f}_1$ (i.e., $\mathtt{f}_1(\alpha,\beta)$ may be identified with an element of $R_2([\alpha,\omega_1))$ for all $\alpha<\beta<\omega_1$), but we can be much more precise: when $n=1$, the definition of $\mathtt{f}_n$ via equations \ref{efff} and \ref{effff} assumes a particularly straightforward form:
\begin{align}\label{43}
  \mathtt{f}_1(\alpha,\beta) = \left\{\def\arraystretch{1}%
  \begin{array}{@{}c@{\quad}l@{}}
    0 & \hspace{.3 cm}\textnormal{if }\beta=\alpha+1\\
    -\langle\alpha,C^\beta(\alpha),\beta\rangle+\mathtt{f}_1(C^\beta(\alpha),\beta)+\mathtt{f}_1(\alpha,C^\beta(\alpha)) & \hspace{.3 cm}\mathrm{otherwise}\\
  \end{array}\right.
\end{align}
It follows immediately that \begin{align}\label{44}\text{supp}(\mathtt{f}_1(\alpha,\beta))\subseteq [\,[\alpha,\beta]\,]^3\end{align}
This facilitates sufficiently ``spatial'' readings that we introduce the following notation: for $A,B\subseteq [\xi]^{<\omega}$, let $A\otimes B$ denote the collection of tuples $(\vec{\alpha},\vec{\beta})\in A\times B$ for which $\vec{\alpha}<\vec{\beta}$. Extensions of this notation should be self-explanatory. For example, it follows from equations \ref{41} and \ref{44} that
\begin{align}\label{45}\mathtt{f}_1(\alpha,\beta)-\mathtt{f}_1(\alpha,\gamma)\big|_{[\alpha,\beta)\otimes[ \omega_1]^2}\,=^*0\textnormal{ for all }\alpha<\beta<\gamma<\omega_1
\end{align}
and, hence, that
\begin{align}\label{46}\mathtt{f}_1(\alpha,\gamma)\big|_{[\alpha,\beta)\otimes[(\beta,\omega_1)]^2}\,=^*0\textnormal{ for all }\alpha<\beta<\gamma<\omega_1
\end{align}
It follows also from equation \ref{44} that for any ``trivializing'' $\mathtt{e}_0$ as in (\ref{42}),
\begin{align}\label{47}
\mathtt{e}_0(\beta)=^*\mathtt{e}_0(\alpha)\big|_{[(\beta,\omega_1)]^3}\textnormal{ for all }\alpha<\beta<\omega_1
\end{align}
hence the data of such an $\mathtt{e}_0$ is entirely present (mod finite) in $\mathtt{e}_0(0)$. In other words, there exists an $\mathtt{e}_0$ as in (\ref{42}) if and only if for some $\mathtt{e}_0(0)\in\prod_{[\omega_1]^3}\mathbb{Z}$ 
\begin{align}\label{48}
\mathtt{e}_0(0)\big|_{\beta\otimes[\omega_1]^2}\,=^*\mathtt{f}_1(0,\beta)\textnormal{ for all }\beta<\omega_1
\end{align}
(For the ``if'' direction, let $\mathtt{e}_0(\beta)=\mathtt{e}_0(0)\big|_{[\beta,\omega_1)\otimes[\omega_1]^2}$.) %By arguments revisiting the classical material of minimal walks, 
 We will derive a contradiction from the existence of such an $\mathtt{e}_0(0)$; this will conclude the $n=1$ step of our proof of Theorems \ref{mitchellstheorem},  \ref{omegan}, and \ref{eight}.

The reader is now referred to Figure \ref{walksclubs}. The $x$, $y$, and $z$ axes therein are each of ``length'' $\omega_1$ (the $y$ axis drifts back, into the page).  The spaces of the supports of $\mathtt{f}_1(0,\alpha)$, $\mathtt{f}_1(0,\beta)$, and $\mathtt{f}_1(\alpha,\beta)$ are plotted along these axes as the tetrahedra $[\,[0,\alpha]\,]^3$, $[\,[0,\beta]\,]^3$, and $[\,[\alpha,\beta]\,]^3$, respectively. Depicted as well are the outputs of each of these three functions $\mathtt{f}_1(\eta,\xi)$ within the distinguished planes $x=\eta$ and $z=\xi$ (shaded in Figure \ref{walksclubs}); by equation \ref{43}, these have the following general forms:
\begin{align}
\mathtt{f}_1(\alpha,\beta)\big|_{\{\alpha\}\otimes[\omega_1]^2}\,= & \,-\langle\alpha,C^\beta(\alpha),\beta\rangle-\langle\alpha,C^{C^\beta(\alpha)}(\alpha),C^\beta(\alpha)\rangle-\dots\label{4105} \\
\mathtt{f}_1(\alpha,\beta)\big|_{[\omega_1]^2\otimes\{\beta\}}\,= & \,-\langle\alpha,C^\beta(\alpha),\beta\rangle-\langle C^\beta(\alpha),C^\beta(C^\beta(\alpha)),\beta\rangle-\dots\label{4110}
\end{align} 
Line \ref{4105}, restricted to either the $2^{\mathrm{nd}}$ or $3^{\mathrm{rd}}$ coordinate, bears copies (minus the first or last element, respectively) of the walk from $\beta$ down to $\alpha+1$. %(and, hence, down to $\alpha$ as well, if $\alpha$ is a limit\footnote{In the $\omega_1$ context, it follows from our conventions on $\{$otp($\eta_i^\beta)\,|\,\eta_i^\beta\in C_\beta\}$ that limit $\alpha$ are elements only of $C_{\alpha+1}$.}). 
Line \ref{4110}, similarly, is an image of the club $C_\beta$ above $\alpha$ (here our convention that $\mathrm{otp}(C_\beta)\leq\omega$ for all countable $\beta$ is essential). For limit $\beta$, of course, these clubs $C_\beta$ are infinite; by equation \ref{45}, $\mathtt{f}_1(\alpha,\gamma)$ must contain all but finitely much of each of these $C_\beta$-images, where $\beta$ ranges through $(\alpha,\gamma)\cap\text{Lim}$. This is a requirement in some tension with equation \ref{46}, a tension manifesting as the nontriviality of the system $\mathtt{f}_1$. 

\begin{figure}
\centering
\begin{tikzpicture}[MyPersp,scale=.92]
	\coordinate (A) at (0,0,0);
	\coordinate (B) at (0,0,4);
	\coordinate (C) at (0,4,4);
	\coordinate (D) at (4,4,4);
	\coordinate (E) at (0,0,6);
	\coordinate (F) at (0,6,6);
	\coordinate (G) at (6,6,6);
	\coordinate (H) at (4,4,6);
	\coordinate (I) at (4,6,6);
	\coordinate (J) at (0,4,6);
	\coordinate (K) at (0,1.8,4);
	\coordinate (L) at (0,1.1,2.2);
	\coordinate (M) at (0,.8,1.6);
	\coordinate (N) at (0,.4,.8);
	\coordinate (O) at (0,.2,.4);
	\coordinate (P) at (0,2.8,6);
	\coordinate (Q) at (0,1.9,4.9);
	\coordinate (R) at (0,8,0);
	\coordinate (S) at (0,0,8);
	\coordinate (T) at (9,0,0);
	\coordinate (U) at (0,4,0);
	\coordinate (V) at (0,6,0);
	\coordinate (W) at (4,0,0);
	\coordinate (X) at (6,0,0);
	\coordinate (K2) at (.6,2.15,4);
	\coordinate (K3) at (1.2,2.5,4);
	\coordinate (K4) at (1.8,2.85,4);
	\coordinate (K5) at (2.4,3.2,4);
	\coordinate (K6) at (2.8,3.4,4);
	\coordinate (K7) at (3.2,3.629,4);
	\coordinate (P2) at (1.5,3.53,6);
	\coordinate (P3) at (3,4.5,6);
	\coordinate (P4) at (4.41,5.3,6);
	\coordinate (P5) at (4.75,5.55,6);
	\coordinate (P6) at (4.92,5.65,6);
	\coordinate (P7) at (5.18,5.83,6);
	\coordinate (Y) at (4,4.6,6);
	\coordinate (Y2) at (4,4.4,5.2);
	\coordinate (Y3) at (4,4.27,4.9);
	\coordinate (Y4) at (4,4.15,4.6);
	\coordinate (B-) at (-.13,0,4);
	\coordinate (B+) at (.13,0,4);
	\coordinate (E-) at (-.13,0,6);
	\coordinate (E+) at (.13,0,6);	
	\coordinate (C-) at (0,4,-.13);
	\coordinate (C+) at (0,4,.13);
	\coordinate (F-) at (0,6,-.13);
	\coordinate (F+) at (0,6,.13);
	\coordinate (W-) at (4,0,-.13);
	\coordinate (W+) at (4,0,.13);
	\coordinate (X-) at (6,0,-.13);
	\coordinate (X+) at (6,0,.13);
	
	\draw (U) node[inner sep=6pt, below] {$\alpha$};
	\draw (V) node[inner sep=6pt, below] {$\beta$};
	\draw (W) node[inner sep=6pt, below] {$\alpha$};
	\draw (X) node[inner sep=6pt, below] {$\beta$};
	\draw (A)--(B)--(C)--cycle;
	\draw (A)--(B)--(D)--cycle;
	\draw (A)--(C)--(D)--cycle;
	\draw[thick] (B-)--(B+);
	\draw[thick] (E-)--(E+);
	\draw[thick] (C-)--(C+);
	\draw[thick] (F-)--(F+);
	\draw[thick] (W-)--(W+);
	\draw[thick] (X-)--(X+);
	\draw (B) node[inner sep=6pt, left] {$\alpha$}--(C)--(D)--cycle;
	\fill[fill=gray, opacity=.09] (B)--(C)--(D);
	\fill[fill=gray, opacity=.06] (A)--(E)--(F);
	\fill[fill=gray, opacity=.09] (E)--(F)--(G);
	\fill[fill=gray, opacity=.06] (D)--(H)--(I);
	\draw (A)--(E)--(F)--cycle;
	\draw (A)--(F)--(G)--cycle;
	\draw (A)--(E)--(G)--cycle;
	\draw (D)--(H)--(I)--cycle;
	\draw (E) node[inner sep=6pt, left] {$\beta$}--(F)--(G)--cycle;
	\draw (D)--(H)--(G)--cycle;
	\draw (H)--(I)--(G)--cycle;
	\draw (D)--(H);
	\draw[->] (A)--(R) node[below right] {y};
	\draw[->] (A)--(S) node[left] {z};
	\draw[->] (A)--(T) node[below] {x};
	\draw[dashed] (H)--(J)--(C);
	\draw (P) node[circle, very thick,draw,fill=white,fill opacity=.8]{};
	\draw[densely dotted, thick, out=225, in=90] (P) to (Q);
	\draw (Q) node[circle, very thick,draw,fill=white,fill opacity=.8]{};
	\draw (M) node[circle, very thick,draw,fill=white,fill opacity=.8]{};
\draw (N) node[circle, very thick,draw,fill=white,fill opacity=.8]{};	
	\draw (O) node[circle, very thick,draw,fill=white,fill opacity=.8]{};
	\draw[densely dotted, thick, out=230, in=90] (K) node[fill,circle, gray,opacity=.8]{} to (L);
	\draw[densely dotted, thick, out=210, in=90] (L) node[fill,circle, gray,opacity=.8]{} to (M);
	\draw[densely dotted, thick, out=230, in=90] (M) node[fill,circle, gray,opacity=.8]{} to (N);
	\draw[densely dotted, thick, out=215, in=90] (N) node[fill,circle, gray,opacity=.8]{} to (O) node[fill,circle, gray,opacity=.8]{};
	\draw[densely dotted, thick, out=235, in=90] (Q) to (M);
	\draw (K2) node[fill,circle, gray,opacity=.8]{};
     \draw (K3) node[fill,circle, gray,opacity=.8]{};
     \draw (K4) node[fill,circle, gray,opacity=.8]{};
     \draw (K5) node[fill,circle, gray,opacity=.8]{};
     \draw[dashdotted] (K)  to (K6);
     \draw[dashdotted] (P)  to (P6);
     \draw[dashdotted] (Y)  to (P6);
     \draw[->][thick, loosely dotted] (K6) to (K7);
     \draw[->][thick, loosely dotted] (P6) to (P7);
     \draw (P2) node[circle, very thick,draw,fill=white,fill opacity=.8]{};
     \draw (P3) node[circle, very thick,draw,fill=white,fill opacity=.8]{};
     \draw (P4) node[circle, very thick,draw,fill=white,fill opacity=.8]{};
     \draw (P5) node[circle, very thick,draw,fill=white,fill opacity=.8]{};
     \draw (Y) node[circle,draw,fill=black,fill opacity=.8,scale=.55]{};
     \draw (Y2) node[circle,draw,fill=black,fill opacity=.8,scale=.55]{};
     \draw (Y3) node[circle,draw,fill=black,fill opacity=.8,scale=.55]{};
     \draw (Y4) node[circle,draw,fill=black,fill opacity=.8,scale=.55]{};
          \draw (P4) node[circle,draw,fill=black,fill opacity=.8,scale=.55]{};
     \draw (P5) node[circle,draw,fill=black,fill opacity=.8,scale=.55]{};
     \draw[densely dotted, thick, out=215, in=90] (Y) to (Y2);
     \draw[densely dotted, thick, out=200, in=90] (Y2) to (Y3);
     \draw[densely dotted, thick, out=200, in=90] (Y3) to (Y4);
     \node [matrix,draw=black,rounded corners, row sep=3mm, thick] at (5.7,4.7,0.9){\draw (0.2,0) node[fill,circle, gray,opacity=.8]{}; & \node {$\mathtt{f}_1(0,\alpha)$};        \\ \draw (0.2,0) node[circle, very thick,draw,fill=white,fill opacity=.8]{}; & \node {$\mathtt{f}_1(0,\beta)$}; \\ \draw (0.2,0) node[circle,draw,fill=black,fill opacity=.8,scale=.55]{}; & \node {$\mathtt{f}_1(\alpha,\beta)$}; \\ \draw[densely dotted, thick] (0.05,0) to (0.4,0); & \node[align=left] {the path \\ of a walk}; \\ \draw[dashdotted] (0.05,0) to (0.4,0); & \node[align=left] {the line \\ of a club}; \\};
\end{tikzpicture}
\caption{Walks and clubs in $\mathtt{f}_1$}
\label{walksclubs}
\end{figure}
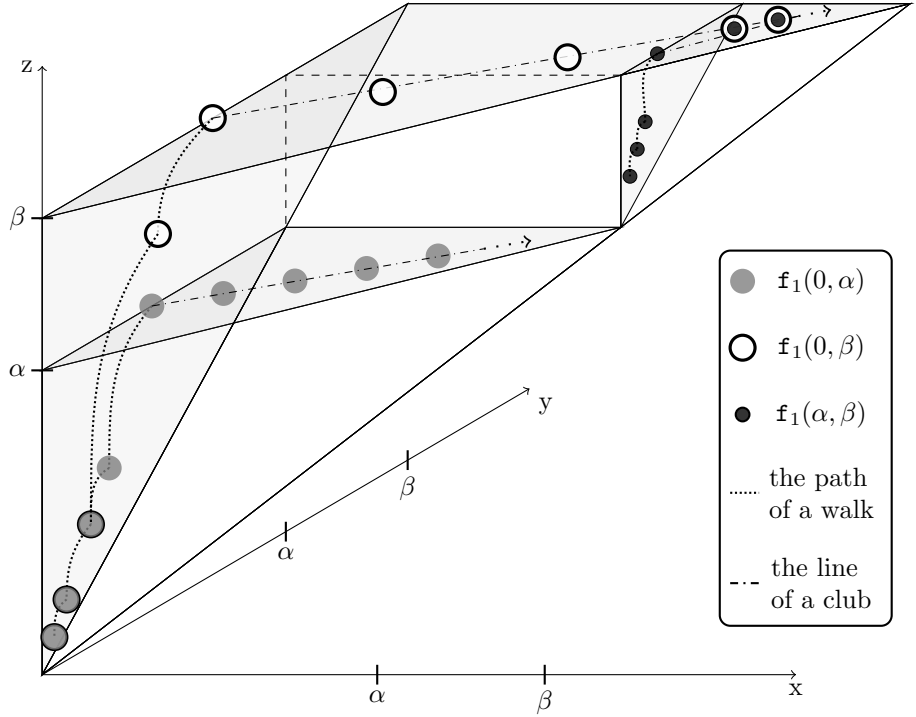

This nontriviality may be seen in either of (at least) two ways; both generalize to higher dimensions. For the first, observe that, by equations \ref{45} and \ref{4110}, we may define the following function for any $\beta<\gamma<\omega_1$ in which $\beta$ is a limit ordinal:
\begin{align}\label{mfunction}
m(\beta,\gamma)=\min \{\eta_i^\beta\,|\,(\eta_j^\beta,\eta_{j+1}^\beta,\beta) \in \text{supp}(\mathtt{f}_1(0,\gamma))\text{ for all }j\geq i\}
\end{align}
(Recall that $(\eta_j^\beta)$ is the increasing enumeration of $C_\beta$.) Visually, $m(\beta,\gamma)$ may be identified with the leftmost point in the $z=\beta$ plane on the left side of Figure \ref{yslices}. Equation \ref{46} applied to $0<\alpha<\gamma<\omega_1$ entails that
\begin{align}
\{\beta\in(\alpha,\gamma)\cap\text{Lim}\,|\,m(\beta,\gamma)<\alpha\}\text{ is finite for any }\alpha<\gamma<\omega_1\label{n11}
\end{align}

\hspace{-.3 cm} Now suppose that some $\mathtt{e}_0(0)\in\prod_{[\omega_1]^3}\mathbb{Z}$ satisfied equation \ref{48}. Then for all limit ordinals $\beta<\omega_1$, the function
\begin{align*}
n(\beta)=\min \{\eta_i^\beta\,|\,(\eta_j^\beta,\eta_{j+1}^\beta,\beta) \in \text{supp}(\mathtt{e}_0(0))\text{ for all }j\geq i\}
\end{align*}
would be defined. By the Pressing Down Lemma, the function $\text{Lim}\cap\omega_1\to\omega_1 :\beta\mapsto n(\beta)$ would be constantly $\eta$ on some stationary $S\subseteq\omega_1$. Now take $\gamma$ with $S\cap\gamma$ infinite, and $\alpha$ in the interval $(\eta,\gamma)$. By equation \ref{48}, $m(\beta,\gamma)=\eta$ for infinitely many $\beta\in(\alpha,\gamma)$, contradicting fact \ref{n11}.
\begin{figure}
\begin{tikzpicture}[MyPersp,font=\large]
	\coordinate (A) at (0,0,0);
	\coordinate (B) at (3.3,0,3.3);
	\coordinate (E) at (0,0,3.3);
	\coordinate (e) at (0,0,2.4);
	\coordinate (b) at (2.4,0,2.4);
	\coordinate (C) at (1,0,1);
	\coordinate (c) at (1,0,0);
	\coordinate (d) at (0,0,1);
	\coordinate (D) at (1,0,2.4);
	\coordinate (f) at (2.4,0,0);
	\coordinate (e1) at (0,0,2.4);
	\coordinate (e2) at (.22,0,2.4);
	\coordinate (e3) at (.44,0,2.4);
	\coordinate (e4) at (.66,0,2.4);
	\coordinate (e5) at (.88,0,2.4);
	\coordinate (e6) at (1.1,0,2.4);
	\coordinate (e7) at (1.32,0,2.4);
	\coordinate (e8) at (1.54,0,2.4);
	\coordinate (e9) at (1.76,0,2.4);
	\coordinate (e10) at (1.98,0,2.4);
	\coordinate (e11) at (2.2,0,2.4);
	\coordinate (e12) at (1.32,0,2.4);
	\coordinate (j5) at (.9,0,2.1);
	\coordinate (j6) at (1.15,0,2.1);
	\coordinate (j7) at (1.4,0,2.1);
	\coordinate (j8) at (1.65,0,2.1);
	\coordinate (j9) at (1.9,0,2.1);
	\coordinate (k5) at (1.14,0,1.8);
	\coordinate (k6) at (1.31,0,1.8);
	\coordinate (k7) at (1.48,0,1.8);
	\coordinate (k8) at (1.65,0,1.8);
	\coordinate (l5) at (.84,0,1.5);
	\coordinate (l6) at (1.1,0,1.5);
	\coordinate (l7) at (1.36,0,1.5);
	\coordinate (m1) at (.3,0,1.2);
	\coordinate (m2) at (.5,0,1.2);
	\coordinate (m3) at (.7,0,1.2);
	\coordinate (m4) at (.89,0,1.2);
	\coordinate (m5) at (1.08,0,1.2);
	\coordinate (n1) at (.2,0,.9);
	\coordinate (n2) at (.37,0,.9);
	\coordinate (n3) at (.54,0,.9);
	\coordinate (n4) at (.71,0,.9);
	\coordinate (o1) at (.1,0,.6);
	\coordinate (o2) at (.28,0,.6);
	\coordinate (o3) at (.46,0,.6);
	\coordinate (p1) at (.14,0,.3);

	\coordinate (Aa) at (6,0,0);
	\coordinate (Ba) at (9.3,0,3.3);
	\coordinate (Ea) at (6,0,3.3);
	\coordinate (ea) at (6,0,1.8);
	\coordinate (ba) at (8.4,0,2.4);
	\coordinate (Ca) at (7,0,1);
	\coordinate (ca) at (7,0,0);
	\coordinate (da) at (6,0,1);
	\coordinate (Da) at (7,0,3.3);
	\coordinate (fa) at (8.4,0,0);
	\coordinate (e1a) at (6,0,2.4);
	\coordinate (e2a) at (6.22,0,2.4);
	\coordinate (e3a) at (6.44,0,2.4);
	\coordinate (e4a) at (6.658,0,2.4);
	\coordinate (e5a) at (6.88,0,2.4);
	\coordinate (e6a) at (7.1,0,2.4);
	\coordinate (e7a) at (7.32,0,2.4);
	\coordinate (e8a) at (7.54,0,2.4);
	\coordinate (e9a) at (7.76,0,2.4);
	\coordinate (e10a) at (7.98,0,2.4);
	\coordinate (e11a) at (8.2,0,2.4);
	\coordinate (e12a) at (7.32,0,2.4);
	\coordinate (j5a) at (6.9,0,2.1);
	\coordinate (j6a) at (7.15,0,2.1);
	\coordinate (j7a) at (7.4,0,2.1);
	\coordinate (j8a) at (7.65,0,2.1);
	\coordinate (j9a) at (7.9,0,2.1);
	\coordinate (k5a) at (7.14,0,1.8);
	\coordinate (k6a) at (7.31,0,1.8);
	\coordinate (k7a) at (7.48,0,1.8);
	\coordinate (k8a) at (7.65,0,1.8);
	\coordinate (l5a) at (6.84,0,1.5);
	\coordinate (l6a) at (7.1,0,1.5);
	\coordinate (l7a) at (7.36,0,1.5);
	\coordinate (m1a) at (6.3,0,1.2);
	\coordinate (m2a) at (6.5,0,1.2);
	\coordinate (m3a) at (6.7,0,1.2);
	\coordinate (m4a) at (6.89,0,1.2);
	\coordinate (m5a) at (7.08,0,1.2);
	\coordinate (n1a) at (6.2,0,.9);
	\coordinate (n2a) at (6.37,0,.9);
	\coordinate (n3a) at (6.54,0,.9);
	\coordinate (n4a) at (6.71,0,.9);
	\coordinate (o1a) at (6.1,0,.6);
	\coordinate (o2a) at (6.28,0,.6);
	\coordinate (o3a) at (6.46,0,.6);
	\coordinate (p1a) at (6.14,0,.3);
	\coordinate (q1a) at (6.65,0,2.7);
	\coordinate (q2a) at (6.89,0,2.7);
	\coordinate (q3a) at (7.13,0,2.7);
	\coordinate (q4a) at (7.37,0,2.7);
	\coordinate (q5a) at (7.61,0,2.7);
	\coordinate (q6a) at (7.85,0,2.7);
	\coordinate (q7a) at (8.09,0,2.7);
	\coordinate (q8a) at (8.33,0,2.7);
	\coordinate (q9a) at (8.57,0,2.7);
	\coordinate (r1a) at (6.26,0,3);
	\coordinate (r2a) at (6.58,0,3);
	\coordinate (r3a) at (6.9,0,3);
	\coordinate (r4a) at (7.22,0,3);
	\coordinate (r5a) at (7.54,0,3);
	\coordinate (r6a) at (7.86,0,3);
	\coordinate (r7a) at (8.18,0,3);
	\coordinate (r8a) at (8.5,0,3);
	\coordinate (r9a) at (8.82,0,3);

\coordinate (S) at (4,0,0);
\coordinate (Sa) at (10,0,0);
\coordinate (T) at (0,0,4);
\coordinate (Ta) at (6,0,4);
\coordinate (X) at (3.3,0,0);
\coordinate (Xa) at (9.3,0,0);

	\coordinate (E-) at (-.1,0,3.3);
	\coordinate (E+) at (.1,0,3.3);	

	\coordinate (W-) at (3.3,0,-.1);
	\coordinate (W+) at (3.3,0,.1);
	
	\coordinate (d-) at (-.1,0,1);
	\coordinate (d+) at (.1,0,1);	

	\coordinate (c-) at (1,0,-.1);
	\coordinate (c+) at (1,0,.1);

	\coordinate (e-) at (-.1,0,2.4);
	\coordinate (e+) at (.1,0,2.4);	
	
	\coordinate (ee-) at (-.1,0,1.8);
	\coordinate (ee+) at (.1,0,1.8);	
	\coordinate (ee) at (0,0,1.8);	

	\coordinate (f-) at (2.4,0,-.1);
	\coordinate (f+) at (2.4,0,.1);
	
		\coordinate (Ea-) at (5.9,0,3.3);
	\coordinate (Ea+) at (6.1,0,3.3);	

	\coordinate (Wa-) at (9.3,0,-.1);
	\coordinate (Wa+) at (9.3,0,.1);
	
	\coordinate (da-) at (5.9,0,1);
	\coordinate (da+) at (6.1,0,1);	

	\coordinate (ca-) at (7,0,-.1);
	\coordinate (ca+) at (7,0,.1);

	\coordinate (ea-) at (5.9,0,1.8);
	\coordinate (ea+) at (6.1,0,1.8);	

	\coordinate (fa-) at (8.4,0,-.1);
	\coordinate (fa+) at (8.4,0,.1);

	\draw (e1) node[fill,circle,inner sep=0pt,minimum size=4pt, black, opacity=.95]{};
		\draw (e2) node[fill,circle,inner sep=0pt,minimum size=4pt, black, opacity=.95]{};
			\draw (e3) node[fill,circle,inner sep=0pt,minimum size=4pt, black, opacity=.95]{};
				\draw (e4) node[fill,circle,inner sep=0pt,minimum size=4pt, black, opacity=.95]{};
					\draw (e5) node[fill,circle,inner sep=0pt,minimum size=4pt, black, opacity=.95]{};
						\draw (e6) node[fill,circle,inner sep=0pt,minimum size=4pt, black, opacity=.95]{};
							\draw (e7) node[fill,circle,inner sep=0pt,minimum size=4pt, black, opacity=.95]{};
			\draw (e8) node[fill,circle,inner sep=0pt,minimum size=4pt, black, opacity=.95]{};
					\draw (e9) node[fill,circle,inner sep=0pt,minimum size=4pt, black, opacity=.95]{};
						\draw (e10) node[fill,circle,inner sep=0pt,minimum size=4pt, black, opacity=.95]{};
							\draw (e11) node[fill,circle,inner sep=0pt,minimum size=4pt, black, opacity=.95]{};
							\draw (j5) node[fill,circle,inner sep=0pt,minimum size=4pt, black, opacity=.95]{};
		\draw (j6) node[fill,circle,inner sep=0pt,minimum size=4pt, black, opacity=.95]{};
			\draw (j7) node[fill,circle,inner sep=0pt,minimum size=4pt, black, opacity=.95]{};
				\draw (j8) node[fill,circle,inner sep=0pt,minimum size=4pt, black, opacity=.95]{};
					\draw (j9) node[fill,circle,inner sep=0pt,minimum size=4pt, black, opacity=.95]{};
			\draw (k5) node[fill,circle,inner sep=0pt,minimum size=4pt, black, opacity=.95]{};
		\draw (k6) node[fill,circle,inner sep=0pt,minimum size=4pt, black, opacity=.95]{};
			\draw (k7) node[fill,circle,inner sep=0pt,minimum size=4pt, black, opacity=.95]{};
				\draw (k8) node[fill,circle,inner sep=0pt,minimum size=4pt, black, opacity=.95]{};
\draw (l5) node[fill,circle,inner sep=0pt,minimum size=4pt, black, opacity=.95]{};
						\draw (l6) node[fill,circle,inner sep=0pt,minimum size=4pt, black, opacity=.95]{};
							\draw (l7) node[fill,circle,inner sep=0pt,minimum size=4pt, black, opacity=.95]{};
								\draw (m1) node[fill,circle,inner sep=0pt,minimum size=4pt, black, opacity=.95]{};
		\draw (m2) node[fill,circle,inner sep=0pt,minimum size=4pt, black, opacity=.95]{};
			\draw (m3) node[fill,circle,inner sep=0pt,minimum size=4pt, black, opacity=.95]{};
				\draw (m4) node[fill,circle,inner sep=0pt,minimum size=4pt, black, opacity=.95]{};
					\draw (m5) node[fill,circle,inner sep=0pt,minimum size=4pt, black, opacity=.95]{};	
		\draw (n1) node[fill,circle,inner sep=0pt,minimum size=4pt, black, opacity=.95]{};
		\draw (n2) node[fill,circle,inner sep=0pt,minimum size=4pt, black, opacity=.95]{};
		\draw (n3) node[fill,circle,inner sep=0pt,minimum size=4pt, black, opacity=.95]{};
		\draw (n4) node[fill,circle,inner sep=0pt,minimum size=4pt, black, opacity=.95]{};			
				\draw (o1) node[fill,circle,inner sep=0pt,minimum size=4pt, black, opacity=.95]{};
		\draw (o2) node[fill,circle,inner sep=0pt,minimum size=4pt, black, opacity=.95]{};
		\draw (o3) node[fill,circle,inner sep=0pt,minimum size=4pt, black, opacity=.95]{};
		\draw (p1) node[fill,circle,inner sep=0pt,minimum size=4pt, black, opacity=.95]{};		
				
	\draw (X) node[inner sep=6pt, below] {$\omega_1$};
	\draw (f) node[inner sep=6pt, below] {$\gamma$};
	\draw (c) node[inner sep=6pt, below] {$\alpha$};
	\draw[thick] (ee-)--(ee+);	
	\draw[thick] (E-)--(E+);
	\draw[thick] (W-)--(W+);
	\draw[thick] (d-)--(d+);
	\draw[thick] (c-)--(c+);
	\draw[thick] (e-)--(e+);
	\draw[thick] (f-)--(f+);
	\draw (E) node[inner sep=6pt, left] {$\omega_1$};
		\draw (d) node[inner sep=6pt, left] {$\alpha$};
		\draw (ee) node[inner sep=6pt, left] {$\beta$};
			\draw (e) node[inner sep=6pt, left] {$\gamma$};
	\draw[->] (A)--(S) node[below] {x};
	\draw[->] (A)--(T) node[left] {z};
\draw[->] (A)--(B);
\draw (A)--(e)--(b)--cycle;
\draw[dashed] (d)--(C)--(D);
\fill[fill=gray, opacity=.25] (A)--(d)--(C);
\fill[fill=gray, opacity=.25] (C)--(D)--(b);
\fill[fill=gray, opacity=.1] (d)--(C)--(D)--(e);

	\draw (Xa) node[inner sep=6pt, below] {$\omega_1$};
	\draw (fa) node[inner sep=6pt, below] {$\gamma$};
	\draw (ca) node[inner sep=6pt, below] {$\alpha$};
	\draw[thick] (Ea-)--(Ea+);
	\draw[thick] (Wa-)--(Wa+);
	\draw[thick] (da-)--(da+);
	\draw[thick] (ca-)--(ca+);
	\draw[thick] (ea-)--(ea+);
	\draw[thick] (fa-)--(fa+);
	\draw (Ea) node[inner sep=6pt, left] {$\omega_1$};
		\draw (da) node[inner sep=6pt, left] {$\alpha$};
			\draw (ea) node[inner sep=6pt, left] {$\beta$};
	\draw[->] (Aa)--(Sa) node[below] {x};
	\draw[->] (Aa)--(Ta) node[left] {z};
\draw[dotted] (Ea)--(Ba);
\draw (Ea)--(Aa)--(Ba);
\draw[dashed] (da)--(Ca)--(Da);
\fill[fill=gray, opacity=.25] (Aa)--(da)--(Ca);
\fill[fill=gray, opacity=.25] (Ca)--(Da)--(Ba);
\fill[fill=gray, opacity=.1] (da)--(Ca)--(Da)--(Ea);

	%\draw (e1a) node[fill,circle,inner sep=0pt,minimum size=4pt, black, opacity=.95]{};
		\draw (e2a) node[fill,circle,inner sep=0pt,minimum size=4pt, black, opacity=.95]{};
			\draw (e3a) node[fill,circle,inner sep=0pt,minimum size=4pt, black, opacity=.95]{};
				\draw (e4a) node[fill,circle,inner sep=0pt,minimum size=4pt, black, opacity=.95]{};
					\draw (e5a) node[fill,circle,inner sep=0pt,minimum size=4pt, black, opacity=.95]{};
						\draw (e6a) node[fill,circle,inner sep=0pt,minimum size=4pt, black, opacity=.95]{};
							\draw (e7a) node[fill,circle,inner sep=0pt,minimum size=4pt, black, opacity=.95]{};
			\draw (e8a) node[fill,circle,inner sep=0pt,minimum size=4pt, black, opacity=.95]{};
					\draw (e9a) node[fill,circle,inner sep=0pt,minimum size=4pt, black, opacity=.95]{};
						\draw (e10a) node[fill,circle,inner sep=0pt,minimum size=4pt, black, opacity=.95]{};
							\draw (e11a) node[fill,circle,inner sep=0pt,minimum size=4pt, black, opacity=.95]{};
							\draw (j5a) node[fill,circle,inner sep=0pt,minimum size=4pt, black, opacity=.95]{};
		\draw (j6a) node[fill,circle,inner sep=0pt,minimum size=4pt, black, opacity=.95]{};
			\draw (j7a) node[fill,circle,inner sep=0pt,minimum size=4pt, black, opacity=.95]{};
				\draw (j8a) node[fill,circle,inner sep=0pt,minimum size=4pt, black, opacity=.95]{};
					\draw (j9a) node[fill,circle,inner sep=0pt,minimum size=4pt, black, opacity=.95]{};
			\draw (k5a) node[fill,circle,inner sep=0pt,minimum size=4pt, black, opacity=.95]{};
		\draw (k6a) node[fill,circle,inner sep=0pt,minimum size=4pt, black, opacity=.95]{};
			\draw (k7a) node[fill,circle,inner sep=0pt,minimum size=4pt, black, opacity=.95]{};
				\draw (k8a) node[fill,circle,inner sep=0pt,minimum size=4pt, black, opacity=.95]{};
\draw (l5a) node[fill,circle,inner sep=0pt,minimum size=4pt, black, opacity=.95]{};
						\draw (l6a) node[fill,circle,inner sep=0pt,minimum size=4pt, black, opacity=.95]{};
							\draw (l7a) node[fill,circle,inner sep=0pt,minimum size=4pt, black, opacity=.95]{};
								\draw (m1a) node[fill,circle,inner sep=0pt,minimum size=4pt, black, opacity=.95]{};
		\draw (m2a) node[fill,circle,inner sep=0pt,minimum size=4pt, black, opacity=.95]{};
			\draw (m3a) node[fill,circle,inner sep=0pt,minimum size=4pt, black, opacity=.95]{};
				\draw (m4a) node[fill,circle,inner sep=0pt,minimum size=4pt, black, opacity=.95]{};
					\draw (m5a) node[fill,circle,inner sep=0pt,minimum size=4pt, black, opacity=.95]{};	
		\draw (n1a) node[fill,circle,inner sep=0pt,minimum size=4pt, black, opacity=.95]{};
		\draw (n2a) node[fill,circle,inner sep=0pt,minimum size=4pt, black, opacity=.95]{};
		\draw (n3a) node[fill,circle,inner sep=0pt,minimum size=4pt, black, opacity=.95]{};
		\draw (n4a) node[fill,circle,inner sep=0pt,minimum size=4pt, black, opacity=.95]{};			
				\draw (o1a) node[fill,circle,inner sep=0pt,minimum size=4pt, black, opacity=.95]{};
		\draw (o2a) node[fill,circle,inner sep=0pt,minimum size=4pt, black, opacity=.95]{};
		\draw (o3a) node[fill,circle,inner sep=0pt,minimum size=4pt, black, opacity=.95]{};
		\draw (p1a) node[fill,circle,inner sep=0pt,minimum size=4pt, black, opacity=.95]{};	
		\draw (q1a) node[fill,circle,inner sep=0pt,minimum size=4pt, black, opacity=.95]{};
		\draw (q2a) node[fill,circle,inner sep=0pt,minimum size=4pt, black, opacity=.95]{};
		\draw (q3a) node[fill,circle,inner sep=0pt,minimum size=4pt, black, opacity=.95]{};
		\draw (q4a) node[fill,circle,inner sep=0pt,minimum size=4pt, black, opacity=.95]{};	
		\draw (q5a) node[fill,circle,inner sep=0pt,minimum size=4pt, black, opacity=.95]{};	
		\draw (q6a) node[fill,circle,inner sep=0pt,minimum size=4pt, black, opacity=.95]{};
			\draw (q7a) node[fill,circle,inner sep=0pt,minimum size=4pt, black, opacity=.95]{};
				\draw (q8a) node[fill,circle,inner sep=0pt,minimum size=4pt, black, opacity=.95]{};
					\draw (q9a) node[fill,circle,inner sep=0pt,minimum size=4pt, black, opacity=.95]{};	
							\draw (r1a) node[fill,circle,inner sep=0pt,minimum size=4pt, black, opacity=.95]{};
		\draw (r2a) node[fill,circle,inner sep=0pt,minimum size=4pt, black, opacity=.95]{};
		\draw (r3a) node[fill,circle,inner sep=0pt,minimum size=4pt, black, opacity=.95]{};
		\draw (r4a) node[fill,circle,inner sep=0pt,minimum size=4pt, black, opacity=.95]{};	
		\draw (r5a) node[fill,circle,inner sep=0pt,minimum size=4pt, black, opacity=.95]{};	
		\draw (r6a) node[fill,circle,inner sep=0pt,minimum size=4pt, black, opacity=.95]{};
			\draw (r7a) node[fill,circle,inner sep=0pt,minimum size=4pt, black, opacity=.95]{};
				\draw (r8a) node[fill,circle,inner sep=0pt,minimum size=4pt, black, opacity=.95]{};
					\draw (r9a) node[fill,circle,inner sep=0pt,minimum size=4pt, black, opacity=.95]{};

\end{tikzpicture}
\caption{Schematic profile views of $\mathtt{f}_1(0,\gamma)$ and a candidate trivialization $\mathtt{e}_0(0)$. In each, the lighter shaded box denotes the restricted domain $\alpha\otimes\omega_1 \otimes(\alpha,\omega_1)$. By equation \ref{46}, the support therein of $\mathtt{f}_1(0,\gamma)$ (depicted as black dots) is finite for any $\alpha<\gamma$, but the support therein of any trivializing $\mathtt{e}_0(0)$ must be uncountable for some $\alpha<\omega_1$, entailing contradiction.}
\label{yslices}
\end{figure}
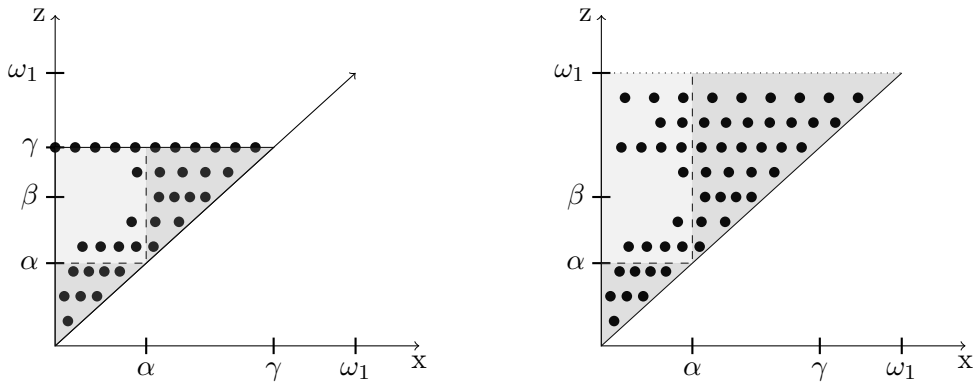

For the second argument, observe simply that any $\mathtt{e}_0(0)$ satisfying equation \ref{48} satisfies\begin{align*}
\mathtt{e}_0(0)\big|_{\beta\otimes\omega_1\otimes(\beta,\omega_1)}\,=^*0,
\end{align*}
by equation \ref{44}. Hence the function
\begin{align*}
g:\omega_1\to\omega_1\,:\,\beta\mapsto\min\Big\{\eta\,\Big|\,\mathtt{e}_0(0)\big|_{\xi\otimes\omega_1\otimes[\eta,\omega_1)}=0\text{ for all }\xi<\beta\Big\}
\end{align*}
is a well-defined increasing continuous function. Let $\gamma$ denote an infinite-cofinality fixed point of this function. Then $\mathtt{e}_0(0)$ restricted to the $z=\gamma$ plane is $0$; hence $\mathtt{e}_0(0)$ disagrees infinitely often thereon with $\mathtt{f}_1(0,\gamma)$, violating equation \ref{48}.
\subsection{The cases of higher $n$}\label{efffn}

The fundamentals of the higher-order cases are all visible already in the case of $n=2$. Here the coherence of the function $\mathtt{f}_2:[\omega_2]^3\rightarrow\prod_{[\omega_2]^4}\mathbb{Z}$ takes the following form:
\begin{align}\label{419}\mathtt{f}_2(\beta,\gamma,\delta)-\mathtt{f}_2(\alpha,\gamma,\delta)+\mathtt{f}_2(\alpha,\beta,\delta)-\mathtt{f}_2(\alpha,\beta,\gamma)=^*0\textnormal{ for all }\alpha<\beta<\gamma<\delta<\omega_2
\end{align}
$\mathtt{f}_2$ is nontrivial if there exists no $\mathtt{e}_1\in K^1(\mathbf{R}_3(\omega_2))$ satisfying
\begin{align}\label{429}
\mathtt{e}_1(\beta,\gamma)-\mathtt{e}_1(\alpha,\gamma)+\mathtt{e}_1(\alpha,\beta)=^*\mathtt{f}_2(\alpha,\beta,\gamma)\text{ for all }\alpha<\beta<\gamma<\omega_2
\end{align}
Again the statement ``$\mathtt{e}_1\in K^1(\mathbf{R}_3(\omega_2))$'' abbreviates \begin{align}\label{4291}
\mathtt{e}_1:[\omega_2]^2\rightarrow\prod_{[\omega_2]^4}\mathbb{Z}\,\textnormal{ and }\,\textnormal{supp}(\mathtt{e}_1(\beta,\gamma))\subseteq [\,[\beta,\omega_2)\,]^4\textnormal{ for all }\beta<\gamma<\omega_2
\end{align}
Again $\mathtt{f}_2$ admits a straightforward definition, that of (\ref{efff}) and (\ref{effff}):
\begin{align}\label{439}
  \mathtt{f}_2(\alpha,\beta,\gamma) = \left\{\def\arraystretch{1}%
  \begin{array}{@{}c@{\quad}l@{}}
    0 & \hspace{.45 cm}\textnormal{if }\beta\in C_\gamma\textnormal{ but }\\ & \hspace{.4 cm} C^{\beta\gamma}(\alpha)\textnormal{ is undefined} \\
    -\langle\alpha,C^{\beta\gamma}(\alpha),\beta,\gamma\rangle+\mathtt{f}_2(C^{\beta\gamma}(\alpha),\beta,\gamma) & \\ \;+\mathtt{f}_2(\alpha,C^{\beta\gamma}(\alpha),\gamma)-\mathtt{f}_2(\alpha,C^{\beta\gamma}(\alpha),\beta) & \hspace{.3 cm}\textnormal{ if }\beta\in C_\gamma\textnormal{ and }\\ & \hspace{.4 cm} C^{\beta\gamma}(\alpha)\textnormal{ is defined} \\
     \langle\alpha,\beta,C^\gamma(\beta),\gamma\rangle-\mathtt{f}_2(\beta,C^\gamma(\beta),\gamma) & \\ \;+\mathtt{f}_2(\alpha,C^\gamma(\beta),\gamma)+\mathtt{f}_2(\alpha,\beta,C^\gamma(\beta)) & \hspace{.3 cm}\textnormal{ if }\beta\notin C_\gamma 
  \end{array}\right.
\end{align}
(If $\beta\in C_\gamma$, then $C^{\beta\gamma}(\alpha)$ is undefined when $C_{\beta\gamma}\subseteq\alpha+1$. The case $\gamma=\beta+1$, wherein $C_{\beta\gamma}=\varnothing$, is an instance.) As before it is immediate from this definition that
\begin{align}\label{449}\text{supp}(\mathtt{f}_2(\alpha,\beta,\gamma))\subseteq [\,[\alpha,\gamma]\,]^4\end{align}
for all $\alpha<\beta<\gamma<\omega_2$. Support considerations again then afford us a reduction of equation \ref{419}:
\begin{align}\label{efftwo}(\mathtt{f}_2(0,\beta,\gamma)-\mathtt{f}_2(0,\alpha,\gamma)+\mathtt{f}_2(0,\alpha,\beta))\big|_{\alpha\otimes[\omega_2]^3}=^*0\textnormal{ for all }\alpha<\beta<\gamma<\omega_2\end{align}
Observe now that for any $\hat{\mathtt{e}}\in K^1(\mathbf{R}_3(\omega_2))$ satisfying equation \ref{429}, the function $$\mathtt{e}_1(0,\,\cdot\,):\omega_2\to \prod_{[\omega_2]^4}\mathbb{Z}\,:\,\beta\mapsto -\hat{\mathtt{e}}_1(0,\beta)$$ will satisfy
\begin{align} (\mathtt{e}_1(0,\beta)-\mathtt{e}_1(0,\alpha))\big|_{\alpha\otimes[\omega_2]^3}=^{*}\mathtt{f}_2(0,\alpha,\beta)\big|_{\alpha\otimes[\omega_2]^3}\hspace{.3 cm}\textnormal{ for all }\alpha<\beta<\omega_2,\label{e1}
\end{align}
by equation \ref{4291}. (In fact there exists an $\mathtt{e}_1$ as in equation \ref{e1} if and only if there exists an $\mathtt{e}_1$ as in equation \ref{429}, just as in the $n=1$ case, as the reader may verify). Hence to show the nontriviality of $\mathtt{f}_2$, it will suffice to show that no $\mathtt{e}_1(0,\,\cdot\,)$ as in equation \ref{e1} can exist.

As in the $n=1$ case, this nontriviality derives ultimately from the \emph{lower-order nontriviality} manifesting in the $z=\beta$ hyperplanes of the system $\mathtt{f}_2$. Again a picture may be of use; what Figure \ref{zslices} aims above all to convey is the following: for any $\beta\in\omega_2$ of uncountable cofinality, the family $\{\mathtt{f}_2(0,\alpha,\beta)\big|_{\alpha\otimes[\omega_2]^2\otimes\{\beta\}}\;|\;\alpha\in C_\beta\}$ is a copy --- or what might be more precisely described as a relativization to the club $C_\beta$ --- of the nontrivial family $\{\mathtt{f}_1(0,\alpha)\;|\;\alpha\in \omega_1\}$. This the reader may verify by comparing the first three entries of the second alternative in equation \ref{439} to equation \ref{43}; this is the grounding recognition for either of two arguments paralleling (in reverse order) those for the nontriviality of $\mathtt{f}_1$.
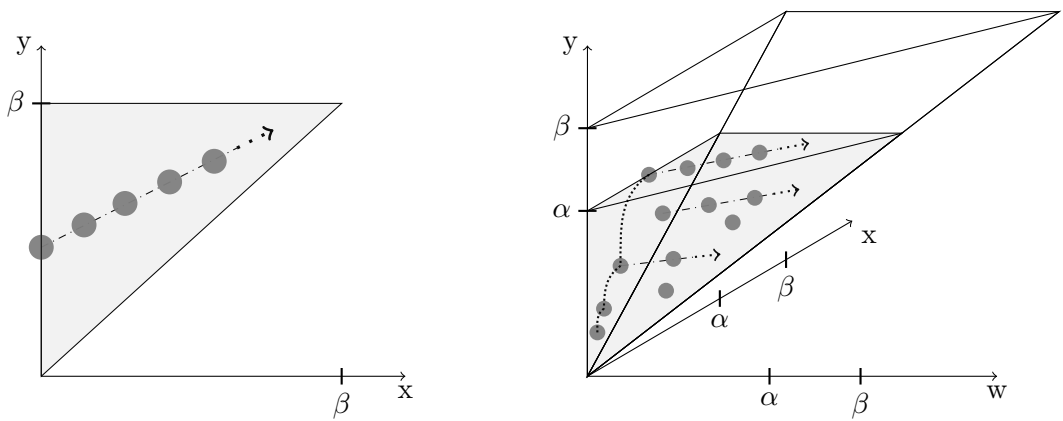
\begin{figure}
\begin{tikzpicture}[MyPersp,font=\large]
	\coordinate (A) at (0,0,0);
	\coordinate (B) at (3.3,0,3.3);
	\coordinate (D) at (0,0,1.56);
	\coordinate (E) at (0,0,3.3);
	\coordinate (F) at (0.47,0,1.83);	
	\coordinate (G) at (0.92,0,2.09);	
	\coordinate (H) at (1.41,0,2.35);	
	\coordinate (I) at (1.9,0,2.6);	
	\coordinate (J) at (2.15,0,2.76);
	\coordinate (K) at (2.55,0,2.98);
	
	\coordinate (AA) at (6,0,0);
	\coordinate (BB) at (6,0,2);
	\coordinate (CC) at (6,2,2);
	\coordinate (DD) at (8,2,2);
	\coordinate (EE) at (6,0,3);
	\coordinate (FF) at (6,3,3);
	\coordinate (GG) at (9,3,3);
	\coordinate (HH) at (8,2,3);
	\coordinate (II) at (8,3,3);
	\coordinate (JJ) at (6,2,3);
	
	\coordinate (F0) at (6,2,0);	
	\coordinate (F1) at (8,0,0);
	\coordinate (F2) at (6,3,0);	
	\coordinate (F3) at (9,0,0); 
	
	\coordinate (K0) at (6,.93,2);	
	\coordinate (K1) at (6.3,1.1,2); 	
 	\coordinate (K2) at (6.55,1.3,2);	
	\coordinate (K3) at (6.8,1.5,2);
	\coordinate (K4) at (6.95,1.6,2);
	\coordinate (K5) at (7.17,1.74,2);
	\coordinate (L1) at (6.1,1,1.5);
	\coordinate (L2) at (6.45,1.215,1.5);
	\coordinate (L3) at (6.82,1.4,1.5);
	\coordinate (L4) at (6.95,1.5,1.5);
	\coordinate (L5) at (7.15,1.62,1.5);
	\coordinate (M1) at (6,.5,1.1);
	\coordinate (M2) at (6.45,.68,1.1);
	\coordinate (M3) at (6.65,.73,1.1);
	\coordinate (M4) at (6.88,.8,1.1);
	\coordinate (N) at (6.65,1.3,1.25);	
	\coordinate (O1) at (6,.25,.7);	
	\coordinate (O2) at (6.5,.5,.8);	
	\coordinate (O3) at (6,.15,.46);
	
	\coordinate (S) at (0,0,4);
	\coordinate (T) at (4,0,0);
	\coordinate (X) at (3.3,0,0);
	
	\coordinate (RR) at (6,4,0);
	\coordinate (SS) at (6,0,4);
	\coordinate (TT) at (10.5,0,0);
	\coordinate (UU) at (6,2,0);
	\coordinate (VV) at (6,3,0);
	\coordinate (WW) at (8,0,0);
	\coordinate (XX) at (9,0,0);

	\coordinate (E-) at (-.1,0,3.3);
	\coordinate (E+) at (.1,0,3.3);	

	\coordinate (W-) at (3.3,0,-.1);
	\coordinate (W+) at (3.3,0,.1);
	
	\coordinate (BB-) at (5.9,0,2);
	\coordinate (BB+) at (6.1,0,2);
	
	\coordinate (EE-) at (5.9,0,3);
	\coordinate (EE+) at (6.1,0,3);

	\coordinate (F0-) at (6,2,-.1);
	\coordinate (F0+) at (6,2,.1);
	
	\coordinate (F1-) at (8,0,.1);
	\coordinate (F1+) at (8,0,-.1);
	
	\coordinate (F2-) at (6,3,-.1);
	\coordinate (F2+) at (6,3,.1);
	
	\coordinate (F3-) at (9,0,.1);
	\coordinate (F3+) at (9,0,-.1);

	\draw[dashdotted] (D)--(J);
	\draw[dashdotted] (K0)--(K4);
	\draw[dashdotted] (L1)--(L4);
	\draw[dashdotted] (M1)--(M3);
	\draw (K0) node[fill,circle,inner sep=0pt,minimum size=6pt, gray,opacity=.95]{};
	\draw (K1) node[fill,circle,inner sep=0pt,minimum size=6pt, gray,opacity=.95]{};
	\draw (K2) node[fill,circle,inner sep=0pt,minimum size=6pt, gray,opacity=.95]{};
	\draw (K3) node[fill,circle,inner sep=0pt,minimum size=6pt, gray,opacity=.95]{};
	\draw (L1) node[fill,circle,inner sep=0pt,minimum size=6pt, gray,opacity=.95]{};
	\draw (L2) node[fill,circle,inner sep=0pt,minimum size=6pt, gray,opacity=.95]{};
	\draw (L3) node[fill,circle,inner sep=0pt,minimum size=6pt, gray,opacity=.95]{};
	\draw (M1) node[fill,circle,inner sep=0pt,minimum size=6pt, gray,opacity=.95]{};	
	\draw (M2) node[fill,circle,inner sep=0pt,minimum size=6pt, gray,opacity=.95]{};	
	\draw (N) node[fill,circle,inner sep=0pt,minimum size=6pt, gray,opacity=.95]{};	
	\draw (O1) node[fill,circle,inner sep=0pt,minimum size=6pt, gray,opacity=.95]{};	
	\draw (O2) node[fill,circle,inner sep=0pt,minimum size=6pt, gray,opacity=.95]{};	
	\draw (O3) node[fill,circle,inner sep=0pt,minimum size=6pt, gray,opacity=.95]{};	
 	\draw[->] (A)--(S) node[left] {y};
	\draw[->, thick, dotted] (K4)--(K5);
	\draw[->, thick, dotted] (L4)--(L5);
	\draw[->, thick, dotted] (M3)--(M4);
	\draw (D) node[fill,circle, gray,opacity=.95]{};	
	\draw (F) node[fill,circle, gray,opacity=.95]{};
	\draw (G) node[fill,circle, gray,opacity=.95]{};	
	\draw (H) node[fill,circle, gray,opacity=.95]{};
	\draw (I) node[fill,circle, gray,opacity=.95]{};
	\draw[->, very thick, loosely dotted] (J)--(K);
	\draw[densely dotted, thick, out=215, in=90] (K0) to (M1);
	\draw[densely dotted, thick, out=215, in=90] (M1) to (O1);
	\draw[densely dotted, thick, out=215, in=90] (O1) to (O3);
	\draw (X) node[inner sep=6pt, below] {$\beta$};
	\draw (F0) node[inner sep=6pt, below] {$\alpha$};
	\draw (F1) node[inner sep=6pt, below] {$\alpha$};
	\draw[thick] (E-)--(E+);
	\draw[thick] (W-)--(W+);
	\draw[thick] (BB-)--(BB+);
	\draw[thick] (EE-)--(EE+);
	\draw[thick] (F0-)--(F0+);
	\draw[thick] (F1-)--(F1+);
	\draw[thick] (F2-)--(F2+);
	\draw[thick] (F3-)--(F3+);
	\draw (F2) node[inner sep=6pt, below] {$\beta$};
	\draw (F3) node[inner sep=6pt, below] {$\beta$};
	\draw (E) node[inner sep=6pt, left] {$\beta$};
	\draw (EE) node[inner sep=6pt, left] {$\beta$};
	\draw (BB) node[inner sep=6pt, left] {$\alpha$};
	\draw[->] (A)--(T) node[below] {x};
\draw (A)--(B)--(E)--cycle;
\fill[fill=gray, opacity=.1] (A)--(B)--(E);
\fill[fill=gray, opacity=.1] (AA)--(BB)--(CC)--(DD);
\draw (AA)--(BB)--(CC)--cycle;
	\draw (AA)--(BB)--(DD)--cycle;
	\draw (AA)--(CC)--(DD)--cycle;
	\draw (AA)--(EE)--(FF)--cycle;
	\draw (AA)--(FF)--(GG)--cycle;
	\draw (AA)--(EE)--(GG)--cycle;
		\draw[->] (AA)--(RR) node[below right] {x};
	\draw[->] (AA)--(SS) node[left] {y};
	\draw[->] (AA)--(TT) node[below] {w};
\end{tikzpicture}
\caption{The supports of the $z=\beta$ ``slices'' of $\mathtt{f}_1(0,\beta)$ and $\mathtt{f}_2(0,\alpha,\beta)$, respectively. On the left, the axes are of length $\omega_1$ and $\beta$ is of cofinality $\aleph_0$; on the right, axes are of length $\omega_2$ and $\beta$ is of cofinality $\aleph_1$ and $\alpha$ is in $C_\beta$ and of cofinality $\aleph_0$. Arrowed ellipses connote continuation in the direction indicated: the nodes on the left form an image of $C_\beta$, as observed above, while arrowed subcollections of those on the right form images of $C_{\alpha\beta}$, etc. In the $w=0$ hyperplane on the right is an image of the $C_\beta$-internal walk from $\alpha$ down to $1$.}
\label{zslices}
\end{figure}

For the first argument, observe that equations \ref{e1} and \ref{449} together imply that $\mathtt{e}_1(0,\alpha)\big|_{\alpha\otimes [\omega_2]^3}$ and $\mathtt{e}_1(0,\beta)\big|_{\alpha\otimes [\omega_2]^3}$ perfectly agree on all but finitely many hyperplanes $z=\gamma$ satisfying $\gamma>\beta$. More precisely,
\begin{align}
err(\alpha,\beta)=\{\gamma>\beta\;|\:(\mathtt{e}_1(0,\beta)-\mathtt{e}_1(0,\alpha))\big|_{\alpha\otimes[\omega_2]^2\otimes\{\gamma\}}\neq 0\}\text{ is finite for all }\alpha<\beta<\omega_2
\label{err}
\end{align}
Hence
\begin{align*}
g:\omega_2\to\omega_2\,:\,\xi\mapsto\min\{\eta\,|\,err(\alpha,\beta)\cap [\eta,\omega_2)=\varnothing\text{ for all }\alpha<\beta<\xi\}
\end{align*}
is a continuous increasing function. Denote its collection of fixed points $E_g$ and take $\gamma\in E_g\cap S_1^2$; now
\begin{align*}
\big(\mathtt{e}_1(0,\gamma)\big|_{\gamma\otimes[\omega_2]^2\otimes\{\gamma\}}-\bigcup_{\xi<\gamma}\mathtt{e}_1(0,\xi)\big|_{\xi\otimes[\omega_2]^2\otimes\{\gamma\}}\big)\big|_{\beta\otimes[\omega_2]^2\otimes\{\gamma\}}=^*\mathtt{f}_2(0,\beta,\gamma)\big|_{\beta\otimes[\omega_2]^2\otimes\{\gamma\}}
\end{align*}
for each $\beta\in C_\gamma$. This, though, implies that the family $\{\mathtt{f}_2(0,\beta,\gamma)\big|_{\beta\otimes [\omega_2]^2\otimes\{\gamma\}}\;|\;\beta\in C_\gamma\}$ is trivial, contradicting our observation in the preceding paragraph.

For the second argument, assume again for contradiction the existence of an $\mathtt{e}_1(0,\,\cdot\,)$ satisfying equation \ref{e1}. Observe that for each $\gamma\in S_1^2$, as $F_\gamma:=\{\mathtt{f}_2(0,\beta,\gamma)\big|_{\beta\otimes[\omega_2]^2\otimes\{\gamma\}}\;|\;\beta\in C_\gamma\}$ is nontrivial, there exist $\alpha_\gamma<\beta_\gamma$ in $C_\gamma$ such that
\begin{align}\label{adapt}
\big(\mathtt{e}_1(0,\beta_\gamma)-\mathtt{e}_1(0,\alpha_\gamma)\big)\big|_{\alpha_\gamma\otimes [\omega_2]^2\otimes\{\gamma\}}\neq 0.
\end{align}
(Again if there were no such $\alpha_\gamma<\beta_\gamma$ then the function $\mathtt{e}_1(0,\gamma)\big|_{[\omega_2]^3\otimes\{\gamma\}}-\bigcup_{\beta\in C_\gamma}\mathtt{e}_1(0,\beta)\big|_{\beta\otimes [\omega_2]^2\otimes \{\gamma\}}$ would trivialize $F_\gamma$.) Hence by the Pressing Down Lemma, there exists a stationary $S\subseteq S_1^2$ and $\alpha<\beta$ such that $\alpha_\gamma=\alpha$ and $\beta_\gamma=\beta$ for each $\gamma\in S$. This, though, contradicts observation \ref{err} above.

These two arguments are, arguably, in principle the same. Each merely emphasizes a different bar to triviality; in the latter case, lower-order nontriviality in $z=\gamma$ hyperplanes $(\gamma\in S_1^2)$ enforces disagreement thereon between some lower-index $\mathtt{e}_1(0,\,\cdot\,)$ terms, which the Pressing Down Lemma then concentrates on a pair of terms for a contradiction. In the other argument, the eventual finitude of the disagreements between pairs of $\mathtt{e}_1(0,\,\cdot\,)$ terms allows for closing-off arguments, entailing contradiction when they intersect with $S_1^2$. In each case, the nontriviality principle is one first appearing at $\omega_2$; as the reader may verify, all the ``initial'' families $\{\mathtt{f}_2(0,\vec{\beta})\mid\vec{\beta}\in [\delta\backslash\{0\}]^2\}$ \emph{are} trivial in the sense of (\ref{e1}).\footnote{For any such $\delta\in\omega_2$, the assignment $\mathtt{e}_1(0,\beta)=\mathtt{f}_2(0,\beta,\gamma)$ defines a trivialization of $\{\mathtt{f}_2(0,\vec{\beta})\mid\vec{\beta}\in [\delta\backslash\{0\}]^2\}$ in the sense of (\ref{e1}), by equation \ref{efftwo}.}

In the interests of clarity, we conclude this section by simply outlining the argument of the nontriviality of the functions $\mathtt{f}_n$ $(n>2)$; at least on a first reading, the greater detail in which it is recorded in Appendix \ref{appendixAA} may serve only to obscure the key points. The argument in each case begins with a coherence equation (as in (\ref{41}), (\ref{419})) and the nontriviality relation (as in (\ref{42}), (\ref{429})) which we aim to prove; each derives in a straightforward way from the argument and equations \ref{fcoherence} and \ref{showingthis} of Section \ref{theargument}. Similarly, explicit recursive definitions of the functions $\mathtt{f}_n$ (as in (\ref{43}), (\ref{439})) derive from equation \ref{effff}; support considerations then afford us ``degree reductions'' of the coherence and nontriviality relations at hand (as in (\ref{45}), (\ref{48}), (\ref{efftwo}), (\ref{e1})). More importantly, as a comparison of the explicit $\mathtt{f}_n$ and $\mathtt{f}_{n-1}$ definitions makes plain, for any $\gamma\in S_{n-1}^n$ the family $\{\mathtt{f}_n(\vec{\beta},\gamma)\mid\vec{\beta}\in [\gamma]^n\}$ is in essence a relativization to $C_\gamma$ of the family $\{\mathtt{f}_n(\vec{\beta})\mid\vec{\beta}\in [\omega_{n-1}]^n\}$.\footnote{We are eliding the matter of ``short'' internal tails, e.g. cases when coordinates of $\vec{\beta}$ do not fall within $C_\gamma$. Over suitably restricted domains, those arguments expand into $C_\gamma$, in the sense that after only finitely many ``corrective'' steps, their outputs agree with the $C_\gamma$-relativization we evoke. This is one meaning of the phrase ``in essence'' above. Alternatively, one may confine attention to the stricter relativizations $\{\mathtt{f}_n(0,\vec{\beta},\gamma)\mid\vec{\beta}\in [C_\gamma]^{n-1}\}$; the nontriviality of these families alone is sufficient for the remainder of our argument.} It will be our inductive assumption that the latter family, and hence the former, is nontrivial. The second argument for the nontriviality of $\mathtt{f}_2$ is then the simplest to generalize: for each $\gamma\in S_{n-1}^n$ the nontriviality of $\{\mathtt{f}_n(\vec{\beta},\gamma)\mid\vec{\beta}\in [\gamma]^n\}$ implies that there exists some $\vec{\alpha}_\gamma\in [C_\gamma]^n$ such that, just as in equation \ref{adapt},
$$\Bigg(\sum_{i=0}^{n-1}(-1)^i \mathtt{e}_{n-1}(0,\vec{\alpha}_\gamma^i)\Bigg)\Big|_{\vec{\alpha}_{\gamma}(0)\otimes [\omega_n]^n\otimes\{\gamma\}}\neq0.$$
(Here $\vec{\alpha}_{\gamma}(0)$ denotes the minimum element of $\vec{\alpha}_{\gamma}$.) Hence, just as before, by the Pressing Down Lemma there exists a stationary $S\subseteq S_{n-1}^n$ and $\vec{\alpha}\in [\omega_n]^n$ such that $\vec{\alpha}_\gamma=\vec{\alpha}$ for all $\gamma\in S$. However, also just as before, our triviality equations will imply that
\begin{align}
err(\vec{\alpha}):=\Big\{\gamma>\vec{\alpha}\;|\:\Big(\sum_{i=0}^{n-1}(-1)^i \mathtt{e}_{n-1}(0,\vec{\alpha}_\gamma^i)\Big)\Big|_{\vec{\alpha}_{\gamma}(0)\otimes [\omega_n]^n\otimes\{\gamma\}}\neq0\Big\} \text{ is finite for all }\vec{\alpha}\in[\omega_n]^n.
\label{err1}
\end{align}
This contradiction shows that no $\mathtt{e}_{n-1}$ can trivialize $\mathtt{f}_n$. As argued in Section \ref{theargument}, we have shown the following:
\begin{thm}\label{concludestheproof} For all $k\in\omega$ the cohomological dimension of $\omega_k$ is greater than or equal to $k+1$.
\end{thm}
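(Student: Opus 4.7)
The plan is to induct on $k$. As laid out in Section~\ref{theargument}, the inequality $\mathrm{cd}(\omega_k)\geq k+1$ reduces to showing that the cocycle $[\mathtt{f}_k]\in K^k(\mathbf{R}_{k+1}(\omega_k)/\mathbf{P}_{k+1}(\omega_k))$ constructed in Section~\ref{section52} is \emph{nontrivial}: combined with the long exact sequence~(\ref{les}) applied to $0\to\mathbf{P}_{k+1}(\omega_k)\to\mathbf{R}_{k+1}(\omega_k)\to\mathbf{R}_{k+1}(\omega_k)/\mathbf{P}_{k+1}(\omega_k)\to 0$, and with the easy vanishing of $\lim^j\mathbf{R}_{k+1}(\omega_k)$ for $j>0$ (checked directly via $\mathcal{K}(\mathbf{R}_{k+1}(\omega_k))$), this gives $\lim^{k+1}\mathbf{P}_{k+1}(\omega_k)\neq 0$. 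The base case $k=0$ is handled separately by the classical computation $\lim^1(\mathbb{Z}\stackrel{\times 2}{\longleftarrow}\mathbb{Z}\stackrel{\times 2}{\longleftarrow}\cdots)=\mathbb{Z}_2/\mathbb{Z}\neq 0$, so the content lies in the inductive step: assuming $\mathtt{f}_{k-1}$ is nontrivial on $\omega_{k-1}$, conclude the same for $\mathtt{f}_k$ on $\omega_k$.

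For the inductive step I would model the argument on the pressing-down proof already given in full for $n=1,2$. First, use the support containment $\mathrm{supp}(\mathtt{f}_k(\vec{\alpha}))\subseteq [\,[\vec{\alpha}(0),\vec{\alpha}(k)]\,]^{k+2}$ (the analogue of~(\ref{449})) to reduce a hypothetical trivialization $\mathtt{e}_{k-1}$ to a single function $\mathtt{e}_{k-1}(0,\cdot)$ satisfying a restricted coboundary relation modelled on~(\ref{e1}). Next, for each $\gamma\in S_{k-1}^k$, read off from the recursion defining $\mathtt{f}_k$ (the $k$-dimensional analogue of~(\ref{439})) that the $z=\gamma$ hyperplane slice of $\{\mathtt{f}_k(\vec{\beta},\gamma)\mid\vec{\beta}\in[\gamma]^k\}$ is, via the order-isomorphism $\omega_{k-1}\to C_\gamma$ and the $C_{\delta\gamma}$ conventions of Definition~\ref{cmpdlddrs}, a $C_\gamma$-relativization of the lower-dimensional family $\{\mathtt{f}_{k-1}(\vec{\beta})\mid\vec{\beta}\in[\omega_{k-1}]^k\}$. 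The inductive hypothesis then forces, for every such $\gamma$, the existence of some $\vec{\alpha}_\gamma\in[C_\gamma]^k$ on which the slice coboundary relation fails, i.e.\ on which the inequality of~(\ref{err1}) holds. Applying the Pressing Down Lemma to the regressive map $\gamma\mapsto\vec{\alpha}_\gamma(0)$ yields a stationary $S\subseteq S_{k-1}^k$ on which $\vec{\alpha}_\gamma$ takes some constant value $\vec{\alpha}$; but the support containment together with the trivialization hypothesis forces $\mathrm{err}(\vec{\alpha})$ to be finite, contradicting $|S|=\aleph_k$.

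The main obstacle is making the ``$C_\gamma$-relativization'' identification precise and uniform in $\gamma$. The recursion for $\mathtt{f}_k$ makes the identification transparent when every coordinate of $\vec{\beta}$ already lies in $C_\gamma$, but in general one must account for \emph{short internal tails}, where the first few applications of $\mathtt{b}$ pull coordinates ``into'' $C_\gamma$ and produce finitely many correction terms; these must be absorbed into the `$=^*$' framework. As the footnote in Section~\ref{efffn} observes, one can sidestep the general case entirely by restricting attention to the subfamily $\{\mathtt{f}_k(0,\vec{\beta},\gamma)\mid\vec{\beta}\in[C_\gamma]^{k-1}\}$; I would then verify that the nontriviality of this narrower family still propagates from the inductive hypothesis and still suffices to drive the pressing-down contradiction. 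This bookkeeping — matching the restricted $\mathtt{e}_{k-1}(0,\cdot)$ against the exact subfamily arising from the $C_\gamma$-copy of $\mathtt{f}_{k-1}$ — is the technical heart of the induction.
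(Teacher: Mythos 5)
Your proposal is correct and follows essentially the same route as the paper: the long exact sequence reduction, the base case via $\lim^1(\mathbb{Z}\stackrel{\times 2}{\leftarrow}\mathbb{Z}\stackrel{\times 2}{\leftarrow}\cdots)$, the support-containment degree reduction, the recognition that the $z=\gamma$ hyperplane of $\mathtt{f}_k(\,\cdot\,,\gamma)$ for $\gamma\in S^k_{k-1}$ is a $C_\gamma$-relativization of $\mathtt{f}_{k-1}$, the selection of witnesses $\vec{\alpha}_\gamma$ from the inductive hypothesis, the Pressing Down Lemma to stabilize them, and the contradiction with the finiteness of $\mathrm{err}(\vec{\alpha})$ --- all of this matches Sections \ref{theargument}, \ref{efffn} and Appendix \ref{appendixAA}. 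You also correctly flag the short-internal-tails subtlety and correctly identify the paper's workaround (restricting attention to $\{\mathtt{f}_k(0,\vec{\beta},\gamma)\mid\vec{\beta}\in[C_\gamma]^{k-1}\}$) as adequate. One minor imprecision: you describe applying Fodor to the regressive map $\gamma\mapsto\vec{\alpha}_\gamma(0)$, but that stabilizes only the first coordinate; one must argue (by iterating Fodor, or by applying it to $\sup\vec{\alpha}_\gamma$ and counting the $<\aleph_k$ tuples below a fixed bound) that the full tuple $\vec{\alpha}_\gamma$ stabilizes on a stationary set, as the paper does.
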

As described, this theorem together with Corollary \ref{goblotsthm} then concludes the proof of Theorems \ref{mitchellstheorem},  \ref{omegan}, and  \ref{eight}.\\

In the remaining sections we foreground some of the more intriguing combinatorial phenomena manifesting in or by way of the functions $\mathtt{f}_n$ and the higher-order variants of familiar objects which they articulate.
\section{Trees of trees, cohomology, and higher coherence in various guises}\label{treessection}

\subsection{The cohomology of the ordinals}\label{71cohomology}
It will be useful henceforth to adopt a more systematic usage of the terms \emph{coherence} and \emph{triviality}, and of their order-$n$ instances. The following definition generalizes the \emph{mod finite} nontrivial coherence relations of Section \ref{walkssection}; here and in all subsequent definitions, $A$ denotes an arbitrary abelian group.

\begin{defin}[\cite{CoOI}]\label{highernontriv}
A function $\varphi:\varepsilon\to A$ is \emph{$0$-coherent} if $$\varphi\big|_{\beta}=^{*}0$$
or, in other words, if $\varphi\big|_{\beta}$ is finitely supported, for every $\beta<\varepsilon$. If $\varphi$ itself is finitely supported then it is \emph{$0$-trivial}. Observe that this definition of $0$-trivial readily applies to functions of more general domain as well.

A family of functions $\Phi_1=\{\varphi_\gamma:\gamma\to A\mid \gamma\in\varepsilon\}$ is \emph{$1$-coherent} if 
$$\varphi_\gamma\big|_{\beta}-\varphi_\beta=^{*} 0$$ for all $\beta<\gamma<\varepsilon$. The family $\Phi_1$ is \emph{$1$-trivial} if there exists a $\varphi:\varepsilon\to A$ such that $$\varphi\big|_{\beta}-\varphi_\beta=^{*}0$$
for all $\beta<\varepsilon$.

For $n>1$ a family of functions $\Phi_n=\{\varphi_{\vec{\beta}}:\beta_0\rightarrow A\,|\,\vec{\beta}\in [\varepsilon]^{n}\}$ is \emph{$n$-coherent} if
\begin{align*}
			\sum_{i=0}^{n} (\text{-}1)^i\varphi_{\vec{\alpha}^i}=^{*} 0
		\end{align*}
		for all $\vec{\alpha}\in [\varepsilon]^{n+1}$. (For readability, here and below we've suppressed restriction-notations, understanding equations to hold on the intersection of their constituent functions' domains). The family $\Phi_n$ is \emph{$n$-trivial} (or simply \emph{trivial}, when the $n$ is clear) if there exists a $\Psi_{n-1}=\{\psi_{\vec{\alpha}}:\alpha_0\rightarrow A\,|\,\vec{\alpha}\in [\varepsilon]^{n-1}\}$ 
				such that
				\begin{align*}
					\sum_{i=0}^{n-1} (\text{-}1)^i\psi_{\vec{\alpha}^i}=^{*} \varphi_{\vec{\alpha}}
				\end{align*}
				for all $\vec{\alpha}\in [\varepsilon]^{n}$.
				
				We call any of the aforementioned functions or families of functions \emph{$A$-valued}, and \emph{of height $\varepsilon$}.
\end{defin} 

Observe that the operation of pointwise addition determines group structures on the both the set $\mathsf{coh}(n,A,\varepsilon)$ of $n$-coherent $A$-valued height-$\varepsilon$ families of functions and the set $\mathsf{triv}(n,A,\varepsilon)$ of $n$-trivial $A$-valued height-$\varepsilon$ families of functions; observe moreover that the latter then forms a subgroup of the former. The following is shown in \cite[Theorem 2.30]{CoOI}:
\begin{thm}\label{cooicohtriv} Let $\check{\mathrm{H}}^n(\varepsilon;\mathcal{A})$ denote the $n^{\mathrm{th}}$ \v{C}ech cohomology group of the ordinal $\varepsilon$ (endowed with its usual order-topology) with respect to the sheaf $\mathcal{A}$ of locally constant functions to $A$. Then for all ordinals $\varepsilon$ and positive integers $n$ and abelian groups $A$, $$\check{\mathrm{H}}^n(\varepsilon;\mathcal{A})\cong \frac{\mathsf{coh}(n,A,\varepsilon)}{\mathsf{triv}(n,A,\varepsilon)}\,.$$
\end{thm}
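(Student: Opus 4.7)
The plan is to exhibit both sides as cohomology of a single natural cochain complex.

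On the algebraic side, I observe that the quotient group $\mathsf{coh}(n,A,\varepsilon)/\mathsf{triv}(n,A,\varepsilon)$ is, essentially by definition, the $(n-1)$st cohomology of the complex $\mathcal{K}(\mathbf{X})$ recalled in Section~\ref{freeandproj}, applied to the inverse system $\mathbf{X}=(X_\alpha,x_{\alpha\beta},\varepsilon)$ with
\[
X_\alpha \;=\; A^\alpha/A^{(\alpha)},
\]
the quotient of all functions $\alpha\to A$ by the finitely supported ones, and bonding maps $x_{\alpha\beta}$ induced by restriction. Indeed, the formula for $K^{j}(\mathbf{X})$ from Section~\ref{freeandproj} gives
\[
K^{n-1}(\mathbf{X}) \;=\; \prod_{\vec{\beta}\in [\varepsilon]^n} A^{\beta_0}/A^{(\beta_0)},
\]
so an element of $K^{n-1}(\mathbf{X})$ is exactly a family $\{\varphi_{\vec{\beta}}\}_{\vec{\beta}\in[\varepsilon]^n}$ of functions modulo finite support; the cocycle condition is then $n$-coherence, and coboundaries are $n$-trivial families. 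The theorem therefore reduces to the assertion $\lim^{n-1}\mathbf{X} \;\cong\; \check{\mathrm{H}}^n(\varepsilon;\mathcal{A})$.

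On the sheaf-theoretic side, I would use the short exact sequence of sheaves on $\varepsilon$
\[
0 \longrightarrow \mathcal{A} \longrightarrow \mathcal{F} \longrightarrow \mathcal{Q} \longrightarrow 0,
\]
where $\mathcal{F}$ is the sheaf $U\mapsto A^U$ of arbitrary $A$-valued functions (which is flabby, hence $\check{\mathrm{H}}^{\geq 1}$-acyclic), and $\mathcal{Q}$ is the quotient sheaf. The long exact cohomology sequence then produces a degree-shift isomorphism $\check{\mathrm{H}}^{n-1}(\varepsilon;\mathcal{Q})\cong \check{\mathrm{H}}^n(\varepsilon;\mathcal{A})$ for $n\geq 2$, with a similar boundary analysis handling $n=1$ directly. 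It therefore remains to match $\check{\mathrm{H}}^{n-1}(\varepsilon;\mathcal{Q})$ with $\lim^{n-1}\mathbf{X}$. I would do this by constructing an explicit \v{C}ech cover of $\varepsilon$, adapted to its order structure via a fixed $C$-sequence, whose \v{C}ech complex with values in $\mathcal{Q}$ agrees coordinate-wise with $\mathcal{K}(\mathbf{X})$: the flabbiness of $\mathcal{F}$ allows one to lift any class of $\mathcal{Q}$-sections over an intersection $\bigcap U_{\beta_i}$ to an honest $A$-valued function, well-defined modulo functions of finite support, recovering $A^{\beta_0}/A^{(\beta_0)}$ as the relevant \v{C}ech cochain group.

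The main obstacle will be this last step: choosing a cover that is simultaneously (i) acyclic for $\mathcal{Q}$ (so that its \v{C}ech cohomology computes $\check{\mathrm{H}}^\bullet(\varepsilon;\mathcal{Q})$ by Leray's theorem), and (ii) combinatorially simple enough that its \v{C}ech complex is literally $\mathcal{K}(\mathbf{X})$. The acyclicity of the finite intersections of the cover — each of which is an interval in $\varepsilon$, hence an ordinal — can in principle be verified by transfinite induction, but the bookkeeping is delicate because the requisite acyclicity statement for $\mathcal{Q}$ on smaller ordinals is itself the inductive assertion of the theorem. A cleaner variant, avoiding this circularity, is to forgo the \v{C}ech route and instead establish a direct quasi-isomorphism between $\mathcal{K}(\mathbf{X})$ and a canonical flabby resolution of $\mathcal{A}$, shifting the work into a single spectral-sequence degeneration argument. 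In either approach, the remaining verifications are diagram-chases made routine by the explicit description of $\mathcal{K}(\mathbf{X})$.
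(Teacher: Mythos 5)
The paper does not itself prove Theorem~\ref{cooicohtriv}: it is stated as a citation to Theorem~2.30 of the reference abbreviated \texttt{CoOI}, and no argument is given in this text. So there is no ``paper's own proof'' to compare your proposal against; what follows is an assessment of the proposal as a free-standing attempt.

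Your algebraic reduction is correct and essentially definitional: taking $\mathbf{X}$ with terms $A^\alpha/A^{(\alpha)}$ and restriction bonding maps, the cocycle condition on $K^{n-1}(\mathbf{X})$ is exactly $n$-coherence and the coboundaries are exactly the $n$-trivial families (after observing that a family of finitely supported $\varphi_{\vec\beta}$ is automatically $n$-trivial, so the pointwise-lift map descends to an isomorphism of quotients). This is the right way to see $\mathsf{coh}/\mathsf{triv}$ as a derived limit.

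The sheaf-theoretic side has a genuine gap that you only partially name. The theorem concerns \v{C}ech cohomology, and ordinals of uncountable cofinality (e.g., $\omega_1$ and above) are not paracompact, so neither the long exact sequence in $\check{\mathrm{H}}^\bullet$ associated to $0\to\mathcal{A}\to\mathcal{F}\to\mathcal{Q}\to 0$ nor the \v{C}ech-acyclicity of the flabby sheaf $\mathcal{F}$ comes for free from the general theory. Your first route (Leray with an order-adapted cover) founders on the circularity you yourself point out: to certify the cover as $\mathcal{Q}$-acyclic you would need the vanishing of $\check{\mathrm{H}}^j$ of $\mathcal{Q}$ on intervals, which is not an obviously independent fact. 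Your proposed repair --- ``forgo the \v{C}ech route and establish a direct quasi-isomorphism between $\mathcal{K}(\mathbf{X})$ and a canonical flabby resolution of $\mathcal{A}$'' --- quietly replaces \v{C}ech cohomology with sheaf cohomology, which is precisely what cannot be done without justification on these spaces. A complete proof must either (i) work entirely with \v{C}ech cochains for a cofinal directed family of covers of $\varepsilon$ (e.g., covers by half-open tails and initial segments, refined along a $C$-sequence) and show directly that the resulting complex computes both sides, or (ii) prove the needed \v{C}ech-acyclicity and long-exact-sequence statements by hand on ordinals. Neither is supplied here, so the proposal remains a plausible sketch with an unresolved core step rather than a proof.
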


In particular, $\check{\mathrm{H}}^n(\varepsilon;\mathcal{A})\neq 0$ if and only if there exists an $A$-valued nontrivial $n$-coherent family of functions of height $\varepsilon$. In \cite{CoOI} the following is shown as well:\footnote{The theorem and its argument differ only cosmetically from Goblot's \cite{Goblot}.}
\begin{thm} For any abelian group $A$ and positive integer $n$ and ordinal $\varepsilon$ of cofinality less than $\aleph_n$, we have $\check{\mathrm{H}}^n(\varepsilon;\mathcal{A})=0$.
\end{thm}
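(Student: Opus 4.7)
The plan is to combine Theorem \ref{cooicohtriv} with the basis construction of Theorem \ref{basis}: it suffices to show that every $n$-coherent $A$-valued family $\Phi_n=\{\varphi_{\vec\beta}:\beta_0\to A\mid\vec\beta\in[\varepsilon]^n\}$ of height $\varepsilon$ with $\mathrm{cf}(\varepsilon)<\aleph_n$ is $n$-trivial. I will prove this by simultaneous induction on $n$ and $\varepsilon$, with the inductive architecture mirroring that of the bases $\mathcal{B}_n(\varepsilon)$ built in Section \ref{bases}.

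The base case $n=1$ is direct. For $\varepsilon=\delta+1$ set $\psi=\varphi_\delta$, so $1$-coherence gives $\psi|_\beta=^*\varphi_\beta$ for every $\beta\le\delta$. For $\mathrm{cf}(\varepsilon)=\aleph_0$, fix a cofinal sequence $(\gamma_i)_{i\in\omega}$ and let $\psi(\alpha)=\varphi_{\gamma_{i(\alpha)}}(\alpha)$, where $i(\alpha)$ is least with $\alpha<\gamma_{i(\alpha)}$; for any $\beta<\varepsilon$, taking $j$ least with $\gamma_j>\beta$, the set of $\alpha<\beta$ on which $\psi$ and $\varphi_\beta$ disagree lies in the finite union $\bigcup_{\ell\le j}\{\alpha:\varphi_{\gamma_\ell}(\alpha)\neq\varphi_\beta(\alpha)\}$, which is finite.

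For the inductive step, if $\varepsilon=\delta+1$ the trivialization is immediate from one on $\delta$ together with the $\Phi_n$-values at new $(n-1)$-tuples ending in $\delta$; coherence of $\Phi_n$ is exactly the relation needed to close the triviality equation. If $\varepsilon$ is a limit of cofinality $\aleph_k<\aleph_n$, fix an increasing cofinal sequence $(\gamma_i)_{i<\aleph_k}$ with $\mathrm{cf}(\gamma_i)<\aleph_k$ for each $i$ (the $C_\varepsilon$ of Section \ref{bases} does this). By the inductive hypothesis on $\varepsilon$, each restricted family $\Phi_n|_{[\gamma_i]^n}$ admits a trivializer $\Psi^i_{n-1}$; it remains to stitch these into a single trivializer of $\Phi_n$.

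The stitching proceeds by successive correction. The discrepancies $D^i:=\Psi^{i+1}_{n-1}|_{[\gamma_i]^{n-1}}-\Psi^i_{n-1}$ are themselves $(n-1)$-coherent families of height $\gamma_i$, since their formal coboundaries cancel to $0$ modulo finite. By the induction on $n$ applied at $\gamma_i$, each $D^i$ is $(n-1)$-trivial, so may be absorbed into $\Psi^{i+1}_{n-1}$ by an $(n-2)$-coboundary, yielding a compatible tower $\widetilde\Psi^i_{n-1}$ whose union is the desired $\Psi_{n-1}$. The hard part will be the passage through limit stages of $i$ when $\aleph_k$ is uncountable: here infinitely many mod-finite corrections must accumulate coherently, and I expect to appeal once more to the inductive hypothesis at the strictly smaller ordinal $\sup_{j<i}\gamma_j$, whose cofinality is less than $\aleph_k<\aleph_n$. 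The hypothesis $\mathrm{cf}(\varepsilon)<\aleph_n$ provides exactly the extra dimension of cofinality that defeats the pressing-down style obstructions of Section \ref{deromega1} and permits the tower to converge to a well-defined $\Psi_{n-1}$ trivializing $\Phi_n$.
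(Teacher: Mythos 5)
The paper does not reproduce a proof of this theorem; it cites \cite{CoOI}, with the footnote noting the argument is ``only cosmetically'' different from Goblot's. Your proposal reconstructs precisely that Goblot-style double induction, transposed from the $\lim^n$/projective-dimension setting into the $\mathsf{coh}/\mathsf{triv}$ vocabulary of Theorem \ref{cooicohtriv}, and the outline is sound: the base case $n=1$ is argued correctly, the successor case works (with a sign adjustment $\psi_{\vec\alpha}=(-1)^{n-1}\varphi_{\vec\alpha\delta}$ for $\vec\alpha\in[\delta]^{n-1}$, $\psi_{\vec\beta\delta}=0$, which you left implicit), and the successor stage of the stitching argument is exactly right: the discrepancy $D^i$ is $(n-1)$-coherent of height $\gamma_i$ with $\mathrm{cf}(\gamma_i)<\aleph_k\leq\aleph_{n-1}$, so the inductive hypothesis at dimension $n-1$ trivializes it, and subtracting the coboundary $\partial\widehat E^i$ from $\Psi^{i+1}$ restores compatibility while preserving the trivialization property since $\partial^2=0$.

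Where you hedge --- the limit stages of $i$ --- no further appeal to the inductive hypothesis is in fact needed, and your anticipated appeal would actually be awkward (you would then face infinitely many discrepancies at once). The clean resolution is to maintain, as an invariant of the recursion, that $\widetilde\Psi^{i'}\big|_{[\gamma_j]^{n-1}}=^*\widetilde\Psi^j$ for \emph{all} $j<i'$, not merely $j=i'-1$. The successor step preserves this because restriction commutes with $=^*$ and $=^*$ is transitive. At a limit stage $i$, provided your cofinal sequence is continuous (which $C_\varepsilon$ is, so $\gamma_i=\sup_{j<i}\gamma_j$ and hence $[\gamma_i]^{n-1}=\bigcup_{j<i}[\gamma_j]^{n-1}$), simply set $\widetilde\psi^i_{\vec\alpha}:=\widetilde\psi^{j(\vec\alpha)}_{\vec\alpha}$ where $j(\vec\alpha)$ is least with $\vec\alpha\in[\gamma_{j(\vec\alpha)}]^{n-1}$. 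For each $j<i$ and $\vec\alpha\in[\gamma_j]^{n-1}$ one has $j(\vec\alpha)\leq j$, and the invariant at stage $j$ gives $\widetilde\psi^{j(\vec\alpha)}_{\vec\alpha}=^*\widetilde\psi^j_{\vec\alpha}$; hence the invariant persists at $i$, and the trivialization identity $\sum_\ell(-1)^\ell\widetilde\psi^i_{\vec\alpha^\ell}=^*\varphi_{\vec\alpha}$ for $\vec\alpha\in[\gamma_i]^n$ follows by comparing all $\vec\alpha^\ell$ to a single stage $j<i$ large enough that $\vec\alpha\in[\gamma_j]^n$. The mod-finite corrections never ``accumulate'' because the diagonal union is a choice of representative, not a limit; the point throughout is that $n$-triviality is itself a mod-finite condition, so pointwise-mod-finite coherence of the tower suffices. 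You should also note explicitly that the outer induction is on $n$ with the full theorem at dimension $n-1$ available, so that the trivialization of $D^i$ at stage $i$ is licensed by $\mathrm{cf}(\gamma_i)<\aleph_{n-1}$ even when $k=n-1$, which is the boundary case where this theorem outruns the bare $\mathrm{cd}(\varepsilon)\leq k+1$ of Corollary \ref{goblotsthm}.
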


In \cite{CoOI} it is also asserted that there exist groups $A$ for which $\check{\mathrm{H}}^n(\omega_n;\mathcal{A})\neq 0$, and hence that $\omega_n$ is the least ordinal with nonvanishing constant-sheaf $\check{\mathrm{H}}^n$, but no proof is given. We now show that from the work of the previous section, this deduction is easy.

Recall first that a first step in the analysis of the functions $\mathtt{f}_n$ above was a reduction in ``coherence degree''; $\mathtt{f}_1$, for example, satisfies relations (\ref{41}) closest in form to the \emph{2}-coherence of Definition \ref{highernontriv} above, but is best regarded as a collection of $1$-coherent families of functions, as in equation \ref{45}. This nontrivial $1$-coherence permeates $\mathtt{f}_1$ ``in every direction''; for example, let
\begin{align*}
\varphi^x_\beta(\alpha):=\mathtt{f}_1(0,\beta)\big|_{\{\alpha\}\otimes[\omega_1]^2}
\end{align*}
for $\alpha<\beta<\omega_1$. By equation \ref{45} and the non-existence of an $\mathtt{e}_0$ as in (\ref{48}),
\begin{lem}\label{261}
$\{\varphi^x_\beta\,|\,\beta\in\omega_1\}$ is a nontrivial coherent family of functions.
\end{lem}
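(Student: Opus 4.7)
Both halves of the lemma reduce, with a suitable choice of codomain, to results already established in Section \ref{thecasen1}. The value group to use is $A := \bigoplus_{[\omega_1]^2}\mathbb{Z}$: each $\varphi^x_\beta(\alpha)$ lies in $A$ because, by the walks-based description in equations \ref{4105}--\ref{4110}, $\mathtt{f}_1(0,\beta)\big|_{\{\alpha\}\otimes [\omega_1]^2}$ is supported on only the finitely many pairs tracing the walk from $\beta$ down to $\alpha$.

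The coherence of $\{\varphi^x_\beta\}$ is then immediate from equation \ref{45}. Fix $\beta<\gamma<\omega_1$; the relation $(\mathtt{f}_1(0,\beta) - \mathtt{f}_1(0,\gamma))\big|_{\beta\otimes[\omega_1]^2} =^* 0$ says precisely that the support of the difference in $[\omega_1]^3$ is finite, which in particular forces the set of $\alpha<\beta$ at which the slice $(\varphi^x_\gamma-\varphi^x_\beta)(\alpha)$ is nonzero to be finite as well. This is exactly the condition $\varphi^x_\gamma\big|_\beta - \varphi^x_\beta =^* 0$ of $1$-coherence from Definition \ref{highernontriv}.

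For nontriviality I argue by contradiction: suppose that some $\varphi:\omega_1\to A$ trivializes $\{\varphi^x_\beta\}$, so that for every $\beta<\omega_1$ there is a finite $F_\beta\subseteq\beta$ with $\varphi(\alpha)=\varphi^x_\beta(\alpha)$ for all $\alpha\in\beta\setminus F_\beta$. Define $\mathtt{e}_0(0)\in\prod_{[\omega_1]^3}\mathbb{Z}$ by $\mathtt{e}_0(0)(\alpha,\xi_1,\xi_2):=\varphi(\alpha)(\xi_1,\xi_2)$ for $(\alpha,\xi_1,\xi_2)\in[\omega_1]^3$. The support of $(\mathtt{e}_0(0)-\mathtt{f}_1(0,\beta))\big|_{\beta\otimes[\omega_1]^2}$ then sits inside $\bigsqcup_{\alpha\in F_\beta}\{\alpha\}\times\mathrm{supp}\bigl(\varphi(\alpha)-\varphi^x_\beta(\alpha)\bigr)$, a finite union of finite sets since each $\varphi(\alpha) - \varphi^x_\beta(\alpha)$ lies in $A$. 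Hence $\mathtt{e}_0(0)$ would satisfy condition \ref{48}, contradicting the non-existence of any such $\mathtt{e}_0(0)$ shown in Section \ref{thecasen1}.

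The one delicate point, and the reason I would flag it, is the choice of codomain $A$: an overly large value group would invalidate the final step, since disagreements supported on finitely many $\alpha$-slices could still aggregate to infinite total support in $[\omega_1]^3$. The walks-based finiteness of the supports of $\mathtt{f}_1(0,\beta)$ is exactly what rescues the choice $A=\bigoplus_{[\omega_1]^2}\mathbb{Z}$ and makes the reduction to the nontriviality of $\mathtt{f}_1$ automatic.
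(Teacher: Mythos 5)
Your proof is correct and it establishes what the paper actually uses downstream, but it takes a genuinely different route from the paper's own argument. You fix the value group to be $A=\bigoplus_{[\omega_1]^2}\mathbb{Z}$ at the outset, after which the naive identification $\mathtt{e}_0(0)(\alpha,\xi_1,\xi_2):=\varphi(\alpha)(\xi_1,\xi_2)$ works immediately: across the finitely many exceptional $\alpha$-slices, the errors $\varphi(\alpha)-\varphi^x_\beta(\alpha)$ are each finitely supported, so their union is too, and (\ref{48}) follows at once. The paper does not impose that restriction on the codomain --- its trivializing $\varphi$ is allowed to take values in all of $\prod_{[\omega_1]^2}\mathbb{Z}$, in which case a single exceptional slice could carry an error of uncountable support and your final step would collapse --- so it reaches for a different device: it applies the Pressing Down Lemma to stabilize the exceptional sets $a(\beta)$ to a single finite $a$ on a stationary $S$, then overwrites $\hat{\mathtt{e}}_0(0)$ on those finitely many slices with the corresponding slices of $\mathtt{f}_1(0,\max(a)+1)$, letting the coherence relation (\ref{45}) absorb the residual finite discrepancy. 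In exchange for that extra work, the paper's argument is strictly stronger --- it rules out $\prod$-valued trivializations, not merely $\bigoplus$-valued ones --- whereas yours proves the weaker statement, which is nonetheless exactly what Theorem \ref{cechtheorem} needs, and it does so without any appeal to the PDL. You also correctly identified the codomain as the load-bearing assumption; that observation is in fact the precise pivot distinguishing the two arguments.
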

The reason for this is that any $\varphi$ trivializing this family naturally identifies with an $\hat{\mathtt{e}}_0(0)\in\prod_{[\omega_1]^3}\mathbb{Z}$ such that for all limit $\beta<\omega_1$ and all but a finite set $a(\beta)$ of $\alpha<\beta$, $$\hat{\mathtt{e}}_0(0)\big|_{\{\alpha\}\otimes [\omega_1]^2}=\mathtt{f}_1(0,\beta)\big|_{\{\alpha\}\otimes [\omega_1]^2}.$$
By the Pressing Down Lemma, there then exists a stationary $S\subseteq\omega_1$ and finite set $a$ such that $a(\beta)=a$ for all $\beta\in S$. Let $\gamma=\max a$ (do nothing if $a=\varnothing$) and modify $\hat{\mathtt{e}}_0(0)$ on $\{\{\alpha\}\otimes [\omega_1]^2\mid \alpha\in a\}$ to agree with $\mathtt{f}_1(0,\gamma+1)$; this defines an $\mathtt{e}_0(0)$ as in (\ref{48}), a contradiction.

The superscript ``$x$'' in Lemma \ref{261} indicates, of course, that $\varphi^x_\beta(\alpha)$ outputs the $x=\alpha$ ``slice'' of $\mathtt{f}_1(0,\beta)$.
Lemma \ref{261} holds equally for families $\{\varphi^y_\beta\,|\,\beta\in\omega_1\}$ and $\{\varphi^z_\beta\,|\,\beta\in\omega_1\}$, defined by
\begin{align*}
\varphi^y_\beta(\alpha):=&\, \mathtt{f}_1(0,\beta)\big|_{y=\alpha},\textnormal{ and} \\
\varphi^z_\beta(\alpha):=&\, \mathtt{f}_1(0,\beta)\big|_{z=\alpha},\textnormal{ respectively.}
\end{align*}
In the $x$ and $y$ cases, the codomain may be uniformly viewed as $\bigoplus_{[\omega_1]^2}\mathbb{Z}$, or, hence, as $\bigoplus_{\omega_1}\mathbb{Z}$, simply. The $z=\alpha$ slices may be construed as smaller, being bounded by $\alpha$
 --- but for small codomain we can do much better: fix a bijection $\theta:\omega_1\rightarrow [\,\omega_1]^3$. Then $E_\theta=\{\beta\,|\,\theta''\beta=[\beta]^3\}$ is a club subset of $\omega_1$; for $\beta\in E_\theta$, let
\begin{align}\label{51}
  \varphi^\theta_\beta(\alpha):= \left\{\def\arraystretch{1}%
  \begin{array}{@{}c@{\quad}l@{}}
    1 & \hspace{.3 cm}\textnormal{if }\theta(\alpha)\in\text{supp}(\mathtt{f}_1(0,\beta))\\
    0 & \hspace{.3 cm}\mathrm{otherwise}\\
  \end{array}\right.
\end{align}
Then as above, the following is straightforward to see:\footnote{The point is that although $\varphi^{\theta}_\beta$ records only a strictly ``initial tetrahedron'' of the output of $\mathtt{f}_1(0,\beta)$ for any $\beta\in E_\theta$, coherence ensures that any trivialization of $\Phi$ indeed translates, via $\theta$, to a trivialization of $\mathtt{f}_1$. Note that $\varphi^\theta_\beta$ does fully record that tetrahedron since the range of any $\mathtt{f}_1(0,\beta)$, viewed as a function $[\beta+1]^3\to\mathbb{Z}$, is $\{-1,0\}$, as is immediate from equations (\ref{43}) and (\ref{44}); in other words, there is no information loss in the passage from $\mathbb{Z}$ to $\mathbb{Z}/2\mathbb{Z}$ \emph{per se}.}
\begin{lem}\label{thetalemma} $\Phi:=\{\varphi^\theta_\beta:\beta\rightarrow\mathbb{Z}/2\mathbb{Z}\,|\,\beta\in E_\theta\}$ defines a nontrivial coherent family of functions.
\end{lem}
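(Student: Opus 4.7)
I would prove the two halves --- coherence and nontriviality --- in parallel with the argument just given for Lemma \ref{261}, using equations \ref{41} through \ref{48} as the scaffolding.

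For \textbf{coherence}, fix $\beta<\gamma$ in $E_\theta$. Setting $\alpha=0$ in equation \ref{41} and invoking the support bound \ref{44}, the supports $\mathrm{supp}(\mathtt{f}_1(0,\gamma))$ and $\mathrm{supp}(\mathtt{f}_1(0,\beta))$ agree modulo a finite set on $[\beta]^3$ (indeed they agree modulo a finite set on $[\beta+1]^3$, since $\mathrm{supp}(\mathtt{f}_1(\beta,\gamma))\subseteq[[\beta,\gamma]]^3$ is disjoint from $[\beta+1]^3$). Because $\beta\in E_\theta$ makes $\theta\!\restriction\!\beta$ a bijection with $[\beta]^3$, this translates immediately to $\varphi^\theta_\gamma(\alpha)=\varphi^\theta_\beta(\alpha)$ for all but finitely many $\alpha<\beta$, establishing $1$-coherence.

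For \textbf{nontriviality}, suppose for contradiction that $\varphi:\omega_1\to\mathbb{Z}/2\mathbb{Z}$ satisfies $\varphi\!\restriction\!\beta-\varphi^\theta_\beta=^*0$ for every $\beta\in E_\theta$, and let $a(\beta)=\{\alpha<\beta:\varphi(\alpha)\neq\varphi^\theta_\beta(\alpha)\}$, a finite subset of $\beta$. Applying Fodor to the regressive function $\beta\mapsto \sup a(\beta)$ and subsequently refining on the finite possibilities for $a(\beta)$ within that bound, I obtain a stationary $S\subseteq E_\theta\cap\mathrm{Lim}$ and a finite $a\subseteq\omega_1$ with $a(\beta)=a$ identically on $S$. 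Thus for every $\alpha\in\omega_1\setminus a$ and every $\beta\in S$ above $\alpha$, $\varphi(\alpha)=\chi_{\mathrm{supp}(\mathtt{f}_1(0,\beta))}(\theta(\alpha))$ --- the value of $\chi_{S_\beta}$ at $\theta(\alpha)$ has stabilized as $\beta$ ranges through the tail of $S$.

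The goal is now to manufacture from $\varphi$ an $\mathtt{e}_0(0)\in\prod_{[\omega_1]^3}\mathbb{Z}$ satisfying equation \ref{48}, since no such $\mathtt{e}_0(0)$ can exist by the nontriviality argument of Section \ref{deromega1}, whose Pressing Down form applies verbatim modulo $2$ (the values of each $\mathtt{f}_1(0,\beta)$ lying in $\{-1,0\}$, by the footnote following equation \ref{51}). I set $\hat{\mathtt{e}}_0(0)(\vec\tau):=-\varphi(\theta^{-1}(\vec\tau))$ and then, exactly as in the proof of Lemma \ref{261}, adjust $\hat{\mathtt{e}}_0(0)$ on the finite set $\theta(a)$ to agree with $\mathtt{f}_1(0,\gamma+1)$ for $\gamma=\max a$, producing $\mathtt{e}_0(0)$. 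That the resulting function satisfies $\mathtt{e}_0(0)\!\restriction\!_{\beta\otimes[\omega_1]^2}=^*\mathtt{f}_1(0,\beta)$ on $[\beta+1]^3$ follows by coherence of $\mathtt{f}_1$: pick $\gamma\in S$ with $\gamma>\beta$, apply the Pressing Down conclusion to equate $\hat{\mathtt{e}}_0(0)\!\restriction\!_{[\gamma]^3}$ with $-\chi_{S_\gamma}\!\restriction\!_{[\gamma]^3}$ modulo $\theta(a)$, then restrict to $[\beta+1]^3$ and use $S_\gamma\cap[\beta+1]^3=^*S_\beta$ (from the coherence step above).

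\textbf{The hard part} will be the complementary verification, namely that $\hat{\mathtt{e}}_0(0)$ vanishes modulo finite on $A_\beta:=\beta\otimes[\omega_1]^2\setminus[\beta+1]^3$. Because $\mathrm{supp}(\mathtt{f}_1(0,\beta))\subseteq[\beta+1]^3$ is disjoint from $A_\beta$, equation \ref{45} (with $\alpha=0$) says $\mathtt{f}_1(0,\gamma)\!\restriction\!_{A_\beta}$ is finitely supported for each $\gamma>\beta$; the issue, which I expect to be the main obstacle, is that as $\gamma$ varies the sizes of these finite sets are a priori not uniformly bounded, and so the union $\theta(\Psi)\cap A_\beta$ (where $\Psi:=\varphi^{-1}(1)$) need not be visibly finite from the ``pointwise'' stabilization alone. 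My strategy is to leverage the constancy of $a(\beta)=a$ on the stationary set $S$: for each $\vec\tau\in A_\beta$ with max $m$, the Pressing Down conclusion gives $\vec\tau\in\theta(\Psi)$ iff $\vec\tau\in S_\gamma$ modulo the fixed finite set $\theta(a)$, for every $\gamma\in S$ above $m$; combining this uniformity with the coherence-based equation $S_\gamma\cap A_\beta=^*S_\beta\cap A_\beta=\varnothing$ of \ref{45}, together with equation \ref{46} to handle the cases where two coordinates lie above $\beta$, should bound $\theta(\Psi)\cap A_\beta$ by a finite set depending only on $\beta$ and $a$. This yields the desired $\mathtt{e}_0(0)$ and hence the contradiction.
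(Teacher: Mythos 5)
Your coherence argument is correct, and the overall plan for nontriviality --- push a putative trivialization $\varphi$ forward through $\theta$ to an $\mathtt{e}_0(0)$ and invoke the Section \ref{deromega1} nontriviality of $\mathtt{f}_1$ --- is the right idea and is what the footnote following equation \ref{51} gestures at. The construction of $\hat{\mathtt{e}}_0(0)$, the Fodor step producing a constant disagreement set $a$ on a stationary $S$, and the verification of agreement on each $[\beta+1]^3$ are all fine.

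The sketch of ``the hard part,'' though, leaves a genuine gap. You correctly observe that $\mathrm{supp}(\mathtt{f}_1(0,\gamma))\cap A_\beta$ is finite for each fixed $\gamma$ by equations \ref{45} and \ref{46}, and that these finite sets are not uniformly bounded in $\gamma$; but the proposed fix --- testing each $\vec{\tau}\in A_\beta$ against a $\gamma\in S$ depending on $\max\vec{\tau}$ --- does not explain why the union of those growing finite sets stays finite. The missing step is a uniformization: if $\theta(\Psi)\cap A_\beta\setminus\theta(a)$ were infinite, any countably infinite subset would have its coordinates bounded by some $\delta<\omega_1$; a \emph{single} $\gamma\in S$ above $\delta$, $\beta$ and $\max a$ then places that entire subset inside $[\gamma]^3$, where the Fodor constancy $a(\gamma)=a$ forces it (modulo the fixed finite $\theta(a)$) inside $\mathrm{supp}(\mathtt{f}_1(0,\gamma))\cap A_\beta$, which is finite by equation \ref{45} --- a contradiction. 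Supplying this step closes your route.

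That said, verifying (\ref{48}) in full is more work than the paper's intended ``as above'' argument. The first nontriviality argument of Section \ref{deromega1} (via the $m$- and $n$-functions) only interrogates $\mathtt{e}_0(0)$ and $\mathtt{f}_1(0,\gamma)$ on the tuples $(\eta^\beta_j,\eta^\beta_{j+1},\beta)$ with $\beta<\gamma$, all of which lie in $[\gamma]^3$. So it suffices to have the $[\gamma]^3$-agreement of $\theta(\Psi)$ with $\mathrm{supp}(\mathtt{f}_1(0,\gamma))$ --- immediate from the Fodor step --- plus the observation that $\theta(a)$, being finite, meets only finitely many planes $z=\beta$, whence $n(\beta)=m(\beta,\gamma)$ exactly for all but finitely many $\beta<\gamma$; the Pressing Down contradiction of (\ref{n11}) then applies verbatim. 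This avoids the $A_\beta$ verification entirely and is almost certainly what the paper has in mind.
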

In particular, $\check{\mathrm{H}}^1(\omega_1;\mathcal{A})\neq 0$ for $A=\mathbb{Z}/2\mathbb{Z}$. For higher $n$, conversions patterned on (\ref{51}) deriving from bijections $\theta:\omega_n\to[\omega_n]^{n+2}$ omit more and more of the data of $\mathtt{f}_n$; consequently, though such functions will be $n$-coherent, they no longer so clearly inherit the nontriviality of $\mathtt{f}_n$. Just as in Lemma \ref{261}, however, families of functions recording first-coordinate slices of $\mathtt{f}_n$, $$\varphi^{\star}_{\vec{\beta}}:\alpha\mapsto\mathtt{f}_n(0,\vec{\beta})\big|_{\{\alpha\}\otimes [\omega_n]^{n+1}}\,,$$ do inherit the nontrivial coherence relations (as in (\ref{45}), (\ref{48}), (\ref{efftwo}), (\ref{e1})) of the functions $\mathtt{f}_n$. The verification of this fact, as well the natural identification of the codomain of the functions $\varphi^{\star}_{\vec{\beta}}$ with $\bigoplus_{[\omega_n]^{n+1}}\mathbb{Z}\cong\bigoplus_{\omega_n}\mathbb{Z}$, is straightforward and left to the reader (alternately, see \cite[Theorem 3.8.4]{dimords}). The following is then immediate:
\begin{thm}\label{cechtheorem} $\{\varphi^{\star}_{\vec{\beta}}\mid\vec{\beta}\in[\omega_n]^n\}$ is a nontrivial $n$-coherent family of functions. In particular, $\check{\mathrm{H}}^n(\omega_n;\mathcal{A})\neq 0$ for $A=\bigoplus_{\omega_n}\mathbb{Z}$. Hence $\omega_n$ is the least ordinal with a nontrivial \v{C}ech group $\check{\mathrm{H}}^n$ with respect to any constant sheaf $\mathcal{A}$.
\end{thm}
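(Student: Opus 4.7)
The plan is to split the proof into three steps: (i) verify $n$-coherence of the family $\{\varphi^\star_{\vec\beta}\mid \vec\beta\in[\omega_n]^n\}$; (ii) verify nontriviality by reducing to Theorem~\ref{concludestheproof}; (iii) deduce the \v{C}ech-cohomological consequences from Theorem~\ref{cooicohtriv}.

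First I would establish (i) as a direct calculation from the coherence of $\mathtt{f}_n$ recorded in equation~\ref{fcoherence}: for any $\vec\alpha\in[\omega_n]^{n+2}$, the sum $\sum_{i=0}^{n+1}(-1)^i\mathtt{f}_n(\vec\alpha^i)$ lies in $P_{n+1}([\alpha_0,\omega_n))$, i.e., has finite support as a function $[\omega_n]^{n+2}\to\mathbb{Z}$. Applying this to $(0,\vec\beta)$ with $\vec\beta\in[\omega_n]^{n+1}$, the $i=0$ term is $\mathtt{f}_n(\vec\beta)$, whose support lies in $[[\beta_0,\omega_n)]^{n+2}$ and hence vanishes on every slice $\{\gamma\}\otimes[\omega_n]^{n+1}$ with $\gamma<\beta_0$. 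Restricting the finite-support identity to such slices then gives $\sum_{i=0}^{n}(-1)^i\varphi^\star_{\vec\beta^i}(\gamma)=0$ outside a finite set of $\gamma<\beta_0$, which is the $n$-coherence relation of Definition~\ref{highernontriv}.

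For (ii) I would argue by contradiction: suppose $\Psi=\{\psi_{\vec\alpha}:\alpha_0\to A\mid\vec\alpha\in[\omega_n]^{n-1}\}$ trivializes the family. Assemble a candidate $\hat{\mathtt{e}}_{n-1}(0,\vec\alpha)\in\prod_{[\omega_n]^{n+1}}\mathbb{Z}$ by stacking slices, $\hat{\mathtt{e}}_{n-1}(0,\vec\alpha)(\gamma,\vec\delta):=(\psi_{\vec\alpha}(\gamma))(\vec\delta)$ for $\gamma<\alpha_0$ and zero elsewhere; the trivialization identity for $\Psi$ tells us that for each $\vec\alpha\in[\omega_n]^n$ there exists a finite exception set $a(\vec\alpha)\subseteq\alpha_0$ outside of which the slices of $\sum_{i=0}^{n-1}(-1)^i\hat{\mathtt{e}}_{n-1}(0,\vec\alpha^i)$ and $\mathtt{f}_n(0,\vec\alpha)$ agree on $\{\gamma\}\otimes[\omega_n]^{n+1}$. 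This falls short of the global finite-support condition that defines a trivialization of $\mathtt{f}_n$, because $a(\vec\alpha)\otimes[\omega_n]^{n+1}$ is generically infinite. I would correct this exactly as in the proof of Lemma~\ref{261}: apply the Pressing Down Lemma to the regressive assignment $\vec\alpha\mapsto a(\vec\alpha)$ (first to $\max a(\vec\alpha)$, then a pigeonhole on the finitely many admissible finite sets below the fixed max) to isolate a stationary $S$ on which $a(\vec\alpha)=a$ is constant, then modify $\hat{\mathtt{e}}_{n-1}$ on the finitely many slices indexed by $a$ to agree with $\mathtt{f}_n(0,\vec\gamma)\big|_{a\otimes[\omega_n]^{n+1}}$ for a sufficiently large fixed $\vec\gamma$. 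The coherence of $\mathtt{f}_n$ itself ensures that this single patch yields a genuine trivialization of $\mathtt{f}_n$, contradicting Theorem~\ref{concludestheproof}.

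The main obstacle, as I expect, lies in the bookkeeping of step (ii): one must run the Pressing Down argument in a way that handles the higher-dimensional indexing $[\omega_n]^n$ uniformly, and verify that a single modification of $\hat{\mathtt{e}}_{n-1}$ propagates correctly through all the coherence relations of $\mathtt{f}_n$. This is the work alluded to in~\cite[Theorem 3.8.4]{dimords}. Once (i) and (ii) are established, step (iii) is immediate: by Theorem~\ref{cooicohtriv}, the nontrivial $n$-coherent family $\{\varphi^\star_{\vec\beta}\}$ represents a nonzero class in $\check{\mathrm{H}}^n(\omega_n;\mathcal{A})$ for $A=\bigoplus_{\omega_n}\mathbb{Z}$, while the vanishing theorem quoted just above Theorem~\ref{cechtheorem} then guarantees that $\omega_n$ is the least ordinal supporting such a nonvanishing cohomology group.
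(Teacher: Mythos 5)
Your overall plan correctly matches the paper's, which sketches exactly the three steps you outline (coherence from equation~\ref{fcoherence}, nontriviality by reduction to Theorem~\ref{concludestheproof} via an $\hat{\mathtt{e}}_{n-1}$ built from a hypothetical trivialization, and the \v{C}ech consequences from Theorem~\ref{cooicohtriv} together with the vanishing result). Step (i) is fine, and step (iii) is immediate as you say. The issue is in step (ii), where the Pressing Down detour is both problematic and unnecessary.

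Problematic: Fodor's lemma acts on a regressive function whose domain is a stationary subset of a \emph{single} regular cardinal. The assignment $\vec\alpha\mapsto a(\vec\alpha)$ is indexed by $[\omega_n]^n$; ``regressive'' only makes sense one coordinate at a time, and it is not clear that iterating coordinate-by-coordinate yields a single finite $a$ that works uniformly for all $\vec\alpha$ in a set rich enough to recover a trivialization. You acknowledge this bookkeeping worry, but as stated the step is a genuine gap, not merely tedium.

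Unnecessary: the whole correction can be skipped. The value group is $A=\bigoplus_{[\omega_n]^{n+1}}\mathbb{Z}$, a \emph{direct sum}, so each $\psi_{\vec\alpha^i}(\gamma)$ and each $\varphi^\star_{\vec\alpha}(\gamma)$ is already a finitely supported element of $\prod_{[\omega_n]^{n+1}}\mathbb{Z}$. The trivialization of $\{\varphi^\star_{\vec\beta}\}$ gives exact slice agreement of $\sum_i(-1)^i\psi_{\vec\alpha^i}(\gamma)$ with $\varphi^\star_{\vec\alpha}(\gamma)$ for all but finitely many $\gamma<\alpha_0$, and on the finitely many exceptional $\gamma$ the slice disagreement is a difference of elements of $A$, hence finitely supported. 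A finite union of finite supports is finite, so $\sum_i(-1)^i\hat{\mathtt{e}}_{n-1}(0,\vec\alpha^i)-\mathtt{f}_n(0,\vec\alpha)$ restricted to $\alpha_0\otimes[\omega_n]^{n+1}$ is finitely supported for every $\vec\alpha$, with no modification of $\hat{\mathtt{e}}_{n-1}$ at all. This is exactly the relation~(\ref{append4}) whose impossibility is established in Appendix~\ref{appendixAA}, giving the contradiction directly. (The same observation in fact shortens the paper's own proof of Lemma~\ref{261}.) So: keep the stacking construction of $\hat{\mathtt{e}}_{n-1}$, delete the Pressing Down and modification entirely, and invoke the direct-sum codomain to close the argument.
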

%Remark in footnote that could replace with $\mathbb{Z}/2\mathbb{Z}$, by above.
We return to the question of the \emph{integral} cohomology groups of the ordinals $\omega_n$ in our conclusion below. Here we remark simply that it is consistent with the $\mathsf{ZFC}$ axioms that $\check{\mathrm{H}}^n(\omega_n;\mathcal{A})\neq 0$ for $A=\mathbb{Z}$ and all $n\in\omega$ (this follows from $\mathsf{ZFC}+V=L$, for example; see \cite{CoOI}); the question of whether this is a $\mathsf{ZFC}$ \emph{theorem} seems a very good test of our understanding of the higher-dimensional combinatorics that form the present work's theme. Unsurprisingly, this question may also be phrased in terms of higher derived limits; it is in fact equivalent to the following:
\begin{quest} Let $\mathbf{Q}(\varepsilon)$ be the inverse system $(Q_\alpha,q_{\alpha\beta},\varepsilon)$ with $Q_\alpha=\bigoplus_\alpha\mathbb{Z}$ and $q_{\alpha\beta}:Q_\beta\to Q_\alpha$ the natural projection. Is it a $\mathsf{ZFC}$ theorem that $\mathrm{lim}^n\mathbf{Q}(\omega_n)\neq 0$ for each $n\in\omega$?
\end{quest}
This question in turn appears closely related to the sensitivity of the vanishing of $\lim^n \mathbf{A}$ to the \emph{dominating number} $\mathfrak{d}=\text{cf}(^\omega\omega,\leq)$ and its relation to the cardinal $\aleph_n$, where $\mathbf{A}$ is the inverse system featuring centrally in \cite{marpra, DSV, todder, SHDLST, SVHDL}. Indeed, this sensitivity was a main initial motivation for the present line of investigation.

\subsection{Trees of trees}\label{treessubsection}

In this section, a mild modification of Definition \ref{highernontriv} will facilitate description of $n$-dimensional generalizations of coherent Aronszajn trees, each of which makes its first $\mathsf{ZFC}$ appearance at $\omega_n$; classical coherent Aronszajn trees themselves comprise the $n=1$ case. The modification is simply to allow more general domains (still depending on $\gamma_0$) for the functions $\varphi_{\vec{\gamma}}$; as before, the comparison of such functions will always take place on the intersection of their domains.

For motivation, observe that the family $$\{\mathtt{f}_1(0,\beta)\big|_{[\alpha]^3}\mid\alpha\leq\beta+1<\omega_1\}$$
itself defines a coherent $\omega_1$-Aronszajn tree: simply view its elements, i.e.\ the nodes of any level $\alpha$ of the tree, as functions $[\alpha]^3\to\mathbb{Z}$, and order these nodes by inclusion. As a cofinal branch in this tree would render $\mathtt{f}_1$ trivial, this tree is Aronszajn.

On the left-hand side of Figure \ref{fig5} below is the more or less standard visualization of such a tree. On the right is a complementary visualization, one organized to foreground the essential mechanics of the nontrivial coherence of the system $\mathtt{f}_1(0,\,\cdot\,)$. As the analysis of Section \ref{thecasen1} made clear, those mechanics concentrate on the planes $\{z=\beta\mid\beta\in\text{Lim}\cap\omega_1\}$; more precisely, they concentrate on distinguished copies of $C_\beta$ therein. Schematically, then, we might view $\{\mathtt{f}_1(0,\gamma)\big|_{z=\beta}\mid\beta\leq\gamma\}$ as a family $s^1_\gamma$ of $0$-coherent functions $\{s^1_\gamma(\,\cdot\,,\beta):\beta\to\mathbb{Z}\mid\beta\leq\gamma\}$ which are non-$0$-trivial on the limit ordinals $\beta\leq\gamma$. Any of several approaches might effect this sort of identification; in perhaps the simplest, 
\[
  s^1_\gamma(\alpha,\beta) =
  \begin{cases}
                                   1 & \text{if $\beta$ is a limit ordinal and $\alpha\in  C_\beta\backslash m(\beta,\gamma)$} \\
                                   1 & \text{if $\beta$ is a successor ordinal and $\beta=\alpha+1$} \\
                                   0 & \text{otherwise}
  \end{cases}
\]
for each $\alpha<\beta$; here $m(\beta,\gamma)$ is the function defined in (\ref{mfunction}) above. This abstraction of $\mathtt{f}_1(0,\gamma)$, of course, is essentially that of Figure \ref{yslices} reflected through the graph of $z=x$. %Along these lines, for the remainder of this section we will identify size-$n$ sets of ordinals with their \emph{decreasing} arrangement in an ordered $n$-tuple. 
The point of all this is simply the following: $\{s^1_\gamma\mid\gamma<\omega_1\}$ is a natural recasting of the family $\{\mathtt{f}_1(0,\gamma)\mid\gamma<\omega_1\}$ in which:
\begin{enumerate}
\item Each $\{s_\gamma^1(\,\cdot\,,\beta)\mid\beta\leq\gamma\}$ is a family of $0$-coherent functions, each of which is non-$0$-trivial when $\beta$ is a limit.
\item For any $\gamma\leq\delta<\omega_1$ the functions $s^1_\delta\big|_{[\gamma+1]^2}$ and $s^1_\gamma$ differ by a $0$-trivial function.
\item There exists no $t^1:[\omega_1]^2\to\mathbb{Z}$ such that for all $\gamma<\omega_1$ the functions $t^1\big|_{[\gamma+1]^2}$ and $s^1_\gamma$ differ by a $0$-trivial function.
\end{enumerate}
Just as above, $\{s^1_\gamma\mid\gamma<\omega_1\}$ (together with its functions' restrictions to $[\beta]^2$ $(\beta<\omega_1)$) is readily identified with an $\omega_1$-Aronszajn tree.

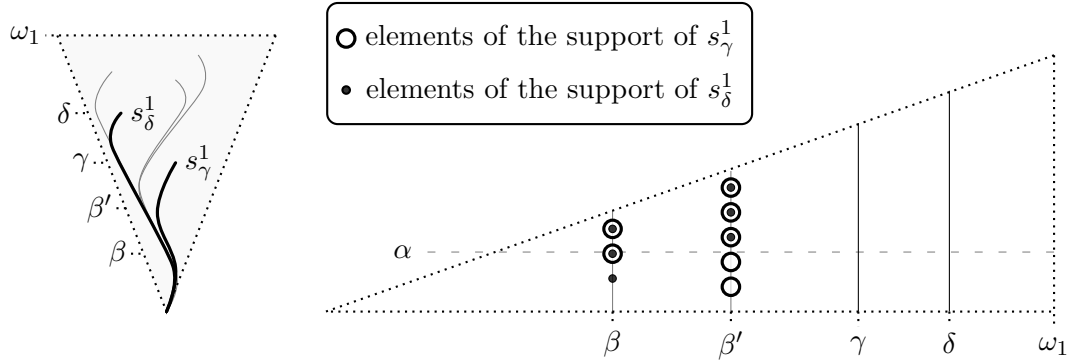
\begin{figure}
\begin{tikzpicture}[MyPersp,font=\large]
	\coordinate (A) at (1.7,0,0);
	\coordinate (B) at (2.9,0,3.34);
	\coordinate (C) at (0.5,0,3.34);
	\coordinate (C') at (0.37,0,3.34);
	\coordinate (D) at (1.8,0,1.8);	
	\coordinate (E) at (1.2,0,2.4);	
	\coordinate (Y) at (11.45,0,.72);	
	\coordinate (Z) at (4.5,0,.72);	
	
	\coordinate (AA) at (3.45,0,0);
	\coordinate (BB) at (11.45,0,3.1);
	\coordinate (CC) at (11.45,0,0);
	\coordinate (DD) at (7.9,0,0);
	\coordinate (EE) at (7.9,0,1.7);
	\coordinate (FF) at (9.3,0,0);
	\coordinate (GG) at (9.3,0,2.28);
	\coordinate (HH) at (10.3,0,0);
	\coordinate (II) at (10.3,0,2.67);
	\coordinate (JJ) at (6.6,0,0);
	\coordinate (KK) at (6.6,0,1.22);
	
	\coordinate (D1) at (6.6,0,.4);
	\coordinate (D2) at (6.6,0,.7);	
	\coordinate (D3) at (6.6,0,1);	
	\coordinate (C1) at (7.9,0,.3);
	\coordinate (C2) at (7.9,0,.6);
	\coordinate (C3) at (7.9,0,.9);
	\coordinate (C4) at (7.9,0,1.2);
	\coordinate (C5) at (7.9,0,1.5);

	\coordinate (DD-) at (7.9,0,-.12);

	\coordinate (FF-) at (9.3,0,-.18);
	
	\coordinate (JJ-) at (6.6,0,-.12);

	\coordinate (HH-) at (10.3,0,-.14);
	
	\coordinate (F-) at (11.45,0,-.2);
	
	\coordinate (g1) at (.9,0,1.8);
	\coordinate (g2) at (1.05,0,1.8);
	
	\coordinate (d1) at (.7,0,2.4);
	\coordinate (d2) at (.85,0,2.4);
	
	\coordinate (b1) at (1.31,0,.7);
	\coordinate (b2) at (1.46,0,.7);

	\coordinate (b3) at (1.15,0,1.25);
	\coordinate (b4) at (1.3,0,1.25);

	%\draw (X) node[inner sep=6pt, below] {$\beta$};

	%\draw[dotted, thick] (CC)--(F-);
	%\draw[thick] (W-)--(W+);
	
	%\draw[->] (A)--(T) node[below] {x};
\fill[fill=gray, opacity=.045] (A)--(B)--(C);
\draw[gray, line cap=round] plot [smooth, tension=.35] coordinates { (1.7,0,0) (1.8,0,.5) (.93,0,2.4) (1.1,0,2.9)};
\draw[gray, line cap=round] plot [smooth, tension=.5] coordinates { (1.7,0,0) (1.8,0,.5) (1.4,0,1.55) (1.9,0,2.5) (1.8,0,2.8)};
\draw[gray, line cap=round] plot [smooth, tension=.5] coordinates { (1.7,0,0) (1.8,0,.5) (1.4,0,1.55) (2.1,0,2.7) (2.05,0,3.1)};
\draw[very thick, line cap=round] plot [smooth, tension=.7] coordinates { (1.7,0,0) (1.8,0,.53) (1.6,0,1.2) (1.8,0,1.8)};
\draw[very thick, line cap=round] plot [smooth, tension=.43] coordinates { (1.7,0,0) (1.77,0,.55) (1.1,0,2) (1.2,0,2.4)};

%\draw[thick] plot [smooth, tension=.4] coordinates { (1.7,0,0) (1.8,0,.5) (1.1,0,2) (1.2,0,2.4)};
\draw[loosely dashed, gray] (Y)--(Z);
\draw (Z) node[left] {$\alpha$};
\draw[dotted, thick] (B)--(A)--(C);
\draw[dotted, thick] (B)--(C');
\draw[dotted, thick] (JJ)--(JJ-) node[below] {$\beta$};
\draw[dotted, thick] (g1)--(g2) node[left] {$\gamma\;$};
\draw[dotted, thick] (b1)--(b2) node[left] {$\beta\;$};
\draw[dotted, thick] (b3)--(b4) node[left] {$\beta'\;$};

\draw[dotted, thick] (d1)--(d2) node[left] {$\delta\;$};
\draw[dotted, thick] (FF)--(FF-) node[below] {$\gamma$};
\draw[dotted, thick] (DD)--(DD-) node[below] {$\beta'$};
\draw[dotted, thick] (HH)--(HH-) node[below] {$\delta$};
\draw[gray] (DD)--(EE);
\draw (FF)--(GG);
\draw (HH)--(II);
\draw (C') node[left] {$\omega_1\!$};
\draw (D) node[right] {$s_\gamma^1$};
\draw (E) node[right] {$s_\delta^1$};
\draw[gray] (JJ)--(KK);
\draw (D2) node[circle, very thick,draw,fill=white,fill opacity=.8, scale=.7]{};
\draw (D3) node[circle, very thick,draw,fill=white,fill opacity=.8, scale=.7]{};
\draw (D1) node[circle,draw,fill=black,fill opacity=.8,scale=.3]{};
\draw (D2) node[circle,draw,fill=black,fill opacity=.8,scale=.3]{};
\draw (D3) node[circle,draw,fill=black,fill opacity=.8,scale=.3]{};
\draw (C1) node[circle, very thick,draw,fill=white,fill opacity=.6, scale=.7]{};
\draw (C2) node[circle, very thick,draw,fill=white,fill opacity=.8, scale=.7]{};
\draw (C3) node[circle, very thick,draw,fill=white,fill opacity=.8, scale=.7]{};
\draw (C4) node[circle, very thick,draw,fill=white,fill opacity=.8, scale=.7]{};
\draw (C5) node[circle, very thick,draw,fill=white,fill opacity=.8, scale=.7]{};
%\draw (C2) node[circle,draw,fill=black,fill opacity=.8,scale=.3]{};
\draw (C3) node[circle,draw,fill=black,fill opacity=.8,scale=.3]{};
\draw (C4) node[circle,draw,fill=black,fill opacity=.8,scale=.3]{};
\draw (C5) node[circle,draw,fill=black,fill opacity=.8,scale=.3]{};

%\fill[fill=gray, opacity=.1] (A)--(B)--(C);
%\fill[fill=gray, opacity=.1] (AA)--(BB)--(CC)--(DD);
\draw[dotted, thick] (BB)--(AA)--(CC);
\draw[dotted, thick] (BB)--(F-) node[below] {$\omega_1$};
	
	     \node [matrix,draw=black,rounded corners, row sep=0mm, thick] at (5.8,0,3){\draw (0.2,0) node[circle, very thick,draw,fill=white,fill opacity=.8, scale=.7]{}; & \node {elements of the support of $s_\gamma^1$}; \\ \draw (0.2,0) node[circle,draw,fill=black,fill opacity=.8,scale=.3]{}; & \node {elements of the support of $s_\delta^1$}; \\};
\end{tikzpicture}
\caption{Two visualizations of the coherent Aronszajn tree $T$ deriving from $\mathtt{f}_1$. On the left is the standard view: $T$ is defined by its branches $s_\gamma^1$  ($\gamma<\omega_1$), i.e., by height-$\gamma$ functions which encode the data of $\mathtt{f}_1(0,\gamma)$. On the right is an alternative visualization: here $s_\gamma^1$ is pictured as the wedge to the left of the vertical line at $\gamma$. At each $\beta\leq\gamma$, the function $s_\gamma^1$ outputs a $0$-coherent function, namely the characteristic function of a tail of $C_\beta$. Moreover, any $s_\gamma^1$ and $s_\delta^1$ disagree on only finitely many columns in their common domain (and disagree only finitely often thereupon as well), a relationship to the functions $s_\gamma^1$ which no length-$\omega_1$ $t^1$ can globally replicate.}\label{fig5}
\end{figure}

For the purposes of generalization, it is convenient to redefine the functions $s^1_\gamma$ as one-coordinate functions taking any $\beta\leq\gamma$ to $\mathtt{f}_1(0,\gamma)\big|_{[\beta]^2\otimes\{\beta\}}$; this evidently preserves, in principle, points (1) through (3) of the prior definition. We then define a family of functions $\{s^2_\delta\mid\delta<\omega_2\}$, each with domain $(\delta+1)\times\delta$, by $$s^2_\delta(\beta,\gamma)=\mathtt{f}_2(0,\gamma,\delta)\big|_{\min\{\beta,\gamma\}\otimes[\beta]^2\otimes\{\beta\}}.$$
More generally, we define families of functions $\{s^n_\delta\mid\delta<\omega_n\}$, each with domain $(\delta+1)\times[\delta]^{n-1}$, by $$s^n_\delta(\beta,\vec{\gamma})=\mathtt{f}_n(0,\vec{\gamma},\delta)\big|_{\min\{\beta,\gamma_0\}\otimes[\beta]^n\otimes\{\beta\}}.$$
In other words, $s^n_\delta(\beta,\vec{\gamma})$ records the $\gamma_0$-restriction of the $z=\beta$ hyperplane of $\mathtt{f}_n(0,\vec{\gamma},\delta)$. In consequence, each function $s^n_\delta(\,\cdot\,,\vec{\gamma})$ may be identified with a function $\varphi^{\delta}_{\vec{\gamma}}$ which maps the union of these hyperplanes, namely $\gamma_0\otimes [\delta+1]^{n+1}$, to the integers. As indicated in this section's introduction, the notions of Definition \ref{highernontriv} extend in a straightforward manner to families of functions, like these, with non-ordinal domains; $n$-coherence and $n$-triviality are, as before, questions of \emph{mod finite} agreement on the intersection of the associated domains. We may now describe how for $n>1$ the families $\{s^n_\delta\mid\delta<\omega_n\}$ satisfy the following higher-dimensional analogues of points (1) through (3) above:
\begin{enumerate}
\item[(1')] Each $\{s_\delta^n(\beta,\vec{\gamma})\mid\vec{\gamma}\in[\beta]^{n-1}\}$ is an $(n-1)$-coherent family of functions which is nontrivial if $\text{cf}(\beta)=\aleph_{n-1}$.
\item[(2')] For any $\gamma\leq\delta<\omega_n$ the functions $s^n_\delta\big|_{(\gamma+1)\times[\gamma]^{n-1}}$ and $s^n_\gamma$ differ by an $(n-1)$-trivial function. More precisely, the families
$$\{\varphi^\delta_{\vec{\alpha}}-\varphi^\gamma_{\vec{\alpha}}:\alpha_0\otimes [\gamma+1]^{n+1}\to\mathbb{Z}\mid\vec{\alpha}\in [\gamma]^{n-1}\}$$
defined above are $(n-1)$-trivial for all $\gamma\leq\delta<\omega_n$.
\item[(3')] There exists no $t^n$ such that for all $\gamma<\omega_n$ the functions $t^n\big|_{(\gamma+1)\times[\gamma]^{n-1}}$ and $s^1_\gamma$ differ by an $(n-1)$-trivial function. Put differently, there exists no $t^n=s^n_{\omega_n}$ such that for all $\gamma<\omega_n$ the family
$$\{\varphi^{\omega_n}_{\vec{\alpha}}-\varphi^\gamma_{\vec{\alpha}}:\alpha_0\otimes [\gamma+1]^{n+1}\to\mathbb{Z}\mid\vec{\alpha}\in [\gamma]^{n-1}\}$$
is nontrivial, where $s^n_{\omega_n}$ induces the function $\varphi^{\omega_n}_{\vec{\alpha}}$ in the manner described in the paragraph preceding (1').
\end{enumerate}
Cumbersome as the notations do grow, the idea of these families is in fact very simple, as Figure \ref{fig6} is meant to convey. There and below, we focus on the case of $n=2$.

\begin{figure}
\begin{tikzpicture}[MyPersp,font=\large]
	\coordinate (A) at (4.7,0,0);
	\coordinate (B) at (3.8,0,2.05);
	\coordinate (C) at (5.6,0,2.05);
	\coordinate (D) at (4.7,0,2.05);
	\coordinate (F) at (7,0,0);	
	\coordinate (G) at (5.8,0,3.1);	
	\coordinate (H) at (8.2,0,3.1);
	\coordinate (E) at (9.3,0,0);	
	\coordinate (I) at (9.3,0,4.07);	
	\coordinate (J) at (10.3,0,0);
	\coordinate (K) at (10.3,0,4.5);
	\coordinate (L) at (2.1,0,1.5);
	\coordinate (M) at (11.45,0,1.5);
	\coordinate (N) at (2.75,0,0);
	\coordinate (O) at (5.85,0,.65);
	\coordinate (P) at (5.85,0,0);
	\coordinate (Q) at (4.42,0,.9);	
	\coordinate (R) at (3.33,0,.32);
	\coordinate (S) at (5.26,0,.96);
	\coordinate (T) at (4.92,0,1.2);	
	\coordinate (U) at (6.46,0,.32);
	\coordinate (V) at (6.9,0,.5);	
	\coordinate (W) at (8.3,0,.4);
	\coordinate (X) at (7.67,0,.7);	
	\coordinate (Y) at (7.2,0,.82);
	
	\coordinate (AA) at (0,0,0);
	\coordinate (BB) at (11.45,0,5);
	\coordinate (CC) at (11.45,0,0);

	\coordinate (A-) at (4.7,0,-.12);
	\coordinate (F-) at (7,0,-.12);	
	\coordinate (E-) at (9.3,0,-.18);	
	\coordinate (J-) at (10.3,0,-.12);	
	\coordinate (CC-) at (11.45,0,-.18);		

	\coordinate (W-) at (3.3,0,-.1);
	\coordinate (W+) at (3.3,0,.1);
	
	\coordinate (BB-) at (5.9,0,2);
	\coordinate (BB+) at (6.1,0,2);
	
	\coordinate (EE-) at (5.9,0,3);
	\coordinate (EE+) at (6.1,0,3);

	\coordinate (F0-) at (6,2,-.1);
	\coordinate (F0+) at (6,2,.1);
	
	\coordinate (F1-) at (8,0,.1);
	\coordinate (F1+) at (8,0,-.1);
	
	\coordinate (F2-) at (6,3,-.1);
	\coordinate (F2+) at (6,3,.1);
	
	\coordinate (F3-) at (9,0,.1);
	\coordinate (F3+) at (9,0,-.1);

	%\draw (X) node[inner sep=6pt, below] {$\beta$};

	%\draw[thick] (E-)--(E+);
	%\draw[thick] (W-)--(W+);
	
	%\draw[->] (A)--(T) node[below] {x};
%\draw[dotted, thick] (A)--(B)--(C)--cycle;
\fill[fill=gray, opacity=.17] (A)--(B)--(C);
\fill[fill=gray, opacity=.17] (F)--(G)--(H);
%\fill[fill=gray, opacity=.1] (AA)--(BB)--(CC)--(DD);
\draw[dashed] (M)--(L) node[left] {$\alpha$};
\draw[line width=1.2 mm, white, line cap=round] plot [smooth, tension=.6] coordinates { (4.7,0,0) (4.75,0,.3) (4.5,0,.8) (4.45,0,1.2) (4.6,0,1.5)};
\draw[line width=.5 mm, white, line cap=round, opacity=.6] plot [smooth, tension=.6] coordinates { (4.7,0,0) (4.75,0,.3) (4.45,0,1) (4.35,0,1.5) (4.5,0,1.85)};
\draw[line width=.5 mm, white, line cap=round, opacity=.6] plot [smooth, tension=.6] coordinates { (4.7,0,0) (4.75,0,.3) (4.55,0,.7) (4.52,0,.9) (4.65,0,1.2)};
\draw[line width=.5 mm, black, opacity=.3, line cap=round] plot [smooth, tension=.6] coordinates { (4.7,0,0) (4.75,0,.3) (4.7,0,.5) (4.85,0,.8) (4.95,0,1)};
\draw[line width=.5 mm, black, opacity=.3, line cap=round] plot [smooth, tension=.6] coordinates { (4.7,0,0) (4.75,0,.3) (4.65,0,.7) (4.85,0,1.2) (4.78,0,1.4)};
\draw[line width=1.2 mm, black, opacity=.9, line cap=round] plot [smooth, tension=.6] coordinates { (4.7,0,0) (4.75,0,.3) (4.65,0,.7) (4.8,0,1.1) (5,0,1.5)};
\draw[line width=1.2 mm, white, line cap=round] plot [smooth, tension=.6] coordinates { (7,0,0) (7,0,.3) (6.8,0,.8) (6.8,0,1.2) (6.6,0,1.5)};
\draw[line width=.5 mm, white, line cap=round, opacity=.6] plot [smooth, tension=.6] coordinates { (7,0,0) (7,0,.3) (6.8,0,.8) (6.8,0,1.2) (6.8,0,2) (6.65,0,2.3)};
\draw[line width=.5 mm, white, line cap=round, opacity=.6] plot [smooth, tension=.6] coordinates { (7,0,0) (7,0,.3) (6.8,0,.8) (6.8,0,1.2) (6.8,0,1.7) (7,0,2.3) (6.8,0,2.6)};
\draw[line width=.5 mm, black, opacity=.3, line cap=round] plot [smooth, tension=.8] coordinates { (7,0,0) (7.05,0,.3) (7.15,0,.6) (7.15,0,1) (7,0,1.5) (7.3,0,2.1) (7.4,0,2.5) (7.2,0,2.8)};
\draw[line width=.5 mm, black, opacity=.3, line cap=round] plot [smooth, tension=.7] coordinates { (7,0,0) (7.05,0,.3) (7.15,0,.6) (7.15,0,1) (7.1,0,1.3) (7.5,0,2.1)};
\draw[line width=1.2 mm, black, opacity=.9, line cap=round] plot [smooth, tension=.6] coordinates { (7,0,0) (7.05,0,.3) (7.15,0,.6) (7.15,0,1) (7.4,0,1.5)};
\draw[dotted, thick] (BB)--(AA)--(CC)--cycle;
\draw[densely dotted, thick] (A)--(B)--(C)--cycle;
\draw[densely dotted, thick] (F)--(G)--(H)--cycle;

\draw (E)--(I);
\draw (J)--(K);
\draw (N) node[above] {$s^2_\gamma(\beta,\alpha)$};
\draw (O) node[above] {$s^2_\delta(\beta,\alpha)$};
\draw (P) node[above] {$s^2_\gamma(\beta',\alpha)$};
\draw (W) node[above] {$s^2_\delta(\beta',\alpha)$};
\draw[->] (R)--(Q);
\draw[->] (S)--(T);
\draw[->] (U)--(V);
\draw[->] (X)--(Y);
\draw[dotted, thick] (A)--(A-) node[below] {$\beta$};
\draw[dotted, thick] (F)--(F-) node[below] {$\beta'$};
\draw[dotted, thick] (E)--(E-) node[below] {$\gamma$};
\draw[dotted, thick] (J)--(J-) node[below] {$\delta$};
\draw[dotted, thick] (BB)--(CC-) node[below] {$\omega_2$};
\end{tikzpicture}
\caption{The $n=2$ generalization of Figure \ref{fig5}, a ``tree of trees.'' This tree's branches $s_\gamma^2$  ($\gamma<\omega_2$) are height-$\gamma$ functions which encode the data of $\mathtt{f}_2(0,\,\cdot\,,\gamma)$. In analogy with Figure \ref{fig5}, $s_\gamma^2$ is loosely identified with the wedge to the left of the vertical line at $\gamma$. At each $\beta\leq\gamma$ the family $\{s_\gamma^2(\beta,\alpha)\mid\alpha<\beta\}$ forms a $1$-coherent family of functions which is nontrivial if $\text{cf}(\beta)=\aleph_1$; its elements are naturally viewed as branches of a tree, as above. Moreover, for any $\gamma<\delta<\omega_2$ the family of functions $\{s^2_\delta(\,\cdot\,,\alpha)-s^2_\gamma(\,\cdot\,,\alpha)\mid\alpha<\gamma\}$ is $1$-trivial, i.e., is equal (mod finite) to the restrictions of a single function; as in Figure \ref{fig5}, this is a relationship with the functions $s^2_\gamma$ which no length-$\omega_2$ function $t^2$ can globally achieve.}
\label{fig6}
\end{figure}
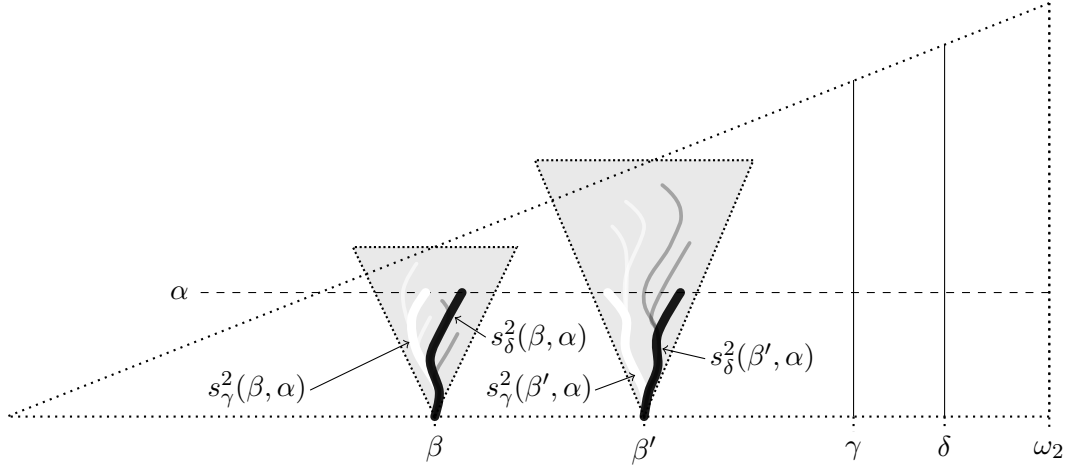
In this case, item (1') translates to the assertion that, viewed as a family of functions, \begin{align}\label{eq1}\{\mathtt{f}_2(0,\gamma,\delta)\big|_{\gamma\otimes [\beta]^2\otimes\{\beta\}}\mid\gamma<\beta\}\end{align} is $1$-coherent, and is nontrivial if $\text{cf}(\beta)=\aleph_1$. This, though, follows immediately from the observation in Section \ref{efffn} that \begin{align}\label{eq2} \{\mathtt{f}_2(0,\gamma,\beta)\big|_{\gamma\otimes [\beta]^2\otimes\{\beta\}}\mid\gamma<\beta\}\end{align} is a $1$-coherent family which is nontrivial if $\text{cf}(\beta)=\aleph_1$, together with equation \ref{efftwo} therein ---
\begin{align*}(\mathtt{f}_2(0,\beta,\delta)-\mathtt{f}_2(0,\gamma,\delta)+\mathtt{f}_2(0,\gamma,\beta))\big|_{\gamma\otimes[\omega_2]^3}=^*0\textnormal{ for all }\gamma<\beta\end{align*}
--- so that the family (\ref{eq1}) is simply a ``shift'' by $\mathtt{f}_2(0,\beta,\delta)\big|_{z=\beta}$ of the family (\ref{eq2}).

Item (2') amounts to the assertion that each $\{s^2_\delta(\,\cdot\,,\alpha)-s^2_\gamma(\,\cdot\,,\alpha)\mid\alpha<\gamma\}$, which may be identified with $\{(\mathtt{f}_2(0,\alpha,\delta)-\mathtt{f}_2(0,\alpha,\gamma))\big|_{\alpha\otimes [\gamma+1]^3}\mid\alpha<\gamma\}$, is a $1$-trivial family, or in other words is approximated (mod finite) by the restrictions of a single function. This too is immediate from a version of equation \ref{efftwo}:
$$(\mathtt{f}_2(0,\alpha,\delta)-\mathtt{f}_2(0,\alpha,\gamma))\big|_{\alpha\otimes [\gamma+1]^3}=^{*}\mathtt{f}_2(0,\gamma,\delta)\big|_{\alpha\otimes [\gamma+1]^3}\text{ for all }\alpha<\gamma.$$

Item (3') generally follows from the nontriviality of the functions $\mathtt{f}_n$; unlike items (1') or (2'), however, the work of arguing this substantially rises with the parameter $n$. In the case of $n=2$, we begin by assuming for contradiction that there exists a $t^2$ as described, i.e., such that for each $\gamma$ some $\psi_\gamma:[\gamma+1]^4\to\mathbb{Z}$ $1$-trivializes $\{\varphi^{\omega_2}_\alpha-\varphi^\gamma_\alpha\mid\alpha<\gamma\}$. Then the functions $\mathtt{e}_1(0,\gamma)$ defined for all successor $\gamma$ by
\[
  \mathtt{e}_1(0,\gamma)\big|_{\min\{\beta,\gamma\}\otimes[\beta]^2\otimes\{\beta\}} =
  \begin{cases}
                                   -\psi_\gamma\big|_{[\beta]^3\otimes\{\beta\}} & \text{if $\beta\leq\gamma$} \\
                                   -t^2(\beta,\gamma)=-\varphi^{\omega_2}_\gamma\big|_{\gamma\otimes [\beta]^2\otimes\{\beta\}} & \text{if $\beta\in(\gamma,\omega_2)$} 
  \end{cases}
\]
together satisfy equation \ref{e1} of Section \ref{efffn}:
\begin{align}\label{e1x2}
(\mathtt{e}_1(0,\gamma)-\mathtt{e}_1(0,\alpha))\big|_{\alpha\otimes[\omega_2]^3}=^{*}\mathtt{f}_2(0,\alpha,\gamma)\big|_{\alpha\otimes[\omega_2]^3}\hspace{.3 cm}\textnormal{ for all successor }\alpha<\gamma<\omega_2\,.
\end{align}
It is then straightforward to extend these assignments to full trivialization $\mathtt{e}_1$ of $\mathtt{f}_2$, giving the contradiction we had desired. Equation \ref{e1x2} is verified in three steps, which may be thought of as a left-to-right movement below the horizontal line at $\alpha$ in Figure \ref{fig6}. First fix successor ordinals $\alpha<\gamma<\omega_2$ and observe that for all $\xi<\alpha$,
\begin{align*} (-\psi_\gamma + \psi_\alpha)\big|_{\xi\otimes[\alpha+1]^3} & =\;\, (\varphi^{\omega_2}_\xi-\psi_\gamma -\varphi^{\omega_2}_\xi+ \psi_\alpha)\big|_{\xi\otimes [\alpha+1]^3} \\
& =^{*} (\varphi^\gamma_\xi-\varphi^\alpha_\xi)\big|_{\xi\otimes [\alpha+1]^3} \\ & =\;\, (\mathtt{f}_2(0,\xi,\gamma)-\mathtt{f}_2(0,\xi,\alpha))\big|_{\xi\otimes [\alpha+1]^3}\\ & =^{*} (\mathtt{f}_2(0,\alpha,\gamma))\big|_{\xi\otimes [\alpha+1]^3}
\end{align*}
This shows that (\ref{e1x2}) holds when restricted to $[\alpha+1]^4$. We show that it holds on the rest of $\alpha\otimes [\omega_2]^3$ by the following sequence of steps. On $\alpha\otimes[\gamma]^2\otimes (\alpha,\gamma]$, the difference (\ref{e1x2}) amounts to $-\psi_\gamma+\varphi^{\omega_2}_\alpha$, which equals $\varphi^\gamma_\alpha$ (mod finite) by definition, which via its identification with $\mathtt{f}_2(0,\alpha,\gamma)$ then extends the restriction on which we have seen (\ref{e1x2}) to hold to $\alpha\otimes[\gamma+1]^3$. For any $\delta\in(\gamma,\omega_2)$, the restriction of the left-hand side of (\ref{e1x2}) to $\alpha\otimes[\delta]^2\otimes (\gamma,\delta]$ is $\varphi^{\omega_2}_\alpha-\varphi^{\omega_2}_\gamma$, which equals (mod finite) $\varphi^{\delta}_\alpha-\varphi^{\delta}_\gamma$, which is finitely supported, by these functions' identifications with $\mathtt{f}_2$ together with equations \ref{419} and \ref{449}. As $\delta$ is arbitrary, this shows that equation \ref{e1x2} does in fact hold on $\alpha\otimes[\omega_2]^3$, concluding the argument.

As mentioned, the argument of the higher-$n$ cases involves no conceptual novelty, but does entail rising notational costs (again functions $\mathtt{e}_{n-1}$ trivializing $\mathtt{f}_n$ are defined from the trivializations $t_n$, but in more ``pieces'' together involving a longer series of derived trivializations $u$). In its place we describe a cleaner mild variation of the ``trees of trees'' structures above and show that nontrivial instances of this variation coincide with instances of the higher-order nontrivial coherence of Definition \ref{highernontriv}. Our argument of this fact is close enough in spirit to the one we omit that little content is ultimately lost.

We turn now to the definition of this variation. Though our terminology will overlap with that of Definition \ref{highernontriv} above, context will generally indicate which of the two notions of \emph{$n$-coherence} we have in mind; in cases of potential ambiguity, we denote the notions in Definitions \ref{highernontriv} and \ref{highernontrivII} as  \emph{$n$-coherence}$^{\mathrm{I}}$ and \emph{$n$-coherence}$^{\mathrm{II}}$, respectively (this overlap is also somewhat justified by Theorem \ref{twoiffone}). It will streamline discussion, also, to tacitly identify families of  functions $\{s^n_\gamma:[\gamma]^{n}\to A\mid\gamma<\kappa\}$ with single functions $s^n:[\kappa]^{n+1}\to A$ via the equation $s^n(\vec{\alpha},\gamma)=s^n_\gamma(\vec{\alpha})$. Such an identification is operative at each step of the following inductive definition:
\begin{defin}\label{highernontrivII} A function is \emph{0-trivial} if its support is finite.

For any $n>0$ a family of functions $\{s^n_\gamma:[\gamma]^{n}\to A\mid\gamma<\varepsilon\}$ is \emph{$n$-coherent} if
\begin{align*}s^n_\delta\big|_{[\gamma]^{n}}-s^n_\gamma\end{align*}
is $(n-1)$-trivial for all $\gamma\leq\delta<\varepsilon$.

Such a family is \emph{$n$-trivial} if there exists a $t^n:[\varepsilon]^{n}\to A$ such that
\begin{align*}t^n\big|_{[\gamma]^{n}}-s^n_\gamma\end{align*}
is $(n-1)$-trivial for all $\gamma<\varepsilon$.
\end{defin}

Observe that any $n$-coherent family $\mathcal{S}_n=\{s_\gamma^n\mid\gamma<\varepsilon\}$ induces a tree \begin{align}\label{treeform}T(\mathcal{S}_n):=\big(\{s_\gamma^n\big|_{[\beta]^{n}}\mid\beta\leq\gamma<\varepsilon\},\,\subseteq\big).\end{align} Observe also that if $\mathcal{S}_n$ is non-$n$-trivial then $T(\mathcal{S}_n)$ has no cofinal branch. In fact, $\mathcal{S}_n$ is non-$n$-trivial if and only if the \emph{uniform closure} $T^*(\mathcal{S}_n)$ of $T(\mathcal{S}_n)$ has no cofinal branch; this notion generalizes that of \cite[Definition 4.1.2]{todwalks}:
\begin{defin} An \emph{$n$-coherent tree} $T(\mathcal{S}_n)$ is one of the form (\ref{treeform}) for some $n$-coherent family of functions $\mathcal{S}_n=\{s^n_\gamma:[\gamma]^{n}\to A\mid\gamma<\varepsilon\}$. The \emph{uniform closure} of $T(\mathcal{S}_n)$ is the tree
$$T^*(\mathcal{S}_n)=(\bigcup_{\gamma<\varepsilon}\{u_\gamma:[\gamma]^n\to A\mid u_\gamma-s^n_\gamma\text{ is $(n-1)$-trivial}\},\subseteq).$$
An $n$-coherent tree $T(\mathcal{S}_n)$ is \emph{nontrivial} if its uniform closure $T^*(\mathcal{S}_n)$ has no cofinal branch.
\end{defin}
The ways or degrees to which $n$-coherence materializes at the level of the tree-structures of the trees $T(\mathcal{S}_n)$ (as it very consequentially does in the case of $n=1$; see \cite[Chapter 4]{todwalks}) remains an interesting question.

As hinted above, the terminological overlap of Definitions 7.8 and 7.1 reflects a degree of equivalence between the notions they describe. This is a relationship made precise by Theorem \ref{twoiffone}. Although at first glance, particularly for low $n$, this relationship may appear to consist in little more than notational rearrangements (as is indeed the case in one direction; see Lemma \ref{rerangement}), the theorem in its full generality is quite subtle, and its proof is sufficiently tedious that we defer it to our appendix (Section \ref{appendixB}). 
\begin{thm} \label{twoiffone} There exists a height-$\varepsilon$ $A$-valued nontrivial $n$-coherent$^{\mathrm{I}}$ family of functions if and only if there exists a height-$\varepsilon$ $A$-valued nontrivial $n$-coherent$^{\mathrm{II}}$ family of functions.
\end{thm}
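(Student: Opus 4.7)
The plan is to prove the biconditional by providing explicit constructions in each direction, proceeding by induction on $n$. The base case $n=1$ is immediate: the two definitions coincide after identifying $[\gamma]^1$ with $\gamma$ and $0$-triviality with finite support, as already noted in the discussion accompanying Definition \ref{highernontrivII}.

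For the inductive step, I would handle the direction from $n$-coherent$^{\mathrm{I}}$ to $n$-coherent$^{\mathrm{II}}$ via the ``rearrangement'' promised by Lemma \ref{rerangement}: given $\Phi=\{\varphi_{\vec{\beta}}\mid\vec{\beta}\in[\varepsilon]^n\}$, define $s^n_\gamma:[\gamma]^n\to A$ by $s^n_\gamma(\vec{\alpha}):=\varphi_{(\vec{\alpha}[1],\dots,\vec{\alpha}[n-1],\gamma)}(\vec{\alpha}[0])$. The $n$-coherence$^{\mathrm{II}}$ of $\{s^n_\gamma\}$ then follows by applying the coherence$^{\mathrm{I}}$ equation to the $(n+1)$-tuple $(\vec{\alpha}[1],\dots,\vec{\alpha}[n-1],\gamma,\delta)$ and reading the resulting alternating sum as expressing $s^n_\delta|_{[\gamma]^n}-s^n_\gamma$, mod finite, as a combination of $\varphi$-terms attached to the remaining subtuples; the inductive hypothesis then packages this combination as an $(n-1)$-trivialization$^{\mathrm{II}}$ via one more rearrangement. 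Preservation of nontriviality is by contraposition: given an $n$-trivialization$^{\mathrm{II}}$ $t^n$ of $\{s^n_\gamma\}$ with its cascade of $(n-1)$-, $(n-2)$-, \dots witnesses $u$, I set $\psi_{\vec{\beta}}(\xi):=\pm t^n(\xi,\vec{\beta})$ and verify that $\sum_{i}(-1)^i\psi_{\vec{\alpha}^i}=^{*}\varphi_{\vec{\alpha}}$, using the coherence$^{\mathrm{I}}$ equations for $\Phi$ itself to force each $u$-witness to agree with the corresponding trace of $t^n$ (mod finite) on every proper initial segment of the relevant ordinal. This final step is a straight generalization of the $n=2$ calculation one can perform directly from Definitions \ref{highernontriv} and \ref{highernontrivII}.

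For the reverse direction, from $n$-coherent$^{\mathrm{II}}$ to $n$-coherent$^{\mathrm{I}}$, I fix for each pair $\gamma<\delta<\varepsilon$ an $(n-1)$-trivialization witness $u_{\gamma\delta}:[\gamma]^{n-1}\to A$ for $s^n_\delta|_{[\gamma]^n}-s^n_\gamma$, and propose to take a rearranged form of $\{u_{\gamma\delta}\}$ as the coherence$^{\mathrm{I}}$ family $\Phi$. The key identity $(s^n_\delta-s^n_\beta)-(s^n_\delta-s^n_\gamma)-(s^n_\gamma-s^n_\beta)=0$ on $[\beta]^n$, for $\beta<\gamma<\delta$, exhibits $u_{\beta\delta}-u_{\gamma\delta}|_{[\beta]^{n-1}}-u_{\beta\gamma}$ as an $(n-1)$-trivialization$^{\mathrm{II}}$ of the zero function, and applying the inductive hypothesis (together with the rearrangement of the first direction) converts this into the coherence$^{\mathrm{I}}$ relation for $\Phi$. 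Nontriviality propagates by the contrapositive: an $n$-trivializing family $\Psi$ for $\Phi$ would, via the reverse rearrangement, assemble the data of the iterated $(n-1)$-, \dots, $0$-witnesses needed to trivialize$^{\mathrm{II}}$ $\mathcal{S}_n$.

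The main obstacle is this second direction: the witnesses $u_{\gamma\delta}$ are not canonical, and the derived coherence$^{\mathrm{I}}$ relation for them holds \emph{a priori} only mod finite on every proper initial segment of $\beta$ rather than mod finite on the full $\beta$. Bridging this gap will require either (a) a canonical choice of witnesses, e.g.\ fixed by a system of distinguished values of $s^n$ along ``paths'' in the tree structure, so that the witnesses telescope and the derived relation automatically holds mod finite on all of $\beta$, or (b) a replacement of the naive $u$'s by alternating-sum normalizations against a reference, so that the cumulated initial-segment agreement upgrades to genuine mod-finite agreement on the full domain. I expect the bulk of the Appendix \ref{appendixB} labor to lie in verifying that such a canonical adjustment propagates consistently through the full $n$-level inductive structure of coherence$^{\mathrm{II}}$.
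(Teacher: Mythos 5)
You have the right architecture — a rearrangement map in the forward direction (the paper's Lemma \ref{rerangement}) and a cascading-trivialization construction in the reverse — and you correctly locate the central obstruction: the witnesses $u_{\gamma\delta}$ carry control only on \emph{initial segments} $[\beta]^{n-1}$ with $\beta<\gamma$, which is strictly weaker than $(n-2)$-triviality on $[\gamma]^{n-1}$ itself. But the resolution you guess at, a canonical or normalized choice of witnesses, is not the paper's move and is unlikely to close the gap cleanly, since there is no evident canonical choice and the difficulty recurs at every level of the cascade. The paper's fix is simpler and different in kind: use the observation that the class $[\Phi_n]$ is determined by any restriction to a cofinal subset, and then work only over $X=\varepsilon\cap\mathrm{Succ}$. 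For $\gamma\in\mathrm{Succ}$, a function on $[\gamma]^{n-1}$ whose restrictions to $[\beta]^{n-1}$ are all $(n-2)$-trivial$^{\mathrm{II}}$ for $\beta<\gamma$ is \emph{itself} $(n-2)$-trivial$^{\mathrm{II}}$ (the paper records this as the observation that any coherent family of successor height is trivial). That single restriction dissolves the ``initial segments only'' issue uniformly at every level of the cascade, letting $t^{n-1}_{\gamma\delta}\rightsquigarrow t^{n-2}_{\gamma\delta\eta}\rightsquigarrow\cdots\rightsquigarrow t^1_{\vec\beta}$ go through.

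Two further points. First, your reverse construction takes only the top-level witnesses $u_{\gamma\delta}:[\gamma]^{n-1}\to A$ as the would-be coherent$^{\mathrm{I}}$ family, but for $n>2$ these do not have the right domain shape (a coherent$^{\mathrm{I}}$ family element must map $\beta_0\to A$); you must descend the full cascade down to $t^1_{\vec\beta}:\beta_0\to A$ and show \emph{those} are $n$-coherent$^{\mathrm{I}}$, as the paper does. Second, your framing as an induction on $n$ does no actual work: an inductive hypothesis asserting mere existence of nontrivial $(n-1)$-coherent families at some $\varepsilon'$ cannot be applied to a specific expression you need to trivialize at $\varepsilon$, and the paper in fact proceeds with no induction on $n$ at all, instead proving the stronger group-isomorphism $\mathsf{coh}^{\mathrm{I}}/\mathsf{triv}^{\mathrm{I}}\cong\mathsf{coh}^{\mathrm{II}}/\mathsf{triv}^{\mathrm{II}}$ via explicit maps $a_n$, $b_n$, verifying $a_nb_n=\mathrm{id}$, $b_na_n=\mathrm{id}$, and — crucially, and absent from your sketch — the independence of $b_n$ from the choice of witnesses. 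Working at the level of cosets rather than chasing existence biconditionals is what makes both preservation-of-nontriviality claims fall out automatically from well-definedness.
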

In Appendix \ref{appendixB}, we show something slightly stronger than this, namely that for every $n>0$ and ordinal $\varepsilon$ and abelian group $A$,
$$\frac{\mathsf{coh}^{\mathrm{I}}(n,A,\varepsilon)}{\mathsf{triv}^{\mathrm{I}}(n,A,\varepsilon)}\cong\frac{\mathsf{coh}^{\mathrm{II}}(n,A,\varepsilon)}{\mathsf{triv}^{\mathrm{II}}(n,A,\varepsilon)}.$$
These quotients are the obvious modifications of that of Theorem \ref{cooicohtriv} in light of Definition \ref{highernontrivII}. From Theorems \ref{twoiffone} and \ref{cechtheorem} the following is immediate:
\begin{cor}\label{maybemylastlabel} For all $n>0$ the least ordinal $\varepsilon$ for which there exists a nontrivial $n$-coherent tree of height $\varepsilon$ is $\omega_n$.
\end{cor}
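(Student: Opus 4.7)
The plan is to deduce the corollary by chaining the cited theorems through a tautological reinterpretation of tree-nontriviality. First I would observe that for any $n$-coherent$^{\mathrm{II}}$ family $\mathcal{S}_n=\{s^n_\gamma\mid\gamma<\varepsilon\}$, a cofinal branch of the uniform closure $T^*(\mathcal{S}_n)$ is, straight from the definitions, a function $t^n:[\varepsilon]^n\to A$ for which $t^n\big|_{[\gamma]^n}-s^n_\gamma$ is $(n-1)$-trivial for every $\gamma<\varepsilon$; this is exactly a witness to the $n$-triviality$^{\mathrm{II}}$ of $\mathcal{S}_n$. Hence nontrivial $n$-coherent trees of height $\varepsilon$ exist if and only if nontrivial $n$-coherent$^{\mathrm{II}}$ families of height $\varepsilon$ exist.

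Theorem \ref{twoiffone} then transfers the existence question between $n$-coherence$^{\mathrm{II}}$ and $n$-coherence$^{\mathrm{I}}$, while Theorem \ref{cooicohtriv} identifies nontrivial $n$-coherent$^{\mathrm{I}}$ families of height $\varepsilon$ valued in $A$ with nonzero classes in $\check{\mathrm{H}}^n(\varepsilon;\mathcal{A})$. Chaining these three equivalences, the corollary reduces to the statement that $\omega_n$ is the least ordinal admitting a nonzero constant-sheaf $\check{\mathrm{H}}^n$, which is precisely what Theorem \ref{cechtheorem} combined with the Goblot vanishing theorem recalled just before it affirms.

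For the positive direction, Theorem \ref{cechtheorem} supplies a nontrivial $n$-coherent$^{\mathrm{I}}$ family of height $\omega_n$ valued in $\bigoplus_{\omega_n}\mathbb{Z}$; passing through the two equivalences above produces a nontrivial $n$-coherent tree of the same height. For the negative direction, any $\varepsilon<\omega_n$ has cofinality at most $\aleph_{n-1}<\aleph_n$, so the Goblot theorem forces $\check{\mathrm{H}}^n(\varepsilon;\mathcal{A})=0$ for every abelian $A$, and no nontrivial $n$-coherent tree of height $\varepsilon$ can exist. There is no real obstacle: the substantive work has already been invested in Theorems \ref{cechtheorem} and \ref{twoiffone}, and the corollary amounts to a notational repackaging of their combination through the above tautology.
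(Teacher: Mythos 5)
Your argument is correct and is exactly the route the paper intends: the paper observes, just before Corollary \ref{maybemylastlabel}, that non-$n$-triviality$^{\mathrm{II}}$ is by definition the absence of a cofinal branch in $T^*(\mathcal{S}_n)$, and then deduces the corollary by chaining Theorem \ref{twoiffone} with Theorem \ref{cechtheorem} (which already incorporates the Goblot vanishing for the lower-bound direction). You have simply made the chain of equivalences explicit, which the paper compresses into the word ``immediate.''
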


\section{The trace functions at the heart of the functions $\mathtt{f}_n$}\label{highertraces}

\subsection{The case of $n=1$}\label{thecasen=1again}
An organizing question in Sections \ref{thecasen1} and \ref{treessubsection} was that of the behavior of the function $\mathtt{f}_1$ on the planes $z=\beta$ for limit ordinals $\beta$. For any such $\beta$ and $\alpha<\beta<\gamma$ this question amounts essentially to that of $\mathtt{f}_1(\alpha,\gamma)$'s ``point of arrival'' to $z=\beta$: by equation \ref{43}, once $\beta$ appears as second coordinate in an output $-\langle\xi,\beta,\delta\rangle$ in the iterative expansion of $\mathtt{f}_1(\alpha,\gamma)$, some tail of the sequence (\ref{4110}) will follow by way of the term $\mathtt{f}_1(\xi,\beta)$. As $\beta$ is a limit, under our standing $C$-sequence assumptions (see Section \ref{bases}), the aforementioned $\delta$ can only have been $\beta+1$; if $\gamma>\beta+1$ then this output must have been preceded in the $\mathtt{f}_1(\alpha,\gamma)$-expansion by some $-\langle\xi',\beta+1,\delta'\rangle$, with $\xi'\leq\xi$. Reasoning along these lines focuses our attention on the region $\mathtt{f}_1(\alpha,\gamma)\big|_{[\alpha,\beta)\otimes[(\beta,\gamma]]^2}$, the prism (minus the $y=\beta$ plane) depicted in the $\alpha=0$ case of Figure \ref{walkdowntoz=beta} below. Note that
\begin{align}\label{3coordseq}-\langle\xi_i^{L},\xi_i^{T},\delta_i\rangle \end{align}
falls in this region only if $$\xi_i^{L}<\beta<\xi_i^{T}=C^{\delta_i}(\xi_i^{L})<\delta_i$$
and that in this case the indexing makes sense; by this we mean that there is a \textit{next} value in $\mathtt{f}_1(\alpha,\gamma)\big|_{[\alpha,\beta)\otimes[[\beta,\gamma]]^2}$, namely
$$-\langle\text{max}(\xi_i^{L},\text{max}(C_{\xi_i^{T}}\cap\beta)),\min C_{\xi^T_i}\backslash\beta,\xi_i^{T}\rangle$$
which we denote $-\langle\xi_{i+1}^{L},\xi_{i+1}^{T},\delta_{i+1}\rangle$. As above, these considerations have maximum scope when $\alpha=0$; therefore let it. Beginning with $\mathtt{f}_1(\alpha,\gamma)$, then, the right, left, and middle coordinates of the above-described sequence (ending when the middle coordinate is $\beta$) are more than a little familiar: writing $-\langle\xi_0^{L},\xi_0^{T},\delta_0\rangle$ for the first term in the expansion of $\mathtt{f}_1(\alpha,\gamma)$ to fall in the $[\alpha,\beta)\otimes[[\beta,\gamma]]^2$ region, we have $\delta_0=\gamma$ and $\xi^T_0=\min C_\gamma\backslash\beta$ and $\xi^L_0=\max C_\gamma\cap\beta$ (equaling $0$ if this intersection is empty) and, more generally,
\begin{align} \{\delta_0>\delta_1>\dots>\beta\}\, = \,
\{\gamma>\xi_0^{T}>\dots>\xi_{k-1}^{T}\} & = \,\text{Tr}(\beta,\gamma),\textnormal{ and}\label{412}\\ \label{4121} \{\xi_0^{L}\leq\xi_1^{L}\leq\dots\leq\xi_{k-1}^{L}\} & =\,\text{L}(\beta,\gamma), \end{align}
where \begin{align}\label{rhotwo}k=|\,\text{supp}(\mathtt{f}_1(0,\gamma)\big|_{\beta\otimes[[\beta,\omega_1)]^2})\,|=\rho_2(\beta,\gamma).\end{align}
In other words, the restrictions of $\mathtt{f}_0(\alpha,\gamma)$ to the regions $\beta\otimes[[\beta,\omega_1)]^2$ isolate sequences (\ref{3coordseq}) which coordinatewise are precisely \emph{the upper and lower traces of the walk from $\gamma$ down to $\beta$} (see again Section \ref{walkssection} for definitions). As $\bigcup_{0<\beta<\omega_1}\beta\otimes[[\beta,\omega_1)]^2=[\omega_1]^3$, each element of the support of any $\mathtt{f}_1(\alpha,\gamma)$ will fall in some such prism and, hence, in some sequence of the form (\ref{3coordseq}); in consequence, $\mathtt{f}_1$ may reasonably be regarded as little other than a knitting together, in a strikingly comprehensive fashion, of the upper and lower traces of pairs of countable ordinals. See Figure \ref{walkdowntoz=beta}.

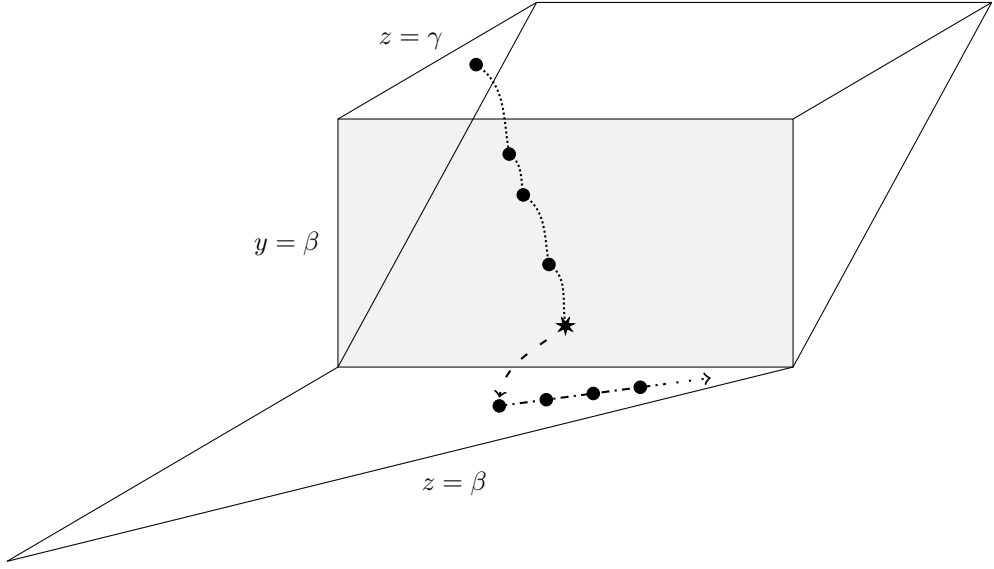
\begin{figure}
\centering
\begin{tikzpicture}[MyPersp]
\draw (0,0,0) -- (0,5,0);
\draw (0,5,0) -- (5,5,0) -- (5,5,3) -- (0,5,3) -- (0,5,0);
\fill[fill=gray, opacity=.1] (0,5,0) -- (5,5,0) -- (5,5,3) -- (0,5,3) -- (0,5,0);
\draw (5,5,0) -- (0,0,0);
\draw (5,8,3) -- (0,8,3);
\draw (0,5,0) -- (0,8,3);
\draw (5,5,0) -- (5,8,3);
\draw (0,5,3) -- (0,8,3);
\draw (5,5,3) -- (5,8,3);
\draw (.5,6.4,3) node[circle,draw,fill=black,scale=.5]{};
\draw (1.3,5.8,2.2) node[circle,draw,fill=black,scale=.5]{};
\draw[thick, densely dotted] (.5,6.4,3) to[out=-34,in=100] (1.3,5.8,2.2);
\draw (1.6,5.6,1.8) node[circle,draw,fill=black,scale=.5]{};
\draw (2.1,5.3,1.1) node[circle,draw,fill=black,scale=.5]{};
\draw (2.5,5,.5) node[star,star points=7,star point ratio=0.4,draw,fill=black,scale=.7]{};
\draw[thick, densely dotted] (1.3,5.8,2.2) to[out=-34,in=100] (1.6,5.6,1.8);
\draw[thick, densely dotted] (1.6,5.6,1.8) to[out=-34,in=100] (2.1,5.3,1.1);
\draw[thick, densely dotted] (2.1,5.3,1.1) to[out=-34,in=100] (2.5,5,.5);
\draw (2.5,4,0) node[circle,draw,fill=black,scale=.5]{};
\draw (2.9,4.16,0) node[circle,draw,fill=black,scale=.5]{};
\draw (3.3,4.32,0) node[circle,draw,fill=black,scale=.5]{};
\draw (3.7,4.48,0) node[circle,draw,fill=black,scale=.5]{};
\draw[->][thick, loosely dotted] (3.9,4.56,0) -- (4.3,4.72,0);
\draw[thick, dashdotted] (2.5,4,0) -- (3.9,4.56,0);
\draw[->][thick, loosely dashed] (2.5,5,.5) to[out=-140,in=80] (2.5,4,.1);
%\draw (.6,6.4,3) node[right]{$(\max(C_\gamma\cap\beta),\min(C_\gamma\backslash\beta+1),\gamma)$};
\draw (3,2,0) node[right]{$z=\beta$};
\draw (-.1,5,1.5) node[left]{$y=\beta$};
\draw (-.2,7,3) node[left]{$z=\gamma$};
\end{tikzpicture}
\caption{The path of the supports of $\mathtt{f}_1(0,\gamma)$ through $\beta\otimes[[\beta,\gamma]]^2$ to the plane $z=\beta$. This plane and prism sit within the larger $\mathtt{f}_1$ system just as the $z=\alpha$ plane and $\alpha\otimes[[\alpha,\beta]]^2$ prism do in Figure \ref{walksclubs}. The elements of the upper and lower traces of the walk from $\gamma$ down to $\beta$ form the coordinates of the first phase of this path; see (\ref{412}) and (\ref{4121}) below. More precisely, this phase begins at the point $(\max(C_\gamma\cap\beta),\min(C_\gamma\backslash\beta),\gamma)$, moving down and forward until $\beta$ appears as $y$-coordinate, in the starred node $(\max(\mathrm{L}(\beta,\gamma)),\beta,\beta+1)$ (for concreteness, here we take both $\beta$ and $\gamma$ to be limit ordinals). It then outputs an image of the tail of the club $C_\beta$ (as in (\ref{4110})) above $\max(\mathrm{L}(\beta,\gamma))$.}
\label{walkdowntoz=beta}
\end{figure}

These recognitions can illuminate the classical: by (\ref{rhotwo}), for example,
\begin{align}\label{rho2reasoning}|\rho_2(\alpha,\gamma)-\rho_2(\alpha,\beta)|\leq |\,\text{supp}(\mathtt{f}_1(0,\gamma)\big|_{\beta\otimes [\omega_1]^2}-\,\mathtt{f}_1(0,\beta))\,|\end{align}
for all $\alpha<\beta<\gamma<\omega_1$. In other words, for any such $\alpha,\beta,\gamma$ the difference between $\rho_2(\alpha,\gamma)$ and $\rho_2(\alpha,\beta)$ manifests as difference between $\mathtt{f}_1(0,\gamma)\!\!\upharpoonright_{\beta\otimes [\omega_1]^2}$ and $\mathtt{f}_1(0,\beta)$, which is finitely supported by equation \ref{45}. This imposes a uniform bound, immediately implying the \emph{mod bounded} coherence relations of $\rho_2$ recorded in equation \ref{cc}. Observe in contrast, however, that the \emph{mod bounded} \emph{nontriviality} relations of $\rho_2$ are considerably less spatially obvious within the $\mathtt{f}_1$ system. These latter relations appear only really to be accessible via a deeper Ramsey-theoretic analysis of the functions $\rho_2$, a point of more general significance below.

Observe lastly that even the functions $\rho_1(\beta,\gamma)$ and $\rho_0(\beta,\gamma)$ (see \cite{todwalks}) are legible (by (\ref{412})) in the second coordinate of the image of $\mathtt{f}_1(0,\gamma)\big|_{\beta\otimes[\,[\beta,\omega_1)]^2}$ in the uniform collapse of its $y$-axis to $\omega$, i.e., in the following function:
\begin{align*}
  \widetilde{\mathtt{f}}_1(\alpha,\beta) = \left\{\def\arraystretch{1}%
  \begin{array}{@{}c@{\quad}l@{}}
    0 & \hspace{.3 cm}\textnormal{if }\beta=\alpha+1\\
    -\langle\alpha,|\alpha\cap C_\beta|,\beta\rangle+\widetilde{\mathtt{f}}_1(\alpha,C^\beta(\alpha))+\widetilde{\mathtt{f}}_1(C^\beta(\alpha),\beta) & \hspace{.3 cm}\mathrm{otherwise}.\\
  \end{array}\right.
\end{align*}
The materialization of so much of the classical walks apparatus --- $\rho_0$, $\rho_1$, $\rho_2$, $\text{Tr}$, $\text{L}$, for example, even the clubs $C_\beta$ themselves --- as elementary spatial features of the $\mathtt{f}_1$ system may not itself be altogether surprising, in light of equation \ref{43}. The \textit{interesting} point is that $\mathtt{f}_1$ is only the first in an infinite series of nontrivial $n$-coherent systems $\mathtt{f}_n$ of broadly similar spatial organization.

\subsection{The basic form $\mathrm{tr}_n$}\label{thecasehighernagain}
As the reader may recall from Section \ref{thecasen1}, classical walks also appear in the $x=\beta$ planes of the functions $\mathtt{f}_1(\beta,\gamma)$ (see the planes $x=0$ and $x=\alpha$ of Figure \ref{walksclubs}). These in fact are the projections $\{-\langle \beta,\xi_i^{T},\delta_i\rangle\mid i<\rho_2(\beta,\gamma)-1\}$ to the $x=\beta$ plane of $\mathtt{f}_1(0,\gamma)\big|_{\beta\otimes[\,[\beta,\omega_1)]^2}$ or, more precisely, of all but the last element of the associated sequence (\ref{3coordseq}).

From the perspective of the $\mathtt{f}_1$ system, then, the classical upper trace $\mathrm{Tr}(\beta,\gamma)$ manifests at once in or as \begin{enumerate}
\item $\mathtt{f}_1(\beta,\gamma)\big|_{x=\beta}$,
\item the $y$ and $z$ coordinates of $\mathrm{supp}(\mathtt{f}_1(0,\gamma)\big|_{\beta\otimes [[\beta,\omega_1)]^2})$,
\end{enumerate}
and, via this second item, since $\bigcup_{0<\beta<\omega_1}\beta\otimes[[\beta,\omega_1)]^2=[\omega_1]^3$, as
\begin{enumerate}
\setcounter{enumi}{2}
\item the key \emph{finitary constituents} (together with $\mathrm{L}(\beta,\gamma)$) of the nontrivial coherent family of \emph{countably} supported functions $\mathtt{f}_1(0,\gamma)$ $(\gamma<\omega_1$). A related view is of the functions $\mathrm{Tr}$ and $\mathrm{L}$ as distilling away the redundancies of the $\mathtt{f}_1$ system, as in lines \ref{412} and \ref{4121} above.
\end{enumerate}
These interrelated conditions generalize: in this and the following section we describe functions $\mathrm{Tr}_n$, each recursively defined on the pattern of $\mathrm{Tr}_1:=\mathrm{Tr}$, satisfying the higher-$n$ analogues of items (1) through (3) above. Conditions (1) and/or (2) tell us computationally what these higher-order traces should be. Several minor choices arise as to how precisely to render this data, a point we return to below. For concreteness, though, we first record what we propose as the basic form $\mathrm{tr}_2$ of the degree-$2$ upper trace, one which is broadly ``read off'' from either $\mathtt{f}_2(\alpha,\beta,\gamma)\big|_{w=\alpha}$ or $\mathtt{f}_2(0,\beta,\gamma)\big|_{\alpha\otimes[ [\alpha,\omega_2)]^3}$ in the manner of (1) or (2) above, as the reader may verify.\footnote{Readers are encouraged to compare this definition with equation \ref{439}, from which it is derived. Observe that either of the determining restrictions just cited will entail that the $\mathtt{f}_2(C^{\beta\gamma}(\alpha),\beta,\gamma)$ (if $\beta\in C_\gamma$) and $\mathtt{f}_2(\beta,C^\gamma(\beta),\gamma)$ (if $\beta\not\in C_\gamma$) terms disappear in the translation to the $\mathrm{tr}_2$ form. This is because each of those terms exits the aforementioned restrictions (in either approach), and does not return to them in any subsequent step of its expansion. This in itself does not represent data loss, however, as the supports of those terms are, in aggregate, recovered by evaluations at other values of $\mathrm{tr}_2$. This phenomenon is visible already in the relations between the ``$2$-branching'' expansion-trees of $\mathtt{f}_1$ and the ``$1$-branching'' form of the classical $\mathrm{Tr}$, which readers also may find it valuable to compare.} The form is recursively defined as follows: for all $\alpha\leq\beta\leq\gamma<\omega_2$,
\begin{align}\label{thetr2form}
\mathrm{tr}_2(\alpha,\beta,\gamma)=
\begin{cases} 
\{\min(C_{\beta\gamma}\backslash\alpha)\} \,\sqcup\,\mathrm{tr}_2(\alpha,\text{min}(C_{\beta\gamma}\backslash\alpha),\gamma) & \\ \hspace{.3 cm} \sqcup\: \mathrm{tr}_2(\alpha,\text{min}(C_{\beta\gamma}\backslash\alpha),\beta) & \text{if }\beta\in C_\gamma \\
& \\
      \{\min(C_\gamma\backslash\beta)\}\, \sqcup\,\mathrm{tr}_2(\alpha,\text{min}(C_\gamma\backslash\beta),\gamma) & \\ \hspace{.3 cm} \sqcup\: \mathrm{tr}_2(\alpha,\beta,\text{min}(C_\gamma\backslash\beta)) & \text{if }\beta\not\in C_\gamma \\
   \end{cases}
\end{align}
Grounding this recursion are the following boundary conditions:
\begin{itemize}
\item if $\beta\in C_\gamma$ and $C_{\beta\gamma}\backslash\alpha=\varnothing$ then $\mathrm{tr}_2(\alpha,\beta,\gamma)=\varnothing$;
\item if $\beta=\gamma$ then $\mathrm{tr}_2(\alpha,\beta,\gamma)=\varnothing$.
\end{itemize}
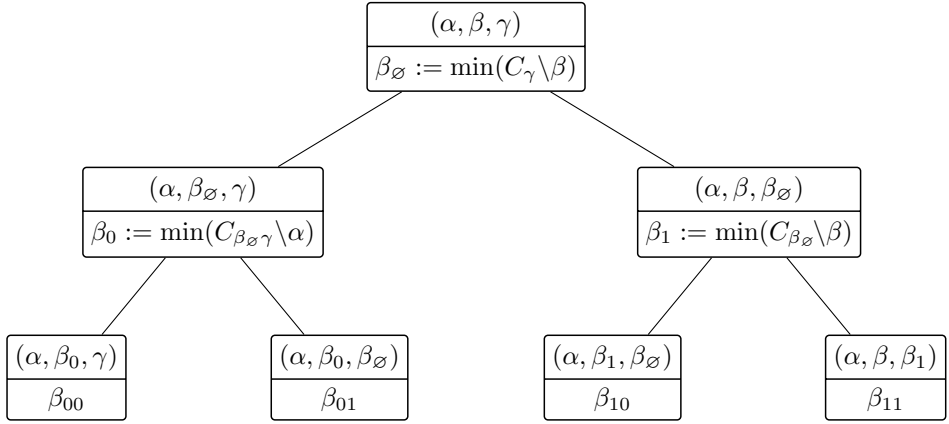
\begin{figure}
\centering
\begin{tikzpicture}[MyPersp]

\node[rectangle split,rectangle split parts=2,draw=black,line width=.2mm,inner sep=3pt, rounded corners=.5mm,text centered](A) at (4,0,7) {$(\alpha,\beta,\gamma)$ \nodepart{second} $\beta_{\varnothing}:=\min(C_\gamma\backslash\beta)$};

\node[rectangle split,rectangle split parts=2,draw=black,line width=.2mm,inner sep=3pt, rounded corners=.5mm,text centered](B) at (7,0,5) {$(\alpha,\beta,\beta_{\varnothing})$ \nodepart{second} $\beta_1:=\min(C_{\beta_{\varnothing}}\!\backslash\beta)$};

\node[rectangle split,rectangle split parts=2,draw=black,line width=.2mm,inner sep=3pt, rounded corners=.5mm,text centered](C) at (1,0,5) {$(\alpha,\beta_{\varnothing},\gamma)$ \nodepart{second} $\beta_0:=\min(C_{\beta_{\varnothing}\gamma}\backslash\alpha)$};

\node[rectangle split,rectangle split parts=2,draw=black,line width=.2mm,inner sep=3pt, rounded corners=.5mm,text centered](D) at (8.5,0,3) {$(\alpha,\beta,\beta_{1})$ \nodepart{second} $\beta_{11}$};

\node[rectangle split,rectangle split parts=2,draw=black,line width=.2mm,inner sep=3pt, rounded corners=.5mm,text centered](E) at (5.5,0,3) {$(\alpha,\beta_1,\beta_{\varnothing})$ \nodepart{second} $\beta_{10}$};

\node[rectangle split,rectangle split parts=2,draw=black,line width=.2mm,inner sep=3pt, rounded corners=.5mm,text centered](F) at (2.5,0,3) {$(\alpha,\beta_0,\beta_{\varnothing})$ \nodepart{second} $\beta_{01}$};

\node[rectangle split,rectangle split parts=2,draw=black,line width=.2mm,inner sep=3pt, rounded corners=.5mm,text centered](G) at (-.5,0,3) {$(\alpha,\beta_0,\gamma)$ \nodepart{second} $\beta_{00}$};

\draw (A) -- (B);
\draw (A) -- (C);
\draw (B) -- (D);
\draw (B) -- (E);
\draw (C) -- (F);
\draw (C) -- (G);
\end{tikzpicture}
\caption{First steps of $\mathrm{tr}_2(\alpha,\beta,\gamma)$. Associated to any $\mathrm{tr}_2$-input such as $(\alpha,\beta,\gamma)$ are an ordinal output, like $\beta_{\varnothing}$, and two further $\mathrm{tr}_2$-inputs, like $(\alpha,\beta_{\varnothing},\gamma)$ and $(\alpha,\beta,\beta_{\varnothing})$. Shaping the above diagram are the assumptions (made somewhat at random, simply for concreteness) that $\beta<\gamma$ and the outputs $\beta_\varnothing$, $\beta_0$, and $\beta_1$ defined as above are each meaningful and not equal to $\beta$. Lower outputs $\beta_\sigma$ may well correspond to undefined expressions, in which case they should be regarded as the empty set, marking the end of a branch.}
\label{asampletr2}
\end{figure}
Several remarks are immediately in order:
\begin{enumerate}
\item[(i)] Any $\mathrm{tr}_2(\alpha,\beta,\gamma)$ is naturally viewed as a binary tree, one in fact generalizing the $1$-branching tree (i.e., the walk) associated to $\mathrm{Tr}(\alpha,\beta)$. Depicted in Figure \ref{asampletr2} are the first two levels of the tree associated to $\mathrm{tr}_2(\alpha,\beta,\gamma)$ under generic assumptions on $\alpha$, $\beta$, and $\gamma$. The nodes of this tree are labeled with two sorts of data: as with the classical $\mathrm{Tr}$, the recursively defined function $\mathrm{tr}_2$ records an ordinal (appearing in the lower half of a node) then proceeds to new inputs (appearing in the top halves of successor nodes); each of these displayed in Figure \ref{asampletr2} above. This is because unlike in the classical case, the collection of ordinals output and the collection of tuples input in the course of a higher-dimensional walk are no longer informationally equivalent; while the former may be better suited to combinatorial applications, deductions \emph{per se} will often require the fuller data of the latter. This also is the reason for the disjoint unions appearing in equation \ref{thetr2form}: a novelty of the higher-dimensional traces is the possibility of output-ordinals arising therein repeatedly.
\item[(ii)] As mentioned, the form of $\mathrm{tr}_2$ derives from that of $\mathtt{f}_2(0,\beta,\gamma)\big|_{\alpha\otimes[ [\alpha,\omega_2)]^3}$ together with multiple presentational choices.\footnote{Note that this zone of attention is further dictated by the fact that unlike those of $\mathtt{f}_2(0,\beta,\gamma)\big|_{\alpha\otimes[ [\alpha,\omega_2)]^3}$, the supports of restrictions like $\mathtt{f}_2(0,\beta,\gamma)\big|_{[\alpha]^2\otimes[ [\alpha,\omega_2)]^2}$, for example, are not guaranteed to be finite.} One example of such a choice is the following: coordinates both of the form $\min (C_{\beta\gamma}\backslash\alpha)$ and $\min (C_\gamma\backslash\beta+1)$ may arise among the supports of $\mathtt{f}_2(0,\beta,\gamma)\big|_{\alpha\otimes[ [\alpha,\omega_2)]^3}$. Outside the contexts of $\mathbf{R}_n(\varepsilon)$ or $\mathbf{P}_n(\varepsilon)$, however, steps to $\min (C_\gamma\backslash\beta)$ are more natural than steps to $\min(C_\gamma\backslash\beta+1)$, and the combinatorial effects of uniformly adjusting definitions in this direction are superficial. Similar comments apply to the expansion of domain to inputs $\alpha\leq\beta\leq\gamma<\omega_2$,  to the choice of boundary conditions, and to the choice of which ordinals to record as outputs: the fact is, a number of variations on the basic idea hold fairly equal title to the name of ``higher walk''; what counts in any of them is that they record the data of the systems $\mathtt{f}_n$ with some of the concision (i.e., finitude) and versatility of classical walks.
\end{enumerate}
We now describe the fundamental features of the function $\mathrm{tr}_2$, framing our discussion in terms of Figure \ref{asampletr2}. Observe that in the passage from any node to one directly below, exactly one element of the associated coordinate-triple is replaced. Observe also that this element is never the least one, $\alpha$. Descending along the leftmost branch, for example, it is the second coordinate that is always changing; observe that its pattern $\beta$, $\beta_\varnothing$, $\beta_0,\dots$ is that of the $C_\gamma$-internal walk from $\beta$ down to $\alpha$ (see Section \ref{internalsection}). Along the rightmost branch, on the other hand, it is the third coordinate that is in motion; its pattern is visibly that of the classical walk from $\gamma$ down to $\beta$. Here it is natural to term the walks associated to rightwards paths through the tree \emph{external}. Any $\mathrm{tr}_2(\alpha,\beta,\gamma)$ may then be regarded as a structured family either of internal walks or of external walks, insofar as any binary tree is, as a set, simply the union either of its leftwards or rightwards branches.

Here a word of clarification is in order. While leftwards paths through $\mathrm{tr}_2(\alpha,\beta,\gamma)$ correspond precisely to internal walks, rightwards paths may properly contain classical walks in the following way: let $\beta'=\min (\mathrm{Tr}(\beta,\gamma)\backslash\{\beta\})$. If $\alpha'=\min(C_{\beta\beta'}\backslash\alpha)$ is defined then the node below and to the right of $(\alpha,\beta,\beta')$ is $(\alpha,\alpha',\beta)$. The rightwards path out of this node will then describe a classical walk from $\beta$ down to $\alpha'$, possibly again initiating a further walk out of $\alpha'$ upon arrival, and so on. External walks correspond in this way to iterated descending chains of classical walks (``walks of walks'').
\begin{lem}\label{lemma81} The set $\mathrm{tr}_2(\alpha,\beta,\gamma)$ is finite for any $\alpha\leq\beta\leq\gamma$.
\end{lem}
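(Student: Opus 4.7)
The plan is to prove the lemma by induction on the third coordinate $\gamma$; the key observation is that rightward descent in the tree of Figure~\ref{asampletr2} strictly decreases the third coordinate, while leftward descent preserves it but terminates for a separate reason.

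\textbf{Base case.} When $\gamma=0$ the only admissible input has $\alpha=\beta=\gamma$ and $\mathrm{tr}_2(\alpha,\beta,\gamma)=\varnothing$ by the boundary clause. More generally, whenever $\beta=\gamma$ the set is empty by definition, so the interesting inputs have $\alpha\leq\beta<\gamma$.

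\textbf{Inductive step.} Assume $\mathrm{tr}_2(\alpha',\beta',\gamma')$ is finite for every $\gamma'<\gamma$ and every $\alpha'\leq\beta'\leq\gamma'$. Inspecting equation (\ref{thetr2form}), the right-hand child of $(\alpha,\beta,\gamma)$ has third coordinate equal to $\beta$ (in the $\beta\in C_\gamma$ case) or $\min(C_\gamma\setminus\beta)<\gamma$ (in the $\beta\notin C_\gamma$ case); in either subcase the third coordinate is strictly less than $\gamma$, so by the inductive hypothesis the entire right subtree is finite. It therefore suffices to show that the \emph{leftmost spine} of the recursion tree---the sequence of triples $(\alpha,\beta_0,\gamma),(\alpha,\beta_1,\gamma),\dots$ obtained by iteratively passing to left children---is itself finite.

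\textbf{Finiteness of the spine.} I claim that after at most one step the sequence $(\beta_i)$ is strictly decreasing in the well-ordered set $C_\gamma$, whence the spine must halt. Indeed, if $\beta_i\in C_\gamma$ then by definition $\beta_{i+1}=\min(C_{\beta_i\gamma}\setminus\alpha)$; since $C_{\beta_i\gamma}\subseteq C_\gamma\cap\beta_i$ (an immediate consequence of Definition \ref{cmpdlddrs}), we obtain $\beta_{i+1}\in C_\gamma$ and $\beta_{i+1}<\beta_i$, and in the degenerate case $C_{\beta_i\gamma}\setminus\alpha=\varnothing$ the boundary clause cuts the spine off at step $i$. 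If instead $\beta_0\notin C_\gamma$, then $\beta_1=\min(C_\gamma\setminus\beta_0)\in C_\gamma$, after which the first scenario applies. Thus the spine is a (possibly empty) initial segment followed by a strictly decreasing sequence inside the well-ordered set $C_\gamma$, and is therefore finite.

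\textbf{Conclusion.} The subtree rooted at $(\alpha,\beta,\gamma)$ decomposes as the spine together with the right subtrees hanging off each spine node. The spine is finite, and each such right subtree has root with third coordinate strictly less than $\gamma$, so is finite by the inductive hypothesis. A finite union of finite sets being finite, $\mathrm{tr}_2(\alpha,\beta,\gamma)$ is finite, completing the induction. The main point requiring care is the switch between the ``$\in$'' and ``$\notin$'' clauses along the spine, where a single step can \emph{increase} $\beta$; the argument above shows that such an increase can occur at most once, after which one re-enters the strictly descending regime inside $C_\gamma$.
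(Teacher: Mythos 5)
Your proof is correct, and it takes a genuinely different route from the paper's. The paper argues by contradiction: it observes that along any branch of $\mathrm{tr}_2(\alpha,\beta,\gamma)$ some coordinate strictly drops within every few steps, so an infinite branch would induce an infinite descending chain of ordinals; since the tree is binary, infinitude of the tree would (by K\"{o}nig's lemma) produce such a branch. You instead argue by transfinite induction on the third coordinate $\gamma$: the right subtrees have smaller third coordinate and are finite by the inductive hypothesis, while the left spine is finite because, after at most one ``$\notin$'' step, the second coordinates enter $C_\gamma$ and strictly decrease there (via $C_{\beta_i\gamma}\subseteq C_\gamma\cap\beta_i$). Your decomposition into a finite spine plus finitely many right subtrees is sound. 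The trade-off: the paper's argument is shorter and applies generically to any finitely branching, well-founded recursion, but it leans on K\"{o}nig's lemma and the stated ``two-steps-decreases-a-coordinate'' observation needs care exactly at the $\notin\!\to\!\in$ transition that you address explicitly. Your induction avoids K\"{o}nig's lemma, is constructive in flavor (the spine length is bounded by the position of $\beta$ in $C_\gamma$ plus one, and the remainder recurses on strictly smaller $\gamma$), and its spine/right-subtree decomposition mirrors the internal/external walk dichotomy that the surrounding text develops.
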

\begin{proof}
Observe that if node $(\delta',\varepsilon',\eta')$ is two steps below $(\delta,\varepsilon,\eta)$ in $\mathrm{tr}_2(\alpha,\beta,\gamma)$ then either $\varepsilon>\varepsilon'$ or $\eta>\eta'$. If $\mathrm{tr}_2(\alpha,\beta,\gamma)$ were infinite then it would have an infinite branch; by our observation, this branch would induce a strictly decreasing infinite sequence of ordinals, a contradiction.
\end{proof}
The derivation, via the generalized principles of (1) and (2) above, of a $\mathrm{tr}_2$ as in (\ref{thetr2form})  readily extends to define functions $\mathrm{tr}_n$ sending $(n+1)$-tuples of ordinals to $n$-branching trees; details are left to the reader. By essentially the same argument as that for Lemma \ref{lemma81}, these trees are all finite. Also as above, ``hyperplanes'' through these trees (i.e., subtrees of smaller branching number determined by some fixed rule of descent) correspond to lower-order walks-structures and their relativizations. To give something of the flavor of these generalizations, the first step of such a $\mathrm{tr}_3(\alpha,\beta,\gamma,\delta)$ would depend on whether $\gamma\in C_\delta$ and, if so, on whether $\beta\in C_{\gamma\delta}$ and, if so, on the value of $\min(C_{\beta\gamma\delta}\backslash\alpha)$.
\subsection{The functions $\mathrm{Tr}_n$ and $\rho^n_2$}\label{lastsection}
Mirroring the $n$-branching descending trees $\mathrm{tr}_n$ from below are higher-order \emph{lower trace} functions $L_n$ tracking the movement of the first coordinate of the associated outputs of $\mathtt{f}_n(0,\,\cdot\,)\big|_{\alpha\otimes [[\alpha,\omega_n)]^{n+1}}$; these $n$-branching trees increase, like $L_1$, along each of their branches. Together, the functions $\mathrm{tr}_n$ and $L_n$ essentially encompass the structured \emph{coordinate} data of the systems $\mathtt{f}_n$, just as in the case of $n=1$. For higher $n$, however, these $\mathtt{f}_n$ systems carry the additional data of \emph{sign}; the full-fledged upper trace functions $\mathrm{Tr}_n$ consist simply in the addition of this data to the functions $\mathrm{tr}_n$. To record this data, these functions take as inputs signed $n$-tuples of ordinals, and output signed ordinals, as in the example of $\mathrm{Tr}_2$:
\[ 
\mathrm{Tr}_2(\pm,\alpha,\beta,\gamma)=
\begin{cases} 
\{\mp\,\text{min}(C_{\beta\gamma}\backslash\alpha)\} \sqcup\, \mathrm{Tr}_2(\pm,\alpha,\text{min}(C_{\beta\gamma}\backslash\alpha),\gamma) & \\ \hspace{.1 cm} \sqcup\, \mathrm{Tr}_2(\mp,\alpha,\text{min}(C_{\beta\gamma}\backslash\alpha),\beta) & \text{if }\beta\in C_\gamma \\
      & \\
          \{\pm\,\text{min}(C_\gamma\backslash\beta)\} \sqcup\, \mathrm{Tr}_2(\pm,\alpha,\text{min}(C_\gamma\backslash\beta),\gamma) & \\ \hspace{.1 cm} \sqcup\, \mathrm{Tr}_2(\pm,\alpha,\beta,\text{min}(C_\gamma\backslash\beta)) & \text{if }\beta\not\in C_\gamma \\  
   \end{cases}
\]
The boundary conditions are just as before. Observe that, its outputs' signs being unvarying (and its outputs' ordinals all distinct),
$$\mathrm{Tr}_1(\pm,\alpha,\beta)=\{\mp\,\beta\}\,\sqcup\,\mathrm{Tr}_1(\pm,\alpha,\text{min}(C_\beta\backslash\alpha))$$
is informationally equivalent to the classical $\mathrm{Tr}$, and naturally identifies therewith (strict identification would require revision of the ordinal outputs to begin with $\beta$ and revision of the boundary values $\mathrm{Tr}_1(\pm,\alpha,\alpha)$ to $\{\pm\alpha\}$, but these are minor points). Just as subdivision of $n$-simplices is a reasonable heuristic for the iterative processes of $\mathrm{tr}_n$ (i.e., new $n$-tuples are formed from a new ordinal conjoined with elements of the boundary of the old one, viewed as a simplex), geometric notions of orientation form a natural heuristic for the signs arising in $\mathrm{Tr}_n$; as in multivariable calculus or geometry, for example, these signs or orientations only assume their proper significance in settings of more than two coordinates.

These signs exhibit useful and interesting organizing effects within $\mathrm{Tr}_n$. For the duration of the down-and-rightwards movement in $\mathrm{tr}_2$ that we identified above with a classical walk, for example, inputs' and outputs' signs are both constant, until the last step. If here a further walk is initiated, signs flip, remaining constant for its duration, and so on. Similarly, inputs' signs along any leftwards, internal walk of $\mathrm{Tr}_2$ are constant on the branch's full length; outputs' signs are constant after a possible first step ``up into'' the internal walk (see again Figure \ref{thepifigure}, caption). Hence one may speak not only of the signs of nodes, but of the signs of eventually-rightwards branches as well.

These observations should begin to suggest the number of interesting \emph{characteristics}, ``statistics,'' or $\rho$-functions which higher walks admit. Clearly, their fuller analysis falls beyond the scope of the present paper. We close, therefore, with a description of one of the simplest of these, a generalization of the function $\rho_2$, to better indicate where we believe the main next work to lie. For increasing $(n+1)$-tuples of ordinals $\vec{\alpha}$, define the function $\rho_2^n(\vec{\alpha})$ as the ``negative charge'' of $\mathrm{Tr}_n(\vec{\alpha})$, i.e., as the ``minuses minus the pluses'':
\begin{align*}
\rho_2^n(\vec{\alpha})= & \textit{ the number of negative signed ordinals in }\mathrm{Tr}_n(+,\vec{\alpha})\\ & - \textit{the number of positive signed ordinals in }\mathrm{Tr}_n(+,\vec{\alpha}).\end{align*}
In particular, under the boundary condition parenthesized above, $\rho_2^1(\alpha,\beta)=|\mathrm{Tr}_1(+,\alpha,\beta)|=|\mathrm{Tr}(\alpha,\beta)|-1=\rho_2(\alpha,\beta)$ for all $\alpha\leq\beta$. The reasoning at (\ref{rho2reasoning}) also readily generalizes; in other words, the following may be deduced either from the coherence of $\mathtt{f}_n$ or by arguments generalizing the classical:
\begin{prop} For each $n>0$ let $\Phi_n=\{\varphi_{\vec{\beta}}:\beta_0\to\mathbb{Z}\mid\vec{\beta}\in[\omega_n]^n\}$ denote the family of fiber maps $\varphi_{\vec{\beta}}(\alpha)=\rho_2^n(\alpha,\vec{\beta})$. Then $\Phi_n$ is an $n$-coherent family of functions, with respect to either of the moduli of \emph{bounded functions} or of \emph{locally constant functions}.
\end{prop}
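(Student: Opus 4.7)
The plan is to deduce both coherence statements directly from the $n$-coherence of the functions $\mathtt{f}_n$ established in Section~\ref{deromega1}, using the slicewise correspondence between signed supports of $\mathtt{f}_n$ and the higher traces $\mathrm{Tr}_n$ developed in this section. The simple heuristic (already visible at line~\ref{rho2reasoning}) is that $\rho_2^n$ is essentially a signed count of the support of $\mathtt{f}_n(0,\vec{\beta})$ in a single first-coordinate slice, and $n$-coherence of the latter, being \emph{mod finite}, constrains such counts uniformly.

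First I would make precise the relationship between $\rho_2^n(\xi,\vec{\beta})$ and the restriction of $\mathtt{f}_n(0,\vec{\beta})$ to the first-coordinate slice $\{\xi\}\otimes[\,[\xi,\omega_n)\,]^{n+1}$. When $n=1$ this is already the content of equation~\ref{rhotwo}, unpacked with signs. For general $n$, an induction mirroring the recursions defining $\mathtt{f}_n$ (equations~\ref{efff} and~\ref{effff}) and $\mathrm{Tr}_n$ shows that the generators appearing in the $\{\xi\}$-slice of $\mathtt{f}_n(0,\vec{\beta})$ are in signed bijection with the signed ordinal outputs of $\mathrm{Tr}_n(+,\xi,\vec{\beta})$. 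Concretely, letting $c_{\vec{\beta}}(\xi)$ denote the sum of the integer coefficients of all generators $\langle\xi,\vec{\eta}\rangle$ appearing in the formal expansion of $\mathtt{f}_n(0,\vec{\beta})$, one obtains
\[
c_{\vec{\beta}}(\xi)\;=\;(-1)^{\epsilon}\,\rho_2^n(\xi,\vec{\beta})
\]
for a uniform choice of sign $(-1)^{\epsilon}$.

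This identification then combines with the $n$-coherence of $\mathtt{f}_n$ (equations~\ref{41}, \ref{419}, and the general alternating-sum statement of~\ref{fcoherence}) to yield both coherence statements at once. Indeed, for each $\vec{\alpha}\in[\omega_n]^{n+1}$ the alternating sum $g_{\vec{\alpha}}:=\sum_{i=0}^{n}(-1)^{i}\mathtt{f}_n(0,\vec{\alpha}^i)$ has finite total support, which we denote $F(\vec{\alpha})$. Applying $c_{(\cdot)}(\xi)$ slicewise and using linearity gives
\[
\sum_{i=0}^{n}(-1)^{i}\rho_2^n(\xi,\vec{\alpha}^i)\;=\;(-1)^{\epsilon}\!\!\sum_{\vec{\eta}:\,(\xi,\vec{\eta})\in F(\vec{\alpha})}\!\! g_{\vec{\alpha}}(\xi,\vec{\eta}).
\]
The right-hand side is bounded in absolute value by $\sum_{(\xi',\vec{\eta})\in F(\vec{\alpha})}|g_{\vec{\alpha}}(\xi',\vec{\eta})|$, a finite quantity independent of $\xi$; this establishes $n$-coherence modulo bounded functions. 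Moreover, the right-hand side vanishes for every $\xi$ outside the finite set of first coordinates of elements of $F(\vec{\alpha})$, so the function $\xi\mapsto\sum_{i}(-1)^{i}\rho_2^n(\xi,\vec{\alpha}^{i})$ on $\alpha_0$ is finitely supported, hence locally constant in the order topology; this establishes $n$-coherence modulo locally constant functions.

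The principal labor, and the main obstacle, is the first step: the recursion defining $\mathtt{f}_n$ branches according to whether successive inputs sit inside the various nested clubs $C_{\vec{\gamma}}$, and $\mathrm{Tr}_n$ is an independently articulated $n$-branching tree carrying its own boundary conventions and sign assignments (Section~\ref{lastsection}). Matching the two branchwise requires a careful case-by-case induction that properly handles the variant choices discussed in remark~(ii) following equation~\ref{thetr2form}. Once that bookkeeping is complete, the coherence of $\rho_2^n$ is an essentially immediate consequence of what has already been shown about $\mathtt{f}_n$.
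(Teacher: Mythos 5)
Your slicewise identity $c_{\vec{\beta}}(\xi)=(-1)^{\epsilon}\rho_2^n(\xi,\vec{\beta})$ is not correct, and its failure propagates to your conclusion, which is also not correct: the alternating sums $\xi\mapsto\sum_{i}(-1)^i\rho_2^n(\xi,\vec{\alpha}^i)$ are in general \emph{not} finitely supported. One can already see this at $n=1$. Take $C_{\omega\cdot 2}$ of the form $\{0\}\cup\{\omega+m\mid m\geq 1\}$, expand $\mathtt{f}_1(0,\omega\cdot 2)$ via (\ref{43}), and one finds that the first-coordinate slice at $\xi=0$ contains the three generators $\langle 0,\omega+1,\omega\cdot 2\rangle$, $\langle 0,\omega,\omega+1\rangle$, and $\langle 0,1,\omega\rangle$, so $c_{\omega\cdot 2}(0)=-3$, whereas $\rho_2(0,\omega\cdot 2)=1$; and for $0<n<\omega$, $c_{\omega\cdot 2}(n)=-1$ whereas $\rho_2(n,\omega\cdot 2)=3$. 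The identification you want is not with the single slice $\{\xi\}\otimes[\,[\xi,\omega_n)\,]^{n+1}$ but with the full region $\xi\otimes[\,[\xi,\omega_n)\,]^{n+1}$ (first coordinate \emph{below} $\xi$, remaining coordinates at or above it), which is what (\ref{rhotwo}) records. Relatedly, the difference $\rho_2(\alpha,\gamma)-\rho_2(\alpha,\beta)$ is, for $\beta\in C_\gamma$, eventually equal to $1$ as $\alpha$ ranges through a tail of $\beta$; it is emphatically not finitely supported, so your final claim cannot be salvaged by a different bookkeeping.

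This points to the structural difference from the paper's proof. The paper does not assert any exact identity between $\rho_2^n$ and slice data of $\mathtt{f}_n$; it only uses the region-based reading from (\ref{rhotwo}) to bound $\big|\sum_{i=0}^n(-1)^i\rho_2^n(\alpha,\vec{\beta}^i)\big|$ by $\big|\mathrm{supp}\big(\sum_{i=0}^n(-1)^i\mathtt{f}_n(0,\vec{\beta}^i)\big|_{\beta_0\otimes [\omega_n]^{n+1}}\big)\big|$, an $\alpha$-independent finite quantity (by (\ref{append3})), which immediately yields coherence modulo bounded functions. An inequality is all that is needed, and it is robust to the sign and cancellation phenomena that make any exact signed-count identity at higher $n$ delicate. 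One further remark: even granting your finite-support claim, ``finitely supported, hence locally constant'' is false on an ordinal with its order topology (a function supported at a limit ordinal $\delta$ fails to be constant on any neighborhood $(\eta,\delta]$ of $\delta$), so the modulo-locally-constant case would still require a separate argument about \emph{where} the eventual-constancy transitions of the $\rho_2^n$ fibers occur, not merely how many of them there are.
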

Here \emph{$n$-coherence} refers to the relations described in Definition \ref{highernontriv}, with the modulus adjusted accordingly. To see the proposition's conclusion with respect to bounded functions, for example, observe that for all $\vec{\beta}\in [\omega_n]^{n+1}$ and $\alpha<\beta_0$,
$$\big|\sum_{i=0}^n(-1)^i\rho_2^n(\alpha,\vec{\beta}^i)\big|\leq\big|\mathrm{supp}\big(\sum_{i=0}^n(-1)^i\mathtt{f}_n(0,\vec{\beta}^i)\big|_{\beta_0\otimes [\omega_n]^{n+1}}\big)\big|$$
by the same reasoning as at (\ref{rho2reasoning}). Observe then that the right-hand side is finite by equation \ref{append3} of Appendix \ref{appendixAA}.

As in the case of $n=1$, establishing \emph{non-$n$-trivial} $n$-coherence is a taller order, typically calling on Ramsey-theoretic facts like the following:
\begin{thm}[\cite{todwalks}] Let $\mathcal{A}=\{(\beta_i,\gamma_i)\,|\,i\in\omega_1\}\subseteq \omega_1\times\omega_1$ satisfy $\max\{\beta_i,\gamma_i\}<\min\{\beta_j,\gamma_j\}$ for every $i<j$ in $\omega_1$. Then for any $\ell\in\mathbb{N}$ there exists a cofinal $\Gamma\subseteq\omega_1$ such that $\rho_2(\beta_i,\gamma_j)>\ell$ for any $i<j$ in $\Gamma$.
\end{thm}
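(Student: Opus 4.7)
The plan is to proceed by induction on $\ell$. The base case $\ell = 0$ is trivial: take $\Gamma = \omega_1$, noting that $\beta_i < \gamma_j$ whenever $i < j$ forces $\rho_2(\beta_i,\gamma_j) \geq 1$. For the inductive step, assume a cofinal $\Gamma_\ell \subseteq \omega_1$ satisfying $\rho_2(\beta_i,\gamma_j) > \ell$ for all $i < j$ in $\Gamma_\ell$; I aim to extract a cofinal $\Gamma_{\ell+1} \subseteq \Gamma_\ell$ gaining one more step.

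The central technical input is an \emph{unboundedness lemma} for $\rho_2$: for every $\alpha < \omega_1$, every cofinal $Y \subseteq \omega_1$, and every $m \in \omega$, there exists $\eta \in Y$ with $\rho_2(\alpha,\eta) > m$. Granting this lemma, I would construct $\Gamma_{\ell+1}$ by transfinite recursion of length $\omega_1$. At stage $\xi$, having chosen indices $\{i_\zeta : \zeta < \xi\} \subseteq \Gamma_\ell$, pick $i_\xi \in \Gamma_\ell$ above $\sup_{\zeta<\xi} i_\zeta$ with $\rho_2(\beta_{i_\zeta},\gamma_{i_\xi}) > \ell + 1$ for each $\zeta < \xi$. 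Each of these at most countably many constraints individually excludes only a non-cofinal set of eligible $i_\xi$, by applying the unboundedness lemma with $\alpha = \beta_{i_\zeta}$ and $Y$ a tail of $\{\gamma_j : j \in \Gamma_\ell\}$; since $\Gamma_\ell$ is cofinal in $\omega_1$, a suitable $i_\xi$ always exists, and $\Gamma_{\ell+1} := \{i_\xi : \xi < \omega_1\}$ is as required.

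The main obstacle is the unboundedness lemma itself, which is \emph{not} an immediate consequence of the Aronszajn property of the $\rho_2$-tree: that property yields nontriviality of the coherent family $\{\varphi_\alpha\}$ but not directly pointwise unboundedness of $\varphi_\eta(\alpha)$ on cofinal $Y$. To prove it I would argue by a secondary induction on $m$, exploiting the concatenation identity $\rho_2(\alpha,\eta) = \rho_2(\beta^*,\eta) + \rho_2(\alpha,\beta^*)$, where $\beta^* = \min(\mathrm{Tr}(\alpha,\eta) \setminus \beta)$ for an auxiliary $\beta$ between $\alpha$ and $\eta$. The strategy is: choose $\beta$ large enough that $Y \cap [\beta,\omega_1)$ remains cofinal; apply the inductive hypothesis above $\beta$ to some $\eta \in Y$ with $\rho_2(\beta^*,\eta) > m - 1$; and argue from the minimality assumption $\mathrm{otp}(C_\delta) = \mathrm{cf}(\delta)$ on the $C$-sequence together with the definition of $\rho_2$ that the segment of the walk passing from $\beta^*$ down to $\alpha$ contributes at least one additional step.

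Finally, once the inductive step is complete, one iterates to produce $\Gamma_\ell$ for each $\ell \in \mathbb{N}$ separately, obtaining the theorem as stated. I expect the unboundedness lemma — in particular its $m$-to-$m{+}1$ step, requiring control over where the walk ``crosses'' the auxiliary ordinal $\beta$ — to be the most delicate point, plausibly calling on a pressing-down argument applied to the maximum element of the lower trace $\mathrm{L}(\alpha,\eta)$ that lies below $\beta$, in order to stabilize the crossing behavior across cofinally many $\eta \in Y$.
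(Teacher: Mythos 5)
The paper does not give a proof of this theorem; it cites it to Todorcevic with a pointer to \cite[Lemma 2.4.3]{todwalks} for a stronger version, so your proposal can only be evaluated against the standard argument, not anything internal to the paper.

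The ``unboundedness lemma'' on which your recursion rests is false. As you observe, it is equivalent to the assertion that $\{\eta:\rho_2(\alpha,\eta)\le m\}$ is bounded for every $\alpha$ and $m$, and that assertion fails. Under this paper's $C$-sequence conventions, $0$ is the least element of every $C_\eta$, so $\rho_2(0,\eta)=1$ for all $\eta>0$; the lemma already fails at $\alpha=0$, $m=1$. More to the point, for any fixed limit $\alpha>0$ one may choose a perfectly admissible minimal-order-type $C$-sequence with $\alpha+1\in C_\eta$ for cofinally many $\eta$; for those $\eta$ we get $\min(C_\eta\setminus\alpha)=\alpha+1$ and hence $\rho_2(\alpha,\eta)=2$, so $\{\eta:\rho_2(\alpha,\eta)\le 2\}$ is cofinal. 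There is no obvious club of ``good'' $\alpha$'s to which to retreat, and since the theorem holds for such $C$-sequences, it simply cannot reduce to pointwise unboundedness of $\rho_2(\alpha,\cdot)$. Your $\omega_1$-length recursion therefore has no reason to pass any limit stage $\xi$: a single constraint from some $\zeta<\xi$ may already exclude a cofinal set, and nothing controls the union over $\zeta<\xi$. (Your internal induction on $m$ also does not cohere: the identity $\rho_2(\alpha,\eta)=\rho_2(\beta^*,\eta)+\rho_2(\alpha,\beta^*)$ holds only for $\beta^*$ actually lying on $\mathrm{Tr}(\alpha,\eta)$, which depends on $\eta$; there is no fixed $\beta^*$ to which the inductive hypothesis can be applied in advance.)

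The missing idea is that the theorem lives at the level of the uncountable family $\mathcal{A}$, not at the level of a single fixed $\alpha$. It can remain true even when every $\rho_2(\alpha,\cdot)$ is bounded on a cofinal set precisely because one is free to discard all but a cofinal set of indices, and $\Gamma$ is chosen to avoid the offending $\eta$'s. The standard proof passes to a club of closure points of the enumeration of $\mathcal{A}$, applies the Pressing Down Lemma to a regressive function built from the maxima $\max\mathrm{L}(\,\cdot\,,\,\cdot\,)$ of lower traces over the pairs, and then exploits the identity: if $\max\mathrm{L}(\beta,\gamma)<\alpha<\beta$ then $\mathrm{Tr}(\beta,\gamma)\subseteq\mathrm{Tr}(\alpha,\gamma)$, hence $\rho_2(\alpha,\gamma)=\rho_2(\beta,\gamma)+\rho_2(\alpha,\beta)$, which lets steps accumulate across the family. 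You gesture at exactly this pressing-down mechanism in your last paragraph, but you deploy it in service of the false lemma; it needs instead to act directly on the index $j$ ranging over a stationary subset of $\omega_1$, where stationarity does the work you had hoped the unboundedness lemma would do.
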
\label{lasttyping}
We term this property $\rho_2$-unboundedness below; see \cite[Lemma 2.4.3]{todwalks} for a stronger version of this theorem. This property is not far in spirit from one of the first and most celebrated applications of walks, namely the construction in \cite{todpairs} of witnesses to the negative partition relation
$$\aleph_1\not\to[\aleph_1]_{\aleph_0}^2.$$
Here the subscript could even be taken to be $\aleph_1$; our choice is mainly for parallelism with the following $\mathsf{ZFC}$ result from \cite{cubes},
$$\aleph_2\not\to[\aleph_1]_{\aleph_0}^3,$$
together with this relation's higher-order variants $\aleph_n\not\to[\aleph_1]_{\aleph_0}^{n+1}$, implicit therein.

This in turn suggests that one guiding question for future work should be whether functions deriving from higher walks witness higher-dimensional versions of the $\rho_2$-unboundedness of Theorem \ref{lasttyping}. This question does not appear altogether easy. Simply to motivate this work and define its central objects has been among this paper's main aims.
\section{Conclusion}\label{conclusion}

In the above, we described for each $n>0$ a number of interrelated $n$-dimensional (or $(n+1)$-dimensional) combinatorial phenomena correlating closely with the ordinal $\omega_n$. Much of the interest of these principles lies in their being $\mathsf{ZFC}$ phenomena, and indeed, our account has very deliberately avoided any appeal to additional set-theoretic assumptions. Having named these phenomena, however, the effects of such assumptions form some of the most immediate next questions.

\begin{quest}\label{question91}
How do the combinatorial phenomena of this work interact with $\square(\kappa)$-type principles? \textnormal{Here we ask also the complementary question:} Under what circumstances do these combinatorial phenomena fail at higher cardinals $\kappa$?
\end{quest}
Answers to these questions are well-understood in the $n=1$ case: $\square(\kappa)$, for example, ensures the existence of nontrivial coherent families of functions (and hence of nontrivial $1$-coherent trees) of height $\kappa$, while the P-Ideal Dichotomy implies that there are no such families on any ordinal of cofinality other than $\aleph_1$. One framing of the second part of Question \ref{question91} is the question of whether it is consistent that $\mathrm{\check{H}}^2(\omega_3;\mathcal{A})=0$ for all abelian groups $A$; any positive result seems certain to require large cardinal assumptions (see \cite{CoOI}).

Question \ref{question91} in part addresses the conspicuous question of ``beyond $\aleph_\omega$'', but there are other senses in which we might:
\begin{quest} To what degree may we regard the techniques of this paper as more general stepping-up principles, translating combinatorial phenomena on any $\aleph_\alpha$ to higher-dimensional phenomena on $\aleph_{\alpha+k}$?
\end{quest}
The question of whether families of combinatorial phenomena indexed by the natural numbers might extend into the transfinite is too nebulous to record formally, but cannot be altogether dismissed; Hjorth in \cite{hjorth} has described characterizations of all the alephs of \emph{countable} subscript, for example.

In broadest senses, the present work pursues a study of set-theoretic incompactness principles of higher dimension; it thereby raises questions of how this shaping notion of \emph{dimension} might in these contexts be made precise. The \emph{cohomological dimension} of Mitchell's Theorem is one way it might be, but a hallmark of classical dimension theory is the provable equivalence in ``nice'' settings of several otherwise distinct notions (see, e.g., \cite[Introduction]{hurewicz}). The structures we have highlighted are far from isolated; $n$-dimensional combinatorial phenomena on $\omega_n$ manifest with increasing frequency in a variety of mathematical fields, often to considerable effect. The following is a somewhat haphazard survey:
\begin{itemize}
\item Kuratowski's Free Set Theorem (\cite{sierpinski, kuratowski, sikorski}; see \cite{ehmr}) may be the best-known of results relating the cardinals $\aleph_n$ to their subscripts. The theorem in recent decades has found application to longstanding problems in both model theory \cite{malliarisshelah} (see also \cite{baldwinetal, laskowskishelah}) and lattice theory \cite{wehrung98, wehrung2007, ruzicka}.
\item Results in which the combinatorics of $\aleph_n$ make an appearance via assumptions like $2^{\aleph_0}=\aleph_n$ or even $\beth_n=\aleph_n$ are too numerous to even begin to list here. We do note, though, that the decisive precedent \cite{osofch} for Mitchell's Theorem was of this form (see the discussion in the appendix below); we note as well the importance of such assumptions in infinite combinatorics of an additive and/or spatial character \cite{komjath, schmerl, fox}, much as the present work's are.
\item In an application to Banach space theory, Lopez-Abad and Todorcevic construct length-$\omega_n$ normalized weakly-null sequences without unconditional subsequences via compoundings (of $\rho$ functions) not far in spirit from those pervading our work above \cite{todpos}.
\item In the note \cite{larson}, Larson records combinatorial features of finite subsets of the ordinals strongly evocative of Lebesgue covering dimension: associated to each $\omega_n$ are collections whose subsets ``reduce'' to sets of size $n+1$, but not in general to sets which are smaller.
\end{itemize}

Finer invariants than cohomological dimension are particular \v{C}ech cohomology computations, as we have described in Section \ref{71cohomology} above. The outstanding question in this area is the following:
\begin{quest} Is it a $\mathsf{ZFC}$ theorem that $\mathrm{\check{H}}^n(\omega_n;\mathbb{Z})\neq 0$ for all $n\geq 0$?
\end{quest}
The question is in some sense perverse: throughout our account above, combinatorial structures on $\omega_n$ (particularly for $n>1$) have appeared ``most at home'' in the wide berths of settings like $[\omega_n]^{n+2}$ or $\bigoplus_{\omega_n}\mathbb{Z}$. Hence one of the most natural and persuasive ways of certifying the $n$-dimensionality of $\omega_n$, namely, a $\mathsf{ZFC}$ argument that $\check{\mathrm{H}}^n(\omega_n;\mathbb{Z})\neq 0$, would entail overcoming the very affinity through which it was first perceived, i.e., it would entail realizing these ``wide-angle'' combinatorics on the much more restricted setting of $\mathbb{Z}$. It would be equally interesting if this is not possible in the $\mathsf{ZFC}$ framework, but it seems likelier that this task is simply a test of the depth of our understanding of the higher-dimensional combinatorics of the ordinals $\omega_n$. Higher walks seem to us both a potential resource and motivation for developing just this sort of understanding.
\begin{quest}\label{94}
Do functions deriving from higher walks exhibit higher-dimensional negative partition properties? In particular, do any exhibit higher-dimensional versions of $\rho_2$-unboundedness?
\end{quest}
As indicated, the main result of \cite{cubes} does suggest the existence of underexplored combinatorics in this direction.

We close with a simplified version of a question lingering from Section \ref{goodsection} (see footnote \ref{footnote}):
\begin{quest} Call a simplicial complex $X$ \emph{acyclic} if $\tilde{\mathrm{H}}_k^\Delta(X) = 0$ for all $k$. Is it the case that, for any $n>0$, any $n$-dimensional acyclic simplicial complex $X$ on a set $S$ may be extended to an $n$-dimensional acyclic simplicial complex $Y$ on $S$ with a complete ($n-1$)-skeleton, i.e., satisfying $X^{n-1}=[S]^n$?
\end{quest}
We do not expect this question to involve any deep set-theoretic considerations; its answer does, however, a bit surprisingly, appear to be unknown.

The question also cues the following reflection: we labored above to access some of the concrete content of Mitchell's abstract category-theoretic result; its nature in turn suggests that abstract simplicial or homotopical techniques could conceivably play a role in its further development. Particularly attractive would be a homotopical framework in which cardinal succession figures as suspension, helping to account, for example, for the chart concluding \cite[44]{CoOI}. Suggestive in this direction is how the ``space between'' one cardinal and the next accommodates a cone construction at the heart of Mitchell's original argument; this is one further reason we record that argument in our appendix, just below.

\appendix

\section{Mitchell's original argument and its background}

For simplicity, we restate Mitchell's theorem just in terms of ordinals:

\begin{thm}[\cite{rings}] \label{notprj} If $\varepsilon>0$ is an ordinal of cofinality $\aleph_\xi$ and $\xi$ is finite, then the projective dimension of $\mathbf{\Delta}_{\varepsilon}(\mathbb{Z})$ is $\xi+1$. If $\xi$ is infinite, then the projective dimension of $\mathbf{\Delta}_{\varepsilon}(\mathbb{Z})$ is $\infty$.
\end{thm}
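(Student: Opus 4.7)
The plan is to deduce this statement directly from what has already been assembled in the excerpt, using Lemma \ref{projlemma} to translate between projective dimension of $\mathbf{\Delta}_\varepsilon(\mathbb{Z})$ and the vanishing behavior of $\mathrm{lim}^n$ (equivalently, cohomological dimension of $\varepsilon$). For finite cofinality this decomposes into an upper bound, a lower bound at $\omega_k$, and a reflection to arbitrary ordinals of cofinality $\aleph_k$; the infinite-cofinality case is then handled by a summand-extraction argument that reduces it to the finite case.

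First I would dispatch the upper bound. If $\mathrm{cf}(\varepsilon)=\aleph_k$ with $k$ finite, Theorem \ref{basis} gives that $\mathbf{d}_{k+1}\mathbf{P}_{k+1}(\varepsilon)$ is free, and hence by Lemma \ref{summandlemma} projective. Truncating the standard resolution $\mathsf{P}(\varepsilon)$ at this subsystem in the form (\ref{Q}) exhibits a projective resolution of $\mathbf{\Delta}_\varepsilon(\mathbb{Z})$ of length at most $k+1$, so $\mathrm{pd}(\mathbf{\Delta}_\varepsilon(\mathbb{Z}))\leq k+1$; this is Corollary \ref{goblotsthm} expressed on the projective-dimension side.

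Next, the lower bound for finite $\xi=k$. At $\varepsilon=\omega_k$, Theorem \ref{concludestheproof} gives $\mathrm{cd}(\omega_k)\geq k+1$; via Lemma \ref{projlemma} this says $\mathrm{pd}(\mathbf{\Delta}_{\omega_k}(\mathbb{Z}))\geq k+1$. To promote this to arbitrary $\varepsilon$ of cofinality $\aleph_k$, I would fix a cofinal $\delta\subseteq\varepsilon$ of order type $\omega_k$, then transport the nontrivial cocycle $\mathtt{f}_k\in K^k(\mathbf{R}_{k+1}(\omega_k))$ constructed in Section \ref{deromega1} to a nontrivial element of $\mathrm{lim}^k(\mathbf{R}_{k+1}(\delta)/\mathbf{P}_{k+1}(\delta))$; extending by zero off of $\delta$ produces, via the long exact sequence (\ref{les}) applied over $\varepsilon$, a nonvanishing $\mathrm{lim}^{k+1}$ witness indexed by $\varepsilon$. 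One invokes the uniform formula (\ref{readerverify}) to verify that the cofinality of $\delta$ in $\varepsilon$ is all one needs to lift nontrivial cocycles, as is remarked in case (1) of Section \ref{afterbases}.

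Finally, the infinite-cofinality case. Suppose for contradiction that $\varepsilon$ has cofinality $\kappa\geq\aleph_\omega$ and $\mathrm{pd}(\mathbf{\Delta}_\varepsilon(\mathbb{Z}))=n<\infty$. Then $\mathbf{d}_n\mathbf{P}_n(\varepsilon)$ is projective, so the inclusion into $\mathbf{P}_{n-1}(\varepsilon)$ (or epimorphism from $\mathbf{P}_n(\varepsilon)$) admits a section $\mathbf{s}:\mathbf{d}_n\mathbf{P}_n(\varepsilon)\to\mathbf{P}_n(\varepsilon)$. Following the sketch in case (2) of Section \ref{afterbases}, I would construct an $X\subseteq\varepsilon$ of cofinality $\aleph_n$ such that $P_n(X\cap[\alpha,\varepsilon))$ is closed under $\mathbf{s}\,\mathbf{d}_n$ for every $\alpha$: start with any cofinal $X_0\subseteq\varepsilon$ of order type $\omega_n$ and iteratively close under the (finitely-supported) action of $\mathbf{s}\,\mathbf{d}_n$, a process whose closure $X$ still has cofinality $\aleph_n$ since at each step only countably many coordinates are added per element. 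The natural projection $\mathbf{p}:\mathbf{P}_n(\varepsilon)\to\mathbf{P}_n(X)$ then makes $\mathbf{d}_n\,\mathbf{p}\,\mathbf{s}$ a retract of the inclusion $\mathbf{d}_n\mathbf{P}_n(X)\hookrightarrow\mathbf{d}_n\mathbf{P}_n(\varepsilon)$, exhibiting $\mathbf{d}_n\mathbf{P}_n(X)$ as a direct summand and hence (Lemma \ref{summandlemma}) projective. But then $\mathrm{cd}(X)\leq n$, contradicting the lower bound established in the previous step for ordinals of cofinality $\aleph_n$.

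The hard part has already been done: Theorem \ref{concludestheproof} is where all the combinatorial content --- the compounded $C$-sequences, the basis $d_n\mathcal{B}_n(\omega_n)$, the explicit $\mathtt{f}_n$, and the Pressing Down arguments of Section \ref{deromega1} --- is discharged. What remains in this final assembly is essentially bookkeeping; the only subtle technical step is the construction of the $\mathbf{s}\,\mathbf{d}_n$-closed set $X$ of cofinality exactly $\aleph_n$ inside $\varepsilon$, which requires a cardinality estimate on the supports produced by the section $\mathbf{s}$ together with a routine closing-off argument of length $\omega_n$.
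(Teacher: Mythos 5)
Your overall route is the paper's own new constructive argument from Sections 4--6 (in particular the assembly described in Section \ref{theargument}), which is \emph{not} the proof the appendix attaches to this statement: the appendix records Mitchell's original abstract argument, which establishes the lower bound by an induction on $\xi$ via a cone construction (Proposition \ref{notproject} and diagram (\ref{charttwo})), without ever producing explicit witnesses. Your plan trades that abstract descent for the explicit cocycles $\mathtt{f}_n$; this is exactly the substitution the paper advertises, and your first three paragraphs (upper bound via Theorem \ref{basis}, lower bound at $\omega_k$ via Theorem \ref{concludestheproof}, reflection to general $\varepsilon$ of finite cofinality via a cofinal copy of $\omega_k$ and the long exact sequence (\ref{les}), together with the alternative appeal to a theorem like [SSH, Theorem 15.5]) track Section \ref{theargument} closely and are fine.

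The gap is in the last paragraph. You begin with ``any cofinal $X_0 \subseteq \varepsilon$ of order type $\omega_n$,'' but no such set exists: if $\mathrm{cf}(\varepsilon)=\kappa\geq\aleph_\omega$ then every cofinal subset of $\varepsilon$ has cardinality $\geq\kappa>\aleph_n$. The set $X$ you need must be \emph{bounded} in $\varepsilon$; the paper never asks it to be cofinal. More substantively, the $\omega$-length closure you describe does not control $\mathrm{cf}(X)$. After closing under the (finite) supports of $\mathbf{s}\,\mathbf{d}_n$ in $\omega$ stages $X_0\subseteq X_1\subseteq\cdots$, you certainly get $|X|=\aleph_n$, but if $\sup X_m$ increases strictly then $\mathrm{cf}(X)=\aleph_0$, and if it increases for infinitely many but not all $m$ you can land anywhere; nothing forces $\mathrm{cf}(X)=\aleph_n$. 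And if $\mathrm{cf}(X)=\aleph_k$ for some $k<n$, then ``$\mathrm{cd}(X)\leq n$'' is consistent with Case (1), so no contradiction follows. Your final sentence actually diagnoses the fix correctly --- a ``routine closing-off argument of length $\omega_n$'' --- but the construction paragraph does not implement it. Concretely: build a continuous $\subseteq$-increasing chain $\langle X_i \mid i<\omega_n\rangle$ of subsets of $\varepsilon$ of size $<\aleph_n$, taking unions at limits, and at each successor stage $i+1$ adjoining (i) the coordinates of $\mathrm{supp}(\mathbf{s}\,\mathbf{d}_n\langle\vec{\alpha}\rangle)$ for all $\vec{\alpha}\in[X_i]^{n+1}$, and (ii) some ordinal strictly above $\sup X_i$. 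Then $X=\bigcup_{i<\omega_n}X_i$ is closed under $\mathbf{s}\,\mathbf{d}_n$, has cardinality $\aleph_n$, and --- because $\omega_n$ is regular and the $\sup X_i$ form a strictly increasing continuous $\omega_n$-sequence --- has cofinality exactly $\aleph_n$. With that replacement, the retract $\mathbf{d}_n\,\mathbf{p}\,\mathbf{s}$ argument and Lemma \ref{summandlemma} deliver the contradiction.
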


That pd($\mathbf{\Delta}_{\varepsilon}(\mathbb{Z}))\leq \xi+1$ for such an $\varepsilon$ was known at the time, due to Goblot \cite{Goblot}. Hence the novelty of Mitchell's result was its computation of lower bounds for pd($\mathbf{\Delta}_{\varepsilon}(\mathbb{Z}))$; this part of the theorem may be rephrased as follows.

\begin{prop} \label{notproject}
Let $\varepsilon$ be of cofinality $\aleph_\xi$. Then $\mathbf{d}_n\mathbf{P}_n(\varepsilon)$ is not projective for any finite ordinal $n\leq\xi$.
\end{prop}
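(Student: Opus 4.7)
The proposition asserts that $\text{pd}(\mathbf{\Delta}_\varepsilon(\mathbb{Z})) \geq \xi+1$ for every ordinal $\varepsilon$ of cofinality $\aleph_\xi$ (with $\xi$ finite), or equivalently, that $\mathbf{d}_n\mathbf{P}_n(\varepsilon)$ is non-projective for each $n \leq \xi$. My plan is to deduce this from two inputs already developed in the paper: Theorem \ref{concludestheproof} (giving $\text{cd}(\omega_k) \geq k+1$) together with the formalism of projective resolutions from Section \ref{freeandproj}.

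First I would reduce the proposition to its ``top'' case $n = \xi$. Indeed, for a fixed $\varepsilon$ and each $n < \xi$, the short exact sequence
\begin{equation*}
0 \to \mathbf{d}_{n+1}\mathbf{P}_{n+1}(\varepsilon) \to \mathbf{P}_n(\varepsilon) \to \mathbf{d}_n\mathbf{P}_n(\varepsilon) \to 0
\end{equation*}
inherited from the standard projective resolution $\mathsf{P}(\varepsilon)$ splits whenever its rightmost term is projective, exhibiting $\mathbf{d}_{n+1}\mathbf{P}_{n+1}(\varepsilon)$ as a direct summand of the free system $\mathbf{P}_n(\varepsilon)$ and hence (by Lemma \ref{summandlemma}) as itself projective. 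Contrapositively, non-projectivity of $\mathbf{d}_{n+1}\mathbf{P}_{n+1}(\varepsilon)$ propagates downwards to $\mathbf{d}_n\mathbf{P}_n(\varepsilon)$; iterating, it is enough to establish non-projectivity of $\mathbf{d}_\xi\mathbf{P}_\xi(\varepsilon)$ alone.

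For the canonical instance $\varepsilon = \omega_\xi$, the top case is immediate: Theorem \ref{concludestheproof} combined with Lemma \ref{projlemma} and Definition \ref{projdefinition} converts $\text{cd}(\omega_\xi) \geq \xi+1$ directly into the statement that $\mathbf{d}_\xi\mathbf{P}_\xi(\omega_\xi)$ is not projective. For a general $\varepsilon$ of cofinality $\aleph_\xi$, I would argue by contradiction, adapting the template sketched in Section \ref{theargument} case (2). Assume $\mathbf{d}_\xi\mathbf{P}_\xi(\varepsilon)$ is projective with section $\mathbf{s}$ of $\mathbf{d}_\xi$. Build by a L\"owenheim--Skolem-style iteration a cofinal $X \subseteq \varepsilon$ of cofinality $\aleph_\xi$ such that $\mathbf{P}_\xi(X)$ is closed under $\mathbf{s}\mathbf{d}_\xi$: start with any cofinal $X_0 \subseteq \varepsilon$ of order-type $\omega_\xi$ and iteratively adjoin the ordinals appearing in the supports of the images $\mathbf{s}\mathbf{d}_\xi\langle\vec{\alpha}\rangle$ for basis tuples $\vec{\alpha}$ already within the stage. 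The natural projection $\mathbf{p}: \mathbf{P}_\xi(\varepsilon) \to \mathbf{P}_\xi(X)$ then makes $\mathbf{d}_\xi\,\mathbf{p}\,\mathbf{s}$ a retract of the natural inclusion $\mathbf{d}_\xi\mathbf{P}_\xi(X) \hookrightarrow \mathbf{d}_\xi\mathbf{P}_\xi(\varepsilon)$, exhibiting $\mathbf{d}_\xi\mathbf{P}_\xi(X)$ as a direct summand and hence projective. But the combinatorial arguments of Sections \ref{bases}, \ref{afterbases}, and \ref{deromega1} depend only on the cofinality structure of the underlying index through the $C$-sequence machinery, not on its particular order-type; applied to $X$ (or to an order-isomorphic copy of $\omega_\xi$ cofinal in $X$), they yield the contradictory conclusion that $\mathbf{d}_\xi\mathbf{P}_\xi(X)$ is \emph{not} projective.

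The hardest part will be the cofinality bookkeeping in the last step: one must verify that the iterative closure does not inadvertently collapse the cofinality of $X$, and that the direct-summand identification of $\mathbf{d}_\xi\mathbf{P}_\xi(X)$ inside $\mathbf{d}_\xi\mathbf{P}_\xi(\varepsilon)$ genuinely respects the ambient $\mathsf{Ab}^{\varepsilon^{\mathrm{op}}}$ structure once the mild abuse noted in Section \ref{theargument} is unpacked. These are standard but delicate points; the genuinely new combinatorial content, namely the non-projectivity of $\mathbf{d}_\xi\mathbf{P}_\xi(\omega_\xi)$, has already been executed through the $\mathtt{f}_n$ analysis of Section \ref{deromega1} and here is invoked only as Theorem \ref{concludestheproof}.
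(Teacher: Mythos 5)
Your proposal takes a genuinely different route from the paper's own proof of Proposition~\ref{notproject}, which is Mitchell's inductive cone argument (induction on $\xi$, with Claim~\ref{clma3} as base case and the cone $\mathbf{Q}_n(\delta,\xi)$ supplying the retract in the inductive step). Your plan instead derives the proposition from Theorem~\ref{concludestheproof}, i.e.\ from the constructive $\mathtt{f}_n$-analysis. That is a legitimate strategy in spirit --- the paper itself notes that Fact~\ref{fackt} follows from either Theorem~\ref{concludestheproof} or Proposition~\ref{notproject} --- and your reduction of the general claim to the single ``top'' case $n=\xi$ via the split short exact sequences is correct and clean.

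The gap is in your handling of a general $\varepsilon$ of cofinality $\aleph_\xi$. The closure argument you borrow from Section~\ref{theargument} case~(2) is, in that source, a \emph{cofinality-reduction} device: there one has $\mathrm{cf}(\varepsilon)=\kappa\geq\aleph_\omega$ and projects down to $X$ of cofinality $\aleph_n<\kappa$, at which point case~(1) applies. In your setting the cofinality cannot drop: $X$ is cofinal in $\varepsilon$, so $\mathrm{cf}(X)=\mathrm{cf}(\varepsilon)=\aleph_\xi$. Closing $X_0$ (of order-type $\omega_\xi$) under $\mathbf{s}\,\mathbf{d}_\xi$ keeps $|X|=\aleph_\xi$, but each closure step may add finitely many ordinals \emph{between} existing elements of $X$, so the resulting $X$ will in general have order-type strictly between $\omega_\xi$ and $\omega_{\xi+1}$, not $\omega_\xi$ itself. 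Theorem~\ref{concludestheproof} is a statement about $\omega_\xi$; to conclude $\mathbf{d}_\xi\mathbf{P}_\xi(X)$ is non-projective for such an $X$ you would need precisely the general claim of Proposition~\ref{notproject} --- a circularity. Your parenthetical fallback (``or to an order-isomorphic copy of $\omega_\xi$ cofinal in $X$'') does not rescue this: the direct-summand identification hinges on closure of $\mathbf{P}_\xi(X)$ under $\mathbf{s}\,\mathbf{d}_\xi$, and that property will not persist on a thinned cofinal $Y\subsetneq X$ of order-type $\omega_\xi$. Note also that your appeal to ``the combinatorial arguments depend only on the cofinality structure, not on the particular order-type'' is, taken at face value, already the content of what you are trying to prove; if you could cash that assertion in directly, the closure step would be superfluous.

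The correct tool for the finite-cofinality extension is the one the paper itself sketches in Section~\ref{theargument} point~(1): fix a cofinal $Z\subseteq\varepsilon$ of order-type $\omega_\xi$, observe that the nontrivial $\mathtt{f}_\xi$-cocycle on $\omega_\xi$ relativizes to $Z$ and then \emph{extends} to a cochain on all of $\varepsilon$ witnessing $\lim^{\xi+1}\mathbf{P}_{\xi+1}(\varepsilon)\neq 0$, and hence $\mathrm{cd}(\varepsilon)\geq\xi+1$ (or cite~\cite[Theorem~15.5]{SSH}). If you replace your closure step by that relativization-and-extension step, the rest of your scaffolding goes through and yields a proof of Proposition~\ref{notproject} that is valid, constructive, and complementary to Mitchell's cone argument recorded in the appendix.
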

Below we sketch the original argument of Proposition \ref{notproject}, referring the reader to \cite{rings} or \cite{SSH} for details. Some words of context, though, seem to be in order before beginning.

Fundamental to all our arguments and constructions above were \emph{functor} or \emph{presheaf} categories $\mathsf{Ab}^{\varepsilon^{\text{op}}}$; the name ``presheaf'' derives from the centrality of $\mathsf{Ab}^{\tau(X)^{\text{op}}}$ to \v{C}ech or sheaf cohomology computations, relevant in our context as well (here $\tau(X)^{\text{op}}$ denotes the collection of open subsets of a topological space $X$, reverse-ordered by inclusion). More generally, let $P$ denote any partial order; Mitchell's point of departure was the resemblance of $\mathsf{Ab}^P$ to $R$-module categories $_R\mathsf{Mod}\cong\mathsf{Ab}^R$, where $R$ is a ring, construed on the right as a one-object additive category. Under this view, just as module theory is the representation theory of rings $R$, the study of the category $\mathsf{Ab}^P$ might be thought of as ``the representation theory of orders,'' and all of the foregoing may be viewed as a study of several of the most fundamental objects of these categories, $\mathbf{\Delta}_{\varepsilon}(\mathbb{Z})$ and $\mathbf{P}_n(\varepsilon)$ $(n\in\omega)$.

Put differently, the theorem that has formed our focus first emerged within a larger project of translating ``noncommutative homological ring theory [...] to (pre)additive category theory'' and as such incorporates multiple prior recognitions of the homological significance of the cardinals $\aleph_n$ \cite[2]{rings}. We would heartily recommended Osofsky's 1974 survey \emph{The subscript of $\aleph_n$, projective dimension, and the vanishing of lim$^n$} to any reader interested in that background, as well as Husainov's wider-ranging 2002 survey of Mitchell's work and its wake \cite{osofsubscript, husainov}. We record here just a few of the more noteworthy points:
\begin{enumerate}
\item ``The first irrefutable indications that cardinality was intimately tied up with projective dimension came in 1967 in two separate papers where lower bounds as well as upper bounds on dimensions were calculated in terms of subscripts of cardinalities'' \cite[14]{osofsubscript}. These were \cite{pierce} and \cite{osofvaluation}; rough outlines of the argument we will sketch below are legible in each.
\item Very shortly thereafter, Barbara Osofsky published \cite{osofch}; this is the acknowledged template for Mitchell's theorem \cite[6]{rings}. It seems telling that these first ``indications'' all emerged in work on rings which articulate \emph{orders}: Boolean rings, valuation rings, and directed rings, respectively. In \cite{osofch}, cardinal arithmetic assumptions transfer features of the cardinals $\aleph_n$ to rings of size continuum. Perhaps the best-known result in this line (cited in \cite[98]{weibel}, for instance), for example, is that the global dimension of $\prod_\omega\mathbb{C}$ is $k+1$ if and only if $2^{\aleph_0}=\aleph_k$. Most striking from our perspective, though, is the appendix of \cite{osofch}: therein, following the lead of \cite{bass}, Osofsky constructs bases for projective modules $d_1 P_1$ and $d_2 P_2$ quite close in spirit to the $n=1$ and $n=2$ cases of our more general constructions above. 
\end{enumerate}
Osofsky summarizes her survey as follows: ``What began as a study of dimension via derived functors branched off into a study of dimension via cardinality and came back to a study of derived functors via cardinality'' \cite[8]{osofsubscript}. As should be clear, the present work pursues a fourth combination in this sequence: the study of cardinality via derived functors and dimension. 
\begin{proof}[Sketch of proof of Proposition \ref{notproject}]

The argument is by induction on $\xi$. The base case $\xi=0$ consists in verifying that $\mathbf{d}_0\mathbf{P}_0(\varepsilon)\cong\mathbf{\Delta}_\varepsilon(\mathbb{Z})$ is not projective if $\varepsilon$ is a limit ordinal. The mechanism of the induction is an argument that if $\mathbf{d}_{n-1}\mathbf{P}_{n-1}(\delta)$ is not projective for any $\delta<\varepsilon$ with $\text{cf}(\delta)<\text{cf}(\varepsilon)$ then $\mathbf{d}_n\mathbf{P}_n(\varepsilon)$ is not projective either.\\

\noindent \underline{The base case}: By the following claim, if $\varepsilon$ is a limit ordinal then the epimorphism $\mathbf{d}_0:\mathbf{P}_0(\varepsilon)\rightarrow\mathbf{\Delta}_\varepsilon(\mathbb{Z})$ has no right-inverse. Hence $\mathbf{\Delta}_\varepsilon(\mathbb{Z})$ is not projective.
\begin{clm}\label{clma3}
Let $\varepsilon$ be a limit ordinal. Then the only morphism $\mathbf{f}:\mathbf{\Delta}_\varepsilon(\mathbb{Z})\rightarrow \mathbf{P}_0(\varepsilon)$ is the zero morphism.
\end{clm}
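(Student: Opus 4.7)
The plan is to unpack a candidate morphism coordinate-by-coordinate and use that the bonding maps on the source are identities while those on the target are (injective) inclusions of direct summands, so the ``value'' of $\mathbf{f}$ is pinned down by its behavior at $\alpha = 0$ and must simultaneously live in every tail $[\alpha,\varepsilon)$.

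Concretely, a morphism $\mathbf{f}=\{f_\alpha:\mathbb{Z}\to P_0([\alpha,\varepsilon))\mid \alpha<\varepsilon\}$ is determined by the elements $x_\alpha := f_\alpha(1)$. The compatibility condition $p_{\alpha\beta} f_\beta = f_\alpha$ for $\alpha\leq\beta<\varepsilon$ (noting that the bonding maps of $\mathbf{\Delta}_\varepsilon(\mathbb{Z})$ are the identity) reads $p_{\alpha\beta}(x_\beta) = x_\alpha$. Since each $p_{\alpha\beta}$ is the canonical inclusion $\bigoplus_{[\beta,\varepsilon)}\mathbb{Z}\hookrightarrow\bigoplus_{[\alpha,\varepsilon)}\mathbb{Z}$, it is injective, and its image consists of precisely those finitely-supported functions whose support lies in $[\beta,\varepsilon)$. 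Composing these inclusions back to $\alpha=0$ shows that $x_0$ must lie in $P_0([\beta,\varepsilon))\subseteq P_0([0,\varepsilon))$ for every $\beta<\varepsilon$; that is, $\mathrm{supp}(x_0)\subseteq[\beta,\varepsilon)$ for all $\beta<\varepsilon$.

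Here the hypothesis that $\varepsilon$ is a limit ordinal enters: $\bigcap_{\beta<\varepsilon}[\beta,\varepsilon)=\varnothing$, so $\mathrm{supp}(x_0)=\varnothing$ and hence $x_0=0$. Then for each $\alpha<\varepsilon$, injectivity of $p_{0\alpha}$ together with $p_{0\alpha}(x_\alpha)=x_0=0$ forces $x_\alpha=0$. Since $\mathbb{Z}$ is cyclic and each $f_\alpha$ is a group homomorphism, $f_\alpha=0$ for all $\alpha<\varepsilon$, whence $\mathbf{f}=\mathbf{0}$.

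There is no serious obstacle; the essential point, and really the only step requiring any thought, is the identification of the bonding maps $p_{\alpha\beta}$ as inclusions of direct summands and the resulting support-tracking argument. The argument also makes transparent \emph{why} the limit hypothesis is needed: if $\varepsilon=\delta+1$ were a successor then $[\delta,\varepsilon)=\{\delta\}$, and the assignment $f_\alpha(1)=\langle\delta\rangle$ would define a nonzero morphism, recovering the fact noted in Section~\ref{freeandproj} that $\mathbf{\Delta}_\varepsilon(\mathbb{Z})$ is free (in fact splits off of $\mathbf{P}_0(\varepsilon)$) precisely when $\varepsilon$ is a successor.
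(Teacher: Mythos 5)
Your proof is correct and follows essentially the same route as the paper's: both pin down $\mathbf{f}$ by the values $f_\alpha(1)$, use that the bonding maps of $\mathbf{P}_0(\varepsilon)$ are inclusions with shrinking supports, and conclude from the limit hypothesis that the common support is empty. Your version is slightly more explicit about where the limit hypothesis enters (via $\bigcap_{\beta<\varepsilon}[\beta,\varepsilon)=\varnothing$), but the underlying idea is identical.
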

\begin{proof} Such an $\mathbf{f}$ is a collection of morphisms $\{f_\alpha:\mathbb{Z}\rightarrow\bigoplus_{[\alpha,\varepsilon)}\mathbb{Z}\,|\,\alpha<\varepsilon\}$ commuting with the bonding maps in $\mathbf{\Delta}_{\varepsilon}(\mathbb{Z})$ and $\mathbf{P}_0(\varepsilon)$; in consequence, $f_\alpha(1)$ must equal $f_\beta(1)$ for all $\alpha<\beta<\varepsilon$. This is not possible if $\beta>\min(\text{supp}(f_\alpha(1)))$. Hence $\min(\text{supp}(f_\alpha(1)))$ must not be defined for any $\alpha<\varepsilon$. Thus $\mathbf{f}$ is the zero morphism.
\end{proof}

\noindent \underline{The induction step}: This consists in showing that if $\mathbf{d}_n\mathbf{P}_n(\varepsilon)$ is projective and $n>0$ then for any regular $\kappa<\text{cf}(\varepsilon)$ there exists a $\delta\in\text{Cof}(\kappa)\cap\varepsilon$ with $\mathbf{d}_{n-1}\mathbf{P}_{n-1}(\delta)$ projective as well. This is argued via the following diagram:
\begin{align}\label{chartone}
\xymatrix{\mathbf{P}_n(\varepsilon) \ar@/_1pc/[d]_{\mathbf{p}} \ar[r] & \mathbf{d}_n\mathbf{P}_n(\varepsilon) \ar@/_1pc/@{.>}[l]_{\mathbf{s}} \ar@/_1pc/@{.>}[d]_{\mathbf{q}} \ar[r] & \mathbf{P}_{n-1}(\varepsilon) \ar@/_1pc/[d]_{\mathbf{p}} \\ \mathbf{P}_n(\delta) \ar@/_1pc/[u]_{\mathbf{i}} \ar[r] & \mathbf{d}_n\mathbf{P}_n(\delta) \ar@/_1pc/[u]_{\mathbf{j}} \ar[r] & \mathbf{P}_{n-1}(\delta) \ar@/_1pc/[u]_{\mathbf{i}} \ar[r] & \mathbf{d}_{n-1}\mathbf{P}_{n-1}(\delta)}
\end{align}

\noindent Rows are telescopings of the projective resolutions of $\mathbf{\Delta}_\varepsilon(\mathbb{Z})$ and $\mathbf{\Delta}_\delta(\mathbb{Z})$, respectively; they are, in other words, the natural decompositions of the differentials $\mathbf{d}_n:\mathbf{P}_n\rightarrow\mathbf{P}_{n-1}$ into $\mathbf{P}_n\twoheadrightarrow\mathbf{d}_n\mathbf{P}_n$ followed by $\mathbf{d}_n\mathbf{P}_n\hookrightarrow\mathbf{P}_{n-1}$. Natural projections $\mathbf{p}:\mathbf{P}_n(\varepsilon)\rightarrow\mathbf{P}_n(\delta)$ and inclusions $\mathbf{i}:\mathbf{P}_n(\delta)\rightarrow\mathbf{P}_n(\varepsilon)$ connect pairs $\mathbf{P}_n(\varepsilon)$ and $\mathbf{P}_n(\delta)$. Similarly, $\mathbf{d}_n\mathbf{P}_n(\delta)$ naturally includes into $\mathbf{d}_n\mathbf{P}_n(\varepsilon)$; what is perhaps surprising is that this inclusion $\mathbf{j}$ may have no left-inverse.\footnote{$\mathbf{j}:\mathbf{d}_1\mathbf{P}_1(5)\rightarrow\mathbf{d}_1\mathbf{P}_1(\omega)$, for example, does have a left-inverse, while $\mathbf{j}:\mathbf{d}_1\mathbf{P}_1(\omega)\rightarrow\mathbf{d}_1\mathbf{P}_1(\omega_1)$ does not, as the reader is encouraged to verify.} This is the first key observation in the induction: if $\mathbf{d}_n\mathbf{P}_n(\varepsilon)$ is projective and, hence, admits some section $\mathbf{s}$ of the map $\mathbf{d}_n$, then at closure points $\delta$ of $\mathbf{s}\,\mathbf{d}_n$, a left-inverse to $\mathbf{j}$ does exist --- namely, $\mathbf{q}=\mathbf{d}_n\,\mathbf{p}\,\mathbf{s}$.

The second key observation is that $\mathbf{q}$, together with the space in $\varepsilon$ above $\delta$, may be used to define a retract $\mathbf{r}$ of the inclusion $\mathbf{d}_n\mathbf{P}_n(\delta)\hookrightarrow\mathbf{P}_{n-1}(\delta)$. The existence of such an $\mathbf{r}$ will imply that $\mathbf{P}_{n-1}(\delta)\cong\mathbf{d}_n\mathbf{P}_n(\delta)\oplus\mathbf{d}_{n-1}\mathbf{P}_{n-1}(\delta)$, by the exactness of the sequence
\begin{align*}
\xymatrix{\mathbf{0} \ar[r] & \mathbf{d}_n\mathbf{P}_n(\delta) \ar[r] & \mathbf{P}_{n-1}(\delta) \ar[r] \ar@/_1pc/@{.>}[l]_{\mathbf{r}} & \mathbf{d}_{n-1}\mathbf{P}_{n-1}(\delta) \ar[r] & \mathbf{0}}
\end{align*}

\noindent Hence $\mathbf{d}_{n-1}\mathbf{P}_{n-1}(\delta)$ is a summand of the free system $\mathbf{P}_{n-1}(\delta)$. By Lemma \ref{summandlemma}, $\mathbf{d}_{n-1}\mathbf{P}_{n-1}(\delta)$ is therefore projective. If we have shown that $\text{pd}(\mathbf{\Delta}_\delta(\mathbb{Z}))\geq n$, then this is a contradiction, hence our assumption that $\mathbf{d}_n\mathbf{P}_n(\varepsilon)$ is projective was false. In consequence, $\text{pd}(\mathbf{\Delta}_\varepsilon(\mathbb{Z}))\geq n+1$.

In the preceding paragraph, we referenced ``the space in $\varepsilon$ above $\delta$'': fix $\xi\in\varepsilon\backslash\delta$. The key device in this second part of the argument --- i.e., in the derivation of a retract $\mathbf{r}$ from $\mathbf{q}$ --- is the \emph{formation of a cone}
over $\mathbf{P}_{n-1}(\delta)$ in $\mathbf{P}_n(\varepsilon)$. By this we mean the following: let $\mathbf{Q}_n(\delta,\xi)$ be the subsystem of $\mathbf{P}_n(\varepsilon)$ generated by $\{\langle\vec{\alpha},\xi\rangle\,|\,\vec{\alpha}\in [\delta]^n\}$. As the reader may verify, there are natural inclusion-relations between $\mathbf{d}_n\mathbf{Q}_n(\delta,\xi)$ and several of the main terms in the diagram \ref{chartone}. These we denote $\mathbf{t}$, $\mathbf{u}$, and $\mathbf{v}$ in the diagram below; to see that $\mathbf{t}$, for example, is meaningful, observe that $d_n\langle\vec{\beta}\rangle=\sum_{i=0}^n(-1)^i\langle\vec{\beta}^i\rangle=\sum_{i=0}^n(-1)^{i+n}d_n\langle\vec{\beta}^i,\xi\rangle$ for any $\vec{\beta}\in [\delta]^{n+1}$.
\begin{align}\label{charttwo}
\xymatrix{\mathbf{P}_n(\varepsilon) \ar@/_1pc/[dd]_{\mathbf{p}} \ar[r] & \mathbf{d}_n\mathbf{P}_n(\varepsilon) \ar@/_1pc/@{.>}[l]_{\mathbf{s}} \ar@/_1pc/@{.>}[dd]_{\mathbf{q}} \ar[rr]^{\mathbf{b}} & & \mathbf{P}_{n-1}(\varepsilon) \ar@/_1pc/[dd]_{\mathbf{p}} \ar@/_1pc/@{.>}[dl]^{\mathbf{w}} \\ & & \mathbf{d}_n\mathbf{Q}_n(\delta,\xi) \ar@/_1pc/[ur]_{\mathbf{v}} \ar[ul]_{\mathbf{u}} & & \\ \mathbf{P}_n(\delta) \ar@/_1pc/[uu]_{\mathbf{i}} \ar[r] & \mathbf{d}_n\mathbf{P}_n(\delta) \ar@/_1pc/[uu]_{\mathbf{j}} \ar[ur]^{\mathbf{t}} \ar[rr]_{\mathbf{a}} & & \mathbf{P}_{n-1}(\delta) \ar@/_1pc/@{.>}[ll]_{\mathbf{r}} \ar@/_1pc/[uu]_{\mathbf{i}} \ar[r] & \mathbf{d}_{n-1}\mathbf{P}_{n-1}(\delta)}
\end{align}

\noindent What the cone construction critically affords us is a retract, $\mathbf{w}$, of $\mathbf{v}$. This is defined as follows: for $\vec{\beta}\in [\varepsilon]^n$, let $$\mathbf{w}(\langle\vec{\beta}\rangle) =
\begin{cases}
(-1)^n d_n\langle\vec{\beta},\xi\rangle & \text{if }\vec{\beta}\in [\delta]^n \\
0 & \text{otherwise}
\end{cases}
$$
For a generator $d_n\langle\vec{\alpha},\xi\rangle$ of $\mathbf{d}_n\mathbf{Q}_n(\delta,\xi)$,
$$\mathbf{w}\mathbf{v}(\mathbf{d}_n\langle\vec{\alpha},\xi\rangle)=\mathbf{w}\bigg(\sum_{i=0}^{n-1} (-1)^i\langle\vec{\alpha}^i,\xi\rangle + (-1)^n \langle\vec{\alpha}\rangle\bigg)=d_n\langle\vec{\alpha},\xi\rangle$$
Hence $\mathbf{w}$ is a retract of $\mathbf{v}$, as desired. The point is the following: 

Write $\mathbf{a}$ for $\mathbf{d}_n\mathbf{P}_n(\delta)\hookrightarrow\mathbf{P}_{n-1}(\delta)$, as in diagram \ref{charttwo} above. Then given a $\mathbf{q}$ left-inverse to $\mathbf{j}$, the map $\mathbf{r}=\mathbf{q}\mathbf{u}\mathbf{w}\mathbf{i}$ is left-inverse to $\mathbf{a}$:
$$\mathbf{r}\mathbf{a}=\mathbf{q}\mathbf{u}\mathbf{w}\mathbf{i}\mathbf{a}=\mathbf{q}\mathbf{u}\mathbf{w}\mathbf{b}\mathbf{j}=\mathbf{q}\mathbf{u}\mathbf{w}\mathbf{b}\mathbf{u}\mathbf{t}=\mathbf{q}\mathbf{u}\mathbf{w}\mathbf{v}\mathbf{t}=\mathbf{q}\mathbf{u}\mathbf{t}=\mathbf{q}\mathbf{j}$$
The equation records a diagram-chase on (\ref{charttwo}) above, together with the fact that $\mathbf{w}\mathbf{v}=\mathbf{id}$. It shows that $\mathbf{r}$ is indeed a retract of $\mathbf{a}$, and thereby concludes the induction step.
\end{proof}
\section{Details of the proof that $\mathtt{f}_n$ is nontrivial}
\label{appendixAA}
Here we record in greater notational detail the argument outlined at the conclusion of Section \ref{efffn} that the functions $\mathtt{f}_n$ are nontrivial. We follow that outline closely. In particular, our argument is by induction on $n$; therefore assume the claim true for all $\mathtt{f}_m$ with $m<n$, recalling that we handled the base cases of $n=1$ and $n=2$ in Sections \ref{thecasen1} and Sections \ref{efffn}, respectively.

Consider now the function $\mathtt{f}_n\in K^n(\mathbf{R}_{n+1}(\omega_n))$ derived, in the manner described in Section \ref{section52}, from equation \ref{effff}. Equation \ref{fcoherence} takes the form of the following coherence condition:
\begin{align}\label{append1}\sum_{i=0}^{n+1}(-1)^i\mathtt{f}_n(\vec{\alpha}^i)=^* 0\hspace{.3 cm}\text{ for all }\vec{\alpha}\in [\omega_n]^{n+2}
\end{align}
Our aim is to show that there exists no $\mathtt{e}_{n-1}\in K^{n-1}(\mathbf{R}_{n+1}(\omega_n))$ trivializing $\mathtt{f}_n$ in the manner of equation \ref{showingthis}, i.e., satisfying
\begin{align}\label{append2}
\sum_{i=0}^n(-1)^i\mathtt{e}_{n-1}(\vec{\alpha}^i)=^*\mathtt{f}_n(\vec{\alpha})\hspace{.3 cm}\text{ for all }\vec{\alpha}\in [\omega_n]^{n+1}
\end{align}
To that end, we begin by deducing from equation \ref{effff} that
\begin{align}\label{fcontainment}\text{supp}(\mathtt{f}_n(\vec{\beta}))\subseteq [\,[\beta_0,\beta_n]\,]^{n+2}\hspace{.3 cm}\text{ for all }\vec{\beta}\in [\omega_n]^{n+1}\end{align}
and hence, by (\ref{append1}), that
\begin{align}\label{append3}\sum_{i=0}^{n}(-1)^i\mathtt{f}_n(0,\vec{\beta}^i)\big|_{\beta_0\otimes [\omega_n]^{n+1}}=^* 0\hspace{.3 cm}\text{ for all }\vec{\beta}\in [\omega_n]^{n+1}
\end{align}
This is the ``degree reduction'' in coherence relations alluded to in the argument's outline in Section \ref{section52}. Similarly, by definition,
$$\text{supp}(\mathtt{e}_{n-1}(\vec{\beta}))\subseteq [\,[\beta_0,\omega_n]\,]^{n+2}\hspace{.3 cm}\text{ for all }\vec{\beta}\in [\omega_n]^{n}$$
for any $\mathtt{e}_{n-1}\in K^{n-1}(\mathbf{R}_{n+1}(\omega_n))$; this implies the relation
\begin{align}\label{append4}
\sum_{i=0}^{n-1}(-1)^i\mathtt{e}_{n-1}(0,\vec{\beta}^i)\big|_{\beta_0\otimes [\omega_n]^{n+1}}=^*\mathtt{f}_n(0,\vec{\beta})\big|_{\beta_0\otimes [\omega_n]^{n+1}}\hspace{.3 cm}\text{ for all }\vec{\beta}\in [\omega_n\backslash\{0\}]^{n}
\end{align}
for any $\mathtt{e}_{n-1}$ as in equation \ref{append2}. Hence to show that $\mathtt{f}_n$ is nontrivial in the sense of equation \ref{append2}, it will suffice to show that there exists no $\mathtt{e}_{n-1}$ as in equation \ref{append4}.

For this purpose the key point is the existence of nontrivial $(n-1)$-coherent families of functions within the restriction, for each $\gamma\in S^n_{n-1}$, of $\mathtt{f}_n(\,\cdot\,,\gamma)$ to the hyperplane $z=\gamma$; it is here that we make use of our inductive hypothesis. Before proceeding, though, we will require some additional notation.

Fix $\gamma\in S^n_{n-1}$ and let $\pi$ denote the order-isomorphism $\omega_{n-1}\to C_\gamma$. It is then immediate from definitions that for any $\vec{\beta}\in [C_\gamma]^n$, the maximal proper internal tail of $(\vec{\beta},\gamma)$ is $(\vec{\alpha},\gamma)$, where $\vec{\xi}$ is the maximal proper initial tail of $\pi^{-1}[\vec{\beta}]$ and $\vec{\alpha}=\pi[\vec{\xi}]$. It follows that $\mathtt{b}(\vec{\beta},\gamma)=\langle\pi[\mathtt{b}(\pi^{-1}[\vec{\beta}])],\gamma\rangle$ for all such $\vec{\beta}$. Here we've committed the mild abuse of applying $\pi$ to a group element, but our meaning should be clear; more generally, for any $a\in\prod_{[\omega_{n-1}]^{n+1}}\mathbb{Z}$, let $\pi[a]$ denote the element of $\prod_{[\gamma]^{n+1}}\mathbb{Z}$ induced by the order-isomorphism $\pi$ together with the inclusion of $C_\gamma$ into $\gamma$. Also, for any element $a$ of $\prod_{[\gamma]^{n+1}}\mathbb{Z}$, define the element $a * \gamma$ of $\prod_{[\omega_n]^{n+1}\otimes\{\gamma\}}\mathbb{Z}$ by $a*\gamma\,(\vec{\alpha},\gamma)=a(\vec{\alpha})$. In particular, $\pi[0]=0$ and $0*\gamma=0$. Our central assertion may now be stated as follows:
\begin{align}\label{alonggamma}
\mathtt{f}_n(\vec{\beta},\gamma)\big|_{[\omega_n]^{n+1}\otimes\{\gamma\}}=\pi[\mathtt{f}_{n-1}(\pi^{-1}[\vec{\beta}])]*\gamma\hspace{.3 cm}\text{ for all }\vec{\beta}\in[C_\gamma]^n
\end{align}
Put differently, the order-isomorphism $\pi$ translates the nontrivial coherence of $\mathtt{f}_{n-1}$ to the restriction of the $\mathtt{f}_n(\,\cdot\,,\gamma)$-images of $[C_\gamma]^n$ to the hyperplane $z=\gamma$. To see (\ref{alonggamma}), observe first that $\mathtt{f}_n(\vec{\beta},\gamma)=0$ if and only if $\mathtt{f}_{n-1}(\pi^{-1}(\vec{\beta}))=0$, for any $\vec{\beta}\in[C_\gamma]^n$. If for some such $\vec{\beta}$ this is not the case then
\begin{align}\label{24revisited}
\mathtt{f}_n(\vec{\beta},\gamma)\big|_{[\omega_n]^{n+1}\otimes\{\gamma\}}=(\text{-}1)^{j+1}\Big[\mathtt{b}(\vec{\beta},\gamma)-\displaystyle\sum_{i=0}^{j}\,(\text{-}1)^i \mathtt{f}_n(\mathtt{b}(\vec{\beta,\gamma})^i)-\displaystyle\sum_{i=j+2}^{n}\,(\text{-}1)^i \mathtt{f}_n(\mathtt{b}(\vec{\beta},\gamma)^i)\Big]\big|_{[\omega_n]^{n+1}\otimes\{\gamma\}}
\end{align}
for some $j\leq n-2$, by equation \ref{effff}. Note that the term $\mathtt{f}_n(\mathtt{b}(\vec{\beta},\gamma)^{n+1})$ of the latter has disappeared from the above sum, on the grounds that it contributes nothing to the restricted range $[\omega_n]^{n+1}\otimes\{\gamma\}$. Observe that the branches of the expansion (in the sense of Lemma \ref{nocircularity}) of (\ref{24revisited}) are of the form
\begin{align*}(\vec{\beta},\gamma)= & \pi[\pi^{-1}[\vec{\beta}]]*\gamma\;\longrightarrow\;\mathtt{b}(\vec{\beta},\gamma)=\pi[\mathtt{b}(\pi^{-1}[\vec{\beta}])]*\gamma\;\longrightarrow\;\pi[\mathtt{b}(\pi^{-1}[\vec{\beta}])^{k_1}]*\gamma \\ &\longrightarrow\;\mathtt{b}(\pi[\mathtt{b}(\pi^{-1}[\vec{\beta}])^{k_1}]*\gamma)=\pi[\mathtt{b}(\mathtt{b}(\pi^{-1}[\vec{\beta}])^{k_1})]*\gamma\;\longrightarrow\;\pi[\mathtt{b}(\mathtt{b}(\pi^{-1}[\vec{\beta}])^{k_1})^{k_2}]*\gamma\;\longrightarrow\cdots \end{align*}
where each $k_i\leq n$ is other than the index of the coordinate added by the $i^{\mathrm{th}}$ application of $\mathtt{b}$. It is then easy to see that these branches are precisely those which may be written as $\pi[B]*\gamma$, where $B$ is a branch of the expansion of $\mathtt{f}_{n-1}(\pi^{-1}[\vec{\beta}])$. More precisely, the $\pi$ and $*\gamma$ operations induce a correspondence between the expansion-trees, and thus between the expansions, and, hence, between the values of $\mathtt{f}_{n-1}(\pi^{-1}[\vec{\beta}])$ and $\mathtt{f}_n(\vec{\beta},\gamma)\big|_{[\omega_n]^{n+1}\otimes\{\gamma\}}$, in exactly the sense recorded in equation \ref{alonggamma}.

This correspondence underlies the following lemma, from which the remainder of step $n$ of our inductive argument rapidly follows.
\begin{lem}\label{appBlemma} Let $\mathtt{e}_{n-1}\in K^{n-1}(\mathbf{R}_{n+1}(\omega_n))$ be as in equation \ref{append4}. For any $\gamma\in S^n_{n-1}$ at which (\ref{alonggamma}) holds, there exists some $\vec{\alpha}_\gamma\in [C_\gamma]^n$ such that
\begin{align}\label{11July}\Bigg(\sum_{i=0}^{n-1}(-1)^i \mathtt{e}_{n-1}(0,\vec{\alpha}_\gamma^i)\Bigg)\Big|_{\vec{\alpha}_{\gamma}(0)\otimes [\omega_n]^n\otimes\{\gamma\}}\neq0.\end{align}
As previously noted, $\vec{\alpha}_{\gamma}(0)$ denotes the minimum element of $\vec{\alpha}_{\gamma}$.
\end{lem}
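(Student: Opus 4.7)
The proof is by contradiction. Assume that for every $\vec{\alpha} \in [C_\gamma]^n$ the function
$\bigl(\sum_{i=0}^{n-1}(-1)^i \mathtt{e}_{n-1}(0,\vec{\alpha}^i)\bigr)\big|_{\vec{\alpha}(0)\otimes [\omega_n]^n\otimes\{\gamma\}}$
is identically zero; call this assumption $(\star)$. Note that $(\star)$ is strictly stronger than what (\ref{append4}) already affords: restricting (\ref{append4}) to $\vec{\beta} = \vec{\alpha} \in [C_\gamma]^n$ and to the $z=\gamma$ hyperplane, and combining with (\ref{fcontainment}) (the RHS support lies in $[[0,\alpha_{n-1}]]^{n+2}$, missing $z=\gamma$), one already gets that the alternating sum is $=^{*} 0$. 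The plan is to upgrade this ``mod finite'' triviality, via $(\star)$, to an \emph{exact} coboundary condition, and then combine this with (\ref{alonggamma}) and the order-isomorphism $\pi : \omega_{n-1} \to C_\gamma$ to produce an $\mathtt{e}'_{n-2} \in K^{n-2}(\mathbf{R}_n(\omega_{n-1}))$ satisfying the level-$(n-1)$ analog of (\ref{append4}), contradicting the inductive nontriviality of $\mathtt{f}_{n-1}$.

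The driving identity comes from (\ref{append4}) applied to $\vec{\beta} = (\vec{\zeta}, \gamma)$ for $\vec{\zeta} \in [C_\gamma]^{n-1}$, restricted to the $z = \gamma$ hyperplane:
\[
\sum_{i=0}^{n-2}(-1)^i \mathtt{e}_{n-1}(0, \vec{\zeta}^i, \gamma)\big|_{\zeta_0 \otimes [\omega_n]^n \otimes \{\gamma\}} + (-1)^{n-1}\mathtt{e}_{n-1}(0, \vec{\zeta})\big|_{\zeta_0 \otimes [\omega_n]^n \otimes \{\gamma\}} =^{*} \mathtt{f}_n(0, \vec{\zeta}, \gamma)\big|_{\zeta_0 \otimes [\omega_n]^n \otimes \{\gamma\}}.
\]
The first sum has precisely the shape $\sum_i (-1)^i h(\vec{\zeta}^i)$ of a level-$(n-1)$ trivialization, for the provisional $h(\vec{\xi}) := \mathtt{e}_{n-1}(0, \vec{\xi}, \gamma)\big|_{\xi_0 \otimes [\omega_n]^n \otimes \{\gamma\}}$; the sole obstruction to reading off a trivialization of $F_\gamma := \{\mathtt{f}_n(0, \vec{\zeta}, \gamma)|_{\zeta_0 \otimes [\omega_n]^n \otimes \{\gamma\}} : \vec{\zeta} \in [C_\gamma]^{n-1}\}$ is the remaining ``stray'' term in $\mathtt{e}_{n-1}(0, \vec{\zeta})|_{\zeta_0 \otimes \cdots \otimes \{\gamma\}}$, and the purpose of $(\star)$ is to absorb it into a correction of $h$.

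Concretely, reading $(\star)$ at $\vec{\alpha} = (\alpha_0, \vec{\zeta}) \in [C_\gamma]^n$ with $\alpha_0 < \zeta_0$ exhibits the exact identity
\[
\mathtt{e}_{n-1}(0, \vec{\zeta})\big|_{\alpha_0 \otimes \cdots \otimes \{\gamma\}} = \sum_{j=0}^{n-2}(-1)^j \mathtt{e}_{n-1}(0, \alpha_0, \vec{\zeta}^j)\big|_{\alpha_0 \otimes \cdots \otimes \{\gamma\}},
\]
expressing the stray term at each column $\alpha_0 < \zeta_0$ as an exact coboundary of tuples containing $\alpha_0$. I would iterate this rewriting coordinate-by-coordinate down through $C_\gamma$, assembling the resulting coboundaries into a correction $g$ on $[C_\gamma]^{n-2}$, and verify that $h' := h - g$ satisfies $\sum_j (-1)^j h'(\vec{\zeta}^j) =^{*} \mathtt{f}_n(0,\vec{\zeta},\gamma)|_{\zeta_0 \otimes \cdots \otimes \{\gamma\}}$ for all $\vec{\zeta} \in [C_\gamma]^{n-1}$. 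The pullback $\mathtt{e}'_{n-2} := \pi^{-1}[h']$ then trivializes $\mathtt{f}_{n-1}$ via (\ref{alonggamma}), contradicting the inductive hypothesis.

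The main obstacle is the coherent assembly of $g$ when $n \geq 3$. In the $n = 2$ base case (Section \ref{efffn}), $(\star)$ reduces to pairwise agreement of single restrictions and the ``gluing'' is the transparent direct-limit-style construction already recorded there; for higher $n$, $(\star)$ is a full exact $(n-1)$-cocycle identity, and the well-definedness of $g$ at each coordinate level requires verifying a residual cocycle condition inherited from $(\star)$ by nested differencing. I expect the most delicate bookkeeping to be support-theoretic: tracking the treatment of the leading ``$0$'' coordinate of inputs to $\mathtt{e}_{n-1}$ (which lies outside $C_\gamma$), and ensuring that the correction $g$ is supported so that $\pi^{-1}[h']$ genuinely lies in $K^{n-2}(\mathbf{R}_n(\omega_{n-1}))$ and satisfies the appropriate support conventions of (\ref{append4}) at level $n-1$.
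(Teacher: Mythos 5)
Your overall strategy coincides with the paper's: assume for contradiction that the alternating sum in (\ref{11July}) vanishes exactly for all $\vec{\alpha}\in [C_\gamma]^n$, convert that exactness into an ``integration'' of $\mathtt{e}_{n-1}(0,\,\cdot\,)$ restricted to the $z=\gamma$ hyperplane, combine the result with $\mathtt{e}_{n-1}(0,\,\cdot\,,\gamma)$, and pull back along $\pi^{-1}$ to manufacture an $\mathtt{e}_{n-2}$ contradicting the inductive nontriviality of $\mathtt{f}_{n-1}$. The driving exact identity you extract from $(\star)$ is correct, and your $\pi^{-1}[h']$ is the paper's $\mathtt{e}_{n-2}$, namely the $\pi^{-1}$-translation of $\mathtt{e}_{n-1}(0,\,\cdot\,,\gamma)+(-1)^{n-1}\mathtt{g}(0,\,\cdot\,)$, up to a sign absorbed into the correction. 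But you do not actually construct the correction: the ``coherent assembly of $g$ when $n\geq 3$'' that you flag as the main obstacle is precisely the paper's Claim \ref{endlessness}, which carries the entire technical weight of the lemma, and your coordinate-by-coordinate nested-differencing scheme for it is an outline, not an argument. As written, the proposal has a genuine gap at exactly the step you yourself identify.

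The paper dispatches Claim \ref{endlessness} in either of two ways, both of which sidestep the gluing problem you anticipate. The first observes that, under the exact-vanishing hypothesis $(\star)$, the function $\mathtt{e}_{n-1}(0,\,\cdot\,)$ restricted to the $z=\gamma$ hyperplane is a cocycle in a cochain complex computing a $\lim^{n-2}$ group of a \emph{flasque} inverse system; Jensen's vanishing theorem for flasque systems (\cite[Theorem~1.8]{jensen}) then supplies the integrating $\mathtt{g}$ of equation (\ref{12July}) directly, with no assembly to perform. The second is a closed-form, slicewise definition: for $\vec{\alpha}\in [C_\gamma\backslash\{0\}]^{n-2}$ with $\alpha_0$ a limit and $\xi<\alpha_0$, set $\mathtt{g}(0,\vec{\alpha})\big|_{\{\xi\}\otimes[\omega_n]^n\otimes\{\gamma\}}$ equal to $-\mathtt{e}_{n-1}(0,\xi+1,\vec{\alpha})\big|_{\{\xi\}\otimes[\omega_n]^n\otimes\{\gamma\}}$. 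By always inserting the \emph{canonical successor} $\xi+1$ as the second coordinate, the choice is uniform per slice and independent of $\vec{\alpha}$, so the compatibility bookkeeping across varying $\alpha_0$ that you worry about never arises. Either route is considerably more economical than the iterative rewriting you gesture at; you should carry one of them out explicitly (and check the support conventions you rightly flag) before regarding the argument as complete.
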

Since we have shown that (\ref{alonggamma}) holds at all $\gamma\in S^n_{n-1}$, Lemma \ref{11July} implies that there exists a stationary $S\subseteq S_{n-1}^n$ and $\vec{\alpha}\in [\omega_n]^n$ such that $\vec{\alpha}_\gamma=\vec{\alpha}$ for all $\gamma\in S$. However, (\ref{fcontainment}) and (\ref{append4}) together imply that
\begin{align}
err(\vec{\alpha}):=\Big\{\gamma>\vec{\alpha}\;|\:\Big(\sum_{i=0}^{n-1}(-1)^i \mathtt{e}_{n-1}(0,\vec{\alpha}_\gamma^i)\Big)\Big|_{\vec{\alpha}_{\gamma}(0)\otimes [\omega_n]^n\otimes\{\gamma\}}\neq0\Big\} \text{ is finite for all }\vec{\alpha}\in[\omega_n]^n.
\end{align}
This is the desired contradiction, showing that no $\mathtt{e}_{n-1}\in K^{n-1}(\mathbf{R}_{n+1}(\omega_n))$ can trivialize $\mathtt{f}_n$ in the sense of equation \ref{append4}, and thereby concluding our argument.
\begin{proof}[Proof of Lemma \ref{appBlemma}] It is our inductive assumption that there exists no $\mathtt{e}_{n-2}\in K^{n-2}(\mathbf{R}_n(\omega_{n-1}))$ trivializing $\mathtt{f}_{n-1}$ in the (lower degree) sense of (\ref{append4}). We will show that if the conclusion of Lemma \ref{appBlemma} failed, then the $\mathtt{e}_{n-1}$ under discussion would induce an $\mathtt{e}_{n-2}$ which contradicts this assumption. Our argument consists in two claims. It is convenient at the outset to assume that $0\in C_\gamma$; there is no loss of generality in doing so, as in any case $\mathtt{f}_n(0,\vec{\beta},\gamma)$ will ``expand into'' $\mathtt{f}_n\big|_{[C_\gamma]^n\otimes\{\gamma\}}$, within which setting our arguments would, without this assumption, apply with only cosmetic changes.

\begin{clm}\label{endlessness} If the negation of (\ref{11July}) holds for all $\vec{\alpha}_\gamma\in [C_\gamma\backslash\{0\}]^n$ then there exists a $\mathtt{g}:[C_\gamma]^{n-1}\to\prod_{[\omega_n]^{n+1}\otimes\{\gamma\}}\mathbb{Z}$ such that 
\begin{align}\label{12July}\bigg(\sum_{i=0}^{n-2}(-1)^i \mathtt{g}(0,\vec{\alpha}^i)\bigg)\Big|_{\alpha_0\otimes [\omega_n]^n\otimes\{\gamma\}} = \mathtt{e}_{n-1}(0,\vec{\alpha})\Big|_{\alpha_0\otimes [\omega_n]^n\otimes\{\gamma\}}\end{align}
for all $\vec{\alpha}\in [C_\gamma\backslash\{0\}]^{n-1}$.
\end{clm}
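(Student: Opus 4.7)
\textit{Reformulation.} The hypothesis says the function
\[
\mathtt{E}:\vec{\alpha}\longmapsto\mathtt{e}_{n-1}(0,\vec{\alpha})\Big|_{\alpha_0\otimes[\omega_n]^n\otimes\{\gamma\}},\qquad\vec{\alpha}\in[C_\gamma\setminus\{0\}]^{n-1},
\]
is a strict cocycle: $\sum_{i=0}^{n-1}(-1)^i\mathtt{E}(\vec{\alpha}_\gamma^i)=0$ (on the common restriction to $\vec{\alpha}_\gamma(0)\otimes[\omega_n]^n\otimes\{\gamma\}$) for every $\vec{\alpha}_\gamma\in[C_\gamma\setminus\{0\}]^n$. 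The conclusion of the claim amounts to exhibiting an antiderivative $\widetilde{\mathtt{g}}:[C_\gamma\setminus\{0\}]^{n-2}\to\prod_{[\omega_n]^{n+1}\otimes\{\gamma\}}\mathbb{Z}$ whose coboundary agrees with $\mathtt{E}$ on the corresponding restriction; one then sets $\mathtt{g}(0,\vec{\beta}):=\widetilde{\mathtt{g}}(\vec{\beta})$ and takes $\mathtt{g}$ arbitrary elsewhere.

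\textit{Construction.} The base case $n=2$ is immediate: here $\widetilde{\mathtt{g}}$ is a single element of $\prod_{[\omega_n]^{n+1}\otimes\{\gamma\}}\mathbb{Z}$, and one simply takes its pointwise cofinal union, setting $\widetilde{\mathtt{g}}(\rho_0,\vec{\rho}_1,\gamma):=\mathtt{e}_1(0,\alpha)(\rho_0,\vec{\rho}_1,\gamma)$ for any $\alpha\in C_\gamma\setminus\{0\}$ with $\alpha>\rho_0$ --- well-definedness being exactly the cocycle hypothesis applied to $(\alpha_1,\alpha_2)\in[C_\gamma\setminus\{0\}]^2$. For general $n$ I would set
\[
\widetilde{\mathtt{g}}(\vec{\beta})(\rho_0,\vec{\rho}_1,\gamma)\;:=\;(-1)^n\,\mathtt{e}_{n-1}(0,\vec{\beta},\zeta_{\vec{\beta}})(\rho_0,\vec{\rho}_1,\gamma)
\]
for witnesses $\zeta_{\vec{\beta}}\in C_\gamma$ chosen above $\max\vec{\beta}$. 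Applying the cocycle hypothesis to $\vec{\alpha}_\gamma=(\vec{\alpha},\zeta)\in[C_\gamma\setminus\{0\}]^n$ would then collapse directly to the target identity, \emph{provided} the witnesses $\zeta_{\vec{\alpha}^i}$ across $i=0,\dots,n-2$ could all be set equal to a single $\zeta$.

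\textit{Main obstacle.} The real work is that independently chosen $\zeta_{\vec{\alpha}^i}$ produce residual ``defect'' terms of shape $\mathtt{e}_{n-1}(0,\vec{\beta},\zeta_1,\zeta_2)\big|_{\alpha_0\otimes\cdots}$, which themselves are cocycles in one fewer free coordinate; by the same reasoning these in turn can be absorbed by reapplying the cofinal-antidifferentiation scheme. My plan is to arrange the witnesses $\zeta_{\vec{\beta}}$ by recursion on $n-|\vec{\beta}|$ so that all the compatibility conditions hold simultaneously: the cofinality $\aleph_{n-1}$ of $C_\gamma$ accommodates these nested cofinal choices, and the recursion terminates in $n-2$ rounds because each round reduces the arity by one. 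This coordinated selection of witnesses is the main technical burden of the claim, and I expect the book-keeping to be nontrivial; however, no genuinely new idea beyond the cocycle hypothesis and the cofinality structure of $C_\gamma$ should be required, and the result parallels, at one dimension lower, the broader inductive strategy of Section \ref{efffn}.
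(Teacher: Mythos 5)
Your base case $n=2$ is correct, and in fact already contains the key idea that the rest of your argument abandons: there, the witness $\alpha$ is chosen depending on the evaluation point $\rho_0$ (the coordinate of the cell being filled in), not on the input tuple $\vec{\beta}$, and the cocycle relation on a single $2$-tuple $(\alpha_0,\alpha_1)\in[C_\gamma\setminus\{0\}]^2$ is exactly what makes this well-defined. For $n>2$ you switch to witnesses $\zeta_{\vec{\beta}}>\max\vec{\beta}$ depending on the tuple, which is precisely what creates the compatibility defects you then identify. The paper avoids this entirely by keeping the $n=2$ strategy: it defines $\mathtt{g}$ cell by cell, setting $\mathtt{g}(0,\vec{\alpha})\big|_{\{\xi\}\otimes[\omega_n]^n\otimes\{\gamma\}}$ using $\mathtt{e}_{n-1}(0,\xi+1,\vec{\alpha})\big|_{\{\xi\}\otimes\cdots}$, i.e., inserting the new coordinate \emph{below} $\vec{\alpha}$, just above the evaluation coordinate $\xi$. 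Since $\xi+1$ depends only on $\xi$ and not on the tuple, the same witness appears in every term of $\sum_i(-1)^i\mathtt{g}(0,\vec{\alpha}^i)\big|_{\{\xi\}\otimes\cdots}$, and the cocycle hypothesis for the single $n$-tuple $(\xi+1,\vec{\alpha})$ collapses the sum directly to $\mathtt{e}_{n-1}(0,\vec{\alpha})\big|_{\{\xi\}\otimes\cdots}$ with no residual defects to manage. The paper also notes the structural reason this works: the relevant inverse system indexed by $C_\gamma\setminus\{0\}$ is \emph{flasque} (it is a product system with surjective restriction maps), so by Jensen's theorem its $\lim^{n-2}$ vanishes and every strict cocycle is a coboundary.

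The upshot is that your proposal as written has a real gap: you explicitly defer the "coordinated selection of witnesses," and the recursive absorption scheme you sketch is not obviously correct and would, if completable, amount to an argument on the order of the cascading-trivialization machinery in the proof of Theorem \ref{twoiffone} --- substantially heavier than what the claim requires. The claim is intended to be quick precisely because the cell-by-cell (from-below) construction, which your $n=2$ case already realizes, makes the bookkeeping vacuous. I would encourage you to redo the inductive step by generalizing your $n=2$ construction rather than departing from it: fix $\xi$, observe that for every $\vec{\alpha}\in[C_\gamma\setminus\{0\}]^{n-1}$ with $\alpha_0>\xi$ the cocycle identity for $(\zeta,\vec{\alpha})$ with a witness $\zeta\in C_\gamma\cap(\xi,\alpha_0)$ chosen as a function of $\xi$ alone already gives (\ref{12July}) on $\{\xi\}\otimes[\omega_n]^n\otimes\{\gamma\}$, and then let $\xi$ range to obtain it on $\alpha_0\otimes[\omega_n]^n\otimes\{\gamma\}$.
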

\begin{proof}
One approach is to observe that functions $\mathtt{e}_{n-1}$ satisfying the negation of (\ref{11July}) for all $\vec{\alpha}_\gamma\in [C_\gamma\backslash\{0\}]^n$ define cocycles in cohomology groups corresponding to $\mathrm{lim}^{n-2}$ of a \emph{flasque} inverse system, in the sense of Jensen's \cite[page 5]{jensen}; Theorem 1.8 therein then converts in our context to the existence of a trivializing $\mathtt{g}$ in precisely the sense of (\ref{12July}). A more computational argument would consist in verifying that letting
$$\mathtt{g}(0,\vec{\alpha})\big|_{\{\xi\}\otimes [\omega_n]^n\otimes\{\gamma\}}=-\mathtt{e}_{n-1}(0,\xi+1,\vec{\alpha})\big|_{\{\xi\}\otimes [\omega_n]^n\otimes\{\gamma\}}$$
for all $\vec{\alpha}\in [C_\gamma\backslash\{0\}]^{n-2}$ with $\alpha_0\in\mathrm{Lim}$ and $\xi<\alpha_0$ partially defines a function $\mathtt{g}$ which then canonically extends to satisfy the conclusion of the claim on the domain described. 
\end{proof}
\begin{clm} If there exists a $\mathtt{g}$ as in Claim \ref{endlessness} for an $\mathtt{e}_{n-1}$ as in (\ref{append4}), then there exists an $\mathtt{e}_{n-2}$ trivializing $\mathtt{f}_{n-1}$ in the (lower degree) sense of (\ref{append4}). 
\end{clm}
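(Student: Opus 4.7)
The plan is to build $\mathtt{e}_{n-2}$ by transporting $\mathtt{g}$ through the order-isomorphism $\pi\colon\omega_{n-1}\to C_\gamma$, relying on the identification (\ref{alonggamma}) that the $z=\gamma$ slice of $\mathtt{f}_n$ on inputs from $[C_\gamma]^n$ is precisely the $\pi$-image of $\mathtt{f}_{n-1}$. Concretely, for $\vec{\xi}\in[\omega_{n-1}]^{n-1}$ I would set
$$\mathtt{e}_{n-2}(0,\vec{\xi})\;:=\;\pi^{-1}\bigl[\mathtt{g}(0,\pi[\vec{\xi}])\big|_{[C_\gamma]^{n+1}\otimes\{\gamma\}}\bigr],$$
viewed as an element of $\prod_{[\omega_{n-1}]^{n+1}}\mathbb{Z}$, and hence as an element of $K^{n-2}(\mathbf{R}_n(\omega_{n-1}))$. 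The definition is natural because, on the one hand, (\ref{12July}) says that $\mathtt{g}$ is a primitive of $\mathtt{e}_{n-1}$ on the $z=\gamma$ slice, and on the other hand, (\ref{alonggamma}) says that this same slice encodes $\mathtt{f}_{n-1}$.

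The verification is then a diagram chase. By linearity of $\pi^{-1}$ and the identity $\pi[\vec{\xi}]^i=\pi[\vec{\xi}^i]$, the alternating sum $\sum_{i=0}^{n-2}(-1)^i\mathtt{e}_{n-2}(0,\vec{\xi}^i)$ is the $\pi^{-1}$-pullback of $\sum_{i=0}^{n-2}(-1)^i\mathtt{g}(0,\pi[\vec{\xi}^i])$. On the restriction $\pi(\xi_0)\otimes[\omega_n]^n\otimes\{\gamma\}$ the defining identity (\ref{12July}) for $\mathtt{g}$ converts this into $\mathtt{e}_{n-1}(0,\pi[\vec{\xi}])$. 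Applying (\ref{append4}) to the $n$-tuple $(\pi[\vec{\xi}],\gamma)\in[\omega_n\setminus\{0\}]^n$ and separating the $i=n-1$ summand yields, on $\pi(\xi_0)\otimes[\omega_n]^{n+1}$,
$$(-1)^{n-1}\mathtt{e}_{n-1}(0,\pi[\vec{\xi}])\,+\,\sum_{i=0}^{n-2}(-1)^i\mathtt{e}_{n-1}(0,\pi[\vec{\xi}^i],\gamma)\;=^{*}\;\mathtt{f}_n(0,\pi[\vec{\xi}],\gamma).$$
Restricting to the $z=\gamma$ hyperplane and invoking (\ref{alonggamma}) rewrites the right-hand side as $\pi[\mathtt{f}_{n-1}(0,\vec{\xi})]*\gamma$; applying $\pi^{-1}$ then delivers $\mathtt{f}_{n-1}(0,\vec{\xi})$ on the pullback of the restriction, which is exactly the lower-degree form of (\ref{append4}) that $\mathtt{e}_{n-2}$ is required to satisfy.

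The principal difficulty is bookkeeping the ``cross terms'' $\mathtt{e}_{n-1}(0,\pi[\vec{\xi}^i],\gamma)$ produced by the splitting of (\ref{append4}): these must contribute only finitely on the $z=\gamma$ slice, or else be absorbed into the definition of $\mathtt{e}_{n-2}$ by an iterated correction. The cleanest route is to enlarge the construction so that the $\mathtt{e}_{n-2}$ assignment incorporates a Jensen-style flasque correction built out of restrictions $\mathtt{e}_{n-1}(0,\pi[\vec{\eta}],\gamma)$ for $\vec{\eta}$ of length $\le n-2$, proceeding inductively on the number of trailing $\gamma$'s; this parallels the auxiliary construction already present in the proof of Claim \ref{endlessness} and is forced by the same flasqueness-type identities. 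Once this bookkeeping is arranged, all remaining steps are formal: $\pi^{-1}$ is additive and commutes with restrictions, so the alternating-sum calculation above delivers the required trivialization of $\mathtt{f}_{n-1}$, contradicting the inductive hypothesis and completing the proof.
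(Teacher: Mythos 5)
Your instinct to transport through the order-isomorphism $\pi\colon\omega_{n-1}\to C_\gamma$ and invoke (\ref{alonggamma}) is exactly right, and the paper uses the same mechanism; but the initial definition of $\mathtt{e}_{n-2}$ is missing a term, and this is a genuine gap rather than a bookkeeping nuisance.

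Starting from your proposed $\mathtt{e}_{n-2}(0,\vec{\xi})=\pi^{-1}\bigl[\mathtt{g}(0,\pi[\vec{\xi}])\big|_{\ldots}\bigr]$, the alternating sum on the restriction computes, via (\ref{12July}), to $\pi^{-1}\bigl[\mathtt{e}_{n-1}(0,\pi[\vec{\xi}])\bigr]$ --- not to $\mathtt{f}_{n-1}(0,\vec{\xi})$. Those two differ, by the very equation you wrote down after ``separating the $i=n-1$ summand,'' by a factor of $(-1)^{n-1}$ and, more importantly, by the cross terms $\sum_{i=0}^{n-2}(-1)^i\mathtt{e}_{n-1}(0,\pi[\vec{\xi}^i],\gamma)$. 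These cross terms need not be $0$-trivial on the $z=\gamma$ slice, so they cannot be waved away. Your suggested repair --- a ``Jensen-style flasque correction $\ldots$ proceeding inductively on the number of trailing $\gamma$'s'' --- describes an iteration that isn't what's needed here and isn't pinned down precisely enough to check; in particular there is no iteration to run, since only one trailing $\gamma$ is in play. The fix is a single addition, not a cascade. Define instead
\[
\mathtt{e}_{n-2}(0,\vec{\xi})\;=\;\pi^{-1}\Bigl[\bigl(\mathtt{e}_{n-1}(0,\pi[\vec{\xi}],\gamma)+(-1)^{n-1}\mathtt{g}(0,\pi[\vec{\xi}])\bigr)\big|_{[\omega_n]^{n+1}\otimes\{\gamma\}}\Bigr].
\]
With this definition, the alternating sum over $i\leq n-2$ produces, on the restriction, precisely $\sum_{i=0}^{n-1}(-1)^i\mathtt{e}_{n-1}(0,(\pi[\vec{\xi}],\gamma)^i)$: the first $n-1$ summands come from the $\mathtt{e}_{n-1}(0,\,\cdot\,,\gamma)$ part, while the $\mathtt{g}$ part, via (\ref{12July}), supplies the last summand $(-1)^{n-1}\mathtt{e}_{n-1}(0,\pi[\vec{\xi}])$. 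Equation \ref{append4} then turns this into $\mathtt{f}_n(0,\pi[\vec{\xi}],\gamma)$ modulo finite on the slice, and (\ref{alonggamma}) converts that into $\mathtt{f}_{n-1}(0,\vec{\xi})$ after pulling back through $\pi^{-1}$ --- exactly the lower-degree instance of (\ref{append4}). In short: the cross terms must be built into $\mathtt{e}_{n-2}$ from the outset, and the sign $(-1)^{n-1}$ attached to $\mathtt{g}$ matters; once both are in place the diagram chase you describe goes through verbatim.
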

\begin{proof}
The $\mathtt{e}_{n-2}$ in question is the $\pi^{-1}$-translation of the function $\mathtt{e}_{n-1}(0,\,\cdot\,,\gamma)+(-1)^{n-1}\mathtt{g}(0,\,\cdot\,)$. This works as asserted by equation \ref{alonggamma}, together with the fact that
\begin{align*}
& \sum_{i=0}^{n-2}(-1)^i\Big(\mathtt{e}_{n-1}(0,\vec{\alpha}^i,\gamma)+(-1)^{n-1}\mathtt{g}(0,\vec{\alpha}^i)\Big)\Big|_{[\omega_n]^{n+1}\otimes\{\gamma\}} \\
=^{\hspace{.17 cm}} & \sum_{i=0}^{n-1}(-1)^i\mathtt{e}_{n-1}(0,(\vec{\alpha},\gamma)^i)\big|_{[\omega_n]^{n+1}\otimes\{\gamma\}}\\
=^* & \mathtt{f}_n(0,\vec{\alpha},\gamma)\big|_{[\omega_n]^{n+1}\otimes\{\gamma\}}
\end{align*}
for all $\vec{\alpha}\in [C_\gamma\backslash\{0\}]^{n-1}$. The first and second equalities follow from our claim's first and second premises, respectively, and the equality of the first and third lines should be read as a relativization to the $z=\gamma$ hyperplane of the $\mathtt{e}_{n-2}$ variant of equation \ref{append4}.
\end{proof}
\end{proof}
\section{A proof of Theorem \ref{twoiffone}}\label{appendixB}
We first recall the statement of the theorem:
\begin{unthm} There exists a height-$\varepsilon$ $A$-valued nontrivial $n$-coherent$^{\mathrm{I}}$ family of functions if and only if there exists a height-$\varepsilon$ $A$-valued nontrivial $n$-coherent$^{\mathrm{II}}$ family of functions.
\end{unthm}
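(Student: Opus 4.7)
The plan is to prove Theorem \ref{twoiffone} by constructing, by induction on $n$, explicit homomorphisms
$$G:\mathsf{coh}^{\mathrm{I}}(n,A,\varepsilon)\to\mathsf{coh}^{\mathrm{II}}(n,A,\varepsilon),\qquad F:\mathsf{coh}^{\mathrm{II}}(n,A,\varepsilon)\to\mathsf{coh}^{\mathrm{I}}(n,A,\varepsilon)$$
which preserve the triviality subgroups and descend to mutually inverse isomorphisms on the quotients; this yields the stronger quotient isomorphism stated just after the theorem, with Theorem \ref{twoiffone} as an immediate corollary. The base case $n=1$ is trivial since the two definitions literally coincide there.

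For $G$, I set $G(\Phi)^n_\gamma(\alpha_0,\ldots,\alpha_{n-1}):=\varphi_{(\alpha_1,\ldots,\alpha_{n-1},\gamma)}(\alpha_0)$. To verify $n$-coherence$^{\mathrm{II}}$, I apply the $n$-coherence$^{\mathrm{I}}$ relation for $\Phi$ to the tuple $(\alpha_1,\ldots,\alpha_{n-1},\gamma,\delta)\in[\varepsilon]^{n+1}$ and obtain
$$G(\Phi)^n_\delta(\vec{\alpha})-G(\Phi)^n_\gamma(\vec{\alpha})=^{*}(-1)^n\sum_{i=0}^{n-2}(-1)^i\varphi_{(\alpha_1,\ldots,\widehat{\alpha_{i+1}},\ldots,\alpha_{n-1},\gamma,\delta)}(\alpha_0).$$
The term $i=n-2$ is independent of $\alpha_{n-1}$ and furnishes a candidate witness $t^{n-1}(\alpha_0,\ldots,\alpha_{n-2}):=\varphi_{(\alpha_1,\ldots,\alpha_{n-2},\gamma,\delta)}(\alpha_0)$ for the $(n-1)$-triviality$^{\mathrm{II}}$ of the difference. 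After subtracting it, the residual is structurally identical to the original difference but at dimension $n-1$, with the pair $(\gamma,\delta)$ promoted to a triple involving the additional varying coordinate $\eta=\alpha_{n-1}$; the inductive hypothesis applied to the residual then closes the argument. A parallel unfolding shows that $G$ carries $n$-trivial$^{\mathrm{I}}$ families to $n$-trivial$^{\mathrm{II}}$ families.

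For $F$, the formal inverse $F(s)_{\vec{\beta}}(\xi):=s^n_{\beta_{n-1}}(\xi,\beta_0,\ldots,\beta_{n-2})$ is not by itself $n$-coherent$^{\mathrm{I}}$: expanding $\sum_i(-1)^i F(s)_{\vec{\alpha}^i}(\xi)$ yields, after a boundary computation, the discrepancy $s^n_{\alpha_n}(\xi,\alpha_0,\ldots,\alpha_{n-2})-s^n_{\alpha_{n-1}}(\xi,\alpha_0,\ldots,\alpha_{n-2})$ together with a simplicial-boundary correction, which are only $(n-1)$-trivial$^{\mathrm{II}}$ rather than zero modulo finite. The remedy is to augment $F$ by correction terms built from witnesses to the lower $(k-1)$-triviality$^{\mathrm{II}}$ relations implicit in $n$-coherence$^{\mathrm{II}}$: fix, for each pair $\gamma<\delta<\varepsilon$, a witness to the $(n-1)$-triviality of $s^n_\delta|_{[\gamma]^n}-s^n_\gamma$ and iteratively extract further witnesses to its lower-order triviality conditions, then set $F(s)_{\vec{\beta}}(\xi)$ equal to a telescoping alternating sum of these witnesses arranged so that the $n$-coherence$^{\mathrm{I}}$ differential collapses, level by level, into the nested $(k-1)$-triviality$^{\mathrm{II}}$ equations for $k=n,n-1,\ldots,1$.

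The main obstacle is the construction and analysis of this corrected $F$: because $n$-coherence$^{\mathrm{II}}$ asserts the existence of witnesses only up to triviality, $F$ depends on a web of choices, and the critical technical point is to show that any two admissible choices differ by an $n$-trivial$^{\mathrm{I}}$ family, so that the induced map on $\mathsf{coh}^{\mathrm{II}}/\mathsf{triv}^{\mathrm{II}}$ is well-defined. Verifying that $F\circ G$ and $G\circ F$ equal the identity modulo triviality then amounts to bookkeeping the correction terms against the triviality witnesses for $\Phi$ and $s$, respectively, and proceeds by parallel inductions. An alternative route I would pursue if the direct argument proves too intricate is to exhibit both $\mathsf{coh}^{\mathrm{I}}/\mathsf{triv}^{\mathrm{I}}$ and $\mathsf{coh}^{\mathrm{II}}/\mathsf{triv}^{\mathrm{II}}$ as the $n$-th cohomology of naturally quasi-isomorphic cochain complexes — in effect, as two presentations of the same $\check{\mathrm{H}}^n(\varepsilon;\mathcal{A})$ group — so that Theorem \ref{twoiffone} becomes a formal consequence of the analog of Theorem \ref{cooicohtriv} for type-II families.
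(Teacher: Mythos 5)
Your proposal follows essentially the same route as the paper's proof: your $G$ is the paper's rearrangement $\tilde{a}_n$, your verification of its coherence-preservation uses what the paper calls ``natural assignments'' for a cascading family of trivializations (Lemma \ref{rerangement}), and your $F$ is the paper's $b_n$, built by iteratively extracting witnesses to the nested lower-order triviality$^{\mathrm{II}}$ conditions. You also correctly identify well-definedness of $F$ (independence from the choice of witnesses up to $n$-triviality$^{\mathrm{I}}$) as the central technical burden.

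There is one concrete device you omit that the construction of $F$ cannot proceed without, and it is worth naming. At stage $k$ of the cascade, the $(n$-coherence$^{\mathrm{II}}$-induced) relation for your chosen witnesses gives you only that the relevant alternating sum is $(n-k)$-trivial$^{\mathrm{II}}$ \emph{after restriction to every proper initial segment} $[\beta]^{\,n-k+1}$, $\beta<\gamma$; to continue the cascade you need the unrestricted alternating sum on $[\gamma]^{\,n-k+1}$ to admit such a trivialization. In general this need not be automatic. The paper's resolution is to run the entire construction over the cofinal set $X=\varepsilon\cap\mathrm{Succ}$: when $\gamma$ is a successor, a height-$\gamma$ family that is coherent$^{\mathrm{II}}$ is automatically trivial$^{\mathrm{II}}$ (the paper's first preliminary observation), which is exactly what lets the trivializations globalize at each level of the cascade. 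Without restricting to successor indices your construction of $F$ stalls at limit $\gamma$, and the step-by-step ``telescoping alternating sum'' you describe has no guaranteed next term. Incorporating this restriction (and then using cofinality of $X$ in $\varepsilon$ to conclude, as the paper's second preliminary observation records) repairs the gap and the rest of your sketch — including the bookkeeping that $F\circ G$ and $G\circ F$ are the identity on the quotients — then aligns with the paper's argument.
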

As remarked above, we will in fact show that for every $n>0$ and ordinal $\varepsilon$ and abelian group $A$,
\begin{align}\label{lefttoright}\frac{\mathsf{coh}^{\mathrm{I}}(n,A,\varepsilon)}{\mathsf{triv}^{\mathrm{I}}(n,A,\varepsilon)}\cong\frac{\mathsf{coh}^{\mathrm{II}}(n,A,\varepsilon)}{\mathsf{triv}^{\mathrm{II}}(n,A,\varepsilon)}.\end{align}
In this context we write $[\Phi_n]_{\mathrm{I}}$ and $[\mathcal{S}_n]_{\mathrm{II}}$ for the cosets associated to $n$-coherent$^{\mathrm{I}}$ families $\Phi_n$ and $n$-coherent$^{\mathrm{II}}$ families $\mathcal{S}_n$, respectively, omitting the subscripts when they are clear from context.
\begin{proof} We will define functions $a_n$ and $b_n$ from the left side to the right and from the right side to the left, respectively, of equation \ref{lefttoright} such that $a_n b_n$ and $b_n a_n$ are each the identity map. A few preliminary observations will be useful:
\begin{itemize}
\item  For $n>0$, any $n$-coherent$^{\mathrm{I}}$ family of successor height $\varepsilon$ is $n$-trivial$^{\mathrm{I}}$; similarly for any $n$-coherent$^{\mathrm{II}}$ family of successor height $\varepsilon$. Hence we may, and will, restrict our attention below to limit ordinals $\varepsilon$.
\item If $X$ is a cofinal subset of $\varepsilon$ then the class $[\Phi_n]_{\mathrm{I}}$ of an $n$-coherent$^{\mathrm{I}}$ family $\Phi_n$ is determined by $\Phi_n\big|_X:=\{\varphi_{\vec{\beta}}\mid\vec{\beta}\in [X]^n\}$. Put differently, if $\Phi_n\big|_X$ is $n$-trivial$^{\mathrm{I}}$, then so too is $\Phi_n$. Similarly for $n$-coherence$^{\mathrm{II}}$. Hence we may, and will, define $b_n$-images as classes $[\Psi_n\big|_X]_{\mathrm{I}}$, where $X\subseteq\varepsilon$ is the cofinal set $\varepsilon\cap\mathrm{Succ}$.
\item A family's initial segments are all $n$-trivial$^{\mathrm{I}}$ or $n$-trivial$^{\mathrm{II}}$ if and only if that family is $n$-coherent$^{\mathrm{I}}$ or
$n$-coherent$^{\mathrm{II}}$, respectively.\end{itemize}
We now define the maps $a_n$ and $b_n$. It is both convenient and illustrative to first handle the cases of $n=1$ and $n=2$. Throughout this discussion, assume the abelian group $A$ in question to be fixed.
\begin{enumerate}
\item The map $a_1$ is that induced by the map $\tilde{a}_1$ sending any $1$-coherent$^{\mathrm{I}}$ $\Phi_1=\{\varphi_\beta\mid\beta\in \varepsilon\}$ to the family $\mathcal{S}_1=\{s^1_\beta\mid\beta<\varepsilon\}$ via the assignments $s_\beta^1(\alpha)=\varphi_\beta(\alpha)$ for each $\alpha<\beta<\varepsilon$. The map $b_1$ is that induced by the reverse of this operation. Clearly these maps are as desired.
\item The map $a_2$ is that induced by the map $\tilde{a}_2$ sending any $2$-coherent$^{\mathrm{I}}$ $\Phi_2=\{\varphi_{\vec{\beta}}:\vec{\beta}\in [\varepsilon]^2\}$ to the family $\mathcal{S}_2=\{s_\gamma^2\mid\gamma<\varepsilon\}$ via the assignments $s^2_\gamma(\alpha,\beta)=\varphi_{\beta\gamma}(\alpha)$ for each $\alpha<\beta<\gamma<\varepsilon$. The family $\mathcal{S}_2$ is $2$-coherent$^{\mathrm{II}}$ if for all $\gamma<\delta<\varepsilon$ there exists a $t^1_{\gamma\delta}:\gamma\to A$ such that for all $\beta<\gamma$, the expression
\begin{align}\label{coherenttwocondition}t^1_{\gamma\delta}\big|_\beta-(s_\delta^2(\,\cdot\,,\beta)-s_\gamma^2(\,\cdot\,,\beta))\end{align}
is $0$-trivial$^{\mathrm{II}}$, or in other words is finitely supported. Of course the function $\alpha\mapsto\varphi_{\gamma\delta}(\alpha)$ is just such a $t^1_{\gamma\delta}$, by the $2$-coherence$^{\mathrm{I}}$ of $\Phi_2$; this shows that $\tilde{a}$ is indeed a map $\mathsf{coh}^{\mathrm{I}}(n,A,\varepsilon)\to\mathsf{coh}^{\mathrm{II}}(n,A,\varepsilon)$. The argument that it is a map $\mathsf{triv}^{\mathrm{I}}(n,A,\varepsilon)\to\mathsf{triv}^{\mathrm{II}}(n,A,\varepsilon)$ as well is similar.

It is tempting now to define the map $b_2$ as that induced by the reverse of $\tilde{a}_2$, but this would be misguided: above $n=1$, this operation can fail to send $2$-coherent$^{\mathrm{II}}$ families to $2$-coherent$^{\mathrm{I}}$ ones. The correct approach is rather to let $b_2([\mathcal{S}_2])=[\Psi_2=\{\psi_{\vec{\beta}}\mid\vec{\beta}\in[\varepsilon\cap\mathrm{Succ}]^2\}]$, where each $\psi_{\gamma\delta}$ is a trivializing function $t^1_{\gamma\delta}$ as in (\ref{coherenttwocondition}) above. Cancellations of the $s$-terms imply that $(t^1_{\delta\eta}-t^1_{\gamma\eta}+t^1_{\gamma\delta})\big|_{\beta}=^*0$ for all $\beta<\gamma$ with $\gamma<\delta<\eta$ in $\varepsilon\cap\mathrm{Succ}$, implying in turn that $\Psi_2$ is $2$-coherent$^{\mathrm{I}}$, as desired (note the importance of the fact that $\mathrm{cf}(\gamma)\neq\aleph_0$ to this reasoning). Below, in the paragraph beginning with (\ref{andanother}), we argue more generally that this map does not depend on the choices of $t^1_{\gamma\delta}$; since in the case of $\mathcal{S}_2=\tilde{a}_2(\Phi_2)$ each $t^1_{\gamma\delta}$ \emph{may} be taken to be $\varphi_{\gamma\delta}$, the composition $b_2 a_2$ is therefore indeed the identity, as claimed.

The composition $a_2 b_2$ is also the identity if and only if for every $2$-coherent$^{\mathrm{I}}$ family of witnesses $\{t^1_{\gamma\delta}\mid\gamma<\delta<\varepsilon\}$ to the $2$-coherence$^{\mathrm{II}}$ of a family $\mathcal{S}_2$, the family $\mathcal{S}_2-\mathcal{R}_2$ is $2$-trivial$^{\mathrm{II}}$, where $\mathcal{R}_2=\{r_\gamma^2\mid\gamma<\varepsilon\}$ is given by the assignments $r_\gamma^2(\alpha,\beta)=t^1_{\beta\gamma}(\alpha)$. This is so if and only if there exists a $u:[\varepsilon]^2\to A$ such that for all $\gamma<\varepsilon$,
\begin{align}\label{indeedonetrivial} u\big|_{[\gamma]^2}-(s_\gamma^2-r_\gamma^2)\end{align}
is $1$-trivial$^{\mathrm{II}}$. To that end, fix a function $f:\varepsilon\to\varepsilon$ satisfying $\beta<f(\beta)$ for all $\beta<\varepsilon$ and let $u(\,\cdot\,,\beta)=s_{f(\beta)}^2(\,\cdot\,,\beta)-t^1_{\beta,f(\beta)}(\,\cdot\,)$ for each $\beta<\varepsilon$. Then for all $\gamma<\varepsilon$,\begin{align*}
u(\,\cdot\,,\beta)-(s_\gamma^2(\,\cdot\,,\beta)-r_\gamma^2(\,\cdot\,,\beta)) & =\; s_{f(\beta)}^2(\,\cdot\,,\beta)-t^1_{\beta,f(\beta)}(\,\cdot\,)-s_\gamma^2(\,\cdot\,,\beta)+t_{\beta\gamma}^1(\,\cdot\,) \\ & =^* t^1_{\gamma, f(\beta)}(\,\cdot\,)-t^1_{\beta,f(\beta)}(\,\cdot\,)+t_{\beta\gamma}^1(\,\cdot\,) \\ & =^* 0
\end{align*}
for every $\beta<\gamma$ (in the above computation we assumed that $f(\beta)<\gamma$; clearly the negation of this assumption has the same effect). Hence the expression (\ref{indeedonetrivial}) is $1$-trivial$^{\mathrm{II}}$ and thus the composition $a_2 b_2$ is the identity, as claimed.
\end{enumerate}
We turn now to the maps $a_n$ and $b_n$ for $n>2$. The definitions and verifications for these maps broadly follow the pattern of the $n=2$ case above.

The simplest steps of the argument concern the functions $a_n$. As above, $a_n$ is defined to be the function induced by a more concrete $\tilde{a}_n$ amounting to a rearrangement operation on the arguments and indices of any family of functions $\Phi_n$. By the following lemma, this operation converts $n$-coherence$^{\mathrm{I}}$ and $n$-triviality$^{\mathrm{I}}$ to $n$-coherence$^{\mathrm{II}}$ and $n$-triviality$^{\mathrm{II}}$, respectively, ensuring, in particular, that $a_n$ is well-defined.
\begin{lem}\label{rerangement} Let $\tilde{a}_n$ denote the function sending any $\Phi_n=\{\varphi_{\vec{\beta}}:\beta_0\to A\mid\vec{\beta}\in [\varepsilon]^n\}$ to the family $\mathcal{S}_n=\{s^n_\beta:[\beta]^n\to A\mid\beta<\varepsilon\}$ given by $s^n_{\beta_{n-1}}(\alpha,\beta_0,\dots,\beta_{n-2})=\varphi_{\vec{\beta}}(\alpha)$ for each $\vec{\beta}\in[\varepsilon]^n$ and $\alpha<\vec{\beta}$. Then
\begin{itemize}
\item if $\Phi_n$ is $n$-coherent$^{\mathrm{I}}$ then $\tilde{a}_n(\Phi_n)$ is $n$-coherent$^{\mathrm{II}}$, and
\item if $\Phi_n$ is $n$-trivial$^{\mathrm{I}}$ then $\tilde{a}_n(\Phi_n)$ is $n$-trivial$^{\mathrm{II}}$.
\end{itemize}
\end{lem}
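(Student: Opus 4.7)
My plan is induction on $n$, the base case $n=1$ being immediate from the definitions: for the first bullet, $1$-coherence$^{\mathrm{I}}$ of $\Phi_1=\{\varphi_\beta\}$ gives $\varphi_\delta|_\gamma-\varphi_\gamma=^{*}0$, which is precisely $0$-triviality$^{\mathrm{II}}$ of $s^1_\delta|_\gamma-s^1_\gamma$; for the second, the witness $\varphi$ itself serves as the required $t^1$. In the inductive step, the central mechanism in both parts will be to extract from $\Phi_n$ an auxiliary $(n-1)$-dimensional family which can be shown to be $(n-1)$-trivial$^{\mathrm{I}}$ directly, and whose image under $\tilde{a}_{n-1}$ coincides --- through the tacit identification of a function $f:[\gamma]^n\to A$ with the family $\{f_\beta\}_{\beta<\gamma}$ given by $f_\beta(\vec\alpha)=f(\vec\alpha,\beta)$ --- with the function whose $(n-1)$-triviality$^{\mathrm{II}}$ is demanded by Definition \ref{highernontrivII}. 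Both clauses at level $n$ will thus appeal to both clauses at level $n-1$, so the two statements must be inducted simultaneously.

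For the coherence clause I would fix $\gamma\leq\delta<\varepsilon$ and introduce the family $\Psi_{n-1}^{\gamma,\delta}=\{\psi_{\vec\alpha}:=\varphi_{\vec\alpha,\delta}-\varphi_{\vec\alpha,\gamma}\mid\vec\alpha\in[\gamma]^{n-1}\}$. Applying the $n$-coherence$^{\mathrm{I}}$ of $\Phi_n$ separately to the $(n+1)$-tuples $(\vec\tau,\delta)$ and $(\vec\tau,\gamma)$ for $\vec\tau\in[\gamma]^n$ and subtracting (the $(-1)^n\varphi_{\vec\tau}$ contributions cancel) shows $\Psi_{n-1}^{\gamma,\delta}$ is $(n-1)$-coherent$^{\mathrm{I}}$; applying that same coherence instead to the single tuple $(\vec\alpha,\gamma,\delta)$ for $\vec\alpha\in[\gamma]^{n-1}$ and isolating the two surviving terms $\varphi_{\vec\alpha,\delta}$ and $\varphi_{\vec\alpha,\gamma}$ exhibits $\chi_{\vec\alpha}:=(-1)^n\varphi_{\vec\alpha,\gamma,\delta}$ as an $(n-1)$-triviality$^{\mathrm{I}}$ witness. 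Unwinding the definition of $\tilde{a}_{n-1}$ confirms that $\tilde{a}_{n-1}(\Psi_{n-1}^{\gamma,\delta})$ is precisely the family associated, via the tacit identification above, to the function $s^n_\delta|_{[\gamma]^n}-s^n_\gamma$ on $[\gamma]^n$. The inductive hypothesis (triviality clause at $n-1$) then delivers the required $(n-1)$-triviality$^{\mathrm{II}}$, which is exactly what $n$-coherence$^{\mathrm{II}}$ of $\tilde{a}_n(\Phi_n)$ asks for.

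For the triviality clause I would take $\Phi_n$ as $n$-trivial$^{\mathrm{I}}$ with witness $\Psi_{n-1}=\{\psi_{\vec\alpha}\}$ and set $t^n(\alpha,\vec\beta):=(-1)^{n-1}\psi_{\vec\beta}(\alpha)$ (extended by zero outside its natural domain). Fixing $\gamma<\varepsilon$ and applying the $n$-triviality$^{\mathrm{I}}$ relation to $(\vec\beta,\gamma)\in[\varepsilon]^n$ for $\vec\beta\in[\gamma]^{n-1}$, one rearranges to obtain
\begin{equation*}
(t^n|_{[\gamma]^n}-s^n_\gamma)(\alpha,\vec\beta)=^{*}\sum_{i=0}^{n-2}(-1)^{i+1}\psi_{\vec\beta^i,\gamma}(\alpha),
\end{equation*}
which exhibits the $(n-1)$-family $\{\xi_{\vec\beta}:=(t^n|_{[\gamma]^n}-s^n_\gamma)(\cdot,\vec\beta)\mid\vec\beta\in[\gamma]^{n-1}\}$ as $(n-1)$-trivial$^{\mathrm{I}}$ with witness $\eta_{\vec\alpha}(\cdot):=-\psi_{\vec\alpha,\gamma}(\cdot)$ for $\vec\alpha\in[\gamma]^{n-2}$. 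Since $\tilde{a}_{n-1}$ applied to $\{\xi_{\vec\beta}\}$ reproduces, through the tacit identification, the family associated to $t^n|_{[\gamma]^n}-s^n_\gamma$, the inductive hypothesis again yields $(n-1)$-triviality$^{\mathrm{II}}$, completing the inductive step.

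The main difficulty I expect is not conceptual but rather notational: one must keep sign conventions consistent across the reindexing $\tilde{a}_n$ and the tacit identification of a single function $[\gamma]^n\to A$ with an $(n-1)$-indexed family, and verify their compatibility with $\tilde{a}_{n-1}$ so that the inductive hypothesis applies to the correct derived family at the correct dimension. Once these bookkeeping details are pinned down, the algebra of the inductive step proceeds essentially automatically from the $n$-coherence$^{\mathrm{I}}$ (respectively $n$-triviality$^{\mathrm{I}}$) relations of $\Phi_n$.
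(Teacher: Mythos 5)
Your proposal is correct, but it is organized quite differently from the paper's proof. The paper argues directly: it writes out the entire cascading family of trivializations $t^{n-1}_{\beta_{n-1}\beta_n},\,t^{n-2}_{\beta_{n-2}\beta_{n-1}\beta_n},\dots,\,t^1_{\vec\beta}$ all at once via the natural assignments $t^k_{\vec\gamma}(\alpha,\vec\beta)=\varphi_{\vec\beta\vec\gamma}(\alpha)$ and verifies the whole chain in a single step by noting that the terminal alternating sum reduces to the $n$-coherence$^{\mathrm{I}}$ relation $\sum_i(-1)^i\varphi_{\vec\beta^i}=^*0$. You instead induct on $n$ (with the two bullets inducted simultaneously): you extract an auxiliary $(n-1)$-dimensional family --- $\Psi^{\gamma,\delta}_{n-1}=\{\varphi_{\vec\alpha,\delta}-\varphi_{\vec\alpha,\gamma}\}$ for the coherence clause, $\{(t^n|_{[\gamma]^n}-s^n_\gamma)(\cdot,\vec\beta)\}$ for the triviality clause --- exhibit it as $(n-1)$-trivial$^{\mathrm{I}}$ by one application of the $\Phi_n$-relation, observe that its $\tilde a_{n-1}$-image is precisely the family whose $(n-1)$-triviality$^{\mathrm{II}}$ is demanded, and apply the inductive hypothesis (triviality clause) at height $\gamma<\varepsilon$. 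Unrolling your recursion reproduces essentially the same witnesses as the paper's natural assignments (your $\chi_{\vec\zeta}=(-1)^n\varphi_{\vec\zeta,\gamma,\delta}$ is the paper's $t^{n-2}_{\gamma\delta}$ up to sign), so the two routes agree in substance; the inductive packaging avoids the cascade bookkeeping, at the cost of needing the triviality clause at every lower $n$, whereas the paper's direct proof makes the explicit form of the trivializations visible --- which matters elsewhere in Appendix C. One small cosmetic remark: your Claim that $\Psi^{\gamma,\delta}_{n-1}$ is $(n-1)$-coherent$^{\mathrm{I}}$ is never actually invoked (only its $(n-1)$-triviality$^{\mathrm{I}}$ feeds the inductive hypothesis), and is in any case automatic once triviality is known; it could be dropped.
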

\begin{proof}
We verify that $\tilde{a}_n$ is indeed a map $\mathsf{coh}^{\mathrm{I}}(n,A,\varepsilon)\to\mathsf{coh}^{\mathrm{II}}(n,A,\varepsilon)$. Essentially by definition, $\mathcal{S}_n$ is $n$-coherent$^{\mathrm{II}}$ if for every $\beta_{n-1}<\beta_n$ there exists a $t^{n-1}_{\beta_{n-1}\beta_n}:[\beta_{n-1}]^{n-1}\to A$ such that for all $\beta_{n-2}<\beta_{n-1}$,
\begin{align}\label{anotherlabel}t^{n-1}_{\beta_{n-1}\beta_n}\big|_{[\beta_{n-2}]^{n-1}}-(s_{\beta_n}^n(\,\cdot\,,\beta_{n-2})-s_{\beta_{n-1}}^n(\,\cdot\,,\beta_{n-2}))
\end{align}
is $(n-2)$-trivial$^{\mathrm{II}}$. We claim the assignments $t^{n-1}_{\beta_{n-1}\beta_n}(\alpha,\beta_0,\dots,\beta_{n-3})=\varphi_{\vec{\beta}^{n-2}}(\alpha)$ for each $\vec{\beta}\in [\varepsilon]^{n+1}$ and $\alpha<\vec{\beta}$ define such $t^{n-1}_{\beta_{n-1}\beta_n}$. To that end we define $(n-2)$-trivializations $t^{n-2}_{\beta_{n-2}\beta_{n-1}\beta_n}$ for the expressions (\ref{anotherlabel}) via the assignment $t^{n-2}_{\beta_{n-2}\beta_{n-1}\beta_n}(\alpha,\beta_0,\dots,\beta_{n-4})=\varphi_{\vec{\beta}^{n-3}}(\alpha)$ for each $\vec{\beta}\in [\varepsilon]^{n+1}$ and $\alpha<\vec{\beta}$ --- and so on, down to $t^1_{\vec{\beta}^0}(\alpha)=\varphi_{\vec{\beta}^0}(\alpha)$. The verification that these successive trivializations interact as desired amounts to showing that an alternating sum, together with $s_{\beta_n}^n-s_{\beta_{n-1}}^n$, is $0$-trivial$^{\mathrm{II}}$; by our assignments, this translates in turn to the equation $\sum_{i=0}^n(-1)^i \varphi_{\vec{\beta}^i}=^{*}0$, which holds for all $\vec{\beta}\in [\varepsilon]^{n+1}$, of course, exactly when $\Phi_n$ is $n$-coherent$^{\mathrm{I}}$. The verification that $\tilde{a}_n$ maps $\mathsf{triv}^{\mathrm{I}}(n,A,\varepsilon)$ to $\mathsf{triv}^{\mathrm{II}}(n,A,\varepsilon)$ is similar and left to the reader.
\end{proof}

Cascading families of trivializing functions like the $t^k_{\vec{\beta}}$ appearing above form a main motif in analyses of $n$-coherence$^\mathrm{II}$ and $n$-triviality$^{\mathrm{II}}$. They arise, for example, within a slightly different schema in the definition of $b_n$ appearing below. In light of such variety, we adopt a somewhat open definition:
\begin{defin}
A \emph{cascading family of trivializations} is a collection of the form $$\{t^k_{\vec{\gamma}}:[\gamma_0]^k\to A\mid\vec{\gamma}\in [X]^{n-k+1}\text{ and }1\leq k< n\}$$
for some set of ordinals $X$ and $n>2$, in which, for each $k>1$, each $t^{k-1}_{\vec{\gamma}}$ trivializes$^{\mathrm{II}}$ an expression containing $t^k_{\vec{\beta}}$ for some $\vec{\beta}\subseteq\vec{\gamma}$. As above, such collections are often defined in the presence of a distinguished family of functions $\Phi_n$; in such cases, definitions of $t^k_{\vec{\gamma}}$ via $t^k_{\vec{\gamma}}(\alpha,\vec{\beta})=\varphi_{\vec{\beta}\vec{\gamma}}(\alpha)$ for all $(\alpha,\vec{\beta})\in [\gamma_0]^k$ and $\vec{\gamma}\in [\varepsilon]^{n-k+1}$ will be called the \emph{natural assignments} with respect to $\Phi_n$.
\end{defin}
We now define the maps $b_n$. As in the case of $n=2$, the $b_n$-image of any $[\mathcal{S}_n]$ will be a $[\Psi_n=\{\psi_{\vec{\beta}}\mid\vec{\beta}\in[\varepsilon\cap\mathrm{Succ}]^n\}]$ given by $\psi_{\vec{\beta}}(\alpha)=t^1_{\vec{\beta}}(\alpha)$, but for higher $n$ the derivation of $t^1_{\vec{\beta}}$ is more elaborate. First observe that as $\mathcal{S}_n$ is $n$-coherent$^{\mathrm{II}}$, for each $(\gamma,\delta)\in [\varepsilon\cap\mathrm{Succ}]^2$ there exist $t_{\gamma\delta}^{n-1}$ such that equation \ref{anotherlabel} holds with $\gamma=\beta_{n-1}$ and $\delta=\beta_n$. It follows that $(t_{\delta\eta}^{n-1}-t_{\gamma\eta}^{n-1}+t_{\gamma\delta}^{n-1})\big|_{[\beta]^{n-1}}$ is $(n-2)$-trivial$^{\mathrm{II}}$ for all $\gamma<\delta<\eta$ in $\varepsilon\cap\mathrm{Succ}$ and $\beta<\gamma$; as $\gamma$ is a successor, $t_{\delta\eta}^{n-1}-t_{\gamma\eta}^{n-1}+t_{\gamma\delta}^{n-1}$ itself admits an $(n-2)$-trivialization$^{\mathrm{II}}$ $t_{\gamma\delta\eta}^{n-2}$. Reasoning just as above, families $$t_{\delta\eta\xi}^{n-2}-t_{\gamma\eta\xi}^{n-2}+t_{\gamma\delta\xi}^{n-2}-t_{\gamma\delta\eta}^{n-2}$$ will then admit $(n-3)$-trivializations$^{\mathrm{II}}$ $t_{\gamma\delta\eta\xi}^{n-3}$, and so on: this sequence of $t_{\vec{\beta}}^{n-i}$ $(\vec{\beta}\in [\varepsilon\cap\mathrm{Succ}]^{i+1})$ ends with the $t_{\vec{\beta}}^1$ $(\vec{\beta}\in [\varepsilon\cap\mathrm{Succ}]^{n})$ we desire.

It is easy to see that $\Psi_n$ so defined is $n$-coherent$^{\mathrm{I}}$. This follows from the fact that each $$t^1_{\vec{\beta}}\big|_\alpha=^*\Big(\sum_{i=0}^{n-1}(-1)^i t^2_{\vec{\beta}^i}\Big)(\,\cdot\,,\alpha)$$
for all $\alpha<\beta_0$; as $\beta_0$ is a successor, the parameter $\alpha$ may be dropped. We then have
$$\sum_{j=0}^{n}(-1)^j t^1_{\vec{\beta}^j}=^*\sum_{j=0}^n(-1)^j \sum_{i=0}^{n-1}(-1)^i t^2_{(\vec{\beta}^j)^i}=^*0$$ for all $\vec{\beta}\in[\varepsilon\cap\mathrm{Succ}]^{n+1}$, as desired.

Suppose now that $\mathcal{S}_n$ is the $\tilde{a}_n$-image of some $n$-coherent$^{\mathrm{I}}$ family of functions $\Phi_n$. Then as the reader may verify, natural assignments for the cascading family of trivializations $t^k_{\vec{\beta}}$ defining $b_n$ would have been valid, ending with $t^1_{\vec{\beta}}=\varphi_{\vec{\beta}}$. In consequence, once we have shown that the above procedure indeed determines a well-defined map $$b_n:\frac{\mathsf{coh}^{\mathrm{II}}(n,A,\varepsilon)}{\mathsf{triv}^{\mathrm{II}}(n,A,\varepsilon)}\to\frac{\mathsf{coh}^{\mathrm{I}}(n,A,\varepsilon)}{\mathsf{triv}^{\mathrm{I}}(n,A,\varepsilon)}$$it will follow almost immediately that $b_n a_n$ is indeed the identity, as claimed. We therefore focus on showing the former.

To this end, fix $\mathcal{R}_n,\mathcal{S}_n\in \mathsf{coh}^{\mathrm{II}}(n,A,\varepsilon)$ with $\mathcal{S}_n-\mathcal{R}_n$ $n$-trivial$^{\mathrm{II}}$. In particular, there exists a $v^n:[\varepsilon]^n\to A$ and $v_\delta^{n-1}:[\delta]^n\to A$ for each $\delta\in \varepsilon\cap\mathrm{Succ}$ such that \begin{align}\label{andanother}\tag{$\star(\delta)$}v^{n-1}_\delta\big|_{[\beta]^{n-1}}-(v^n-(s^n_\delta-r^n_\delta))(\,\cdot\,,\beta)\end{align} is $(n-2)$-trivial$^{\mathrm{II}}$ for all $\beta<\delta$. Fix also cascading families of trivializations $u_{\vec{\beta}}^{n-i}$ and $t_{\vec{\beta}}^{n-i}$ $(\vec{\beta}\in [\varepsilon\cap\mathrm{Succ}]^{i+1})$ deriving the functions $\theta_{\vec{\beta}}:=u^1_{\vec{\beta}}$ and $\psi_{\vec{\beta}}:=t^1_{\vec{\beta}}$, respectively, from $\mathcal{S}_n$ and $\mathcal{R}_n$ in the manner described above. We will show the family $\Theta_n-\Psi_n=\{\theta_{\vec{\beta}}-\psi_{\vec{\beta}}\mid \vec{\beta}\in[\varepsilon\cap\mathrm{Succ}]^n\}$ to be $n$-trivial$^{\mathrm{I}}$. The crux of the matter is this: in combination, the $s$ and $r$ terms within the $u$ and $t$ variants of equation \ref{anotherlabel} (again with $\gamma=\beta_{n-1}$ and $\delta=\beta_n$) and within the $\delta$ and $\gamma$ variants of equation (\ref{andanother}), cancel, so that \begin{align}\label{lastpromise}v_\delta^{n-1}-v_\gamma^{n-1}+u_{\gamma\delta}^{n-1}-t_{\gamma\delta}^{n-1}\end{align} admits an $(n-2)$-trivialization$^{\mathrm{II}}$,which we denote by $w_{\gamma\delta}^{n-2}$. More precisely, the expressions ($\star(\delta)$) and ($\star(\gamma)$) are each $(n-2)$-trivial$^{\mathrm{II}}$, as are the expressions
\begin{align*}
& u^{n-1}_{\gamma\delta}\big|_{[\beta]^{n-1}}-(s^n_\delta(\,\cdot\,,\beta)-s^n_\gamma(\,\cdot\,,\beta))\\
\text{ and }& t^{n-1}_{\gamma\delta}\big|_{[\beta]^{n-1}}-(r^n_\delta(\,\cdot\,,\beta)-r^n_\gamma(\,\cdot\,,\beta)),
\end{align*}
and the first minus the second plus the third minus the fourth yields (\ref{lastpromise}). Two sums then each $(n-2)$-trivialize$^{\mathrm{II}}$ $$(u^{n-1}_{\delta\eta}-t^{n-1}_{\delta\eta})-(u^{n-1}_{\gamma\eta}-t^{n-1}_{\gamma\eta})+(u^{n-1}_{\gamma\delta}-t^{n-1}_{\gamma\delta}),$$ namely $\sigma_{\gamma\delta\eta}:=u_{\gamma\delta\eta}^{n-2}-t_{\gamma\delta\eta}^{n-2}$ and $\tau_{\gamma\delta\eta}:=w_{\delta\eta}^{n-2}-w_{\gamma\eta}^{n-2}+w_{\gamma\delta}^{n-2}$. If $n=3$, we are done: the $\sigma$ and $\tau$ functions both $1$-trivialize$^{\mathrm{II}}$ the same families; as $\gamma$ in all such cases is a successor, this tells us that $\{w_{\gamma\delta}^{n-2}\mid(\gamma,\delta)\in[\varepsilon\cap\mathrm{Succ}]^2\}$ $3$-trivializes$^{\mathrm{I}}$ $\Theta_3-\Psi_3$, as desired. For $n>3$, observe that each $\sigma_{\gamma\delta\eta}-\tau_{\gamma\delta\eta}$ admits an $(n-3)$-trivialization$^{\mathrm{II}}$, which we denote $w_{\gamma\delta\eta}^{n-3}$. As the associated $\tau$ terms will cancel, $w_{\delta\eta\xi}^{n-3}-w_{\gamma\eta\xi}^{n-3}+w_{\gamma\delta\xi}^{n-3}-w_{\gamma\delta\eta}^{n-3}$ will then $(n-3)$-trivialize$^{\mathrm{II}}$ $$(u_{\delta\eta\xi}^{n-2}-t_{\delta\eta\xi}^{n-2})-(u_{\gamma\eta\xi}^{n-2}-t_{\gamma\eta\xi}^{n-2})+(u_{\gamma\delta\xi}^{n-2}-t_{\gamma\delta\xi}^{n-2})-(u_{\gamma\delta\eta}^{n-2}-t_{\gamma\delta\eta}^{n-2}),$$ and may thus be compared with $\sigma_{\gamma\delta\eta\xi}:=u_{\gamma\delta\eta\xi}^{n-3}-t_{\gamma\delta\eta\xi}^{n-3}$, just as before. This process may be repeated until reaching the stage at which the functions $w_{\vec{\gamma}}^1$ $n$-trivialize$^{\mathrm{I}}$ $\Theta_n-\Psi_n$ as in the $n=3$ case, as desired.

Only that $a_n b_n$ is the identity now remains to be seen. A representative of $a_n b_n([\mathcal{S}_n])$ is a family $\mathcal{R}_n$ given by $r_{\gamma}^n(\alpha,\vec{\beta})=t^1_{\vec{\beta}\gamma}(\alpha)$, where the family of functions $t^1_{\vec{\beta}\gamma}$ is one deriving from $\mathcal{S}_n$ in the manner defining $b_n$, as described above. What we must show is that any such $\mathcal{S}_n-\mathcal{R}_n$ is $n$-trivial$^{\mathrm{II}}$. This is perhaps the intuitively clearest of our claims --- in essence the assertion is that $\mathcal{S}_n$ and a family given by trivializations of combinations of elements of $\mathcal{S}_n$ do not differ by a \emph{non}-trivial family --- but may also be the most computationally tedious to verify. The cleanest approach seems to be to break the difference $\mathcal{S}_n-\mathcal{R}_n$ into steps; the general method is clear from the case of $n=3$. Terms $t_{\vec{\beta}}^k$ will be those defining the $b_3$-image of $\mathcal{S}_3$. As in the case of $n=2$, we fix a function $f:\varepsilon\to\varepsilon$ satisfying $\beta<f(\beta)$ for all $\beta<\varepsilon$; we then show that there exists a $u^3:[\varepsilon]^3\to A$ such that for all $\delta<\varepsilon$ there exists a $u_\delta^2:[\delta]^2\to A$ such that for all $\gamma<\delta$ there exists a $u_{\gamma\delta}^1:\gamma\to A$ such that\begin{align}\label{onelastlongone} 0 =^* u_{\gamma\delta}^1(\,\cdot\,)-u_\delta^2(\,\cdot\,,\beta)+u^3(\,\cdot\,,\beta,\gamma)-s_\delta^3(\,\cdot\,,\beta,\gamma)+t_{\gamma\delta}^2(\,\cdot\,,\beta)
\end{align}
Namely (assuming for simplicity that $f(\gamma)<\delta$), let $u^3(\,\cdot\,,\beta,\gamma)=s_{f(\gamma)}^3(\,\cdot\,,\beta,\gamma)-t_{\gamma f(\gamma)}^2(\,\cdot\,,\beta)$. Letting $v_{\gamma f(\gamma)\delta}$ be the $1$-trivialization$^{\mathrm{II}}$ of $t_{f(\gamma)\delta}^2(\,\cdot\,,\beta)-(s_\delta^3(\,\cdot\,,\beta,\gamma)-s_{f(\gamma)}^3(\,\cdot\,,\beta,\gamma))$ $(\beta<\gamma)$ given by the definition of $t_{f(\gamma)\delta}^2$, we then have
\begin{align*}
u^3(\,\cdot\,,\beta,\gamma)-s_\delta^3(\,\cdot\,,\beta,\gamma)+t^2_{\gamma\delta}(\,\cdot\,,\beta) & = \; s_{f(\gamma)}^3(\,\cdot\,,\beta,\gamma)-s_\delta^3(\,\cdot\,,\beta,\gamma)-t_{\gamma f(\gamma)}^2(\,\cdot\,,\beta)+t_{\gamma\delta}^2(\,\cdot\,,\beta) \\ & =^* v_{\gamma f(\gamma)\delta}(\,\cdot\,)-t_{f(\gamma)\delta}^2(\,\cdot\,,\beta)-t_{\gamma f(\gamma)}^2(\,\cdot\,,\beta)+t_{\gamma\delta}^2(\,\cdot\,,\beta) \\ & =^*(v_{\gamma f(\gamma)\delta}-t_{\gamma f(\gamma)\delta}^1)\big|_\beta
\end{align*}
Now observe that the assignments $u_{\gamma\delta}^1=t_{\gamma f(\gamma)\delta}^1-v_{\gamma f(\gamma)\delta}$, $u_2=0$, and $u_3$ as defined above do indeed solve equation \ref{onelastlongone}, as desired. A similar trick, involving $f(\beta)$ instead of $f(\gamma)$, will satisfy equation \ref{onelastlongone} with $t_{\gamma\delta}^2(\,\cdot\,,\beta)$ and $t_{\beta\gamma\delta}^1(\,\cdot\,)$ taking the place of $s_\delta^3(\,\cdot\,,\beta,\gamma)$ and $t_{\gamma\delta}^2(\,\cdot\,,\beta)$, respectively; together these equations show that $\mathcal{S}_n-\mathcal{R}_n$ is $3$-trivial$^{\mathrm{II}}$, as desired. A little thought will persuade the reader more effectively than any further computations that this method of trivialized ``steps'' from $s_{\gamma}^n$ to $t_{\vec{\gamma}}^1$ is entirely general, and this will conclude the proof.
\end{proof}

What we have technically argued is an isomorphism of two groups as sets; the slightly more work to see that the maps $a_n$ and $b_n$ are homomorphisms is left to the reader. We then have from Theorem \ref{cooicohtriv} the following corollary:
\begin{cor} $\check{\mathrm{H}}^n(\varepsilon;\mathcal{A})\cong \mathsf{coh}^{\mathrm{II}}(n,A,\varepsilon)/\mathsf{triv}^{\mathrm{II}}(n,A,\varepsilon)$ for all ordinals $\varepsilon$, abelian groups $A$, and integers $n>0$.
\end{cor}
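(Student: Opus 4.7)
The plan is to combine two results already in hand: Theorem \ref{cooicohtriv} identifies $\check{\mathrm{H}}^n(\varepsilon;\mathcal{A})$ with $\mathsf{coh}(n,A,\varepsilon)/\mathsf{triv}(n,A,\varepsilon)$, where the notions of coherence and triviality in that quotient are exactly those of Definition \ref{highernontriv}---what we are now calling the type-$\mathrm{I}$ versions $\mathsf{coh}^{\mathrm{I}}(n,A,\varepsilon)/\mathsf{triv}^{\mathrm{I}}(n,A,\varepsilon)$. The proof of Theorem \ref{twoiffone} in Appendix \ref{appendixB} established a strictly stronger statement than the qualitative equivalence advertised in that theorem, namely the isomorphism of quotient groups
\[
\frac{\mathsf{coh}^{\mathrm{I}}(n,A,\varepsilon)}{\mathsf{triv}^{\mathrm{I}}(n,A,\varepsilon)}\;\cong\;\frac{\mathsf{coh}^{\mathrm{II}}(n,A,\varepsilon)}{\mathsf{triv}^{\mathrm{II}}(n,A,\varepsilon)}
\]
witnessed by the mutually inverse maps $a_n$ and $b_n$ constructed there. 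I would simply compose these two isomorphisms to obtain the corollary, which is then just their transitive concatenation.

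The only point requiring a bit of care is confirming that $a_n$ and $b_n$ are genuine group homomorphisms with respect to the pointwise-addition structures on $\mathsf{coh}^{\mathrm{I}}$, $\mathsf{triv}^{\mathrm{I}}$, $\mathsf{coh}^{\mathrm{II}}$, and $\mathsf{triv}^{\mathrm{II}}$, and not merely mutually inverse set-theoretic bijections between cosets. The check for $a_n$ is immediate, since the rearrangement operation $\tilde{a}_n$ is plainly additive in its input family. For $b_n$ the observation to make is that the cascading family of trivializations $\{t^{n-i}_{\vec{\beta}}\}$ chosen for a sum $\mathcal{R}_n + \mathcal{S}_n$ of $n$-coherent$^{\mathrm{II}}$ families may be taken to be the pointwise sum of cascading families chosen separately for $\mathcal{R}_n$ and for $\mathcal{S}_n$; with such compatible choices the assignment $[\mathcal{S}_n] \mapsto [\psi_{\vec{\beta}}]$ is additive up to the ambiguity being quotiented out, as any two such choices differ by a cascading family trivializing $\mathbf{0}$. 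This is the minor residual work and presents no genuine obstacle.

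The principal content of the corollary is therefore already present in the excerpt; it amounts to the combinatorial recasting of $\check{\mathrm{H}}^n(\varepsilon;\mathcal{A})$ in terms of the more tree-theoretic notion of coherence$^{\mathrm{II}}$ of Definition \ref{highernontrivII}, which is the more convenient framework for the $n$-coherent trees and "trees of trees" of Section \ref{treessubsection}. The hard work lies entirely in Theorem \ref{twoiffone}; for the corollary itself, essentially no new argument is needed beyond citation and composition.
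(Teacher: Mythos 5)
Your proposal is correct and follows the paper's argument exactly: the paper obtains the corollary by composing Theorem~\ref{cooicohtriv} with the group isomorphism $\mathsf{coh}^{\mathrm{I}}/\mathsf{triv}^{\mathrm{I}}\cong\mathsf{coh}^{\mathrm{II}}/\mathsf{triv}^{\mathrm{II}}$ established in Appendix~\ref{appendixB}, noting that the verification that $a_n$ and $b_n$ are homomorphisms is "slightly more work\ldots left to the reader." You have in fact gone a step further than the paper by sketching that homomorphism check (additivity of $\tilde{a}_n$ and compatible choices of cascading trivializations for $b_n$), which is a sound and welcome addition.
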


\section*{Acknowledgements}

This work expands on material first appearing in the author's PhD thesis. He would like to thank his advisor, Justin Tatch Moore, and the rest of his thesis committee, Jim West and Slawek Solecki, for their support, instruction, and patience throughout the course of its composition. The periodic stimulus of conversations with Stevo Todorcevic has also richly informed this work. The author would like finally to thank the referee for so exceptionally alert and thoughtful a reading of this text.

\printbibliography

@article {baldwinetal,
    AUTHOR = {Baldwin, John T. and Koerwien, Martin and Laskowski, Michael
              C.},
     TITLE = {Disjoint amalgamation in locally finite {AEC}},
   JOURNAL = {J. Symb. Log.},
  FJOURNAL = {The Journal of Symbolic Logic},
    VOLUME = {82},
      YEAR = {2017},
    NUMBER = {1},
     PAGES = {98--119},
      ISSN = {0022-4812},
   MRCLASS = {03C48 (03C75)},
  MRNUMBER = {3631278},
MRREVIEWER = {Sebastien Vasey},
       DOI = {10.1017/jsl.2016.25},
       URL = {https://doi-org.proxy.library.cornell.edu/10.1017/jsl.2016.25},
}

@article {bass,
    AUTHOR = {Bass, Hyman},
     TITLE = {Big projective modules are free},
   JOURNAL = {Illinois J. Math.},
  FJOURNAL = {Illinois Journal of Mathematics},
    VOLUME = {7},
      YEAR = {1963},
     PAGES = {24--31},
      ISSN = {0019-2082},
   MRCLASS = {16.90},
  MRNUMBER = {143789},
MRREVIEWER = {J. P. Jans},
       URL = {http://projecteuclid.org.proxy.library.cornell.edu/euclid.ijm/1255637479},
}

@ARTICLE{CoOI,
       author = {{Bergfalk}, Jeffrey and {Lambie-Hanson}, Chris},
        title = "{The Cohomology of the Ordinals I: Basic Theory and Consistency Results}",
      journal = {arXiv e-prints},
     keywords = {Mathematics - Logic, Mathematics - Algebraic Topology, 03E10, 55N05, 54B40, 03E35, 03E04},
         year = 2019,
        month = feb,
          eid = {arXiv:1902.02736},
        pages = {arXiv:1902.02736},
archivePrefix = {arXiv},
       eprint = {1902.02736},
 primaryClass = {math.LO},
       adsurl = {https://ui.adsabs.harvard.edu/abs/2019arXiv190202736B},
      adsnote = {Provided by the SAO/NASA Astrophysics Data System}
}

@ARTICLE{SVHDL,
       author = {{Bergfalk}, Jeffrey and {Lambie-Hanson}, Chris},
        title = "{Simultaneously vanishing higher derived limits}",
      journal = {arXiv e-prints},
     keywords = {Mathematics - Logic, Mathematics - Algebraic Topology, 03E05, 03E75, 55N07, 55N40, 18E25, 03E55},
         year = 2019,
        month = jul,
          eid = {arXiv:1907.11744},
        pages = {arXiv:1907.11744},
archivePrefix = {arXiv},
       eprint = {1907.11744},
 primaryClass = {math.LO},
       adsurl = {https://ui.adsabs.harvard.edu/abs/2019arXiv190711744B},
      adsnote = {Provided by the SAO/NASA Astrophysics Data System}
}

@article {SHDLST,
    AUTHOR = {Bergfalk, Jeffrey},
     TITLE = {Strong homology, derived limits, and set theory},
   JOURNAL = {Fund. Math.},
  FJOURNAL = {Fundamenta Mathematicae},
    VOLUME = {236},
      YEAR = {2017},
    NUMBER = {1},
     PAGES = {71--82},
      ISSN = {0016-2736},
   MRCLASS = {03E75 (03E35 03E50 03E57 18G60 55N40)},
  MRNUMBER = {3577392},
MRREVIEWER = {Klaas Pieter Hart},
       DOI = {10.4064/fm140-4-2016},
       URL = {https://doi-org.proxy.library.cornell.edu/10.4064/fm140-4-2016},
}

@book {dimords,
    AUTHOR = {Bergfalk, Jeffrey Scott},
     TITLE = {Dimensions of {O}rdinals: {S}et {T}heory, {H}omology {T}heory,
              and the {F}irst {O}mega {A}lephs},
      NOTE = {Thesis (Ph.D.)--Cornell University},
 PUBLISHER = {ProQuest LLC, Ann Arbor, MI},
      YEAR = {2018},
     PAGES = {162},
      ISBN = {978-0438-34268-2},
   MRCLASS = {Thesis},
  MRNUMBER = {3864913},
       URL =
              {http://gateway.proquest.com.proxy.library.cornell.edu/openurl?url_ver=Z39.88-2004&rft_val_fmt=info:ofi/fmt:kev:mtx:dissertation&res_dat=xri:pqm&rft_dat=xri:pqdiss:10840426},
}

@book {bucur,
    AUTHOR = {Bucur, Ion and Deleanu, Aristide},
     TITLE = {Introduction to the theory of categories and functors},
    SERIES = {With the collaboration of Peter J. Hilton and Nicolae Popescu.
              Pure and Applied Mathematics, Vol. XIX},
 PUBLISHER = {Interscience Publication John Wiley \& Sons, Ltd., London-New
              York-Sydney},
      YEAR = {1968},
     PAGES = {x+224},
   MRCLASS = {18.00},
  MRNUMBER = {0236236},
MRREVIEWER = {J. F. Kennison},
}

@article {cummings,
    AUTHOR = {Cummings, James and Foreman, Matthew and Magidor, Menachem},
     TITLE = {Squares, scales and stationary reflection},
   JOURNAL = {J. Math. Log.},
  FJOURNAL = {Journal of Mathematical Logic},
    VOLUME = {1},
      YEAR = {2001},
    NUMBER = {1},
     PAGES = {35--98},
      ISSN = {0219-0613},
   MRCLASS = {03E05 (03E35 03E55)},
  MRNUMBER = {1838355},
MRREVIEWER = {Pierre Matet},
       DOI = {10.1142/S021906130100003X},
       URL = {https://doi-org.proxy.library.cornell.edu/10.1142/S021906130100003X},
}

@article {DSV,
    AUTHOR = {Dow, Alan and Simon, Petr and Vaughan, Jerry E.},
     TITLE = {Strong homology and the proper forcing axiom},
   JOURNAL = {Proc. Amer. Math. Soc.},
  FJOURNAL = {Proceedings of the American Mathematical Society},
    VOLUME = {106},
      YEAR = {1989},
    NUMBER = {3},
     PAGES = {821--828},
      ISSN = {0002-9939},
   MRCLASS = {55P55 (03E35 55N07)},
  MRNUMBER = {961403},
MRREVIEWER = {J. Siegel},
       DOI = {10.2307/2047441},
       URL = {https://doi-org.proxy.library.cornell.edu/10.2307/2047441},
}

@article {eilmoore,
    AUTHOR = {Eilenberg, Samuel and Moore, J. C.},
     TITLE = {Foundations of relative homological algebra},
   JOURNAL = {Mem. Amer. Math. Soc.},
  FJOURNAL = {Memoirs of the American Mathematical Society},
    VOLUME = {55},
      YEAR = {1965},
     PAGES = {39},
      ISSN = {0065-9266},
   MRCLASS = {18.20},
  MRNUMBER = {178036},
MRREVIEWER = {A. Heller},
}

@article {Goblot,
    AUTHOR = {Goblot, R\'{e}mi},
     TITLE = {Sur les d\'{e}riv\'{e}s de certaines limites projectives.
              {A}pplications aux modules},
   JOURNAL = {Bull. Sci. Math. (2)},
  FJOURNAL = {Bulletin des Sciences Math\'{e}matiques. 2e S\'{e}rie},
    VOLUME = {94},
      YEAR = {1970},
     PAGES = {251--255},
      ISSN = {0007-4497},
   MRCLASS = {18.20 (16.00)},
  MRNUMBER = {274557},
MRREVIEWER = {Chr. U. Jensen},
}

@article {hjorth,
    AUTHOR = {Hjorth, Greg},
     TITLE = {Knight's model, its automorphism group, and characterizing the
              uncountable cardinals},
   JOURNAL = {J. Math. Log.},
  FJOURNAL = {Journal of Mathematical Logic},
    VOLUME = {2},
      YEAR = {2002},
    NUMBER = {1},
     PAGES = {113--144},
      ISSN = {0219-0613},
   MRCLASS = {03E10 (03C55 03E15)},
  MRNUMBER = {1900550},
MRREVIEWER = {J. M. Plotkin},
       DOI = {10.1142/S0219061302000084},
       URL = {https://doi-org.proxy.library.cornell.edu/10.1142/S0219061302000084},
}

@book {hurewicz,
    AUTHOR = {Hurewicz, Witold and Wallman, Henry},
     TITLE = {Dimension {T}heory},
    SERIES = {Princeton Mathematical Series, v. 4},
 PUBLISHER = {Princeton University Press, Princeton, N. J.},
      YEAR = {1941},
     PAGES = {vii+165},
   MRCLASS = {56.0X},
  MRNUMBER = {0006493},
MRREVIEWER = {H. Whitney},
}

@incollection {husainov,
    AUTHOR = {Husainov, A. A.},
     TITLE = {Homological dimension theory of small categories},
      NOTE = {Algebra, 17},
   JOURNAL = {J. Math. Sci. (New York)},
  FJOURNAL = {Journal of Mathematical Sciences (New York)},
    VOLUME = {110},
      YEAR = {2002},
    NUMBER = {1},
     PAGES = {2273--2321},
      ISSN = {1072-3374},
   MRCLASS = {18G20 (55U35)},
  MRNUMBER = {1921607},
MRREVIEWER = {Timothy Porter},
       DOI = {10.1023/A:1014945907335},
       URL = {https://doi-org.proxy.library.cornell.edu/10.1023/A:1014945907335},
}

@article {jensenfine,
    AUTHOR = {Jensen, R. Bj\"{o}rn},
     TITLE = {The fine structure of the constructible hierarchy},
      NOTE = {With a section by Jack Silver},
   JOURNAL = {Ann. Math. Logic},
  FJOURNAL = {Annals of Mathematical Logic},
    VOLUME = {4},
      YEAR = {1972},
     PAGES = {229--308; erratum, ibid. 4 (1972), 443},
      ISSN = {0003-4843},
   MRCLASS = {02K05},
  MRNUMBER = {309729},
MRREVIEWER = {Andreas Blass},
       DOI = {10.1016/0003-4843(72)90001-0},
       URL = {https://doi-org.proxy.library.cornell.edu/10.1016/0003-4843(72)90001-0},
}

@book {jensen,
    AUTHOR = {Jensen, C. U.},
     TITLE = {Les foncteurs d\'{e}riv\'{e}s de {$\varprojlim$} et leurs applications
              en th\'{e}orie des modules},
    SERIES = {Lecture Notes in Mathematics, Vol. 254},
 PUBLISHER = {Springer-Verlag, Berlin-New York},
      YEAR = {1972},
     PAGES = {iv+103},
   MRCLASS = {16A62},
  MRNUMBER = {0407091},
MRREVIEWER = {E. Enochs},
}

@online {larson,
author = {Larson, Paul},
title = {Ordering finite sets},
year = {2017},
url = {http://www.users.miamioh.edu/larsonpb/ordering_finite.pdf}
}

@article {laskowskishelah,
    AUTHOR = {Laskowski, M. C. and Shelah, S.},
     TITLE = {On the existence of atomic models},
   JOURNAL = {J. Symbolic Logic},
  FJOURNAL = {The Journal of Symbolic Logic},
    VOLUME = {58},
      YEAR = {1993},
    NUMBER = {4},
     PAGES = {1189--1194},
      ISSN = {0022-4812},
   MRCLASS = {03C30 (03C35 03C50)},
  MRNUMBER = {1253916},
MRREVIEWER = {Paul Bankston},
       DOI = {10.2307/2275137},
       URL = {https://doi-org.proxy.library.cornell.edu/10.2307/2275137},
}

@article {latch,
    AUTHOR = {Latch, Dana and Mitchell, Barry},
     TITLE = {On the difference between cohomological dimension and
              homological dimension},
   JOURNAL = {J. Pure Appl. Algebra},
  FJOURNAL = {Journal of Pure and Applied Algebra},
    VOLUME = {5},
      YEAR = {1974},
     PAGES = {333--343},
      ISSN = {0022-4049},
   MRCLASS = {18G20},
  MRNUMBER = {360755},
MRREVIEWER = {T. W. Hungerford},
       DOI = {10.1016/0022-4049(74)90042-5},
       URL = {https://doi-org.proxy.library.cornell.edu/10.1016/0022-4049(74)90042-5},
}

@article {marpra,
    AUTHOR = {Marde\v{s}i\'{c}, S. and Prasolov, A. V.},
     TITLE = {Strong homology is not additive},
   JOURNAL = {Trans. Amer. Math. Soc.},
  FJOURNAL = {Transactions of the American Mathematical Society},
    VOLUME = {307},
      YEAR = {1988},
    NUMBER = {2},
     PAGES = {725--744},
      ISSN = {0002-9947},
   MRCLASS = {55N35},
  MRNUMBER = {940224},
MRREVIEWER = {Gh. Piti\c{s}},
       DOI = {10.2307/2001195},
       URL = {https://doi-org.proxy.library.cornell.edu/10.2307/2001195},
}

@book {SSH,
    AUTHOR = {Marde\v{s}i\'{c}, Sibe},
     TITLE = {Strong shape and homology},
    SERIES = {Springer Monographs in Mathematics},
 PUBLISHER = {Springer-Verlag, Berlin},
      YEAR = {2000},
     PAGES = {xii+489},
      ISBN = {3-540-66198-0},
   MRCLASS = {55N07 (54C56 55N35 55P55)},
  MRNUMBER = {1740831},
MRREVIEWER = {Leonard R. Rubin},
       DOI = {10.1007/978-3-662-13064-3},
       URL = {https://doi-org.proxy.library.cornell.edu/10.1007/978-3-662-13064-3},
}

@article {rings,
    AUTHOR = {Mitchell, Barry},
     TITLE = {Rings with several objects},
   JOURNAL = {Advances in Math.},
  FJOURNAL = {Advances in Mathematics},
    VOLUME = {8},
      YEAR = {1972},
     PAGES = {1--161},
      ISSN = {0001-8708},
   MRCLASS = {18H15},
  MRNUMBER = {294454},
MRREVIEWER = {J.-P. Lafon},
       DOI = {10.1016/0001-8708(72)90002-3},
       URL = {https://doi-org.proxy.library.cornell.edu/10.1016/0001-8708(72)90002-3},
}

@article {malliarisshelah,
    AUTHOR = {Malliaris, Maryanthe and Shelah, Saharon},
     TITLE = {Keisler's order has infinitely many classes},
   JOURNAL = {Israel J. Math.},
  FJOURNAL = {Israel Journal of Mathematics},
    VOLUME = {224},
      YEAR = {2018},
    NUMBER = {1},
     PAGES = {189--230},
      ISSN = {0021-2172},
   MRCLASS = {03C45 (03C20 03E05 06E10)},
  MRNUMBER = {3799754},
MRREVIEWER = {Paolo Lipparini},
       DOI = {10.1007/s11856-018-1647-7},
       URL = {https://doi-org.proxy.library.cornell.edu/10.1007/s11856-018-1647-7},
}

@article {mitchcohdim,
    AUTHOR = {Mitchell, Barry},
     TITLE = {The cohomological dimension of a directed set},
   JOURNAL = {Canadian J. Math.},
  FJOURNAL = {Canadian Journal of Mathematics. Journal Canadien de
              Math\'{e}matiques},
    VOLUME = {25},
      YEAR = {1973},
     PAGES = {233--238},
      ISSN = {0008-414X},
   MRCLASS = {18G20},
  MRNUMBER = {371996},
MRREVIEWER = {Howard B. Stauffer},
       DOI = {10.4153/CJM-1973-023-0},
       URL = {https://doi-org.proxy.library.cornell.edu/10.4153/CJM-1973-023-0},
}

@article {osofvaluation,
    AUTHOR = {Osofsky, B. L.},
     TITLE = {Global dimension of valuation rings},
   JOURNAL = {Trans. Amer. Math. Soc.},
  FJOURNAL = {Transactions of the American Mathematical Society},
    VOLUME = {127},
      YEAR = {1967},
     PAGES = {136--149},
      ISSN = {0002-9947},
   MRCLASS = {18.20 (16.90)},
  MRNUMBER = {206074},
MRREVIEWER = {George Bachman},
       DOI = {10.2307/1994637},
       URL = {https://doi-org.proxy.library.cornell.edu/10.2307/1994637},
}

@article {komjath,
    AUTHOR = {Komj\'{a}th, P.},
     TITLE = {Partitions of vector spaces},
   JOURNAL = {Period. Math. Hungar.},
  FJOURNAL = {Periodica Mathematica Hungarica. Journal of the J\'{a}nos Bolyai
              Mathematical Society},
    VOLUME = {28},
      YEAR = {1994},
    NUMBER = {3},
     PAGES = {187--193},
      ISSN = {0031-5303},
   MRCLASS = {04A20 (03E05 15A06)},
  MRNUMBER = {1311381},
MRREVIEWER = {Marion Scheepers},
       DOI = {10.1007/BF01876338},
       URL = {https://doi-org.proxy.library.cornell.edu/10.1007/BF01876338},
}

@article {schmerl,
    AUTHOR = {Schmerl, James H.},
     TITLE = {Avoidable algebraic subsets of {E}uclidean space},
   JOURNAL = {Trans. Amer. Math. Soc.},
  FJOURNAL = {Transactions of the American Mathematical Society},
    VOLUME = {352},
      YEAR = {2000},
    NUMBER = {6},
     PAGES = {2479--2489},
      ISSN = {0002-9947},
   MRCLASS = {03E15},
  MRNUMBER = {1608502},
MRREVIEWER = {Otmar Spinas},
       DOI = {10.1090/S0002-9947-99-02331-4},
       URL = {https://doi-org.proxy.library.cornell.edu/10.1090/S0002-9947-99-02331-4},
}

@article {fox,
    AUTHOR = {Fox, Jacob},
     TITLE = {An infinite color analogue of {R}ado's theorem},
   JOURNAL = {J. Combin. Theory Ser. A},
  FJOURNAL = {Journal of Combinatorial Theory. Series A},
    VOLUME = {114},
      YEAR = {2007},
    NUMBER = {8},
     PAGES = {1456--1469},
      ISSN = {0097-3165},
   MRCLASS = {05D10 (03E02 03E25)},
  MRNUMBER = {2360680},
       DOI = {10.1016/j.jcta.2007.02.005},
       URL = {https://doi-org.proxy.library.cornell.edu/10.1016/j.jcta.2007.02.005},
}

@article {osofch,
    AUTHOR = {Osofsky, B. L.},
     TITLE = {Homological dimension and the continuum hypothesis},
   JOURNAL = {Trans. Amer. Math. Soc.},
  FJOURNAL = {Transactions of the American Mathematical Society},
    VOLUME = {132},
      YEAR = {1968},
     PAGES = {217--230},
      ISSN = {0002-9947},
   MRCLASS = {13.90 (18.00)},
  MRNUMBER = {224606},
MRREVIEWER = {W. V. Vasconcelos},
       DOI = {10.2307/1994890},
       URL = {https://doi-org.proxy.library.cornell.edu/10.2307/1994890},
}

@article {osofsubscript,
    AUTHOR = {Osofsky, Barbara L.},
     TITLE = {The subscript of {$\aleph \sb{n}$}, projective dimension, and
              the vanishing of {$\varprojlim\sp{(n)}$}},
   JOURNAL = {Bull. Amer. Math. Soc.},
  FJOURNAL = {Bulletin of the American Mathematical Society},
    VOLUME = {80},
      YEAR = {1974},
     PAGES = {8--26},
      ISSN = {0002-9904},
   MRCLASS = {16A60},
  MRNUMBER = {330226},
MRREVIEWER = {Chr. U. Jensen},
       DOI = {10.1090/S0002-9904-1974-13343-4},
       URL = {https://doi-org.proxy.library.cornell.edu/10.1090/S0002-9904-1974-13343-4},
}

@article {pierce,
    AUTHOR = {Pierce, R. S.},
     TITLE = {The global dimension of {B}oolean rings},
   JOURNAL = {J. Algebra},
  FJOURNAL = {Journal of Algebra},
    VOLUME = {7},
      YEAR = {1967},
     PAGES = {91--99},
      ISSN = {0021-8693},
   MRCLASS = {18.20},
  MRNUMBER = {229695},
MRREVIEWER = {J. P. May},
       DOI = {10.1016/0021-8693(67)90069-5},
       URL = {https://doi-org.proxy.library.cornell.edu/10.1016/0021-8693(67)90069-5},
}

@article {ruzicka,
    AUTHOR = {Ruzicka, Pavel},
     TITLE = {Free trees and the optimal bound in {W}ehrung's theorem},
   JOURNAL = {Fund. Math.},
  FJOURNAL = {Fundamenta Mathematicae},
    VOLUME = {198},
      YEAR = {2008},
    NUMBER = {3},
     PAGES = {217--228},
      ISSN = {0016-2736},
   MRCLASS = {06B15 (06A12 06B10 08A30)},
  MRNUMBER = {2391012},
MRREVIEWER = {James B. Nation},
       DOI = {10.4064/fm198-3-2},
       URL = {https://doi-org.proxy.library.cornell.edu/10.4064/fm198-3-2},
}

@article {todder,
    AUTHOR = {Todorcevic, Stevo},
     TITLE = {The first derived limit and compactly {$F_\sigma$} sets},
   JOURNAL = {J. Math. Soc. Japan},
  FJOURNAL = {Journal of the Mathematical Society of Japan},
    VOLUME = {50},
      YEAR = {1998},
    NUMBER = {4},
     PAGES = {831--836},
      ISSN = {0025-5645},
   MRCLASS = {03E15 (03E50 18G05 57T99)},
  MRNUMBER = {1643351},
MRREVIEWER = {Miroslav Repick\'{y}},
       DOI = {10.2969/jmsj/05040831},
       URL = {https://doi-org.proxy.library.cornell.edu/10.2969/jmsj/05040831},
}

@book {todwalks,
    AUTHOR = {Todorcevic, Stevo},
     TITLE = {Walks on ordinals and their characteristics},
    SERIES = {Progress in Mathematics},
    VOLUME = {263},
 PUBLISHER = {Birkh\"{a}user Verlag, Basel},
      YEAR = {2007},
     PAGES = {vi+324},
      ISBN = {978-3-7643-8528-6},
   MRCLASS = {03-02 (03E05 03E10)},
  MRNUMBER = {2355670},
MRREVIEWER = {A. Tsarpalias},
}

@incollection {todcoh,
    AUTHOR = {Todorcevic, Stevo},
     TITLE = {Coherent sequences},
 BOOKTITLE = {Handbook of set theory. {V}ols. 1, 2, 3},
     PAGES = {215--296},
 PUBLISHER = {Springer, Dordrecht},
      YEAR = {2010},
   MRCLASS = {03E05 (03E10)},
  MRNUMBER = {2768682},
MRREVIEWER = {Justin Tatch Moore},
       DOI = {10.1007/978-1-4020-5764-9_4},
       URL = {https://doi-org.proxy.library.cornell.edu/10.1007/978-1-4020-5764-9_4},
}

@article {todpos,
    AUTHOR = {Lopez-Abad, Jordi and Todorcevic, Stevo},
     TITLE = {Positional graphs and conditional structure of weakly null
              sequences},
   JOURNAL = {Adv. Math.},
  FJOURNAL = {Advances in Mathematics},
    VOLUME = {242},
      YEAR = {2013},
     PAGES = {163--186},
      ISSN = {0001-8708},
   MRCLASS = {46B03 (03E02 03E35 03E55)},
  MRNUMBER = {3055991},
MRREVIEWER = {Marek C\'{u}th},
       DOI = {10.1016/j.aim.2013.04.010},
       URL = {https://doi-org.proxy.library.cornell.edu/10.1016/j.aim.2013.04.010},
}

@article {todpairs,
    AUTHOR = {Todorcevic, Stevo},
     TITLE = {Partitioning pairs of countable ordinals},
   JOURNAL = {Acta Math.},
  FJOURNAL = {Acta Mathematica},
    VOLUME = {159},
      YEAR = {1987},
    NUMBER = {3-4},
     PAGES = {261--294},
      ISSN = {0001-5962},
   MRCLASS = {04A20},
  MRNUMBER = {908147},
MRREVIEWER = {Andrzej Pelc},
       DOI = {10.1007/BF02392561},
       URL = {https://doi-org.proxy.library.cornell.edu/10.1007/BF02392561},
}

@article {wehrung98,
    AUTHOR = {Wehrung, Friedrich},
     TITLE = {Non-measurability properties of interpolation vector spaces},
   JOURNAL = {Israel J. Math.},
  FJOURNAL = {Israel Journal of Mathematics},
    VOLUME = {103},
      YEAR = {1998},
     PAGES = {177--206},
      ISSN = {0021-2172},
   MRCLASS = {06F20},
  MRNUMBER = {1613568},
MRREVIEWER = {S. J. Bernau},
       DOI = {10.1007/BF02762273},
       URL = {https://doi-org.proxy.library.cornell.edu/10.1007/BF02762273},
}

@article {wehrung2007,
    AUTHOR = {Wehrung, Friedrich},
     TITLE = {A solution of {D}ilworth's congruence lattice problem},
   JOURNAL = {Adv. Math.},
  FJOURNAL = {Advances in Mathematics},
    VOLUME = {216},
      YEAR = {2007},
    NUMBER = {2},
     PAGES = {610--625},
      ISSN = {0001-8708},
   MRCLASS = {06B15 (06A12 06B10 08A30)},
  MRNUMBER = {2351371},
MRREVIEWER = {Keith A. Kearnes},
       DOI = {10.1016/j.aim.2007.05.016},
       URL = {https://doi-org.proxy.library.cornell.edu/10.1016/j.aim.2007.05.016},
}

@book {weibel,
    AUTHOR = {Weibel, Charles A.},
     TITLE = {An introduction to homological algebra},
    SERIES = {Cambridge Studies in Advanced Mathematics},
    VOLUME = {38},
 PUBLISHER = {Cambridge University Press, Cambridge},
      YEAR = {1994},
     PAGES = {xiv+450},
      ISBN = {0-521-43500-5; 0-521-55987-1},
   MRCLASS = {18-01 (16-01 17-01 20-01 55Uxx)},
  MRNUMBER = {1269324},
MRREVIEWER = {Kenneth A. Brown},
       DOI = {10.1017/CBO9781139644136},
       URL = {https://doi-org.proxy.library.cornell.edu/10.1017/CBO9781139644136},
}

@book {ehmr,
    AUTHOR = {Erd\H{o}s, Paul and Hajnal, Andr\'{a}s and M\'{a}t\'{e}, Attila and Rado,
              Richard},
     TITLE = {Combinatorial set theory: partition relations for cardinals},
    SERIES = {Studies in Logic and the Foundations of Mathematics},
    VOLUME = {106},
 PUBLISHER = {North-Holland Publishing Co., Amsterdam},
      YEAR = {1984},
     PAGES = {347},
      ISBN = {0-444-86157-2},
   MRCLASS = {04-02 (03-02 03E05 04A20)},
  MRNUMBER = {795592},
MRREVIEWER = {Carlos A. Di Prisco},
}

@article {kuratowski,
    AUTHOR = {Kuratowski, Casimir},
     TITLE = {Sur une caract\'{e}risation des alephs},
   JOURNAL = {Fund. Math.},
  FJOURNAL = {Polska Akademia Nauk. Fundamenta Mathematicae},
    VOLUME = {38},
      YEAR = {1951},
     PAGES = {14--17},
      ISSN = {0016-2736},
   MRCLASS = {27.2X},
  MRNUMBER = {48518},
MRREVIEWER = {F. Bagemihl},
       DOI = {10.4064/fm-38-1-14-17},
       URL = {https://doi-org.proxy.library.cornell.edu/10.4064/fm-38-1-14-17},
}

@article {sierpinski,
    AUTHOR = {Sierpinski, Waclaw},
     TITLE = {Sur quelques propositions concernant la puissance du continu},
   JOURNAL = {Fund. Math.},
  FJOURNAL = {Polska Akademia Nauk. Fundamenta Mathematicae},
    VOLUME = {38},
      YEAR = {1951},
     PAGES = {1--13},
      ISSN = {0016-2736},
   MRCLASS = {27.2X},
  MRNUMBER = {48517},
MRREVIEWER = {F. Bagemihl},
       DOI = {10.4064/fm-38-1-1-13},
       URL = {https://doi-org.proxy.library.cornell.edu/10.4064/fm-38-1-1-13},
}

@article {sikorski,
    AUTHOR = {Sikorski, Roman},
     TITLE = {A characterization of alephs},
   JOURNAL = {Fund. Math.},
  FJOURNAL = {Polska Akademia Nauk. Fundamenta Mathematicae},
    VOLUME = {38},
      YEAR = {1951},
     PAGES = {18--22},
      ISSN = {0016-2736},
   MRCLASS = {27.2X},
  MRNUMBER = {48519},
MRREVIEWER = {F. Bagemihl},
       DOI = {10.4064/fm-38-1-18-22},
       URL = {https://doi-org.proxy.library.cornell.edu/10.4064/fm-38-1-18-22},
}

@article {cubes,
    AUTHOR = {Todorcevic, Stevo},
     TITLE = {Some partitions of three-dimensional combinatorial cubes},
   JOURNAL = {J. Combin. Theory Ser. A},
  FJOURNAL = {Journal of Combinatorial Theory. Series A},
    VOLUME = {68},
      YEAR = {1994},
    NUMBER = {2},
     PAGES = {410--437},
      ISSN = {0097-3165},
   MRCLASS = {04A20 (03E05 03E35 03E55)},
  MRNUMBER = {1297180},
MRREVIEWER = {A. Kanamori},
       DOI = {10.1016/0097-3165(94)90113-9},
       URL = {https://doi-org.proxy.library.cornell.edu/10.1016/0097-3165(94)90113-9},
}

\end{document}